\theoremstyle{plain}
\newtheorem{thm}{Theorem}[section]
\newtheorem{cor}[thm]{Corollary}
\newtheorem{lem}[thm]{Lemma}
\newtheorem{prop}[thm]{Proposition}
\newtheorem{thmABC}{Theorem}
\theoremstyle{definition}
\newtheorem{defn}[thm]{Definition}
\newtheorem{rmk}[thm]{Remark}
\newtheorem{ex}[thm]{Example}
\newcommand{\Ind}{\operatorname{Ind}}
\newcommand{\Res}{\operatorname{Res}}
\newcommand{\Hom}{\operatorname{Hom}}
\newcommand{\End}{\operatorname{End}}
\newcommand{\Out}{\operatorname{Out}}
\newcommand{\bra}{\llbracket}
\newcommand{\ket}{\rrbracket}
\newcommand{\Irr}{\mathrm{Irr}}
\newcommand{\GL}{\operatorname{GL}}
\newcommand{\SL}{\operatorname{SL}}
\newcommand{\C}{\mathbb{C}}
\newcommand{\Q}{\mathbb{Q}}
\newcommand{\F}{\mathbb{F}}
\newcommand{\Z}{\mathbb{Z}}
\renewcommand{\Re}{\mathrm{Re}}
\renewcommand{\epsilon}{\varepsilon}
\numberwithin{equation}{section}
\title[Weil zeta functions of group representations over finite fields]{Weil 
zeta functions of group representations \\ over finite fields}
\author{Ged Corob Cook} 
\address{University of Lincoln, Brayford Pool, Lincoln LN6 7TS, UK}
\email{gcorobcook@gmail.com}
\author{Steffen Kionke}
\address{FernUniversit\"at in Hagen, Fakult\"at f\"ur Mathematik und Informatik, 58084 Hagen, Germany}
\email{steffen.kionke@fernuni-hagen.de}
\author {Matteo Vannacci}
\address{Matematika Saila \\ 
UPV-EHU \\ Barrio Sarriena, s/n \\
48910 Bilbao, Spain}
\email{matteo.vannacci@ehu.eus}
\date{\today}
\thanks{The first author was funded by the Engineering and Physical Sciences Research Council [grant number EP/V036874/1]. The second author was funded by the Deutsche Forschungsgemeinschaft (DFG, German Research Foundation) - 441848266. The third author has 
been supported by the Spanish Government, grants PID2020-117281GB-I00 
and PID2019-107444GA-I00, partly with FEDER funds, and the Basque Government, 
grant IT1483-22.}
\begin{document}
\renewcommand{\labelenumi}{(\roman{enumi})}
\renewcommand{\labelenumii}{(\alph{enumii})}

\begin{abstract}
In this article we define and study a zeta function $\zeta_G$ -- similar to the Hasse-Weil zeta function -- which enumerates absolutely irreducible representations over finite fields of a (profinite) group $G$. The zeta function converges on a complex half-plane for all UBERG groups and admits an Euler product decomposition. Our motivation for this investigation is the observation that the reciprocal value $\zeta_G(k)^{-1}$ at a positive integer $k$ coincides with the probability that $k$ random elements generate the completed group ring of $G$. The explicit formulas obtained so far suggest that $\zeta_G$ is rather well-behaved. 

A central object of this article is the abscissa of convergence $a(G)$ of $\zeta_G$. We calculate the abscissae for free abelian, free abelian pro-$p$, free pro-$p$, free pronilpotent and free prosoluble groups. More generally, we obtain bounds (and sometimes explicit values) for the abscissae of free pro-$\mathfrak{C}$ groups, where $\mathfrak{C}$ is a class of finite groups with prescribed composition factors. We prove that every real number $a \geq 1$ is the abscissa $a(G)$ of some profinite group $G$. 

In addition, we show that the Euler factors of $\zeta_G$ are rational functions in $p^{-s}$ if $G$ is virtually abelian. For finite groups $G$ we calculate $\zeta_G$ using the rational representation theory of $G$.
\end{abstract}

\maketitle

\tableofcontents

\section{Introduction}
The use of zeta functions to study asymptotic properties of algebraic objects is a well-established 
branch of algebra, see for instance \cite{duSautoySegal, GSS, LS, Voll1,  Weil} and references therein. In particular, there is considerable interest in zeta functions enumerating all \cite{AKOV,LL}, some \cite{KiKl} or equivalence classes of \cite{SV} complex irreducible representations of infinite groups. Every time the counting problem is adapted to the class of groups under investigation, which gives the theory of representation growth a piecemeal flavour. In this article we take a different approach suitable for the large class of UBERG groups: we define and study a \emph{zeta function} enumerating absolutely irreducible representations over finite fields.

For a profinite group $G$ and a field $F$, we write $r^*(G,F,n)$ to denote the number of absolutely irreducible representations of $G$ of dimension $n$ defined over $F$.
We say that $G$ has \emph{UBERG} if there exists a 
positive constant $c>0$ such that $r^*(G,F,n)\le \vert F \vert^{cn}$ for every 
finite field $F$. UBERG stands for `uniformly bounded exponential 
representation growth' (over finite fields) and, maybe surprisingly, it shows up 
naturally in the study of probabilistic generation properties of profinite 
groups. In fact, a finitely presented profinite group is \emph{positively 
finitely related} (PFR) exactly if it has UBERG. Moreover, a profinite group 
has UBERG if and only if the completed group algebra 
$\hat{\mathbb{Z}}\llbracket G\rrbracket$ is \emph{positively finitely generated} 
(PFG, see \cite{KV}) as a $\hat{\mathbb{Z}}\llbracket G\rrbracket$-module.
The properties of UBERG groups were the central object of the authors' paper 
\cite{CCKV}.

We define the following complex function for an UBERG profinite group $G$:
\begin{equation}\label{eq:defn_zeta}
	\zeta_G(s) := \exp  \left( \sum_{p \in \mathcal{P}}\sum_{n=1}^\infty\sum_{j=1}^\infty \frac{r^\ast(G,\F_{p^j},n)}{j} p^{-snj}|\mathbb{P}^{n-1}(\F_{p^j})|  \right) \text{\quad for } s\in \mathbb{C}.
\end{equation}
We will see that, for an UBERG group $G$, the above sum converges on some complex half-plane (cf.\ Corollary~\ref{cor:conv}). It admits an obvious Euler product decomposition $\zeta_G(s) = \prod_{p\in \mathcal{P}} \zeta_{G,p}(s)$. The formula for $\zeta_G$ is reminiscent of the Hasse-Weil zeta function of an algebraic variety $V$ where absolutely irreducible representations of $G$ over $\F_{q}$ now take the role of $\F_q$-rational points of $V$. The main objective of this article is to start the investigation of the function $\zeta_G(s)$ and the group-theoretical properties of $G$ detected by it. 

If $G$ is an abstract group, any finite-dimensional representation of $G$ over a finite field has to factor through a finite quotient; hence, for a group $G$ with UBERG profinite 
completion, we define $\zeta_G(s) := \zeta_{\widehat{G}}(s)$.
For instance, for $G = \Z$ one obtains (see Example~\ref{Z})
\[
	\zeta_{\Z}(s) = \frac{\zeta(s-1)}{\zeta(s)},
\]
where $\zeta$ denotes Riemann's zeta function. In general, $\zeta_G$ is the Hasse-Weil zeta function of $\mathrm{Spec(\Z[G])}$ whenever $G$ is a finitely generated abelian group.
More examples can be found in Appendix \ref{app:A}; our list of examples conveys the impression that $\zeta_G$ is rather well-behaved.
It is worth noting that if one replaces absolutely irreducible by irreducible representations in the formula for the
zeta function the resulting zeta function is less well-behaved;  see Appendix~\ref{app:B}. 

Our initial motivation to study $\zeta_G$ is a significant connection
between the values of the zeta function $\zeta_G(s)$ at the natural numbers and 
the probability $P_R(R,k)$ that $k$ random elements generate the group ring 
$R=\widehat{\Z}\bra G \ket$ of $G$ over $\widehat{\Z}$ as a $\widehat{\Z}\bra G \ket$-module. 
In this setting, we have the following theorem (see 
Section~\ref{sec:prob}).

\begin{thmABC}\label{thmABC:prob}
 Let $G$ be an UBERG profinite group and let be $R=\widehat{\Z}\bra G \ket$ its group ring over $\widehat{\Z}$. For all sufficiently large integers $\ell$ the following equality holds:
\[
	P_R(R,\ell)^{-1} = \zeta_G(\ell).
\]
\end{thmABC}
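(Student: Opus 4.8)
The plan is to evaluate both sides through the semisimple structure of $R=\widehat{\Z}\bra G\ket$ and check that the two resulting infinite products agree factor by factor. First I would recall the standard probabilistic reduction: for a finite ring $A$ and a finite $A$-module $M$, Nakayama's lemma reduces generation of $M$ to generation of $M/\mathrm{rad}(M)$, and the Chinese Remainder Theorem together with the Wedderburn decomposition of $A/\mathrm{rad}(A)$ makes the probability $P_A(M,k)$ a finite product over the simple $A$-modules occurring in $M$. Passing to the inverse limit over the finite quotients of $R$ (and using that the limit is positive for $k$ large precisely because $R$ is PFG, i.e. $G$ has UBERG), one gets $P_R(R,\ell)=\prod_{[S]}P_S$, where $[S]$ runs over isomorphism classes of (finite) simple $R$-modules and $P_S$ is the probability that $\ell$ random elements generate the matrix algebra $R/\mathrm{ann}_R(S)$ as a module over itself. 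If $S$ has characteristic $p$, endomorphism field $\End_R(S)=\F_{p^d}$ and $\F_{p^d}$-dimension $n$, then $R/\mathrm{ann}_R(S)\cong M_n(\F_{p^d})$, and a short computation with random matrices over $\F_{p^d}$ gives
\[
P_S=\prod_{i=0}^{n-1}\bigl(1-(p^{d})^{\,i-\ell n}\bigr).
\]

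The second step is a dictionary between simple $R$-modules and absolutely irreducible representations. By Jacobson density, a simple $R$-module $S$ of characteristic $p$ with $\End_R(S)=\F_{p^d}$ and $\dim_{\F_{p^d}}S=n$ is the same datum as a surjection $R\twoheadrightarrow M_n(\F_{p^d})$, hence the same as an absolutely irreducible representation $G\to\GL_n(\F_{p^d})$ whose $\overline{\F_p}$-isomorphism class has field of definition exactly $\F_{p^d}$ — equivalently, whose orbit under $\mathrm{Gal}(\overline{\F_p}/\F_p)$ has size exactly $d$. Write $a^{(p)}_{d,n}$ for the number of such $S$, i.e. the number of size-$d$ Galois orbits of $n$-dimensional absolutely irreducible $\overline{\F_p}$-representations. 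Since Schur indices vanish over finite fields and Hilbert 90 forces a Galois-stable isomorphism class to descend uniquely up to conjugacy, a representation is realizable over $\F_{p^j}$ exactly when its class is fixed by $\mathrm{Gal}(\F_{p^j}/\F_p)$, so
\[
r^\ast(G,\F_{p^j},n)=\sum_{d\mid j}d\,a^{(p)}_{d,n}.
\]

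The third step is to unwind the definition of $\zeta_G$. Using $p^{-snj}\,|\mathbb{P}^{n-1}(\F_{p^j})|=\sum_{i=0}^{n-1}(p^{\,i-sn})^{j}$ and the elementary exp–log identity $\sum_{j\ge1}\tfrac1j\bigl(\sum_{d\mid j}d\,a^{(p)}_{d,n}\bigr)x^{j}=-\sum_{d\ge1}a^{(p)}_{d,n}\log(1-x^{d})$, the triple sum defining $\zeta_{G,p}$ telescopes into
\[
\zeta_{G,p}(s)=\prod_{n\ge1}\prod_{d\ge1}\ \prod_{i=0}^{n-1}\bigl(1-p^{\,d(i-sn)}\bigr)^{-a^{(p)}_{d,n}}.
\]
Comparing this at $s=\ell$ with $P_R(R,\ell)^{-1}=\prod_{p}\prod_{n,d}\bigl(\prod_{i=0}^{n-1}(1-(p^{d})^{\,i-\ell n})\bigr)^{-a^{(p)}_{d,n}}$ from the first two steps, one sees the two Euler products coincide term by term (the simple modules are partitioned by their characteristic), which is the claim. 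The hypothesis that $\ell$ be sufficiently large is used exactly to make all these infinite products converge, i.e. $\ell$ lies beyond the abscissa of convergence $a(G)$ and $P_R(R,\ell)>0$ (cf. Corollary~\ref{cor:conv}).

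I expect the main obstacle to be the second step: making the correspondence between simple $R$-modules and Galois orbits of absolutely irreducible representations precise enough to obtain the identity $r^\ast(G,\F_{p^j},n)=\sum_{d\mid j}d\,a^{(p)}_{d,n}$ exactly, together with the bookkeeping that lets the $\tfrac1j$-weighted sum in the definition of $\zeta_G$ reassemble into a clean product over simple modules. The probabilistic reduction $P_R(R,\ell)=\prod_{[S]}P_S$, the matrix-counting formula for $P_S$, and the final term-by-term comparison should be routine by contrast.
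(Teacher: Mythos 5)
Your proposal is correct and follows essentially the same route as the paper's proof: reduce modulo the Jacobson radical, decompose the semisimple quotient as a (countable) product of matrix algebras $M_n(\F_{p^d})$, compute the probability that $\ell$ random matrices generate $M_n(\F_{p^d})$, and match this with the corresponding factor in $\zeta_G$. The paper organizes the reduction differently — it invokes a separate continuity lemma (Proposition~\ref{prop:continuity}) to pass from finite semisimple quotients to $R$, then reduces to a single matrix algebra and compares with the precomputed Example~\ref{ex:zeta-matrix-algebra} — whereas you write the full Euler product over Galois orbits $a^{(p)}_{d,n}$ and use the identity $r^\ast(G,\F_{p^j},n)=\sum_{d\mid j}d\,a^{(p)}_{d,n}$ together with the Lambert-series identity to telescope $\zeta_{G,p}$ directly into that product. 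This is a harmless repackaging: your exp--log manipulation is exactly the computation hidden inside Example~\ref{ex:zeta-matrix-algebra}, and your convergence discussion substitutes for the paper's appeal to the continuity lemma.
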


Following the tradition of the study of zeta functions associated to algebraic 
objects, we continue by studying the \emph{abscissa of convergence} of the 
function $\zeta_G(s)$, i.e.\ the real number
\begin{equation}\label{eq:abscissa_defn}
   a(G) = \inf\{t\in \mathbb{R}_{\ge 1} \mid \zeta_G(t) \text{ converges}\}.
\end{equation}

In Section~\ref{sec:abscissa} we prove the following properties of the 
function $a(G)$.

\begin{thmABC}\label{thmABC:abscissa}
 Let $G$ be an UBERG profinite group.
\begin{enumerate}[(i)]
 \item\label{it:open-bound} If $H$ is an open subgroup of $G$, then $$\frac{a(H)}{\vert G:H \vert} \le 
a(G)\le a(H) +1 - \frac{1}{\vert G:H \vert}.$$ Moreover, if $G=H\times F$ for a 
finite group $F$, then $a(G)=a(H)$. 
 \item If $K$ a closed normal subgroup of $G$, then $a(G/K) \le a(G)$. 
Moreover, if the quotient map $G\to G/K$ splits, then $$a(G) \le a(K) + a(G/K) 
+1.$$
\end{enumerate}
\end{thmABC}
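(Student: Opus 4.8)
The plan is to reduce every inequality to a term‑by‑term comparison of Dirichlet‑type series. Since $q^{n-1}\le|\mathbb{P}^{n-1}(\F_q)|\le 2q^{n-1}$ and $j\ge 1$, for a real argument $t$ the quantity $\zeta_G(t)$ is finite if and only if
\[
	f_G(t):=\sum_{p}\sum_{n\ge1}\sum_{j\ge1} r^\ast(G,\F_{p^j},n)\,p^{-j(n(t-1)+1)}
\]
is finite; all summands being positive, $a(G)$ is exactly the abscissa of convergence of $f_G$, and I will estimate $f_G$ against $f_H$, $f_K$ and $f_{G/K}$. The quotient bound $a(G/K)\le a(G)$ is immediate, since inflation along $G\to G/K$ is an injection from absolutely irreducible $\F_q$‑representations of $G/K$ into those of $G$ preserving dimension and field of definition; hence $r^\ast(G/K,\F_q,n)\le r^\ast(G,\F_q,n)$ and $f_{G/K}\le f_G$ term by term.

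For the open‑subgroup statement put $m=|G:H|$. The engine is a two‑sided estimate from Clifford--Mackey theory \emph{over the finite field} $\F_q$, applied inside finite quotients of $G$ (and, when $H$ is not normal, using Mackey's formula to reduce to the normal case): an absolutely irreducible $\F_q$‑representation $\rho$ of dimension $n$ restricts to $H$, over $\overline{\F_q}$, to $a$ copies of a single $G$‑orbit of $r$ absolutely irreducible representations of dimension $n/(ra)$ with $r\mid m$; the absolute Galois group of $\F_q$ permutes these constituents, and each is already defined over $\F_{q^{c}}$ for some $c$ dividing $r$ (so $c\le m$), where crucially $c$ and the dimension‑scaling factor are tied to the same inertia subgroup, so that $c=m$ can occur only when $r=m$, which forces $a=1$. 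Reading this dictionary both ways gives, up to a factor polynomial in the dimension and a constant depending only on $m$,
\[
	r^\ast(G,\F_q,n)\ \ll\ \sum_{\substack{c\mid m\\ cd\mid n}} r^\ast(H,\F_{q^{c}},d)
	\qquad\text{and}\qquad
	r^\ast(H,\F_q,d)\ \ll\ \sum_{\substack{c\le m\\ d\le n\le md}} r^\ast(G,\F_{q^{c}},n).
\]
Inserting these into $f_G$ and into $f_H$ respectively, and then substituting $Q=p^{jc}$ in the block with field degree $c$, that block of $f_G(s)$ becomes (a constant times) $f_H$ at the shifted argument $s-1+\tfrac1c$, and that block of $f_H(s)$ dominates $f_G$ at $1+\tfrac1c(s-1)$; the shifts are forced by the interaction of the exponent $p^{-j(n(t-1)+1)}$ with the field substitution, the unbounded $n$‑sum being a convergent geometric series because $s>1$. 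The worst block is $c=m$, which yields precisely $a(G)\le a(H)+1-\tfrac1m$ and $m\,a(G)\ge a(H)$, i.e.\ $a(G)\ge a(H)/m$. For $G=H\times F$ with $F$ finite, one direction is $a(H)=a\big((H\times F)/(1\times F)\big)\le a(G)$ by the quotient bound; for the other, $F$ has only finitely many absolutely irreducible representations, all of bounded dimension, so tensoring a representation of $H$ by one of $F$ alters the dimension by a bounded factor, whence $f_{H\times F}\ll f_H$ and $a(G)\le a(H)$.

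For the split extension $G=K\rtimes Q$ we may assume $K$ is UBERG, as otherwise $a(K)=\infty$ and there is nothing to prove. I would now run Clifford theory relative to the closed normal subgroup $K$, inside finite quotients $G/N$: by the Clifford correspondence an absolutely irreducible $\F_q$‑representation $\rho$ of $G$ is induced from the inertia subgroup of one of its $K$‑constituents $\tau$, on which it is $\tau$ twisted by an irreducible \emph{projective} representation $\pi$ of a subquotient $P$ of $Q$, so that $\dim\rho=\dim\tau\cdot\dim\Ind_P^Q\pi$ — a product of a ``$K$‑dimension'' and a ``$Q$‑dimension''. Feeding this into $f_G$ and exploiting that the weight $p^{-j(n(t-1)+1)}$ is, in $n$, multiplicative up to a bounded loss, one bounds $f_G(t)$ by a product of an $f_K$‑type series and an $f_Q$‑type series evaluated near $a(K)$ and $a(Q)$ — once the $Q$‑side is under control. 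That control is the delicate point: $\pi$ is a projective representation of a subquotient of a finite quotient of $Q$, so one must linearise it through a central extension $\widehat Q$ of $Q$, induce from $\widehat P\le\widehat Q$ up to $\widehat Q$, and know that these operations change the relevant abscissa by at most a bounded amount (a direct‑product argument when the pertinent extension splits, a twisted‑group‑algebra variant of the previous paragraph otherwise). The total slack, together with the index $[Q:P]$ appearing in $\dim\Ind_P^Q\pi$, is exactly what the ``$+1$'' in $a(K)+a(Q)+1$ absorbs.

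The step I expect to be the main obstacle is the Clifford‑theoretic bookkeeping over a non‑algebraically closed finite field. The precise constants $1/m$ and $1-\tfrac1m$ in part~(i) come from the coupling between the dimension of a restriction constituent, the order of its inertia subgroup and the degree of its field of definition; isolating this coupling — rather than settling for a lossy multiplicative $m$ or $m^2$ — and packaging it as a clean lemma is where the real work lies. In part~(ii)(b) there is the extra difficulty that a split extension need not trivialise the Clifford cocycle, so one genuinely has to enumerate projective representations of subquotients of finite quotients of $Q$ and show this enumeration is governed by $a(Q)$ up to a bounded constant — essentially that central extensions of $Q$ by finite groups, and open subgroups of $Q$, cannot raise the abscissa by more than $1$.
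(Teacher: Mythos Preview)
For part~(i) and the quotient bound in~(ii) your plan is the paper's: compare the series for $G$ and $H$ via the Clifford correspondence between absolutely irreducible $G$-modules and constituents of their restriction to $H$. The paper carries this out by defining explicit maps $N\mapsto M$ (an irreducible summand of $\Res^G_H N$, regarded as absolutely irreducible over its endomorphism field) and $M\mapsto N$ (an irreducible quotient of $\Ind^G_H M$, likewise), bounding each fibre by $|G:H|^2$, and comparing the individual weights $\alpha(N),\alpha(M)$ directly; no reduction to normal $H$ is needed or attempted. Your two displayed inequalities on $r^\ast$ have the right shape but are asserted rather than proved, and the ``coupling'' you identify as the real work is exactly what the proof consists of --- in particular the claim that the field degree $c$ of a restriction constituent divides the orbit length would need justification. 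For $G=H\times F$ your argument agrees with the paper's: Fein's theorem says absolutely irreducible $\F_q[H\times F]$-modules are tensor products over the \emph{same} field $\F_q$, so no field extension occurs and the shift vanishes.

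The genuine gap is the split extension bound. You propose to count the $G$-irreducibles over a fixed $K$-constituent $\tau$ via the Clifford correspondence, landing in projective representations of the inertia quotient $P_\tau\le Q$ with cocycle $\omega_\tau$, and then to linearise through a central extension. But $P_\tau$ and $\omega_\tau$ both vary with $\tau$, so there is no single central extension $\widehat Q$ that serves for all $\tau$; you would need a bound on projective representation counts that is \emph{uniform} over all cocycles and all subgroups of all finite quotients of $Q$, and you give no mechanism for extracting such a bound from $a(Q)$ alone. The paper avoids projective representations entirely by invoking \cite[Theorem~4.7]{CCKV}, a result proved specifically for split extensions, which bounds the number of irreducible $\F_p\bra G\ket$-modules of dimension $n$ lying over a fixed $K$-constituent directly by $n\cdot R(G/K,\F_p,n)$, the \emph{ordinary} irreducible count for $G/K$. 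With this input the contribution of each $K$-constituent is controlled by a sum of $r^\ast(G/K,\F_{p^v},u)$ against $p^{(1-s)nj}$ over all $nj\ge uv$, and the extra $+1$ in the final bound arises from the geometric tail in that sum.
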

As a corollary we obtain that $a(G)=1$ for all finite groups $G$, however there are infinite groups with abscissa $1$ (see Theorem~\ref{thmABC:examples} and \ref{thmABC:arbitrary}). We show in Example \ref{ex:free_prop} that the lower bound in \eqref{it:open-bound} is sharp, but it appears to us that the upper bound in \eqref{it:open-bound} is not sharp. In order to obtain a better understanding, we construct examples of groups $G$ that are split 
extensions of a finite index normal subgroup $N$ such that $a(G)> a(N)$ in 
Section~\ref{largeabscissae}.

We move on to calculate the abscissa of convergence of various families of 
profinite groups in Section \ref{sec:examples}. We recall briefly the notation used in the theorem. As usual, 
$F^{sol}_r$, $F^{nil}_r$ and $F^{p}_r$ denote the free prosoluble, pronilpotent 
and pro-$p$ groups on $r$ generators, respectively, and $\Z_p$ denotes the $p$-adic 
integers. Finally, $c_{nil} = \frac{5\log(2)}{2\log(3)}$ and $c_{sol} = \frac{2}{3}+\frac{5\log(2)}{2\log(3)}$ are the constants from \cite{Wolf} and \cite{Palfy}, respectively.

\begin{thmABC}
	\phantomsection\label{thmABC:examples} Let $p$ be a prime number and $r\in \mathbb{N}$.
	\begin{enumerate}[(i)]\setlength\itemsep{1em}
	\item $\displaystyle a(\mathbb{Z}^r) = r+1$.
	\item $\displaystyle \frac{r}{K(p)} \leq a(\Z_p^r) \leq \frac{r-1}{K(p)} + 1.$ In particular, $a(\Z_p) = 1$.
	\item $\displaystyle a(F^p_r) = \displaystyle \frac{r-1}{K'(p)} + 1$ if $p$ is odd and $\displaystyle a(F^2_r) = c_{nil}(r-1) +1$. %
	\item $\displaystyle a(F^{nil}_r) =c_{nil}(r-1) +1$ if $r>2$ and $\displaystyle a(F^{nil}_2) = 3$.     \item $\displaystyle a(F^{sol}_r) = c_{sol}(r-1)+1$.
    \item If $G$ is a pronilpotent group of finite rank $r$, then $a(G)\le r+1$.
   \end{enumerate}
\end{thmABC}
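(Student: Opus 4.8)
The plan is to handle all six parts by one template: compute $r^\ast(G,\F_{p^j},n)$ — or sharp two-sided estimates for it — substitute into \eqref{eq:defn_zeta}, and read off where the resulting Dirichlet-type series converges. Two observations organise the work. Since every summand in \eqref{eq:defn_zeta} is non-negative for real $s$, convergence of $\zeta_G(t)$ is monotone in $t$, so $a(G)$ is a genuine threshold and it suffices to bound the inner sum both from above and from below. And the Euler product lets me work one characteristic at a time: for a pro-$p$ group the factor $\zeta_{G,p}$ is simply $(1-p^{-s})^{-1}$, because the only absolutely irreducible $\overline{\F_p}$-representation of a pro-$p$ group is the trivial one (a finite $p$-subgroup of $\GL_n(\overline{\F_p})$ is unipotent, hence reducible once $n\ge 2$), so $a(G)$ is governed by the factors $\zeta_{G,\ell}$ with $\ell\ne p$, where the relevant representations factor through finite $p$-group quotients. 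Part (i) finishes immediately along these lines: $\Z^r$ being abelian, $r^\ast(\Z^r,\F_{\ell^j},1)=(\ell^j-1)^r$ and $r^\ast(\Z^r,\F_{\ell^j},n)=0$ for $n\ge2$, and expanding $(\ell^j-1)^r=\sum_{i}\binom ri(-1)^{r-i}\ell^{ij}$ identifies $\zeta_{\Z^r}$ with a finite product of powers of shifted Riemann zeta functions $\zeta(s-i)$, $0\le i\le r$; the factor $\zeta(s-r)^{\pm1}$ has abscissa $r+1$ while the others converge for $\Re s>r$, so $a(\Z^r)=r+1$. (Equivalently, without the product identity: at $s=r+1$ the leading part of the inner sum is $\asymp\sum_\ell\ell^{-1}=\infty$ and the rest converges there.)

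For (ii) and (iii) I would analyse the $\ell\ne p$ Euler factors of pro-$p$ groups. For $\Z_p^r$ the relevant representations are the characters of $p$-power order, so $r^\ast(\Z_p^r,\F_{\ell^j},1)=p^{r\,v_p(\ell^j-1)}$ ($v_p$ the $p$-adic valuation), and the question becomes the abscissa of $\sum_{\ell\ne p}\sum_j j^{-1}p^{r\,v_p(\ell^j-1)}\ell^{-sj}$. Grouping by the value $v_p(\ell^j-1)=k$, one uses for the upper bound that such $\ell^j$ satisfy $\ell^j\equiv 1\pmod{p^k}$ — hence are at least the least prime power in that progression — and for the lower bound the existence of such a prime of controlled size; the constant $K(p)$ measures exactly how small such primes can be relative to $p^k$ (and $K(p)\ge 1$, since the least such prime exceeds $p^k$), which yields $\tfrac r{K(p)}\le a(\Z_p^r)\le\tfrac{r-1}{K(p)}+1$, the case $r=1$ being sharpened to $a(\Z_p)=1$ by the trivial bound $p^{v_p(\ell-1)}\ge 1$ together with Dirichlet's theorem. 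For $F^p_r$ the finite $p$-quotients need no longer be abelian, so one must also count the higher-dimensional absolutely irreducible representations; the finite $p$-subgroups of $\GL_n(\overline{\F_\ell})$ occurring as images are nilpotent completely reducible linear groups, whose orders obey Wolf's bound, and combining that bound with the count of such groups that are at most $r$-generated and of their fields of definition gives the upper bound, the growth constant being $K'(p)$ for odd $p$ and Wolf's $c_{nil}$ for $p=2$. The matching lower bounds come from realising the extremal groups of Wolf's theorem — iterated wreath products of cyclic $p$-groups, whose $r$-generated members carry faithful irreducibles of degree $p^{r-1}$ — as quotients of $F^p_r$, keeping track of the field of definition and of the number of inequivalent representations obtained.

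For (iv) and (v) I would assemble the local data over all primes. Since $F^{nil}_r=\prod_p F^p_r$, an absolutely irreducible $\overline{\F_\ell}$-representation of $F^{nil}_r$ is a tensor product of absolutely irreducible representations of finitely many factors $F^p_r$ with $p\ne\ell$, so $\zeta_{F^{nil}_r,\ell}$ is built from the $\zeta_{F^p_r,\ell}$ by a product that also allows such tensoring; bookkeeping these combinations shows the total is dominated by the $p=2$ factor once $r$ is large, giving $a(F^{nil}_r)=c_{nil}(r-1)+1$ for $r>2$, while $r=2$ is too small for the extremal estimate to be attained and is computed directly to give $3$. The free prosoluble case (v) runs along the same scheme with P\'alfy's bound replacing Wolf's, the constant $c_{sol}$ in place of $c_{nil}$, and the structure theory of soluble linear groups replacing the clean decomposition $\prod_p F^p_r$; the outcome is $a(F^{sol}_r)=c_{sol}(r-1)+1$.

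For (vi), given a pronilpotent $G$ of finite rank $r$, I would write $G=\prod_p G_p$ with each $G_p$ a pro-$p$ group of rank $\le r$; one cannot invoke $F^p_r$ here, finite rank being far stronger than $r$-generation, so instead pass to an open uniform subgroup $U_p\le G_p$ of dimension $\le r$, bound $a(U_p)\le r$ uniformly in $p$ by the same kind of estimate as for $\Z_p^r$ (the $\ell\ne p$ representations of $U_p$ still factor through $p$-quotients of controlled order), and apply Theorem~\ref{thmABC:abscissa}\eqref{it:open-bound} to conclude $a(G_p)\le a(U_p)+1\le r+1$, hence $a(G)=\sup_p a(G_p)\le r+1$. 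Throughout (ii)--(v) the upper bounds are the soft direction — UBERG together with Wolf/P\'alfy and a summation — while the real difficulty, which is also why $F^2_r$, $F^{nil}_2$ and similar small cases fall outside the generic formula, lies in the lower bounds: one has to manufacture, at each relevant modulus $p^k$ (respectively each degree $n$), enough pairwise inequivalent absolutely irreducible representations defined over a field $\F_\ell$ with $\ell$ as small as possible relative to $p^k$ (respectively $n$). The hard part will be exactly this — quantitative control on the least prime in an arithmetic progression, an explicit construction of the Wolf/P\'alfy extremal groups inside the free objects, and the tensor-product bookkeeping needed to combine the different characteristics in (iv)--(v).
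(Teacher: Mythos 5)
Your template for parts (i)--(v) matches the paper's strategy in outline: direct computation for $\Z^r$, Wolf/P\'alfy bounds plus Proposition~\ref{upperbound-images} for upper bounds, and explicit constructions of large irreducible $p$-subgroups of $\GL(n,q)$ (iterated wreath products, via Corollary~\ref{cor:lowerbound-general-perm}) for the lower bounds. The sketch is thin on the points that actually determine the answers --- the case analysis $q\equiv 1$ vs.\ $3\bmod 4$ and the semidihedral Sylow $2$-subgroup of $\GL(2,3)$ that drive the $p=2$ case, the precise definitions of $K(p)$, $K'(p)$, $K^-$, and the $q>Q$ vs.\ $q\le Q$ split in (iv) that makes $r=2$ exceptional --- but the route is the paper's route.

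Part (vi) is wrong. You write $G=\prod_p G_p$, claim $a(U_p)\le r$ uniformly for an open uniform subgroup $U_p\le G_p$, and then conclude $a(G)=\sup_p a(G_p)\le r+1$. The last step is false: the abscissa of a direct product of \emph{groups} is not the supremum of the abscissae of the factors. The group ring $\widehat{\Z}\bra G\times H\ket$ is not the ring $\widehat{\Z}\bra G\ket\times\widehat{\Z}\bra H\ket$; it is a completed tensor product, and absolutely irreducible modules of $G\times H$ over $\F_q$ are tensor products $M\otimes N$ (Proposition~[Products of groups]), which multiplies representation counts rather than adding them. The paper's own example makes this concrete: $\widehat{\Z}=\prod_p\Z_p$ has $a(\widehat{\Z})=2$, while every factor has $a(\Z_p)=1$. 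Your formula would give $a(\widehat{\Z})=1$. The intermediate claim $a(U_p)\le r$ also lacks justification: a uniform pro-$p$ group of dimension $r$ is generally nonabelian, so higher-dimensional absolutely irreducible representations over $\F_\ell$ ($\ell\neq p$) appear and the estimate for $\Z_p^r$ does not transfer. The paper's actual argument for (vi) is structurally different: it observes that the one-dimensional representations factor through the abelianisation, a quotient of $\widehat{\Z}^r$ (contributing $s>r+1$), and bounds the higher-dimensional ones using the classification of primitive nilpotent linear groups over finite fields (monomial, or induced from dimension $\le 2$, by Detinko--Flannery) together with polynomial subgroup growth of finite-rank groups. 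You would need to replace your step for (vi) by something of that kind.
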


Here $K(p)$ and $K'(p)$ are number-theoretic constants defined in Sections \ref{sec:free_ab_pro_p} and \ref{sec:free-pro-p} respectively, with $1 \leq K(p) \leq 2.1115$, $K'(p) \leq K(p)$, and
$$K'(p) \begin{cases} = \frac{2 \log(3)}{5 \log(2)} 
& \text{ if } p=2; \\
= \frac{(p-1)\log(p+1)}{p\log(p)}<1 & \text{ if $p$ is a Mersenne prime;} \\
\geq 1 & \text{ otherwise.}
\end{cases}$$
It is worth pointing out that the calculation of the abscissa of $\Z_p^r$ touches on 
some deep number-theoretic conjectures and results concerning the distribution of primes in arithmetic progressions: see 
Section~\ref{sec:free_ab_pro_p}. Strong results are needed to show that $a(\Z_p^2) > 1$.

On the other hand, we can estimate (and sometimes calculate exactly) the absissa of some free groups with restricted non-abelian composition factors of \emph{large degree}, as we explain now. Let $\mathfrak{C}$ be a NE-formation of finite groups containing the cyclic groups of prime order. Let $F^{\mathfrak{C}}_r$ be the free pro-$\mathfrak{C}$ group on $r$ generators. If $\mathfrak{C}$ contains alternating groups of arbitrarily large degree, or classical groups with natural representation of arbitrarily large dimension, one can show that $F^{\mathfrak{C}}_r$ does not have UBERG (cf.\  Theorem~\ref{thm:lowerbound-general}). Assume it does not. Let $c_0$ be maximal such that $Alt(c_0) \in \mathfrak{C}$, and assume that $c_0$ is large. In Theorem~\ref{dominant-abscissa} we show that the abscissa of $F^\mathfrak{C}_r$ is dominated by representations of a very specific type. Using this, we can bound the absissa of $F^\mathfrak{C}_r$.

\begin{thmABC}
Let $\mathfrak{C}$ be a NE-formation of finite groups containing the cyclic groups of prime order and suppose that $\mathfrak{C}$ does not contain alternating groups of degree greater than $c_0$. Let $F^{\mathfrak{C}}_r$ be the free pro-$\mathfrak{C}$ group on $r$ generators. Then:
 $$c_{\mathfrak{C}, \text{space}}(r-1)+1 \leq a(F^{\mathfrak{C}}_r) \leq \max(c_{\mathfrak{C}, \text{space}}(r-1)+1, c_{\mathfrak{C}, \text{time}}(r-1)+2).$$
\end{thmABC}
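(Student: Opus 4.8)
The plan is to read off $a(F^{\mathfrak{C}}_r)$ from the defining series~\eqref{eq:defn_zeta} by estimating the coefficients $r^*(F^{\mathfrak{C}}_r,\F_{p^j},n)$ from above and from below. Write $F=F^{\mathfrak{C}}_r$. Since $F$ is the free pro-$\mathfrak{C}$ group of rank $r$, a continuous absolutely irreducible representation of $F$ of dimension $n$ over $\overline{\F_p}$ is exactly an $r$-tuple of matrices in $\GL_n(\overline{\F_p})$ generating a finite $\mathfrak{C}$-group that acts absolutely irreducibly, taken up to $\GL_n$-conjugacy; the passage to representations defined over $\F_{p^j}$ and the weight $|\mathbb{P}^{n-1}(\F_{p^j})|\asymp p^{(n-1)j}$ in~\eqref{eq:defn_zeta} only keep track of the Galois and scalar ambiguities. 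Hence, up to factors of lower order on the relevant exponential scale, $r^*(F,\F_{p^j},n)$ is a weighted count over isomorphism types of $r$-generated absolutely irreducible finite $\mathfrak{C}$-linear groups $\Gamma$ of degree $n$ realisable over $\F_{p^j}$, with weight $\approx|\Gamma|^{r-1}$ on each type (conjugacy removes one factor of $|\Gamma|$). By Clifford theory (reduction to primitive) each such representation is induced from a \emph{primitive} representation of a finite-index subgroup $U\le F$, say $[F:U]=m\mid n$, and $U$ is again free pro-$\mathfrak{C}$, now of Nielsen--Schreier rank $m(r-1)+1$. The abscissa thereby turns into an extremal problem: how large, relative to its degree, can a finite $\mathfrak{C}$-linear group be when assembled along a chief series out of primitive blocks — tensor-induced products of quasi-simple and extraspecial-type groups whose non-abelian composition factors lie in the \emph{finite} set allowed by $\mathfrak{C}$, the largest alternating one being $\mathrm{Alt}(c_0)$ — and out of wreathing by transitive $\mathfrak{C}$-permutation groups. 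The constants $c_{\mathfrak{C},\text{space}}$ and $c_{\mathfrak{C},\text{time}}$ record the two competing extremal rates in this problem, with $c_{\mathfrak{C},\text{space}}$ playing for $F^{\mathfrak{C}}_r$ the role of the P\'alfy and Wolf constants $c_{sol},c_{nil}$ for $F^{sol}_r$ and $F^{nil}_r$.

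For the lower bound I would exhibit an explicit extremal ``space-type'' family: using $\mathrm{Alt}(c_0)$ together with the abelian members of $\mathfrak{C}$, build $r$-generated absolutely irreducible images $\Gamma_k\le\GL_{n_k}$ of iterated wreath/tensor-induced shape — the configuration optimising the ratio of order to degree, the analogue of the P\'alfy/Wolf extremal groups — with $n_k\to\infty$ and $\frac{1}{n_k}\log_p|\Gamma_k|\to c_{\mathfrak{C},\text{space}}$ for suitable primes $p$. Each $\Gamma_k$ accounts for at least $\gtrsim|\Gamma_k|^{r-1}$ representations of dimension $n_k$, so the corresponding part of~\eqref{eq:defn_zeta} dominates a series $\sum_k p^{(c_{\mathfrak{C},\text{space}}(r-1)+1-s)\,n_k j}$ (for a suitable fixed $p^j$), which diverges for every real $s<c_{\mathfrak{C},\text{space}}(r-1)+1$; here the ``$+1$'' is precisely the contribution of the weight $|\mathbb{P}^{n-1}(\F_{p^j})|$. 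This gives $a(F)\ge c_{\mathfrak{C},\text{space}}(r-1)+1$.

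For the upper bound I would first invoke Theorem~\ref{dominant-abscissa}: once $c_0$ is large, only representations of the single ``very specific type'' isolated there can influence the abscissa, so it suffices to bound the part of~\eqref{eq:defn_zeta} ranging over those. Writing such a representation as $\Ind_U^F\sigma$ with $\sigma$ primitive of dimension $d=n/m$, $m=[F:U]$, one bounds separately (i) the number of index-$m$ subgroups $U\le F$, via an order bound for transitive $\mathfrak{C}$-permutation groups of degree $m$ together with a bound on the number of their conjugacy types, and (ii) the number of primitive absolutely irreducible $\F_{p^j}$-representations of the rank-$(m(r-1)+1)$ group $U$ of dimension $d$, using that primitive $\mathfrak{C}$-linear groups of degree $d$ have subexponential order $d^{O(\log d)}$ and occur in subexponentially many conjugacy types; one then reassembles with $n=dm$. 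The resulting estimate splits: its genuinely ``space-efficient'' part is bounded by $c_{\mathfrak{C},\text{space}}(r-1)+1$, matching the construction, while the residual ``time'' part — the inflation that enlarges the group but not the module (the normaliser and wreathing layers), which is free in the dimension variable yet still costs in~\eqref{eq:defn_zeta} exactly as a split extension costs a ``$+1$'' in Theorem~\ref{thmABC:abscissa}(ii) — is bounded by $c_{\mathfrak{C},\text{time}}(r-1)+2$. Summing and re-inserting the weight, the series~\eqref{eq:defn_zeta} converges for every $s>\max\bigl(c_{\mathfrak{C},\text{space}}(r-1)+1,\ c_{\mathfrak{C},\text{time}}(r-1)+2\bigr)$, which is the claim.

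The main obstacle lies in the upper-bound count, in two places. First, one needs a precise enough structure theory of primitive finite $\mathfrak{C}$-linear groups to extract the $d^{O(\log d)}$ order bound and the $O(\log d)$ bound on the number of tensor factors, using only that $\mathfrak{C}$'s non-abelian composition factors form a finite set free of large alternating and classical groups (the latter being automatic here, since otherwise $F^{\mathfrak{C}}_r$ would not have UBERG). Second, and more delicately, one must control the overcounting in the Clifford decomposition $\Ind_U^F\sigma$ — which pairs $(U,\sigma)$ actually occur, with what multiplicity, and over which subfield $\sigma$ is defined — so that after the Nielsen--Schreier rank jump $U\uparrow F$ the surviving number of $n$-dimensional representations grows no faster than $p^{(c_{\mathfrak{C},\text{time}}(r-1)+1)nj}$ off the extremal locus and like $p^{c_{\mathfrak{C},\text{space}}(r-1)nj}$ on it. Keeping the unavoidable loss to a single extra ``$+1$'', and not being able to eliminate it in general, is exactly what forces the $\max$ rather than an equality in the statement; establishing the hypotheses of Theorem~\ref{dominant-abscissa} for the given $\mathfrak{C}$ underpins the whole argument.
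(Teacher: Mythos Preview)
Your proposal misreads what the constants $c_{\mathfrak{C},\text{space}}$ and $c_{\mathfrak{C},\text{time}}$ actually measure, and this causes the argument to miss in both directions. In the paper these are defined just before Theorem~\ref{spacetime-bounds}: $c_{\mathfrak{C},\text{space}}=\sup_E\bigl(c_E+\frac{\log_p(c_0!)}{\beta k(c_0-1)}\bigr)$ and $c_{\mathfrak{C},\text{time}}=\inf_K\sup_{E:\,p^k\ge K}c_E$, where the supremum runs over \emph{all} classical and alternating $E\in\mathfrak{C}$ with natural module of dimension~$\beta$ over~$\F_{p^k}$. The space/time dichotomy is about whether the divergence of~\eqref{eq:defn_zeta} is driven by large dimension over a fixed small field (``space'') or by many fields in a fixed dimension (``time''); it is not the decomposition you describe into ``module-enlarging'' versus ``normaliser/wreathing'' layers. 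Concretely, the $+2$ in the time term does not arise from the split-extension bound of Theorem~\ref{thmABC:abscissa}(ii): it is simply $c_0(R)+1$ from Lemma~\ref{generalbounds}, applied to the part of $\Sigma_D$ with $p^j\ge K$, where one has $r^\ast_D\le 7n^2p^{(c_{\mathfrak{C},\text{time}}+\varepsilon)(r-1)nj+j}$.

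Two specific consequences. For the lower bound, building extremal groups ``using $\mathrm{Alt}(c_0)$ together with the abelian members'' will in general fall short of $c_{\mathfrak{C},\text{space}}$: the supremum may be attained (or approached) only by a classical $E\in\mathfrak{C}$, and the paper accordingly applies Corollary~\ref{cor:lowerbound-general-perm} with $S=N_{\Gamma L_{\F_p}(\beta,\F_{p^k})}(E)$ for such an $E$ and $T=Sym(c_0)$. For the upper bound, once you invoke Theorem~\ref{dominant-abscissa} you already have the abscissa of $\zeta_{F^{\mathfrak{C}}_r}$ controlled by the explicit series~$\Sigma_D$, whose terms are literally $|N_{\Gamma L}(E)\wr Sym(c_0)\wr\cdots\wr Sym(c_0)|^{r-1}p^j$; no further Clifford decomposition is needed. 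The proof then just observes that at most $7n^2$ groups~$E$ contribute for given $(p,j,n)$, bounds the sum for each fixed $p^j<K$ directly by the space constant, and handles the tail $p^j\ge K$ by Lemma~\ref{generalbounds} and the time constant. Your outline of a second pass through Clifford theory and primitive group enumeration is not wrong in spirit, but it re-proves the content of Theorem~\ref{dominant-abscissa} rather than using it, and with the wrong interpretation of the constants it would not arrive at the stated inequalities.
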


The constants $c_{\mathfrak{C},space}$ and $c_{\mathfrak{C},time}$ are defined just before Theorem~\ref{spacetime-bounds}. Note that $c_{\mathfrak{C},time} \leq c_{\mathfrak{C},space}$, so these bounds differ by at most $1$. The inspiration for these names is that the convergence of $\zeta_G$ is often dominated either by the representations in all dimensions over a specific field, or by the representations in a specific dimension over all fields; we think of the former behaviour as `space-like' and the latter as `time-like'. Not all groups fit into one of these classes, and the time-like groups could be broken down further into being dominated by representations in a specific characteristic, versus in a specific power of the characteristic; but at any rate, this categorisation is useful for the groups we have studied. For a surprising example, $F^{nil}_r$ is space-like for $r>2$ and time-like for $r=2$, which is what causes the behaviour of the abscissa in Theorem \ref{thmABC:examples}(iv).

Additionally, we can calculate the absissa of $F^\mathfrak{C}_r$ for two specific NE-formations $\mathfrak{C}$ where the zeta-function is space-like: the NE-formation $\mathfrak{C}_{\mathrm{Alt}(c_0)}$ generated by all cyclic groups of prime order and the alternating groups of degree at most $c_0$; and the NE-formation $\mathfrak{C}_{\Sigma(c_0)}$ generated by all the simple groups in $\mathfrak{C}_{Alt(c_0)}$, all the sporadic groups, all the exceptional groups of Lie type, and all the classical simple groups whose natural representation has dimension at most $c_0$. In these cases,
\[
   a(F^{\mathfrak{C}_{Alt(c_0)}}_r) = \frac{c_0\log_2(c_0!)}{(c_0-\delta(c_0))(c_0-1)}(r-1)+1
\]
where $\delta(c_0)=1$ or $2$ for $c_0$ odd or even, respectively, and
\[
a(F^{\mathfrak{C}_{\Sigma(c_0)}}_r) = (c_0+\frac{(c_0-1)\log(\prod_{i=1}^{c_0}(1-2^{-i}))+\log(c_0!)}{c_0(c_0-1)\log(2)})(r-1)+1.
\]

In general determining the abscissa of convergence for a profinite group seems to be a hard problem.
One might wonder whether all real numbers  $\alpha \geq 1$
are the abscissa of some profinite group. We answer this positively in Section \ref{sec:arbitrary}. 
\begin{thmABC}\label{thmABC:arbitrary}
Let $\alpha$ be a non-negative real number and set $$G_\alpha = \prod_{p \in 
\mathcal{P}} \mathrm{SL}(2,p)^{\lfloor p^\alpha \rfloor}.$$
Then $\zeta_{G_\alpha}(s)$ has abscissa of convergence 
$\alpha/2+1$.
\end{thmABC}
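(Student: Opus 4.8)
The plan is to compute the abscissa of $\zeta_{G_\alpha}$ directly from the Euler product, using the fact that the zeta function of a product group over a fixed finite field factors multiplicatively. Write $G_\alpha = \prod_p H_p$ with $H_p = \SL(2,p)^{\lfloor p^\alpha\rfloor}$. Since each $H_p$ is a finite group and $G_\alpha$ is their product, the Euler factor at a prime $q$ collects contributions from \emph{all} the factors $H_p$, but the dominant asymptotics will come from the ``diagonal'' contribution of $H_q = \SL(2,q)^{\lfloor q^\alpha\rfloor}$ over fields of characteristic $q$, together with the contributions of $H_q$ over fields of other characteristics. First I would recall (or cite the finite-group computation promised in the abstract, and the product formula from Section~\ref{sec:abscissa}) that for a finite group $H$ and a prime $q$, the local zeta function $\zeta_{H,q}(s)$ is governed by the absolutely irreducible representations of $H$ in characteristic $q$ of small dimension; for $\SL(2,q)$ these are explicitly known from its modular representation theory (the irreducible $\F_q\SL(2,q)$-modules are the symmetric powers $\mathrm{Sym}^i$ for $0\le i\le q-1$, of dimensions $1,2,\dots,q$, all realised over $\F_q$). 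For a direct power $\SL(2,q)^m$, the absolutely irreducible modules over $\overline{\F_q}$ are external tensor products of these, and one counts those of each dimension $n$ together with the size of the Galois orbit / field of definition.

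The second step is to estimate $r^*(G_\alpha,\F_{q^j},n)$ and plug into \eqref{eq:defn_zeta}. The key point is that in a product of $m=\lfloor q^\alpha\rfloor$ copies of $\SL(2,q)$, the smallest non-trivial absolutely irreducible representations are the ``one coordinate nontrivial'' ones: tensoring the $2$-dimensional natural module in one of the $m$ slots with the trivial module elsewhere gives roughly $m$ representations of dimension $2$ over $\F_q$. More generally the number of absolutely irreducible representations of bounded dimension grows polynomially in $m$, hence like a power of $q^\alpha$. Feeding $r^*(G_\alpha,\F_q,2)\sim q^\alpha$ into the term $\frac{r^*}{1}q^{-2s}|\mathbb P^{1}(\F_q)| \sim q^{\alpha}\cdot q^{-2s}\cdot q$ shows the sum over $q$ behaves like $\sum_q q^{\alpha+1-2s}$, which converges precisely when $\alpha+1-2s<-1$, i.e. $s>\alpha/2+1$. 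I would then check that all other contributions — higher-dimensional representations, representations over extension fields $\F_{q^j}$ with $j\ge2$, and representations of $H_p$ for $p\ne q$ inside the $q$-local factor — are subdominant: higher dimension $n$ costs an extra $q^{-ns}$ which more than compensates the polynomial-in-$m$ growth of the count, extension fields contribute a geometric tail, and the cross terms $p\ne q$ contribute only representations in the non-defining characteristic, whose dimensions are bounded below (by Landazuri–Seitz-type bounds, or simply because $\SL(2,p)$ has a faithful $\F_q$-representation of dimension growing with $p$) so that their contribution to $\zeta_{G_\alpha,q}$ is absolutely convergent and does not move the abscissa.

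The third step is the matching lower bound for the abscissa: I must show $\zeta_{G_\alpha}(s)$ \emph{diverges} for $s\le\alpha/2+1$. This follows by keeping only the dimension-$2$, characteristic-$q$, degree-$1$ terms: $\log\zeta_{G_\alpha}(s)\ge \sum_q r^*(G_\alpha,\F_q,2)\,q^{-2s}\,|\mathbb P^1(\F_q)| \gg \sum_q q^{\alpha}q^{-2s}q = \sum_q q^{\alpha+1-2s}$, and this diverges for $s\le\alpha/2+1$ since there are infinitely many primes and $\sum_q q^{-1}$ diverges. One needs the lower bound $r^*(G_\alpha,\F_q,2)\ge c\cdot q^\alpha$, which is immediate because each of the $m=\lfloor q^\alpha\rfloor$ coordinate embeddings of the natural $2$-dimensional module gives a distinct absolutely irreducible $\F_q$-representation of $\SL(2,q)^m$ of dimension $2$ (these are pairwise non-isomorphic and absolutely irreducible, the field of definition being $\F_q$ since the natural module of $\SL(2,q)$ is).

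The main obstacle I anticipate is \emph{not} the dimension-$2$ leading term but the bookkeeping needed to rule out an unexpectedly large contribution from moderate dimensions: a representation of $\SL(2,q)^m$ supported on $t$ coordinates with factors of dimension $d_1,\dots,d_t\ge 2$ has dimension $n=\prod d_i\ge 2^t$ and there are on the order of $m^t\le q^{\alpha t}$ such choices of support, so its contribution is roughly $q^{\alpha t}q^{-ns}q^{n-1}$; one must verify that for $s$ slightly above $\alpha/2+1$ the exponent $\alpha t + n - 1 - ns$ is bounded above uniformly and tends to $-\infty$, which uses $n\ge 2^t$ to beat the polynomial-in-$m$ growth $m^t$. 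Keeping these estimates uniform across all primes $q$ (so that the outer sum over $q$ still converges) is the part that requires care; once the uniform bound $\alpha t + (n-1)(1-s) \le \alpha + (1-s) + (\text{negative})$ is established for $n\ge 2$, summing over $q$ and over $(t,n)$ finishes the convergence half, and the divergence half is the elementary lower bound above.

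Finally, I would assemble these into the statement using Theorem~\ref{thmABC:abscissa} only as a sanity check (e.g. $a(G_\alpha)\ge 1$ automatically, consistent with $\alpha/2+1\ge 1$), the real content being the two-sided estimate on $\log\zeta_{G_\alpha}(s)$ just described.
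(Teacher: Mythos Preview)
Your proposal is correct and follows the same approach as the paper: the lower bound via dimension-$2$ representations over prime fields is exactly what the paper does, and the upper bound via separating the defining-characteristic factor $\SL(2,q)^{\lfloor q^\alpha\rfloor}$ from the remaining factors (using Landazuri--Seitz to bound dimensions from below in non-defining characteristic) is also the paper's strategy. One point that deserves slightly more precision than you give it: for the cross terms $H_p$ with $p\ne q$, the issue is not that each individual $H_p$ contributes something convergent, but that a representation of $G_\alpha$ of dimension $n$ can be nontrivial on \emph{many} such factors simultaneously; the paper handles this by observing that only primes $p\le 2n+1$ can act nontrivially (via the Landazuri--Seitz bound $(p-1)/2\le n$), so the relevant product $H$ has at most $O(n)$ factors each with $O(n)$ nontrivial representations, whence $r^\ast(H,\F_{q^j},n)\le 2^{O_\alpha(\log_2(n)^2)}$. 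Combined with your bound $q^{\alpha t}$ with $t\le\pi(n)$ for the defining-characteristic part, this gives $r^\ast(G_\alpha,\F_{q^j},n)\le q^{\alpha\pi(n)}2^{O(\log_2(n)^2)}$, and the paper then substitutes $k=nj$ and uses that $\pi(k)/k$ is maximised at $k=2$ to obtain convergence for $s>\alpha/2+1$, which is precisely the uniform-in-$q$ estimate you flagged as the main obstacle.
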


Finally, we concentrate our attention on analytic properties of the function 
$\zeta_G(s)$ in special cases. In the case where $G$ is a finite group, we can give in Section \ref{sec:finite} an explicit formula for the zeta function, depending on the rational representations of $G$, up to rational functions. First some notation: for a meromorphic function $f \colon \C \to \C$ and a natural number $n$ we define
\[
	f^{\# n}(s) := \prod_{j = 0}^{n-1} f(ns-j).
\]
We write $f \sim g$, if there exists $K\in \mathbb{N}$ such that $f/g$ is of 
the form
\begin{equation}\label{eq:rational}
	\prod_{k=1}^K (1- M_k^{-a_ks+b_k})^{\varepsilon_k}	
\end{equation}
with $M_k \in \mathbb{N}_{\geq 2}$, $a_k > b_k \in \mathbb{N}_0$ and 
$\varepsilon_k \in \{\pm 1\}$, for $k=1,\ldots,K$.

Let $G$ be a finite group and let $V$ be an irreducible rational representation 
with character $\chi$.
We write $K_\chi$ for the center of the endomorphism algebra of $\End_G(V)$ and 
$m(\chi)$ denotes the Schur index of $\chi$. We note that $K_\chi$ is an 
algebraic number field and
$\dim_{K_\chi} \End_G(V) = m(\chi)^2$.

\begin{thmABC}\label{thmABC:finite_groups}
 	Let $G$ be a finite group. Then
	\[
		\zeta_G(s) \sim \prod_{\chi \in \Irr(G,\Q)} \zeta_{K_\chi}^{\# 
n_\chi}(s).
 	\]
	where $\zeta_{K_\chi}$ denotes the Dedekind zeta function of $K_{\chi}$ and 
$n_\chi = \frac{\chi(1)}{[K_\chi:\Q] m(\chi)}$.
	The zeta function $\zeta_G(s)$ admits a meromorphic continuation to $\C$,  
it has a pole of order $|\Irr(G,\Q)|$ at $s = 1$ and all other poles are 
located 
at rational numbers in the interval $[0,1-\sqrt{|G|^{-1}}]$.
\end{thmABC}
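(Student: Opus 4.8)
The plan is to turn the defining series into an Euler product indexed by Frobenius orbits of absolutely irreducible representations in every characteristic, match those orbits at the primes not dividing $|G|$ with the decomposition of primes in the character fields $K_\chi$, and treat the finitely many bad primes by a direct comparison of explicit rational functions in $p^{-s}$. For a prime $p$ let $a_p(n,d)$ denote the number of length-$d$ orbits of the Frobenius $x\mapsto x^p$ on the finite set of isomorphism classes of absolutely irreducible $\overline{\F}_pG$-modules of dimension $n$. Since finite fields have trivial Brauer group, every absolutely irreducible module over a finite field is realised over its field of character values, so $r^\ast(G,\F_{p^j},n)=\sum_{d\mid j}d\,a_p(n,d)$. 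Plugging this into \eqref{eq:defn_zeta}, writing $|\mathbb P^{n-1}(\F_{p^j})|=\sum_{i=0}^{n-1}p^{ij}$, interchanging the summations and using $\sum_{j\ge 1}\tfrac1j\bigl(\sum_{d\mid j}d\,a\bigr)x^{j}=-a\log(1-x^{d})$ for each fixed $(p,n,i)$, one obtains
\[
 \zeta_G(s)=\prod_{p}\ \prod_{O}\ \prod_{i=0}^{n(O)-1}\bigl(1-p^{-d(O)(n(O)s-i)}\bigr)^{-1},
\]
where $O$ runs over the Frobenius orbits of absolutely irreducible $\overline{\F}_pG$-modules, $n(O)$ is their common dimension and $d(O)$ the orbit length, i.e.\ the degree of the minimal field of definition; by Corollary~\ref{cor:conv} this converges for $\Re(s)$ large.

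Next, fix $p\nmid|G|$. Reduction modulo $p$ is a dimension-preserving bijection between absolutely irreducible representations over $\overline{\Q}_p$ and over $\overline{\F}_p$, and it is equivariant for the Frobenius actions (all the relevant modules are defined over unramified extensions, their Brauer characters being sums of $m$-th roots of unity with $p\nmid m$). Combined with ordinary character theory this shows that the $\mathrm{Gal}(\overline{\Q}/\Q)$-orbit of complex absolutely irreducibles attached to $\chi\in\Irr(G,\Q)$ consists of $[K_\chi:\Q]$ representations of dimension $n_\chi$, and reduces to $g_p(\chi)$ Frobenius orbits each of length $f_p(\chi)$, the common residue degree of a prime of the abelian field $K_\chi$ above the unramified prime $p$ (so $f_p(\chi)g_p(\chi)=[K_\chi:\Q]$). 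Reading this off in the product above gives the exact identity $\zeta_{G,p}(s)=\prod_{\chi\in\Irr(G,\Q)}\zeta_{K_\chi,p}^{\#n_\chi}(s)$, where $\zeta_{K_\chi,p}$ is the $p$-part of the Dedekind zeta function; note that the Schur index $m(\chi)$ has disappeared, as it must, because over finite fields every absolutely irreducible module is realised over its field of character values.

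Only finitely many primes divide $|G|$; for each of them $\zeta_{G,p}(s)$ is, by the product expansion, a finite product of factors $(1-p^{-(as-b)})^{-1}$ with $a>b\ge 0$ integers (because $0\le i\le n(O)-1$), and the same holds for $\prod_\chi\zeta_{K_\chi,p}^{\#n_\chi}(s)$; hence each ratio $\zeta_{G,p}/\prod_\chi\zeta_{K_\chi,p}^{\#n_\chi}$ is a finite product of terms $(1-p^{-(as-b)})^{\pm1}$. Multiplying over $p\mid|G|$ and combining with the good-prime identity yields $\zeta_G\sim\prod_\chi\zeta_{K_\chi}^{\#n_\chi}$. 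For the analytic statements, each $\zeta_{K_\chi}$ extends meromorphically to $\C$ with a single simple pole at $1$, so $\zeta_{K_\chi}^{\#n_\chi}(s)=\prod_{j=0}^{n_\chi-1}\zeta_{K_\chi}(n_\chi s-j)$ is meromorphic with a simple pole at $s=1$ (the $j=n_\chi-1$ factor) and further poles only at the rationals $(j+1)/n_\chi\in[0,1-1/n_\chi]$; from $\sum_\chi[K_\chi:\Q]n_\chi^2=|G|$ one has $n_\chi\le\sqrt{|G|}$, placing these in $[0,1-\sqrt{|G|^{-1}}]$. The finite correction is meromorphic on $\C$, giving the continuation of $\zeta_G$; it has no pole at $s=1$ since $1-p^{-(a-b)}\ne0$, so the pole order there is exactly $|\Irr(G,\Q)|$, and its poles come only from the denominators of the $\zeta_{G,p}$ with $p\mid|G|$, whose real parts equal $i/n(O)$ with $0\le i<n(O)<\sqrt{|G|}$ — the strict bound holding because $\overline{\F}_pG\cong\bigoplus_\phi P_\phi^{\dim\phi}$ with $\dim P_\phi\ge\dim\phi$ forces $\sum_\phi(\dim\phi)^2<|G|$ whenever $p\mid|G|$.

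I expect the delicate point to be precisely this last one, the behaviour at the bad primes. Controlling the poles coming from the finite Euler factors $\zeta_{G,p}(s)$ with $p\mid|G|$ — showing they genuinely sit at rational numbers in $[0,1-\sqrt{|G|^{-1}}]$ and are not accompanied by spurious poles along vertical lines — forces one to understand how the $p$-modular irreducibles and their fields of definition are organised relative to the rational characters $\chi$; this is where the fine structure of the decomposition map (its effect on dimensions together with the residue and ramification data of $K_\chi$ at $p$) has to be used, so that the comparison of $\zeta_{G,p}$ with $\prod_\chi\zeta_{K_\chi,p}^{\#n_\chi}$ introduces no new denominators beyond what the Dedekind side and the $\sqrt{|G|}$-bound already allow. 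The remainder of the argument is formal manipulation of the product expansion obtained in the first step.
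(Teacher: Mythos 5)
Your proof is correct and arrives at the same conclusions as the paper's, but by a related yet genuinely different route. The paper works from the algebra side: it decomposes $\Q[G]\cong\prod_i A_i$ into simple components, passes for almost all $p$ to the maximal $\Z_p$-order $\Z_p[G]\cong\prod_i\prod_{\mathfrak p\mid p}M_{n_i}(\mathcal O_{i,\mathfrak p})$, and then invokes the matrix-algebra formula of Example~\ref{ex:zeta-matrix-algebra}. You instead work from the module side: using that finite fields have trivial Brauer group, you write $r^\ast(G,\F_{p^j},n)=\sum_{d\mid j}d\,a_p(n,d)$ and extract a global Euler product indexed by Frobenius orbits of absolutely irreducible $\overline{\F}_pG$-modules; for $p\nmid|G|$ you then invoke the dimension-preserving, Frobenius-equivariant bijection between absolutely irreducible $\overline{\Q}_pG$-modules and $\overline{\F}_pG$-modules (i.e.\ the decomposition map away from $|G|$), so that the Frobenius orbits inside the Galois orbit of $\chi$ are governed by the splitting type $(f_p,g_p)$ of $p$ in $K_\chi$. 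Both approaches encode the same fact -- that at good primes $\F_p[G]$ is a product of matrix rings over residue fields of the $\mathcal O_{K_\chi}$ -- but the paper's passage through maximal orders is more structural, while your Frobenius-orbit Euler product is a useful explicit intermediate that the paper never writes down.

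Two small remarks. First, your good-prime set $\{p:p\nmid|G|\}$ is a priori smaller than the paper's ``unramified'' set (which requires only that $\Z_p[G]$ be a maximal order, $A_i$ split at $\mathfrak p\mid p$, and $p$ be unramified in $K_i$), but both are cofinite so this has no effect on $\sim$. Second, your identity $\sum_\chi[K_\chi:\Q]n_\chi^2=|G|$ is correct once one notes the paper's convention $n_\chi^2=\dim_{K_\chi}A_\chi$, i.e.\ $n_\chi$ is the split size $n'm(\chi)$ rather than the matrix degree $n'$ of $A_\chi\cong M_{n'}(D)$; this is worth making explicit, as with the naive reading the $m(\chi)^2$ factor would seem to be missing (though even then $n_\chi\le\sqrt{|G|}$ survives, which is all that is used). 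The final point about the bad-prime poles having real parts $i/n(O)$ with $n(O)\le\sqrt{|G|}$ (strict when $p\mid|G|$) is handled correctly via $\dim\bigl(\overline{\F}_pG/J(\overline{\F}_pG)\bigr)=\sum_\phi(\dim\phi)^2\le|G|$, matching the paper's bound $n_i\le\sqrt{|G|}$.
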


Since our zeta function bears a resemblance to the Hasse-Weil zeta function 
of an algebraic variety, it is a very natural question to ask if (or when) the local factors $\zeta_{G,p}$  satisfy parts of Weil's conjectures. In Section \ref{sec:rationality} we show the following.

\begin{thmABC}\label{thmABC:rational}
 Let $p$ be a prime and let $G$ be a finitely generated virtually abelian 
group. Then $\zeta_{G,p}(s)$ is a rational function in $p^{-s}$.
\end{thmABC}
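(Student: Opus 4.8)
The plan is to recognise $\zeta_{G,p}$ as a finite product of specialisations of Hasse--Weil zeta functions of the ``character schemes'' of $G$ in characteristic $p$, and then to apply Dwork's rationality theorem. The first step is to reinterpret the counting function in purely geometric terms over $\overline{\F_p}$. A finite field has trivial Brauer group (Wedderburn), so every absolutely irreducible $\overline{\F_p}$-representation $V$ of $G$ of dimension $n$ is realisable over a field $\F_{p^j}$ precisely when $V\cong V^{(p^j)}$, where $V^{(p)}$ denotes the Frobenius twist; and the corresponding $\F_{p^j}$-model is then unique up to equivalence because $H^1(\mathrm{Gal}(\overline{\F_p}/\F_{p^j}),\overline{\F_p}^\times)=0$ (Hilbert~90). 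Hence
\[
	r^\ast(G,\F_{p^j},n)=\#\{\,[V]\ :\ V\text{ an $n$-dimensional absolutely irreducible }\overline{\F_p}\text{-representation of }G,\ V\cong V^{(p^j)}\,\}.
\]
This reduction uses nothing beyond the finite presentation of $G$.

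The second step is to realise these isomorphism classes as the $\overline{\F_p}$-points of an $\F_p$-scheme in a way compatible with the Frobenius. As a finitely generated virtually abelian group, $G$ is finitely presented, say on generators $g_1,\dots,g_k$ with relators $w_1,\dots,w_r$. The functor of $n$-dimensional representations is represented by the closed subscheme $R_n\subseteq\GL_n^k$ over $\F_p$ cut out by $w_1=\dots=w_r=1$, and absolute irreducibility is an open condition, giving an open subscheme $R_n^{\mathrm{irr}}$ of finite type over $\F_p$ with $R_n^{\mathrm{irr}}(\overline{\F_p})$ the set of $n$-dimensional absolutely irreducible $\overline{\F_p}$-representations. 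The conjugation action of $\mathrm{PGL}_n$ on $R_n^{\mathrm{irr}}$ has trivial stabilisers (Schur's lemma) and closed orbits (irreducible representations are semisimple), so geometric invariant theory produces a geometric quotient $X_n:=R_n^{\mathrm{irr}}/\mathrm{PGL}_n$, a quasi-projective $\F_p$-scheme whose $\overline{\F_p}$-points are the isomorphism classes. The Frobenius on $R_n^{\mathrm{irr}}(\overline{\F_p})$ (raising matrix entries to the $p$-th power) descends to $X_n$ and there acts as $[\rho]\mapsto[\rho^{(p)}]$, so $X_n(\F_{p^j})=\{[\rho]:\rho\cong\rho^{(p^j)}\}$ and, by the first step, $r^\ast(G,\F_{p^j},n)=|X_n(\F_{p^j})|$.

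The third step is to bound the occurring dimensions. Fix a normal abelian subgroup $A\trianglelefteq G$ of finite index $m$. For an absolutely irreducible $\overline{\F_p}$-representation $V$, Clifford's theorem (valid in every characteristic) shows that $V|_A$ is semisimple with homogeneous components permuted transitively by $G/A$; writing $e$ for their common multiplicity, $t$ for the orbit length, and $\chi$ for one of the occurring characters with stabiliser $G_\chi$, one has $\dim V=et$ and $V\cong\Ind_{G_\chi}^G W$ with $\dim W=e$, where $W$ corresponds to a simple module over a twisted group algebra of $G_\chi/A$, a group of order $m/t$. A simple module over an Artinian algebra has dimension at most that of the algebra, so $e\le m/t$ and $\dim V\le m$; thus $X_n=\varnothing$ for $n>m$. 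Finally, since $|\mathbb{P}^{n-1}(\F_{p^j})|=1+p^j+\dots+p^{(n-1)j}$, substituting $r^\ast(G,\F_{p^j},n)=|X_n(\F_{p^j})|$ into the series defining $\zeta_{G,p}$ and regrouping term by term yields
\[
	\zeta_{G,p}(s)=\prod_{n=1}^{m}\ \prod_{i=0}^{n-1}Z_{X_n}\!\big(p^{\,i}(p^{-s})^{n}\big),\qquad Z_{X_n}(T):=\exp\!\Big(\sum_{j\ge1}\tfrac{|X_n(\F_{p^j})|}{j}\,T^{j}\Big).
\]
By Dwork's rationality theorem each $Z_{X_n}$ is rational in $T$; each argument $p^{\,i}(p^{-s})^{n}$ is a monomial in $p^{-s}$ and the product is finite, so $\zeta_{G,p}(s)$ is a rational function in $p^{-s}$.

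The genuinely non-elementary input is Dwork's theorem, but the bulk of the work is the second step: constructing the geometric quotient $X_n$ and checking that the parametrisation of isomorphism classes is Frobenius-equivariant, so that $|X_n(\F_{p^j})|$ really equals $r^\ast(G,\F_{p^j},n)$; this is the point at which one must leave the purely combinatorial description of $\zeta_{G,p}$. A smaller technical point is the dimension bound in the third step, since Ito's theorem on groups with a normal abelian subgroup is unavailable in characteristic $p$ and must be replaced by the twisted-group-algebra estimate above.
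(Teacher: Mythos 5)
Your proof follows the same strategy as the paper's: both reduce $\zeta_{G,p}$ to local Hasse--Weil zeta functions of a moduli scheme of absolutely irreducible representations in each of the finitely many possible dimensions, then invoke Dwork's rationality theorem. The only differences are presentational: the paper cites Korthauer for the existence and Frobenius-equivariance of the moduli variety $M_n$ and then packages the $\mathbb{P}^{n-1}$-factor by setting $V_n = M_n \times \mathbb{P}^{n-1}$, whereas you construct $X_n$ explicitly via GIT and expand $|\mathbb{P}^{n-1}(\F_{p^j})|$ as a geometric sum to get a double product of specialisations --- and you additionally spell out the Clifford-theoretic dimension bound, which the paper asserts without comment.
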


The proof uses the \emph{moduli varieties} $M_n$ of $n$-dimensional absolutely 
irreducible representations and the classical solution of the Weil conjectures. 
We suspect that the zeta functions of a larger class of UBERG groups have 
rational local factors. To support this, we show in Appendix~\ref{app:A} that 
the local factors of the zeta function of the lamplighter groups $C_2\wr \Z$ 
and $C_3\wr\Z$ are rational.

\vspace*{1em}

\noindent\textit{Notation.}
As it is customary when working with profinite groups and rings, all subgroups will be 
assumed to be closed and all homomorphisms will be assumed to be continuous.

\section{The zeta function \`a la Weil of PFG profinite rings}\label{sec:zeta_def}

Let $R$ be a profinite ring. We say that $R$ is PFG if it is positively finitely generated as left module over itself.
Let $F$ be a finite field, and let $\overline{F}$ denote the algebraic closure. An $F\otimes_\mathbb{Z} R$ module $M$ is \emph{absolutely simple} if it is simple and $\overline{F}\otimes_F M$ is simple.
We define $r^\ast(R,F,n)$ to be the number of isomorphism 
classes of absolutely simple $F \otimes_\mathbb{Z} R$ modules of dimension $n$ 
over $F$.
We observe that if $R = \widehat{\mathbb{Z}}\bra G\ket$ is the completed group ring of a profinite group $G$, then
$r^\ast(R,F,n) = r^\ast(G,F,n)$.

\begin{lem}\label{lem:pfg-ring-characteristion}
A profinite ring $R$ is PFG if and only if there is a constant $c > 0$ such that 
for all finite fields $F$ and all $n$ the following inequality holds
\[
	r^\ast(R,F,n) \leq |F|^{cn}.
\]
\end{lem}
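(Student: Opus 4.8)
The plan is to prove the equivalence by unwinding the definition of positive finite generation for profinite rings into a counting statement about the simple top quotients, and then comparing this with the count of absolutely simple modules over finite fields. Recall that a profinite ring $R$ is PFG as a left module over itself if and only if there is an integer $d$ and a positive constant $\epsilon$ such that a random $d$-tuple of elements of $R$ generates $R$ with probability at least $\epsilon$; by a standard Borel--Cantelli / Haar-measure argument this is controlled by the sums $\sum_{M} |R/\mathfrak{m}(M)|^{-d}$ over maximal submodules $\mathfrak{m}(M)$ with simple quotient $M$, grouped by isomorphism type of $M$. Concretely, a $d$-tuple fails to generate $R$ precisely when it lies inside some maximal submodule, and the measure of the union of these is estimated (via inclusion–exclusion or a crude union bound together with a matching lower bound) by $\sum_{M \text{ simple}} P(M)$, where $P(M)$ is the probability a random $d$-tuple maps into a fixed maximal submodule with quotient $M$, namely $(\text{number of maximal submodules with quotient }M) \cdot |\End_R(M)\text{-relevant size}|^{-d}$-type terms. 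So the first step is: $R$ is PFG iff there exists $d$ with $\sum_{M \in \Irr(R)} j_M(d) < \infty$ uniformly, where $j_M$ is the ``module-theoretic zeta coefficient'' of $M$.

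Next I would translate simple modules of $R$ into absolutely simple modules over finite fields. Every simple continuous $R$-module $M$ is finite, hence a module over $R/\mathrm{Ann}(M)$, and it is a vector space over a finite field $F_M$; after base change to $\overline{F}$ it decomposes into Galois-conjugate absolutely simple pieces. Thus the simple $R$-modules of a given ``size'' are parametrised, up to the action of the absolute Galois group of the prime field, by absolutely simple $F\otimes_\Z R$-modules over finite fields $F$, and one counts: the number of simple $R$-modules that are $F_p$-vector spaces of dimension $m$ is comparable to $\sum_{j \mid m} r^*(R, \F_{p^j}, m/j)/j$ or so — exactly the combinatorial bookkeeping that appears inside the exponential defining $\zeta_G$. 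The key estimate here is that $\sum_M j_M(d) < \infty$ for some $d$ is equivalent to a bound $r^*(R,F,n) \le |F|^{cn}$ for a fixed $c$ and all $F,n$: the implication from the growth bound to convergence is a geometric-series estimate (the number of $n$-dimensional $F$-representations is at most $|F|^{cn}$ and each contributes roughly $|F|^{-(d-c)n}\cdot|\mathbb{P}^{n-1}(F)|$-type weight, which is summable once $d$ is large relative to $c$); conversely, if $r^*(R,F,n)$ were not $O(|F|^{cn})$ for any $c$, one can for every $d$ produce a field $F$ and dimension $n$ with too many representations, forcing the $d$-tuple generation probability to $0$, so $R$ is not PFG.

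The main obstacle I expect is the bookkeeping on the module side: making precise the relation between ``maximal left submodules of $R$ with a given simple quotient $M$'' and the representation-theoretic data of $M$ (its field of definition, the endomorphism division ring $\End_R(M)$, and how it sits inside $R$ via Jacobson density), and then controlling the probability that a random $d$-tuple lies in a fixed maximal submodule — this is where one needs $|\mathrm{Hom}_R(R^d, R/\mathfrak{m})| $ and the surjectivity constraint, and the Galois descent from absolutely simple to simple modules must be tracked carefully so that the exponent $c$ and the number $d$ of generators interact correctly. A secondary point is justifying the interchange of the Haar-measure computation with the infinite product over primes $p$ (i.e.\ that PFG as a profinite ring can be checked ``one residue characteristic at a time'' in a uniform way); this is handled by noting that $R \cong \prod_p R_p$ after pro-$p$ decomposition is not available in general, but the measure of non-generating tuples still splits as a sum over simple modules indexed by their characteristic, so the uniform constant $c$ is exactly what makes the sum over all $p$ converge. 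Once these are in place, the equivalence of the two conditions is essentially the statement that $\zeta_R(d) = \zeta_G(d) < \infty$ for some integer $d$ iff the UBERG-type bound holds, which is the content of the lemma.
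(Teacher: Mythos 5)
Your proposal reconstructs exactly the probabilistic argument that the paper relies on, but the paper itself does not spell any of it out: its proof is a one-line citation that the proofs of \cite[Prop.~6.1]{KV} and \cite[Lem.~6.8]{KV} (which establish the corresponding characterisation for completed group rings) carry over verbatim to arbitrary PFG profinite rings. So the approach is essentially the same: your union-bound/Haar-measure estimate over maximal left submodules, the parametrisation of simple quotients by Galois orbits of absolutely simple modules over finite fields, and the geometric-series comparison linking the exponential bound $r^*(R,F,n)\le |F|^{cn}$ to convergence of the generation probabilities for large $d$ are precisely the ingredients of the cited [KV] proofs, and your sketch identifies the right places (the Jacobson-density/endomorphism-ring bookkeeping, the uniform constant $c$ controlling the sum over all residue characteristics) where care is needed.
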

\begin{proof}
The proofs of \cite[Prop.~6.1]{KV} and \cite[Lem.~6.8]{KV} go through without changes in our current situation.
\end{proof}

Suppose $R$ is a PFG profinite ring. Let $c_0(R)$ be the infimum of the numbers 
$c$ such that, for all but finitely many tuples $(p,j,n)$, $r^\ast(R,\F_{p^j},n) 
\leq p^{cnj}$. We may define $c_0(R) = \infty$ if $R$ is not PFG.

\begin{lem}
	\label{generalbounds}
	The series 
	\[
	\sum_{p \in \mathcal{P}}\sum_{n=1}^\infty\sum_{j=1}^\infty \frac{r^\ast(R,\F_{p^j},n)}{j} p^{-snj}|\mathbb{P}^{n-1}(\F_{p^j})|
	\]
	converges absolutely for all complex numbers $s = \sigma + i \tau$ with $\sigma > c_0(R)+1$, and diverges for $s = \sigma + i \tau$ with $\sigma < c_0(R)$.
\end{lem}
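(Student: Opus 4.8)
The plan is to estimate the general term of the triple series
\[
\sum_{p}\sum_{n\ge 1}\sum_{j\ge 1} \frac{r^\ast(R,\F_{p^j},n)}{j}\, p^{-snj}\,|\mathbb{P}^{n-1}(\F_{p^j})|
\]
from above (for convergence when $\sigma>c_0(R)+1$) and from below (for divergence when $\sigma<c_0(R)$), reducing in both cases to a comparison with a suitable Dirichlet-type series or a prime zeta function. First I would record the elementary bound $|\mathbb{P}^{n-1}(\F_{p^j})| = \frac{p^{jn}-1}{p^j-1} \le 2\,p^{j(n-1)}$ (and $\ge p^{j(n-1)}$), so that up to the harmless factor $2$ the weight behaves like $p^{j(n-1)}$; this is the key simplification that turns the $\mathbb{P}^{n-1}$ factor into a shift of the exponent by $1$, which is exactly where the ``$+1$'' in $c_0(R)+1$ comes from.

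For the convergence half, fix $\sigma > c_0(R)+1$ and pick $c$ with $c_0(R) < c < \sigma - 1$. By definition of $c_0(R)$, all but finitely many triples $(p,j,n)$ satisfy $r^\ast(R,\F_{p^j},n)\le p^{cnj}$; the finitely many exceptional triples contribute a finite amount and can be ignored. For the remaining triples, the general term is bounded in absolute value by $\frac{1}{j}\,p^{cnj}\cdot p^{-\sigma nj}\cdot 2 p^{j(n-1)} = \frac{2}{j}\, p^{-j(\sigma - c - 1)n}\, p^{-j}$. Writing $\epsilon := \sigma - c - 1 > 0$, I would then sum the geometric series in $n\ge 1$ to get a bound of order $p^{-j(\epsilon+1)}/(1 - p^{-j\epsilon})$, sum over $j\ge 1$, and finally sum over all primes $p$; all of these converge since $\epsilon>0$ (the sum over $p$ converges because the $j=1$ term already decays like $p^{-(1+\epsilon)}$, and the tail in $j$ is uniformly controlled). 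Absolute convergence then also gives convergence for every $s$ with real part in this range, and a standard argument (Weierstrass $M$-test) shows the sum defines a holomorphic function there, though that is not asked for here.

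For the divergence half, fix $\sigma < c_0(R)$ and choose $c$ with $\sigma < c < c_0(R)$. By definition of $c_0(R)$ as an infimum, it is \emph{not} true that $r^\ast(R,\F_{p^j},n)\le p^{cnj}$ for all but finitely many $(p,j,n)$; hence there are infinitely many triples $(p,j,n)$ with $r^\ast(R,\F_{p^j},n) > p^{cnj}$. Along this infinite set of triples the corresponding terms of the series (all of which are non-negative) are at least $\frac{1}{j}\,p^{cnj}\,p^{-\sigma nj}\,p^{j(n-1)} \ge \frac{1}{j}\, p^{j(c-\sigma)n}\,p^{-j} \ge \frac{1}{j}\,p^{(c-\sigma)}\,p^{-j}$ using $n\ge 1$; more to the point, since $c - \sigma > 0$, for any triple in our infinite set the term exceeds $\frac{1}{j} p^{-j} \ge \tfrac$ — here I would instead argue more carefully: pass to a subsequence of triples and note that the exponent $j(c-\sigma)n - jn\sigma$ is bounded below away from $-\infty$ only if $j$ stays bounded, so split into the case where infinitely many such triples have bounded $j$ (then the terms do not tend to $0$, as $p^{(c-\sigma)n}p^{-\sigma nj}p^{j(n-1)}\to\infty$) and the case $j\to\infty$ (then already $r^\ast\le p^{cnj}$ fails for each, but one checks the term stays bounded below by a positive constant along the subsequence). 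In either case the general term fails to tend to $0$, so the series diverges.

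The main obstacle is the divergence direction: one must extract genuine quantitative information from the mere fact that the inequality $r^\ast\le p^{cnj}$ fails infinitely often, and then confirm that the failure is strong enough to prevent the terms from going to zero even after the decaying factors $p^{-\sigma nj}$ and $\frac 1 j$ are taken into account. The cleanest route is: since the inequality fails, $\limsup \frac{\log r^\ast(R,\F_{p^j},n)}{nj\log p} \ge c_0(R)$ over the relevant index set, so one can find triples with $\log r^\ast(R,\F_{p^j},n) \ge c' nj\log p$ for any $c'<c_0(R)$; choosing $c'$ with $\sigma<c'<c_0(R)$, the $(p,j,n)$-term is then $\ge \frac{1}{j}p^{(c'-\sigma)nj}p^{j(n-1)} \ge \frac{1}{j}p^{(c'-\sigma)nj - j}$, and since $(c'-\sigma)nj - j \ge (c'-\sigma) - 1 + (c'-\sigma - 1)(nj-1)$... — rather than belabour this, note it suffices that along infinitely many triples the exponent of $p$ is bounded below, which forces $\frac 1 j \to 0$ to be the only possible source of decay; but one can always choose the witnessing triples to avoid large $j$ (if $r^\ast$ grows too fast for large $j$, UBERG fails outright), so $\frac 1 j$ is bounded below too, and the terms do not vanish.
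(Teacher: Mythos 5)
Your convergence argument (for $\sigma>c_0(R)+1$) is essentially the paper's: both pick a threshold $c$ (resp.\ $c_0(R)+\varepsilon$) strictly between $c_0(R)$ and $\sigma-1$, use $|\mathbb{P}^{n-1}(\F_{p^j})|\asymp p^{j(n-1)}$, sum the geometric series in $n$, and then observe the resulting Dirichlet-like series over $p,j$ (or $q=p^j$ collapsed, as the paper does) converges. The finitely many exceptional triples are harmless in both accounts. This part is fine.

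The divergence argument has a genuine gap. You correctly set up the witness set: with $c$ chosen so $\sigma<c<c_0(R)$, there are infinitely many $(p,j,n)$ with $r^\ast(R,\F_{p^j},n)>p^{cnj}$, and along this set the term of the series is at least $\frac{1}{j}\,p^{(c-\sigma)nj}\cdot|\mathbb{P}^{n-1}(\F_{p^j})| \geq \frac{1}{j}\,p^{(c-\sigma)j}$, using only $n\geq 1$ and $|\mathbb{P}^{n-1}|\geq 1$. What you then need, and never actually establish, is that $\frac{p^{\varepsilon j}}{j}$ (with $\varepsilon=c-\sigma>0$) is bounded below by a positive constant, \emph{uniformly} in $p\geq 2$ and $j\geq 1$. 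This is elementary --- the function $x\mapsto 2^{\varepsilon x}/x$ is bounded away from $0$ on $[1,\infty)$ because the exponential eventually dominates, and its infimum depends only on $\varepsilon$ --- and it immediately finishes the proof: infinitely many terms each of size at least a positive constant means the series diverges. This is exactly what the paper does (it computes $p^{\varepsilon j}/j \geq \varepsilon\log(2)\,2^{1/\log 2}$).

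Instead, your proposal meanders: you worry that as $j\to\infty$ the factor $\frac1j$ might pull the terms to $0$, split into ``bounded $j$'' and ``$j\to\infty$'' cases without resolving the second, and then assert without justification that ``one can always choose the witnessing triples to avoid large $j$ (if $r^\ast$ grows too fast for large $j$, UBERG fails outright).'' That claim is not supported by the definition of $c_0(R)$ --- nothing in the definition controls which of $p$, $j$, $n$ carries the large growth --- and the appeal to UBERG is a non sequitur here. The point you are missing is that the $\frac1j$ decay is already defeated by the $p^{\varepsilon j}$ factor, so no case split or selection of witnesses is needed; the lower bound is uniform. As written, the divergence half of your proof is incomplete.
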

\begin{proof}
	Since all the coefficients are real and positive, absolute convergence is the same as convergence, and we may assume $s$ is real and rearrange the terms. Let $\varepsilon > 0$. Replacing $p^j$ with $q$, we get the upper bound 
	\[
	\sum_{p \in \mathcal{P}}\sum_{n=1}^\infty \sum_{j=1}^\infty \frac{r^\ast(R,\F_{p^j},n)}{j} p^{-snj}|\mathbb{P}^{n-1}(\F_{p^j})| \le \kappa \sum_{q=1}^\infty\sum_{n=1}^\infty q^{n(c_0(R)+\varepsilon)} q^{-\sigma n}q^{n-1}.
	\]
	for a suitable constant $\kappa > 0$.
	 When $\sigma > c_0(R)+1+2\varepsilon$, we have 
	 \[\sum_{n=1}^\infty q^{n(c_0(R)+\varepsilon)} q^{-\sigma n}q^{n-1} = \frac{q^{c_0(R)+\varepsilon-\sigma}}{1-q^{c_0(R)+\varepsilon+1-\sigma}} \ll q^{-1-\varepsilon},\]
	  so the whole sum converges.
	
	On the other hand, when $\sigma < c_0(R)$, fix $\varepsilon >0$ such that $\sigma + \varepsilon < c_0(R)$. For infinitely many tuples $(p,j,n)$ we have $r^\ast(R,\F_{p^j},n) > p^{(\sigma + \varepsilon) nj}$; call the set of such tuples $S$. So 
	
	$$\sum_{p \in \mathcal{P}}\sum_{n=1}^\infty\sum_{j=1}^\infty \frac{r^\ast(R,\F_{p^j},n)}{j} p^{- \sigma nj}|\mathbb{P}^{n-1}(\F_{p^j})| > \sum_{(p,j,n) \in S} p^{\varepsilon j}/j.$$
	
	Since $p \geq 2$, a calculation shows $p^{\varepsilon j}/j \geq \varepsilon \log(2) 2^{1/\log(2)}$, so the sum diverges. Hence, by standard results on Dirichlet series, the sum diverges for all $s$ with $\sigma < c_0(R)$.
\end{proof}

\begin{cor}\label{cor:conv}
If $R$ is a PFG profinite ring, then 
\[
	\zeta_R(s) := \exp  \left( \sum_{p \in \mathcal{P}}\sum_{n=1}^\infty\sum_{j=1}^\infty \frac{r^\ast(R,\F_{p^j},n)}{j} p^{-snj}|\mathbb{P}^{n-1}(\F_{p^j})|  \right)
\]
defines a holomorphic function in some right half plane.
\end{cor}
We will call $\zeta_R(s)$ the zeta function of $R$. Now we compute some 
examples that will be useful in the proof of Theorem~\ref{thmABC:prob}.
\begin{ex}
\label{trivial}
Let $R= \widehat{\mathbb{Z}}= \widehat{\mathbb{Z}}\bra1\ket$. Let $F$ be any finite field. Then $\widehat{\mathbb{Z}}$ has exactly one one-dimensional absolutely simple module over every finite field.
Therefore (using the logarithmic series)
\[
	\zeta_{\widehat{\mathbb{Z}}}(s) = \exp \left( \sum_{p \in \mathcal{P}}\sum_{j=1}^\infty \frac{1}{j} p^{-sj}\right)
					= \prod_{p\in \mathcal{P}} 
\left(1-\frac{1}{p^s}\right)^{-1} = \zeta(s)
\]
is the Riemann zeta function.
\end{ex}

\begin{ex}
\label{Z}
Let $R= \widehat{\mathbb{Z}}\bra \hat{\mathbb{Z}} \ket$. Let $F$ be any finite field. $\widehat{\mathbb{Z}}\bra \hat{\mathbb{Z}} \ket$ has $q-1$ one-dimensional absolutely simple modules over $\mathbb{F}_q$.
Therefore
\[
	\zeta_{R
	}(s) = \exp \left( \sum_{p \in \mathcal{P}}\sum_{j=1}^\infty \frac{p^j - 1}{j} p^{-sj}\right)
					= \prod_{p\in \mathcal{P}} 
\left(1-\frac{1}{p^{s-1}}\right)^{-1} \left(1-\frac{1}{p^s}\right) = 
\frac{\zeta(s-1)}{\zeta(s)}
\]
\end{ex}
\begin{ex}\label{ex:zeta-matrix-algebra}
Let $p$ be a fixed prime.
Let $R= M_n(\F_{p^k})$ be the matrix algebra over the finite field $\F_{p^k}$.
The ring has $k$ absolutely simple modules of dimension $n$ over $F$ if and only if $F$ contains $\F_{p^k}$, i.e.,
\[
	r^\ast(R,F,m) = \begin{cases}	k & \text{ if } m=n \text{ and } \F_{p^k} \subseteq F; \\
	0 & \text{ otherwise.}
	\end{cases}
\]
The zeta function is
\[
	\zeta_{ R }(s) = \exp \left( \sum_{j=1}^\infty \frac{k}{jk} p^{-sjkn}|\mathbb{P}^{n-1}(\F_{p^{kj}})|\right) = 
	\prod_{i=0}^{n-1} \left(1-\frac{p^{ki}}{p^{skn}}\right)^{-1}.
\]
\end{ex}

Next we show that the zeta function of a profinite ring is the pointwise limit of the 
zeta functions of its finite quotients.

\begin{prop}[Continuity]\label{prop:continuity}
Let $R = \varprojlim_{i \in \mathbb{N}} R_i$ be a PFG profinite ring which is given a an inverse limit of finite rings $R_i$.
If $\zeta_R(s)$ converges absolutely at $s\in \mathbb{C}$, then 
\[
	\zeta_R(s) = \lim_{i\to \infty} \zeta_{R_i}(s).
\]
\end{prop}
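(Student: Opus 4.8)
The plan is to exploit the fact that every absolutely simple module over a finite field is, after extension of scalars, a matrix-algebra-type phenomenon, and to control how the finite levels $R_i$ approximate $R$. The starting observation is that for a fixed finite field $F = \F_{p^j}$, a simple $F\otimes_\Z R$-module $M$ factors through a finite quotient of $F\otimes_\Z R$; since $R = \varprojlim R_i$ we have $F\otimes_\Z R = \varprojlim (F\otimes_\Z R_i)$, so $M$ is a module over $F\otimes_\Z R_i$ for all sufficiently large $i$. Conversely, every absolutely simple $F\otimes_\Z R_i$-module pulls back to an absolutely simple $F\otimes_\Z R$-module via the surjection $R \to R_i$. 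This gives, for each fixed triple $(p,j,n)$, a monotone relation: writing $r^\ast(R_i,\F_{p^j},n)$ for the count at level $i$, these are non-decreasing in $i$ (up to the standard subtlety that the natural maps on module sets are injective for large $i$), and
\[
    r^\ast(R,\F_{p^j},n) = \lim_{i\to\infty} r^\ast(R_i,\F_{p^j},n) = \sup_i r^\ast(R_i,\F_{p^j},n).
\]
First I would prove this termwise statement carefully, using that $R$ is PFG (hence each $R_i$ is PFG with the same constant $c$, by Lemma~\ref{lem:pfg-ring-characteristion} applied to the quotient) so that all the counts are finite.

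Next I would assemble the Dirichlet-type series. Write $D_R(s)$ for the triple sum inside $\exp$ defining $\zeta_R(s)$, and similarly $D_{R_i}(s)$. By the termwise convergence and monotonicity above, each coefficient of $D_{R_i}(s)$ converges monotonically upward to the corresponding coefficient of $D_R(s)$. Since all coefficients are non-negative, and since $R$ and all $R_i$ satisfy the same PFG bound with constant $c$, Lemma~\ref{generalbounds} gives that $D_R(s)$ and every $D_{R_i}(s)$ converge absolutely for $\Re(s) > c_0 + 1 \le c + 1$, with the partial sums of $D_{R_i}$ bounded uniformly in $i$ by the convergent series for $D_R$ at that real point. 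Hence for $s$ in this common half-plane of absolute convergence, the monotone (or dominated) convergence theorem applied to the series — indexed by the countable set of triples $(p,j,n)$ — yields $D_{R_i}(s) \to D_R(s)$ as $i\to\infty$. Applying the continuous function $\exp$ gives $\zeta_{R_i}(s)\to \zeta_R(s)$ on that half-plane. To extend to an arbitrary $s$ where $\zeta_R(s)$ converges absolutely: absolute convergence of $D_R$ at such an $s$ means $\sum_{(p,j,n)} |c_{p,j,n}\, p^{-snj}\,|\mathbb{P}^{n-1}(\F_{p^j})|| < \infty$ where $c_{p,j,n} = r^\ast(R,\F_{p^j},n)/j$; since $0 \le r^\ast(R_i,\F_{p^j},n) \le r^\ast(R,\F_{p^j},n)$ termwise, this same absolutely convergent series dominates every $D_{R_i}(s)$, and dominated convergence again gives $D_{R_i}(s)\to D_R(s)$, hence $\zeta_{R_i}(s)\to\zeta_R(s)$.

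The main obstacle I anticipate is the bookkeeping around the termwise claim $r^\ast(R_i,\F_{p^j},n)\to r^\ast(R,\F_{p^j},n)$ — specifically, verifying that the inflation maps on isomorphism classes of absolutely simple modules are eventually injective (so the counts are genuinely monotone and not merely cofinal), and that no absolutely simple $F\otimes_\Z R$-module is missed, i.e.\ that it really does factor through some $F\otimes_\Z R_i$. This is where one uses that a simple module over a profinite ring is finite-dimensional over $F$ (a consequence of the PFG hypothesis, via the bound on $r^\ast$) and hence annihilated by an open ideal, together with the fact that the open ideals coming from the presentation $R=\varprojlim R_i$ are cofinal. The remaining analytic step — interchanging limit and sum — is then a routine application of monotone/dominated convergence once the uniform PFG bound is in hand, and does not require any delicate estimate beyond what Lemma~\ref{generalbounds} already provides.
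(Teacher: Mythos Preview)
Your proposal is correct and follows essentially the same approach as the paper: establish the termwise monotone convergence $r^\ast(R_i,F,n) \nearrow r^\ast(R,F,n)$ using that every simple module factors through some $R_i$, then apply dominated convergence to the series (dominated by the absolutely convergent series for $R$ itself), and finally apply $\exp$. Two minor remarks: the intermediate step via a common half-plane is unnecessary, since the termwise inequality $r^\ast(R_i,F,n) \le r^\ast(R,F,n)$ already gives domination at any $s$ of absolute convergence; and the fact that a simple continuous module is finite (hence factors through some $R_i$) holds for any profinite ring and does not require the PFG hypothesis.
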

\begin{proof}
Since every absolutely simple module factors over some $R_i$, we have
\[
	r^\ast(R,F,n) = \lim_{i\to\infty} r^\ast(R_i,F,n)
\]
for all finite fields $F$ and the sequence $r^\ast(R_i,F,n)$ is monotonically increasing.
By assumption there is $c > 0$ such that
\[
   r^\ast(R_i,F,n) \leq r^\ast(R,F,n) \leq |F|^{nc}
\]
for all finite fields $F$ and every $n$.
Let $s \in \mathbb{C}$ and assume that the series defining $\zeta_R(s)$ 
converges absolutely at $s$, then
\[
\lim_{i \to \infty } \zeta_{R_i}(s) = \zeta_R(s)
\]
by Lebesgue's theorem of dominated convergence (e.g., 
\cite[5.6~Thm.]{bartle}).
\end{proof}

We end this preliminary section with a formula for the zeta function of the 
direct product of rings and of groups. 

\begin{prop}[Products of rings]\label{prop:products}
Let $R$, $S$ be two PFG profinite rings. Then $R \times S$ is PFG and 
\[
	\zeta_{R\times S}(s) = \zeta_R(s) \cdot \zeta_S(s)
\]
for all $s \in \mathbb{C}$ where  $\zeta_R(s)$ , $\zeta_S(s)$ are defined.
\end{prop}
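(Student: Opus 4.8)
The plan is to reduce the whole statement to a single additivity identity for the representation counts. First I would note that for every finite field $F$ there is a natural ring isomorphism $F \otimes_\Z (R\times S) \cong (F\otimes_\Z R) \times (F\otimes_\Z S)$, compatible with extension of scalars to $\overline{F}$. For a product ring $A\times B$ with central idempotent $e = (1,0)$, any module $M$ splits as $eM \oplus (1-e)M$; if $M$ is simple, one summand vanishes, so every simple $A\times B$-module is the inflation of a simple $A$-module or a simple $B$-module, and the two families are disjoint (they have distinct annihilators). Running the same argument after extending scalars to $\overline{F}$ shows that such a simple module is absolutely simple over $A\times B$ precisely when the corresponding module over $A$, respectively $B$, is absolutely simple, and the dimension over $F$ is unchanged. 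Hence
\[
  r^\ast(R\times S, F, n) = r^\ast(R, F, n) + r^\ast(S, F, n)
\]
for every finite field $F$ and every $n\ge 1$.

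From this the PFG assertion is immediate: if $r^\ast(R,F,n)\le |F|^{c_R n}$ and $r^\ast(S,F,n)\le |F|^{c_S n}$ for all $F,n$, then $r^\ast(R\times S,F,n) \le 2|F|^{\max(c_R,c_S)n} \le |F|^{(\max(c_R,c_S)+1)n}$ because $|F|\ge 2$ and $n\ge 1$, so $R\times S$ is PFG by Lemma~\ref{lem:pfg-ring-characteristion}; equivalently $c_0(R\times S) = \max(c_0(R),c_0(S))$. For the functional equation I substitute the additivity of $r^\ast$ into the exponents. All terms of the relevant series are non-negative, so at any $s$ where the defining series of both $\zeta_R$ and $\zeta_S$ converge absolutely, the series
\[
  \sum_{p\in\mathcal{P}}\sum_{n=1}^\infty\sum_{j=1}^\infty \frac{r^\ast(R\times S,\F_{p^j},n)}{j}\, p^{-snj}\,|\mathbb{P}^{n-1}(\F_{p^j})|
\]
decomposes termwise as the sum of the corresponding series for $R$ and for $S$, hence converges there; applying $\exp$ and using $\exp(x+y)=\exp(x)\exp(y)$ yields $\zeta_{R\times S}(s) = \zeta_R(s)\,\zeta_S(s)$ on the common domain.

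The only step that genuinely needs care is the first one: verifying that absolute simplicity is preserved under the idempotent splitting, i.e. that base change to $\overline{F}$ commutes with the decomposition along $e$. This is elementary but worth writing out explicitly; the remainder is routine manipulation of convergent series with non-negative coefficients.
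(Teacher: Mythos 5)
The paper states this proposition without proof, regarding it as immediate; your write-up correctly supplies the expected argument. The additivity $r^\ast(R\times S,F,n)=r^\ast(R,F,n)+r^\ast(S,F,n)$ via the central idempotent splitting (checked also after base change to $\overline{F}$), the PFG bound, and the termwise decomposition of the exponent followed by $\exp(x+y)=\exp(x)\exp(y)$ are exactly what is needed, and each step is justified correctly.
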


Recall that, given two functions $f,g:\mathbb{N}\to \mathbb{N}$ the \emph{Dirichlet convolution} of $f$ and $g$ is the function
\[
  (f\ast g)(n) = \sum_{d\mid n} f(d) g(n/d).
\]

Recall that if $G$ and $H$ have UBERG, then the direct product $G \times H$ does too; see \cite[Theorem 6.4]{KV}.
\begin{prop}[Products of groups]
Let $G$, $H$ be two profinite groups. Then $r^\ast(G \times H,F,n) = (r^\ast(G,F,-) \ast r^\ast(H,F,-))(n)$, where $\ast$ denotes Dirichlet convolution. If $G$ and $H$ have UBERG, we can write $\zeta_{G \times H}(s)$ as
\[
\exp \left( \sum_{p \in \mathcal{P}}\sum_{n=1}^\infty\sum_{j=1}^\infty \frac{(r^\ast(G,\F_{p^j},-) \ast r^\ast(H,\F_{p^j},-))(n)}{j} p^{-snj}|\mathbb{P}^{n-1}(\F_{p^j})| \right).
\]
\end{prop}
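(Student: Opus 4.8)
The plan is to prove the two assertions in turn: first the combinatorial identity $r^\ast(G \times H, F, n) = (r^\ast(G,F,-) \ast r^\ast(H,F,-))(n)$, and then deduce the formula for $\zeta_{G\times H}(s)$ by substituting this into the definition and invoking Proposition~\ref{prop:products} (or Corollary~\ref{cor:conv}) to guarantee convergence on a common half-plane.

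For the first part, I would work over a fixed finite field $F$ and use the standard classification of absolutely simple modules. An absolutely simple $F\otimes_{\mathbb{Z}} \widehat{\mathbb{Z}}\bra G\times H\ket$-module of dimension $n$ over $F$ corresponds, after extending scalars to $\overline{F}$, to an absolutely irreducible representation of $G\times H$ over $\overline{F}$, which by the usual tensor-product theorem for irreducible representations of a direct product decomposes uniquely as $V_G \otimes_{\overline F} V_H$ with $V_G$, $V_H$ absolutely irreducible. The subtlety is to track the field of definition: an absolutely simple $F$-module is an orbit of Frobenius acting on the $\overline F$-representations, and such an orbit for $G\times H$ is a pair of Frobenius-orbits, one for $G$ and one for $H$, but the dimension over $F$ of the module attached to the pair is the least common multiple of the two individual $F$-dimensions — since $\dim_F(V_G \otimes V_H)$ as an absolutely simple $F$-module equals $\dim_F V_G \cdot \dim_F V_H \cdot$ (a correction for the overlap of the two fields of definition), which works out exactly so that if $\dim_F$ of the $G$-part is $d$ and of the $H$-part is $e$ then the product module has $F$-dimension $\mathrm{lcm}(d,e) \cdot$ (something). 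The cleanest route is: count by the formula $r^\ast(R,F,n) = \frac{1}{j}\sum_{d\mid j}\mu(j/d)\,(\text{number of absolutely irreducible }\overline{\F_{p^d}}\text{-reps of dimension }n)$ — no, better to argue directly that the generating Dirichlet series in a formal variable $x$ satisfies $\sum_n r^\ast(G,F,n) x^n \cdot \sum_m r^\ast(H,F,m) x^m$ has $n$-th coefficient $\sum_{d\mid n} r^\ast(G,F,d)\,r^\ast(H,F,n/d)$, which is precisely the Dirichlet convolution identity claimed; this follows from the bijection between absolutely simple modules for $R\times S$ (over $F$) and pairs of absolutely simple modules for $R$ and $S$, together with the fact that the $F$-dimension of the product is the product of the $F$-dimensions. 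In fact I expect the reference is to a lemma of this flavour already in \cite{KV} or \cite{CCKV}, so I would cite it if available.

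The \emph{main obstacle} is exactly this bookkeeping of fields of definition: the naive statement "absolutely simple $F$-modules for $R\times S$ = (absolutely simple for $R$) $\times$ (absolutely simple for $S$)" is true on the level of $\overline F$-representations but the passage to $F$-forms requires that Frobenius orbits multiply correctly, i.e.\ that $\dim_F$ is multiplicative on these pairs. I would verify this by writing an absolutely simple $F$-module of $F$-dimension $n$ as one with endomorphism algebra $\F_{p^n}$ (since absolute simplicity forces $\End = F$ in the absolutely-simple-over-$F$ normalisation, so the module is $\F_{p^{jn}}$... ) — concretely, an absolutely simple module over $\F_{p^j}$ of $\F_{p^j}$-dimension $n$ is the restriction of scalars of an $\overline F$-representation whose Frobenius orbit has size $n$ (relative to $\F_{p^j}$), and a Frobenius-orbit for $G\times H$ of relative size $n$ is a pair of orbits of relative sizes $d$, $e$ with $\mathrm{lcm}(d,e)$... no: the orbit size of a tensor product $V_G\otimes V_H$ under $\mathrm{Gal}(\overline{\F_{p^j}}/\F_{p^j})$ is $\mathrm{lcm}$ of the orbit sizes, so I must be careful — the correct statement is that the number of absolutely simple $F$-modules is governed by Dirichlet convolution only when one counts with the right normalisation, and I would resolve this by the standard computation showing $r^\ast(G\times H,\F_{p^j},n) = \sum_{d\mid n} r^\ast(G,\F_{p^j},d)\, r^\ast(H,\F_{p^j},n/d)$ holds after all, because the number of orbit-pairs $(\mathcal O_G,\mathcal O_H)$ with $|\mathcal O_G| = d$, $|\mathcal O_H| = e$ contributing to $F$-dimension $n$, weighted appropriately, collapses to the divisor sum. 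Once the convolution identity is in hand, the $\zeta$-formula is immediate: plug $r^\ast(G\times H,\F_{p^j},n) = (r^\ast(G,\F_{p^j},-)\ast r^\ast(H,\F_{p^j},-))(n)$ into \eqref{eq:defn_zeta}, and the displayed expression is literally the definition of $\zeta_{G\times H}(s)$; the fact that this agrees with $\zeta_G(s)\cdot\zeta_H(s)$ is then Proposition~\ref{prop:products} applied to $R = \widehat{\Z}\bra G\ket$, $S = \widehat{\Z}\bra H\ket$, using $\widehat{\Z}\bra G\times H\ket = R\times S$... wait, that is false, $\widehat{\Z}\bra G\times H\ket$ is a completed tensor product, not a product — so here the convolution identity genuinely carries new content and is \emph{not} subsumed by Proposition~\ref{prop:products}, and the UBERG hypothesis on $G$ and $H$ together with \cite[Theorem 6.4]{KV} is what ensures $G\times H$ has UBERG so that $\zeta_{G\times H}$ is defined at all. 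I would finish by remarking that convergence of the displayed series on a suitable half-plane follows from Corollary~\ref{cor:conv} applied to $R = \widehat{\Z}\bra G\times H\ket$.
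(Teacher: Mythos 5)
Your overall plan is the right one: reduce the count to the fact that the absolutely irreducible $F\bra G\times H\ket$-modules are exactly the $F$-tensor products $M\otimes_F N$ of absolutely irreducible $F\bra G\ket$- and $F\bra H\ket$-modules, observe that dimensions then multiply so the count becomes a Dirichlet convolution, and substitute into \eqref{eq:defn_zeta}. You are also correct that Proposition~\ref{prop:products} does not directly apply, since $\widehat{\Z}\bra G\times H\ket$ is a completed tensor product of group rings, not a product, so the convolution identity has genuine content here; and the citation you are looking for is \cite[Theorem 2.7]{Fein}, which the paper invokes for exactly this decomposition in the proof of Proposition~\ref{abscissaproducts}.

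The gap is the long, unresolved detour through "Frobenius orbits" and lcm's. That concern is real when counting merely \emph{irreducible} $F$-modules, where nontrivial endomorphism fields $\F_{p^{jd}}$, $\F_{p^{je}}$ arise, $M\otimes_F N$ splits into pieces governed by $\gcd(d,e)$, and lcm's genuinely enter the dimension bookkeeping. But it evaporates for \emph{absolutely} irreducible modules, which is what $r^\ast$ counts: by Schur's lemma, absolute irreducibility over $F$ is equivalent to $\End_{F[G]}(M)=F$, so there is no Galois orbit to track, $M\otimes_F\overline{F}$ is already simple, $M\otimes_F N$ is again absolutely irreducible, and $\dim_F(M\otimes_F N)=\dim_F M\cdot\dim_F N$ on the nose; Fein's theorem then gives that the map $(M,N)\mapsto M\otimes_F N$ is a bijection onto the absolutely irreducible $F\bra G\times H\ket$-modules. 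You state the multiplicativity of dimension at one point but then explicitly cast doubt on it in the following paragraph and end with a hand-wave that the orbit-pairs "collapse to the divisor sum" — as written that step is not established. The correct resolution is simply that absolute irreducibility is precisely the hypothesis under which the naive multiplicativity holds; this is one of the reasons the paper works with $r^\ast$ rather than the ordinary irreducible count $r$, where the phenomenon you worry about really does bite (compare Appendix~\ref{app:B}).
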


Note that the above formula does not directly give a nice formula for the zeta function of $G\times H$ and, even for finite groups, it might be hard to compute it.

\section{Zeta functions and probability}\label{sec:prob}
In this section we will prove Theorem~\ref{thmABC:prob}.
\begin{proof}[Proof of Theorem~\ref{thmABC:prob}]
Let $R$ be a PFG profinite ring and let $J(R)$ denote the Jacobson radical. A 
tuple of elements generates $R$ if and only if it generates $R/J(R)$.
Hence $P_{R}(R,\ell) = P_{R/J(R)}(R/J(R),\ell)$ for all $\ell \in \mathbb{N}$.
Similarly, the Jacobson radical annihilates all simple modules, hence
\[
r^\ast(R,F,n) = r^\ast(R/J(R),F,n).
\]
So we may replace $R$ with $R/J(R)$, and assume that $R$ is semisimple, i.e., a 
product of matrix algebras over finite fields. Since $R$ is PFG, it has 
countably many simple quotients, so this product is countably based.

Thus we may write
$R = \varprojlim_{i \in \mathbb{N}} R_i$ as an inverse limit of finite 
semisimple rings.
Then 
$P_R(R,\ell) = \lim_{i \in \mathbb{N}} P_{R_i}(R_i,\ell)$
and so by Proposition~\ref{prop:continuity} it is sufficient to prove the theorem for finite 
semisimple rings. Since probabilities and zeta functions behave well under 
finite direct products of rings (see Proposition~\ref{prop:products}), it is sufficient to prove the equality 
for a single matrix algebra.
The probability that $\ell$ random elements generate $M_n(\F_{p^k})$ is the 
probability
that a random $(n\times n\ell)$ matrix has rank $n$, i.e.,
\[
P_{M_n(\F_{p^k})}(M_n(\F_{p^k}),\ell) = p^{-  k \ell 
n^2}\prod_{i=0}^{n-1}(p^{k\ell n}-p^{k i}) = \prod_{i=0}^{n-1}\Bigl(1-\frac{p^{k 
i}}{p^{\ell k n}}\Bigr),
\]
and the assertion follows from Example \ref{ex:zeta-matrix-algebra}.
\end{proof}

\section{Abscissae of convergence}\label{sec:abscissa}

Let $G$ be an UBERG profinite group. Write $a(G)$ for the abscissa of 
convergence of $\zeta_G(s)$, as defined in \eqref{eq:abscissa_defn}. Observing 
that all the coefficients are real and positive, we deduce that the abscissa of 
convergence of $\zeta_G(s)$ equals its abscissa of absolute convergence. 
Therefore, we can rearrange terms in $\zeta_G(s)$ and only study convergence 
for real $s$.

We start by comparing the abscissa of converge of a quotient with that of the group. The following lemma is clear.

\begin{lem}
	\label{quotient}
	For any profinite group $G$ and $N \unlhd G$, we have $a(G/N) \leq a(G)$.
\end{lem}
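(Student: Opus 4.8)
The statement to prove is Lemma~\ref{quotient}: for any profinite group $G$ and closed normal subgroup $N \unlhd G$, we have $a(G/N) \le a(G)$.

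\textbf{Plan of proof.} The key observation is that every (absolutely) irreducible representation of $G/N$ over a finite field $F$ inflates to an absolutely irreducible representation of $G$ over $F$ of the same dimension, and distinct representations of $G/N$ inflate to distinct representations of $G$. Hence for every finite field $F$ and every $n \ge 1$ we have the termwise inequality
\[
   r^\ast(G/N, F, n) \le r^\ast(G, F, n).
\]
(This is exactly the monotonicity already used implicitly in Proposition~\ref{prop:continuity}.) Since all coefficients appearing in the defining series of $\zeta_G$ and $\zeta_{G/N}$ are real and non-negative, this inequality propagates to the Dirichlet-type series inside the exponential: for every real $s$,
\[
   \sum_{p}\sum_{n}\sum_{j} \frac{r^\ast(G/N,\F_{p^j},n)}{j}\, p^{-snj}\, |\mathbb{P}^{n-1}(\F_{p^j})|
   \;\le\;
   \sum_{p}\sum_{n}\sum_{j} \frac{r^\ast(G,\F_{p^j},n)}{j}\, p^{-snj}\, |\mathbb{P}^{n-1}(\F_{p^j})|,
\]
with the convention that this holds in $[0,\infty]$. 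In particular, whenever the right-hand series converges (equivalently, converges absolutely, as noted at the start of Section~\ref{sec:abscissa}) at a real $s = t \ge 1$, so does the left-hand series. It follows that $G/N$ has UBERG whenever $G$ does, and that $\zeta_{G/N}(t)$ converges at every real $t \ge 1$ at which $\zeta_G(t)$ converges. Taking infima over the sets of such $t$ in the definition \eqref{eq:abscissa_defn} yields $a(G/N) \le a(G)$.

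\textbf{Main obstacle.} There is essentially no obstacle here; the only point requiring a word of care is the comparison of absolute convergence for the two series, which is immediate because every coefficient is real and non-negative, so absolute convergence coincides with convergence and termwise domination of series is automatic. This is why the authors call the lemma ``clear''. The one background fact being invoked — that inflation along $G \twoheadrightarrow G/N$ sends absolutely irreducible $F$-representations injectively to absolutely irreducible $F$-representations of the same dimension — is standard: a subrepresentation or an endomorphism after base change to $\overline{F}$ of the inflated module is automatically $N$-trivial, so simplicity and absolute simplicity are preserved, and two inflations agreeing as $G$-modules agree as $G/N$-modules since $N$ acts trivially.
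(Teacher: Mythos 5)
Your proof is correct and is exactly the termwise-domination argument the paper treats as obvious (the paper gives no proof beyond calling the lemma clear). The key point — that inflation along $G \twoheadrightarrow G/N$ gives $r^\ast(G/N,F,n) \le r^\ast(G,F,n)$ and hence termwise domination of the non-negative series defining $\log\zeta_{G/N}$ by that of $\log\zeta_G$ — is precisely what makes the statement immediate.
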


In particular, $a(G) \geq 1$ for any $G$, because $a(\{1\})=1$ by Example 
\ref{trivial}. The next lemma will be used repeatedly for calculations of 
abscissae.

\begin{lem}
	\label{sameabscissa}
	Suppose $f: \mathcal{P} \times \mathbb{N}_+ \times \mathbb{N}_+ \to \mathbb{R}_{\geq 1}$ is such that $f(p,n,j) = p^{o(nj)}$. For any function $g: \mathcal{P} \times \mathbb{N}_+ \times \mathbb{N}_+ \to \mathbb{R}$, the series $$A(s) = \sum_{p \in \mathcal{P}}\sum_{n=1}^\infty\sum_{j=1}^\infty \frac{g(p,n,j)}{j} p^{-snj}|\mathbb{P}^{n-1}(\F_{p^j})|$$ and $$B(s) = \sum_{p \in \mathcal{P}}\sum_{n=1}^\infty\sum_{j=1}^\infty f(p,n,j) \frac{g(p,n,j)}{j} p^{-snj}|\mathbb{P}^{n-1}(\F_{p^j})|$$ have the same abscissae of convergence.
\end{lem}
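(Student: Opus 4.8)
The plan is to show that both series have the same abscissa of convergence by bounding $B(s)$ from above and below in terms of $A$ evaluated at shifted arguments, exploiting that $f(p,n,j) = p^{o(nj)}$ means $f$ grows subexponentially in $nj$.

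First I would handle the easy direction. Since $f(p,n,j) \geq 1$ everywhere and all terms in $A(s)$ are non-negative, we have $B(s) \geq A(s)$ termwise, so whenever $B(s)$ converges so does $A(s)$; hence $a(A) \leq a(B)$. (Here I use the observation, already made in the text, that for series with non-negative coefficients the abscissa of convergence equals the abscissa of absolute convergence, so it suffices to compare at real $s$.)

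For the reverse inequality, fix $\epsilon > 0$. The hypothesis $f(p,n,j) = p^{o(nj)}$ means that for every $\delta > 0$ there is a bound $f(p,n,j) \leq p^{\delta n j}$ for all but finitely many triples $(p,n,j)$, and the finitely many exceptional triples contribute only finitely many terms, which do not affect convergence. Taking $\delta = \epsilon$, we get, up to a finite correction,
\[
  B(s) \le \sum_{p \in \mathcal{P}}\sum_{n=1}^\infty\sum_{j=1}^\infty p^{\epsilon n j}\,\frac{g(p,n,j)}{j}\, p^{-snj}\,|\mathbb{P}^{n-1}(\F_{p^j})| = A(s-\epsilon),
\]
where I should first reduce to the case $g \geq 0$ — in fact, since we only care about convergence and the coefficients of $A$ and $B$ differ by the positive factor $f$, we may as well replace $g$ by $|g|$ throughout, because the abscissa of (absolute) convergence depends only on $|g|$. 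So if $A$ converges at some real $t$, then $B$ converges at $t + \epsilon$, giving $a(B) \leq a(A) + \epsilon$. Letting $\epsilon \to 0$ yields $a(B) \leq a(A)$, and combined with the first step this gives $a(A) = a(B)$.

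The main point requiring care — though it is not really an obstacle — is the precise interpretation of the Landau notation $f(p,n,j) = p^{o(nj)}$: it must be read as saying $\log_p f(p,n,j) = o(nj)$ uniformly as $nj \to \infty$ (ranging over all primes $p$), so that for each $\delta$ only finitely many triples violate $f(p,n,j) \le p^{\delta nj}$. One should also note at the outset that $f$ takes values in $\mathbb{R}_{\geq 1}$, which is what makes the termwise inequality $B \geq A$ valid and what makes $\log_p f \geq 0$ so the $o(\cdot)$ estimate is genuinely a one-sided growth bound. Everything else is the standard fact that shifting the real part of $s$ by an arbitrarily small amount does not change the abscissa, together with the irrelevance of finitely many terms.
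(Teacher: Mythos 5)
Your proof is correct and follows essentially the same route as the paper's (very terse) one: $f\ge 1$ gives $B\ge A$ termwise, and $f(p,n,j)\le p^{\varepsilon nj}$ outside a finite set gives $B(s+\varepsilon)\ll_\varepsilon A(s)$, so the abscissae agree. You spell out two points the paper leaves implicit — the reduction to $g\ge 0$ (justified by the ambient convention that the coefficients are non-negative, so absolute and ordinary abscissae coincide) and the precise reading of $f=p^{o(nj)}$ — and incidentally your inequality is written in the correct direction, whereas the paper's ``$B(s)=O_\varepsilon(A(s+\varepsilon))$'' appears to have the shift on the wrong side (it should be $B(s+\varepsilon)=O_\varepsilon(A(s))$, which is what the paper's justification actually proves).
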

\begin{proof}
	When $A(s)$ converges, $B(s) = O_\varepsilon(A(s+\varepsilon))$ converges too for all $\varepsilon>0$, because $p^{\varepsilon nj} \geq f(p,n,j)$ for large $p,n,j$. The converse is clear.
\end{proof}

Next we deal with the abscissa of open subgroups and we prove the first item in 
Theorem~\ref{thmABC:abscissa}.

\begin{prop}
	\label{opensbgp}
	Suppose $G$ is a profinite group and $H \leq_{\mathrm{o}} G$. Then:
	\begin{enumerate}[(i)]
		\item $a(G) \leq a(H)+1-|G:H|^{-1}$;
		\item $a(H) \leq |G:H|a(G)$.
	\end{enumerate}
	If $F$ is a finite group and $G = H \times F$, then $a(G)=a(H)$.
\end{prop}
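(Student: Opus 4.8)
The plan is to prove the three parts by relating the absolutely irreducible representations of $G$ and those of $H$ via induction and restriction, and controlling the dimension changes. Throughout I would use Lemma~\ref{sameabscissa} to absorb polynomially-many (in fact, $p^{o(nj)}$-many) representations and bounded multiplicities, so that only the exponents of $p$ appearing in the dimensions matter.

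\textbf{Part (ii): $a(H)\le |G:H|\,a(G)$.} Set $d=|G:H|$. Given an absolutely irreducible representation $\rho$ of $H$ of dimension $n$ over $\F_{p^j}$, the induced representation $\Ind_H^G \rho$ has dimension $dn$ over $\F_{p^j}$, and $\rho$ appears (after passing to an algebraic closure, by Frobenius reciprocity / Mackey) as a constituent of the restriction to $H$ of one of its absolutely irreducible constituents. Thus $\rho$ is a constituent of $\Res_H \sigma$ for some absolutely irreducible $G$-representation $\sigma$ of dimension at most $dn$ over some finite extension $\F_{p^{j'}}$ with $j' \le j \cdot (\text{bounded factor})$; in fact one can arrange $j'=j$ up to a bounded-degree extension absorbed by Lemma~\ref{sameabscissa}. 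Since each such $\sigma$ has at most $d$ constituents on restriction, the map $\rho \mapsto \sigma$ is boundedly-to-one. Hence $r^\ast(H,\F_{p^j},n)$ is bounded by a constant times $\sum_{m\le dn} r^\ast(G,\F_{p^j},m)$ (allowing the bounded field extension). Comparing the defining series: a term of $\zeta_H$ indexed by $(p,n,j)$ has size roughly $p^{(\alpha-s)nj}$ where $\alpha$ governs growth of $r^\ast(H,-,-)$; substituting $n = m/d$ shows convergence of $\zeta_G(t)$ for $t > a(G)$ forces convergence of $\zeta_H$ at $dt$, giving $a(H) \le d\,a(G)$. I would write this as a clean term-by-term inequality between the two Dirichlet-type series.

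\textbf{Part (i): $a(G)\le a(H)+1-1/d$.} Conversely, given an absolutely irreducible $G$-representation $\sigma$ of dimension $n$ over $\F_{p^j}$, pick any absolutely irreducible constituent $\rho$ of $\Res_H\sigma$; it has dimension $m$ with $n/d \le m \le n$ (Clifford theory: all constituents have equal dimension $m$ and $n = em$ with $e\le d$), and $\sigma$ is a constituent of $\Ind_H^G\rho$, which has at most $d$ absolutely irreducible constituents over a bounded extension of $\F_{p^j}$. So $\sigma \mapsto \rho$ is again boundedly-to-one, and $r^\ast(G,\F_{p^j},n) \le C\sum_{n/d\le m\le n} r^\ast(H,\F_{p^j},m)$. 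Now estimate: writing $q=p^j$, the $(p,n,j)$-term of $\zeta_G$ is $\ll q^{n} q^{-sn} r^\ast(G,\F_{p^j},n)$, and using the bound and $r^\ast(H,\F_{p^j},m)\ll q^{(a(H)+\varepsilon)m}$ with $m\ge n/d$, we get $\ll q^{n} q^{-sn} q^{(a(H)+\varepsilon) n}$ — but crucially the $|\mathbb{P}^{m-1}|$ factor "lost" in going from dimension $n$ down to dimension $m\approx n/d$ means we only gain a factor $q^{n}$ rather than $q^{m}$, which is exactly where the $1 - 1/d$ (as opposed to $1$) comes from: the relevant exponent is $(a(H)+\varepsilon)/d \cdot$ (no — rather) one compares $q^{-sn}q^{n}q^{(a(H)+\varepsilon)n/d}$ against the threshold and, after the substitution and summing the geometric-type series as in Lemma~\ref{generalbounds}, convergence holds once $s > 1 + (a(H)-1)/d \cdot d$... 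I expect the precise bookkeeping here — tracking whether the $a(H)$-growth is measured per unit dimension of $H$ or of $G$, and how the $|\mathbb{P}^{n-1}|$ versus $|\mathbb{P}^{m-1}|$ discrepancy interacts with it — to be \emph{the main obstacle}; getting the constant to be exactly $1-1/d$ rather than $1$ requires being careful that the worst case is $e=d$, i.e. $m=n/d$, and that in that case the "extra" representations of $H$ at dimension $n/d$ contribute with weight $q^{n/d}$ from their own projective space, while $\sigma$ carries weight $q^{n}$.

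\textbf{The splitting case $G=H\times F$.} Here $H$ is normal with quotient $F$ finite, and Lemma~\ref{quotient} is not directly enough. Instead use that $\widehat{\Z}\bra G\ket = \widehat{\Z}\bra H\ket \,\widehat{\otimes}\, \widehat{\Z}\bra F\ket$, or more elementarily that every absolutely irreducible $\F_{p^j}$-representation of $H\times F$ is, over a bounded extension $\F_{p^{j'}}$ with $[\F_{p^{j'}}:\F_{p^j}]$ dividing $|F|!$, of the form $\rho\otimes\tau$ with $\rho$ absolutely irreducible for $H$ and $\tau$ absolutely irreducible for $F$ (Steinberg's tensor-decomposition for a product, valid over a splitting field; the bounded-degree field extension is harmless by Lemma~\ref{sameabscissa}). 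Since $F$ is finite there are only boundedly many $\tau$, each of bounded dimension $\le \sqrt{|F|}$, so $r^\ast(G,\F_{p^j},n) \asymp \sum_{\tau} r^\ast(H,\F_{p^j},n/\dim\tau)$ up to bounded multiplicities and bounded field extensions. Plugging into the series, the finitely many dimension-rescalings $n \mapsto n/\dim\tau$ each preserve the abscissa (a dilation $n\mapsto cn$ of a convergent series of this shape stays convergent for the same $s$, by Lemma~\ref{sameabscissa} applied with the geometric tail estimate of Lemma~\ref{generalbounds}), and taking a finite sum preserves it too; combined with $a(H)\le a(G)$ from Lemma~\ref{quotient} this yields $a(G)=a(H)$. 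The only subtlety is the field-extension issue for the tensor decomposition, which I would handle by noting that a representation absolutely irreducible over $\F_{p^j}$ remains so over any extension, and one always reaches a common splitting field for the $F$-part within a bounded extension.
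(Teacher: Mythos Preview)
Your overall induction/restriction strategy matches the paper's, but there is a genuine gap in Part~(i), and a related misstatement in Part~(ii).

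\textbf{Part (i): the source of $1-|G:H|^{-1}$ is the field, not the dimension.} You take an absolutely irreducible $G$-module $\sigma$ over $\F_{p^j}$ and pick an ``absolutely irreducible constituent'' $\rho$ of $\Res_H\sigma$ over the \emph{same} field. In general this is impossible: an irreducible $\F_{p^j}[H]$-summand $M$ of $\Res_H\sigma$ need not be absolutely irreducible over $\F_{p^j}$; it becomes so only over $\F_{p^{je}}$ where $\F_{p^{je}}\cong\End_H(M)$, and one can only bound $e\le|G:H|$. In the paper's notation ($\sigma$ over $\F_{p^\ell}$, $\rho$ over $\F_{p^j}$, $\ell\mid j$, $j/\ell\le d:=|G:H|$) the comparison of terms gives an exponent $(\ell n-jm)(1-s)+(j-\ell)$. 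For $s>1$ the first summand is $\le 0$ (the dimension change \emph{helps}); it is the field jump $j-\ell\le (1-1/d)j$ that produces the loss, yielding $\alpha(N)\ll p^{(1-1/d)j}\alpha(M)$ and hence $\log\zeta_G(s)\ll\log\zeta_H(s-1+1/d)$. Your attempt to extract $1-1/d$ from the ratio $|\mathbb{P}^{n-1}|/|\mathbb{P}^{m-1}|$ and the Clifford bound $m\ge n/d$ is looking in the wrong place; without tracking the field extension you will not get the sharp constant (and your own discussion reflects this---the bookkeeping does not close).

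\textbf{Part (ii): field extensions are not $p^{o(nj)}$.} The claim that the passage to $\F_{p^{j'}}$ can be ``absorbed by Lemma~\ref{sameabscissa}'' is incorrect: that lemma handles multiplicative factors bounded by $p^{o(nj)}$, not a change in the index $j$ itself. Here again the field extends (now with $j\mid j'$, $j'/j\le d$), and the correct bookkeeping uses the two-sided bound $mj\le n j'\le d\,mj$ on the products (dimension)$\times$(field degree). This is what the paper does to obtain $\log\zeta_H(s)\ll\log\zeta_G(s/d)$; your substitution ``$n=m/d$'' alone is not enough.

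\textbf{Direct product.} Your tensor-decomposition argument can be made to work, but it is more laborious than necessary. The paper simply reruns the Part~(i) argument and invokes Fein's theorem to note that for $G=H\times F$ one always has $j=\ell$ (no field jump when restricting to a direct factor). This kills the $p^{(1-1/d)j}$ term outright and gives $\alpha(N)\le|G:H|\,\alpha(M)$ directly.
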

\begin{proof}
	$G$ has UBERG if and only if $H$ does, by \cite[Thm.~6.2]{KV}. If neither has UBERG, $\zeta_G$ and $\zeta_H$ have abscissa of convergence $\infty$. So we may assume they both have UBERG.

	For each absolutely irreducible $\F_{p^j}\bra G \ket$-module $N$, pick one 
	irreducible $\F_{p^j}\bra H \ket$-submodule $M$ of $\Res^G_H(N)$. We note that $M$ has the 
	structure of an  $\F_{p^{je}}\bra H \ket$-module for  
	$e$ such that $\F_{p^{je}} \cong \End_H(M)$; by \cite[(9.2)]{Isaacs} this module is absolutely irreducible and henceforth 
	we will abuse notation by writing $M$ for this $\F_{p^{je}}\bra H \ket$-module. 
	We get a map $\phi: N \mapsto M$ from absolutely irreducible $G$-modules over 
	finite fields to absolutely irreducible $H$-modules over finite fields.
	
	Fix an absolutely irreducible $\F_{p^j}\bra H \ket$-module $M$ of dimension $m$: we want to consider $\phi^{-1}(M)$. Any absolutely irreducible $G$-module $N$ in $\phi^{-1}(M)$ is defined over some field $\F_{p^{\ell}}$ with $\ell \mid  j$: note that $|\End_H(M)| \leq |\Hom_H(M,\Res^G_H(N)| = |\Hom_G(\Ind^G_H(M),N)| \leq |\End_G(N)|^{|G:H|}$, so $p^{j} \leq p^{\ell|G:H|}$ and $j/\ell \leq |G:H|$. In particular, there are at most $|G:H|$ possibilities for $\ell$. 
	Denote by $L$ the restriction of scalars from $\F_{p^{\ell}}$ to $\F_{p^j}$ of $M$. As $L$ is irreducible and $N$ appears as a quotient of $\Ind^G_H(L)$, for a fixed $\ell$ there are at most $|G:H|$ possibilities for $N \in \phi^{-1}(M)$. Hence there are at most $|G:H|^2$ possibilities for $N \in \phi^{-1}(M)$. Moreover, such an $N$ can have dimension $n$ at most $|G:H| \dim_{\F_{p^{\ell}}}L = |G:H|jm/\ell$ over $\F_{p^{\ell}}$, with $n\ell \geq mj$. 
	
	Write $\Irr^\ast(G,\F_{p^{\ell}},n)$ for the set of absolutely irreducible $\F_{p^{\ell}}\bra G \ket$-modules of dimension $n$, and define $\displaystyle\alpha(N)=\frac{p^{-sn\ell}}{\ell} \frac{p^{\ell n}-1}{p^{\ell}-1}$ for $N \in \Irr^\ast(G,\F_{p^{\ell}},n)$. Then we may write the $\log$ of the zeta function of $G$ as $$\sum_{p\in \mathcal{P}} \sum_{\ell=1}^\infty \sum_{n=1}^\infty\sum_{\Irr^\ast(G,\F_{p^{\ell}},n)} \alpha(N).$$ Grouping these terms by their image under $\phi$, this is equal to $$\sum_{p\in \mathcal{P}}\sum_{\ell=1}^\infty \sum_{n=1}^\infty\sum_{\Irr^\ast(H,\F_{p^j},m)}\sum_{N \in \phi^{-1}(M)} \alpha(N).$$ Now, for $N$ as above in $\phi^{-1}(M)$,
	\begin{align}
		\alpha(N) &= \frac{p^{-sn\ell}}{\ell}\frac{p^{\ell n}-1}{p^{\ell}-1}  = 
		\frac{p^{-sn 
				\ell}}{\ell}\frac{p^j-1}{p^{\ell}-1}\frac{p^{jm}-1}{p^j-1}\frac{p^{\ell 
				n}-1}{p^{jm}-1} \nonumber \\
		&\leq \frac{p^{-sn 
				\ell}}{\ell}\frac{jp^{j-\ell}}{\ell}\frac{p^{jm}-1}{p^j-1}\frac{\ell n p^{\ell 
				n-jm}}{jm} = \frac{n p^{-sn \ell+j-\ell+\ell n-jm}}{\ell 
			m}\frac{p^{jm}-1}{p^j-1}  \nonumber  \\
		&= \frac{nj}{\ell m} p^{(\ell n-jm)(1-s)+j-\ell}\alpha(M).
		\label{eq:ast} 
	\end{align}
	We have $j/\ell \leq |G:H|$ and $n/m \leq |G:H|j/\ell \leq |G:H|^2$, so for $s>1$, $\alpha(N) \leq |G:H|^3p^{(\ell n-jm)(1-s)+j-\ell}\alpha(M) \leq |G:H|^3p^{(1-|G:H|^{-1})j}\alpha(M)$, and hence $\sum_{N \in \phi^{-1}(M)} \alpha(N) \leq |G:H|^5p^{(1-|G:H|^{-1})j}\alpha(M)$. Finally, for $s > a(H)+1-|G:H|^{-1}$, we conclude that
	\begin{align*}
		\log(\zeta_G)(s) &\leq \sum_{\F_{p^j}}\sum_{m=1}^\infty\sum_{\Irr^\ast(H,\F_{p^j},m)} |G:H|^5p^{(1-|G:H|^{-1})j}\frac{p^{-smj}}{j}\frac{p^{jm}-1}{p^j-1} \\
		&\leq |G:H|^5\sum_{\F_{p^j}}\sum_{m=1}^\infty\sum_{\Irr^\ast(H,\F_{p^j},m)} \frac{p^{(1-|G:H|^{-1}-s)mj}}{j}\frac{p^{jm}-1}{p^j-1} \\
		&= |G:H|^5 \log(\zeta_H)(s-1+|G:H|^{-1}),
	\end{align*}
	which converges.
	
	Conversely, by the same approach, we may define a map $\psi$ from absolutely irreducible $H$-modules $M$ to absolutely irreducible $G$-modules $N$: if $M$ is defined over $\F_{p^j}$, we take $N$ to be an irreducible quotient of $\Ind^G_H(M)$, with the structure of an $\F_{p^{je}}\bra G \ket$-module, where $\F_{p^{je}} \cong \End_G(N)$.
	
	Fix an absolutely irreducible $\F_{p^{\ell}}\bra G \ket$-module $N$ of dimension $n$. Suppose $M \in \psi^{-1}(N)$ is defined over $\F_{p^j}$ (with $j \mid \ell$) and has dimension $m$. As before, we have $|\End_G(N)| \le |\End_H(M)|^{|G:H|}$, so $p^{\ell} \leq p^{j|G:H|}$ and $\ell/j \leq |G:H|$. 
	Denote by $L$ the restriction of scalars from $\F_{p^\ell}$ to $\F_{p^j}$ of $N$. As $L$ is irreducible and $M$ appears as a submodule of $L$, for a fixed $j$ there are at most $|G:H|$ possibilities for $M \in \psi^{-1}(N)$ and so at most $|G:H|^2$ possibilities for $M$ altogether; moreover, such an $N$ must have dimension $n$ at least $(\dim_{\F_{p^j}} L) /{|G:H|}$ over $\F_{p^j}$.
	
	Grouping terms as before, we may write the log of the zeta function of $H$ as $$\sum_{p\in \mathcal{P}} \sum_{\ell=1}^\infty \sum_{n=1}^\infty\sum_{\Irr^\ast(G,\F_{p^{\ell}},n)}\sum_{M \in \psi^{-1}(N)} \alpha(M).$$ Now, for $M$ as above in $\psi^{-1}(N)$,
	\begin{align*}
		\alpha(M) &= \frac{p^{-smj}}{j}\frac{p^{jm}-1}{p^j-1} = 
		\frac{p^{-smj}}{j}\frac{p^{\ell}-1}{p^j-1}\frac{p^{\ell 
				n}-1}{p^{\ell}-1}\frac{p^{jm}-1}{p^{\ell n}-1} \\
		&\leq \frac{p^{-smj}}{j}\frac{\ell p^{\ell-j}}{j}\frac{p^{\ell 
				n}-1}{p^{\ell}-1}\frac{p^{jm}}{p^{\ell n}} = \frac{\ell^2p^{sn 
				\ell-smj+\ell-j+jm-\ell n}}{j^2}\alpha(N).
	\end{align*}
	
	We have $\ell/j \leq |G:H|$, $n \ell/mj \leq |G:H|$ and
	\[
	(s-1)n \ell(1-|G:H|^{-1})+\ell(1-|G:H|^{-1}) -sn\ell \le -|G:H|^{-1} sn\ell,
	\]
	and thus we conclude that $\log(\zeta_H)(s)$ is at most
	\begin{align*}
		&\sum_{p\in \mathcal{P}}\sum_{\ell=1}^{\infty} \sum_{n=1}^\infty\sum_{\Irr^\ast(G,\F_{p^{\ell}},n)} |G:H|^4p^{\ell(1-|G:H|^{-1})((s-1)n+1)}\frac{p^{-sn \ell}}{\ell}\frac{p^{\ell n}-1}{p^{\ell}-1} \\
		&\leq |G:H|^4\sum_{p\in \mathcal{P}}\sum_{\ell=1}^\infty \sum_{n=1}^\infty r^*(G,\F_{p^\ell},n) \frac{p^{-|G:H|^{-1}sn \ell}}{\ell}\frac{p^{\ell n}-1}{p^{\ell}-1} \\
		&= |G:H|^4 \log(\zeta_G)(s/|G:H|),
	\end{align*}
	which converges for $s > |G:H|a(G)$.
	
	For the final statement, if $G = H \times F$, $a(H) \leq a(G)$ by Lemma 
	\ref{quotient}. For the converse, we can argue exactly as in the proof of (i), 
	except that, by \cite[Theorem~2.7]{Fein}, we always have $j=\ell$. We then have $n/m 
	\leq |G:H|$, so from \eqref{eq:ast} we see that, for $s>1$, $\alpha(N) \leq 
	|G:H|\alpha(M)$, and hence $\sum_{N \in \phi^{-1}(M)} \alpha(N) \leq 
	|G:H|^2\alpha(M)$. We conclude as in (i) that $\log(\zeta_G)(s) \leq 
	|G:H|^2\log(\zeta_H)(s)$, which converges for $s > a(H)$.
\end{proof}

\begin{cor}[Finite groups]
	\label{finitegp}
	Let $G$ be a finite group, then $a(G)=1$.
\end{cor}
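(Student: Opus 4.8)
\emph{Plan.} The idea is to read the corollary off Proposition~\ref{opensbgp} together with Example~\ref{trivial}. First note that only the inequality $a(G)\le 1$ needs an argument: the reverse inequality $a(G)\ge 1$ is already in hand, since Lemma~\ref{quotient} applied to the quotient map $G\to\{1\}$ gives $a(\{1\})\le a(G)$, and $a(\{1\})=1$ because the completed group ring of the trivial group is $\widehat{\Z}$ and $\zeta_{\widehat{\Z}}$ is the Riemann zeta function, which has abscissa of convergence $1$.

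For the upper bound I would simply observe that a finite group $G$ can be written as a direct product $G=H\times F$ with $H=\{1\}$ an open subgroup of $G$ (every subgroup of a finite group has finite index) and $F=G$ finite. The last assertion of Proposition~\ref{opensbgp} then yields $a(G)=a(\{1\})=1$. Before invoking that result one should check that $\zeta_G$ is actually defined, i.e.\ that $G$ has UBERG; this is immediate, because a finite group has only finitely many absolutely irreducible representations in each characteristic and their dimensions are bounded, so that $r^\ast(G,\F_{p^j},n)\le|G|$ for all $p,j,n$.

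Since this amounts to a one-line deduction from results proved above, I do not expect a genuine obstacle; the only point deserving a moment's care is verifying that the hypotheses of the cited statements really apply in the degenerate case $H=\{1\}$. As a cross-check one can avoid Proposition~\ref{opensbgp} altogether and estimate $\zeta_G$ directly: with $N=|G|$ one has $r^\ast(G,\F_{p^j},n)\le N$ for all $p,j$ and $r^\ast(G,\F_{p^j},n)=0$ for $n>N$, together with $|\mathbb{P}^{n-1}(\F_{p^j})|\le 2p^{j(n-1)}$, whence for real $s$
\[
  \log\zeta_G(s)\;\le\;2N\sum_{n=1}^{N}\sum_{p\in\mathcal{P}}\sum_{j=1}^{\infty}\frac{1}{j}\,p^{-j(n(s-1)+1)}\;=\;2N\sum_{n=1}^{N}\log\zeta\bigl(n(s-1)+1\bigr),
\]
which is finite for every real $s>1$; this gives $a(G)\le 1$ as well, and combined with $a(G)\ge 1$ completes the proof.
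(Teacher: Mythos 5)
Your main argument is exactly the paper's: apply the final assertion of Proposition~\ref{opensbgp} with $H=\{1\}$ and $F=G$ (together with $a(\{1\})=1$ from Example~\ref{trivial} and the lower bound from Lemma~\ref{quotient}). Your direct estimate via $r^\ast(G,\F_{p^j},n)\le|G|$ and $|\mathbb{P}^{n-1}(\F_{p^j})|\le 2p^{j(n-1)}$ is also correct and is a nice self-contained cross-check, though the paper does not bother with it.
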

\begin{proof}	
	Apply Proposition \ref{opensbgp} to $G$ and the trivial open subgroup.
\end{proof}

\begin{rmk}
	\label{perfect}
	The bound in Proposition \ref{opensbgp}(i) can be improved under additional assumptions. For instance, suppose in addition that $H$ is perfect and normal in $G$. Any irreducible representation $N$ of $G$ such that $N \in \phi^{-1}(M)$ with $M$ $1$-dimensional can then be seen as a representation of $G/H$. In particular, $$\sum_{p,j}\sum_{\Irr^\ast(H,\F_{p^j},1)} \sum_{N \in \phi^{-1}(M)} \alpha(N) \leq \log(\zeta_{G/H})(s),$$ which converges for all $s>1$ by Corollary \ref{finitegp}. On the other hand,
	\begin{align*}
		&\sum_{p,j}\sum_{m=2}^\infty\sum_{\Irr^\ast(H,\F_{p^j},m)} |G:H|^5p^{(1-|G:H|^{-1})j}\frac{p^{-smj}}{j}\frac{p^{jm}-1}{p^j-1} \\
		\leq &|G:H|^5\sum_{p,j}\sum_{m=1}^\infty\sum_{\Irr^\ast(H,\F_{p^j},m)} \frac{p^{((1-|G:H|^{-1})/2-s)mj}}{j}\frac{p^{jm}-1}{p^j-1} \\
		= &|G:H|^5 \log(\zeta_H)(s-(1-|G:H|^{-1})/2)
	\end{align*}
	converges when $s> a(H) + (1-|G:H|^{-1})/2$. So we conclude $a(G) \leq a(H) + (1-|G:H|^{-1})/2$. Further variations of this argument are possible.
\end{rmk}

Now we will look at the abscissa of convergence of direct products of 
profinite groups. We start by considering the case of profinite rings.

\begin{lem}[Products of rings]
	For PFG profinite rings $R,S$, $a(R \times S) = \max(a(R),a(S))$.
\end{lem}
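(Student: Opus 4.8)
The plan is to use Proposition~\ref{prop:products}, which says $\zeta_{R\times S}(s) = \zeta_R(s)\cdot\zeta_S(s)$ wherever both sides are defined, together with the fact recorded at the start of Section~\ref{sec:abscissa} that the abscissa of convergence of each of these zeta functions coincides with its abscissa of \emph{absolute} convergence, since all coefficients are real and non-negative. So the whole statement reduces to a standard fact about products of Dirichlet-type series with non-negative coefficients.

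First I would show $a(R\times S)\le \max(a(R),a(S))$. Fix a real $s>\max(a(R),a(S))$. Then both $\sum \frac{r^\ast(R,\F_{p^j},n)}{j}p^{-snj}|\mathbb{P}^{n-1}(\F_{p^j})|$ and the analogous series for $S$ converge, hence $\exp$ of each is a finite positive number, and by Proposition~\ref{prop:products} the product $\zeta_R(s)\zeta_S(s) = \zeta_{R\times S}(s)$ is finite; since the defining series for $\zeta_{R\times S}$ has non-negative terms, finiteness of its $\exp$ forces convergence of the series (the logarithmic/exponential series has non-negative terms throughout), so $\zeta_{R\times S}$ converges at $s$. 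Letting $s$ decrease to $\max(a(R),a(S))$ gives the upper bound. (Implicitly one also uses that $R\times S$ is PFG, which is Proposition~\ref{prop:products}, so the abscissa is finite and Lemma~\ref{generalbounds} applies.)

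Next I would show $a(R\times S)\ge \max(a(R),a(S))$, equivalently $a(R)\le a(R\times S)$ and symmetrically for $S$. This follows because $r^\ast(R,\F_{p^j},n)\le r^\ast(R\times S,\F_{p^j},n)$: every absolutely simple $\F\otimes R$-module pulls back along the projection $R\times S\to R$ to an absolutely simple $\F\otimes(R\times S)$-module, and distinct ones pull back to distinct ones. Hence the series defining $\log\zeta_R$ is dominated term-by-term by that defining $\log\zeta_{R\times S}$, so convergence of the latter at a real $s$ implies convergence of the former, giving $a(R)\le a(R\times S)$. Combining the two inequalities yields the equality.

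I do not expect any serious obstacle here; the only point requiring a little care is the bookkeeping that "the $\exp$ of the series is finite" is genuinely equivalent to "the series converges", which is immediate because the series has non-negative real terms, so there is no cancellation and $\zeta_R(s)$ is either a convergent product of factors $>1$ or diverges to $+\infty$. An alternative, entirely equivalent phrasing avoids $\exp$ altogether: the statement is really just that for Dirichlet series (here, generalized ones summed over $(p,n,j)$) with non-negative coefficients, the abscissa of convergence of a termwise sum equals the maximum of the two abscissae, and $\log\zeta_{R\times S} = \log\zeta_R + \log\zeta_S$ is exactly such a termwise sum by Proposition~\ref{prop:products}. Either way the proof is short.
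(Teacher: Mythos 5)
Your proof is correct and takes essentially the same route as the paper, which simply records the result as an immediate consequence of Proposition~\ref{prop:products}; you have spelled out the two inequalities (upper bound via multiplicativity of the zeta functions, lower bound via the injection of absolutely simple $R$-modules into absolutely simple $R\times S$-modules through the projection) that make the paper's one-line justification work.
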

\begin{proof}
	This is clear from Proposition \ref{prop:products}.
\end{proof}

Unfortunately, as we have seen in Section~\ref{sec:zeta_def}, the zeta function 
of the product of two groups is not as well-behaved. Nonetheless, we can 
produce an upper bound.

\begin{prop}[Products]
	\label{abscissaproducts}
	$a(G \times H) \leq a(G) + a(H)$.
\end{prop}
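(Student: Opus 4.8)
The plan is to control $\zeta_{G\times H}$ by comparing its logarithm, term by term, with the product $\zeta_G \cdot \zeta_H$, using the Dirichlet convolution description of $r^\ast(G\times H,F,n)$ from the ``Products of groups'' proposition. Fix a prime $p$ and a finite field $\F_{p^j}$; for a divisor $d$ of $n$, each pair consisting of an absolutely irreducible $G$-module of dimension $d$ and an absolutely irreducible $H$-module of dimension $n/d$ (both over $\F_{p^j}$) contributes one summand to $r^\ast(G\times H,\F_{p^j},n)$ via the tensor product. The coefficient of the $(p,n,j)$-term in $\log\zeta_{G\times H}$ is then $\tfrac{1}{j}p^{-snj}|\mathbb{P}^{n-1}(\F_{p^j})|$ times this convolution sum, and I want to bound it by the corresponding product of coefficients from $\log\zeta_G$ and $\log\zeta_H$ evaluated suitably.

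The key estimate is the elementary inequality, for a divisor $d\mid n$ with $n=d\cdot d'$,
\[
  |\mathbb{P}^{n-1}(\F_{p^j})| \;\le\; |\mathbb{P}^{d-1}(\F_{p^j})|\cdot|\mathbb{P}^{d'-1}(\F_{p^j})| \cdot p^{j},
\]
which follows from $\frac{p^{jn}-1}{p^j-1} \le \frac{p^{jd}-1}{p^j-1}\cdot\frac{p^{jd'}-1}{p^j-1}\cdot p^j$ after clearing denominators (here one uses $n \ge d+d'-1$, so $p^{jn}-1 \ge p^{j(d+d'-1)}-1$, and crude bounds on the cross terms). Combining this with $p^{-snj} = p^{-sdj}\cdot p^{-sd'j}$ and the trivial $\tfrac1j \le \tfrac1j\cdot\tfrac1j \cdot j$, I get that each summand of $\log\zeta_{G\times H}(s)$ is bounded by $p^{2j}$ times a product of a $G$-summand and an $H$-summand. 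Summing over all such pairs, $\log\zeta_{G\times H}(s) \le C\cdot \log\zeta_G(s-\epsilon)\cdot\log\zeta_H(s-\epsilon)$-type bounds don't quite close directly, so instead I would argue more carefully: absorb the $p^{2j}$ factor by invoking Lemma~\ref{sameabscissa} (since $p^{2j}=p^{o(nj)}$ is false for bounded $n$, this needs the shift to be in $s$, not hidden in $f$), i.e. replace $s$ by $s-2$ at worst, and conclude
\[
  \log\zeta_{G\times H}(s) \;\le\; \Bigl(\textstyle\sum_{p,d,j}\alpha_G(s')\Bigr)\cdot\Bigl(\sum_{p,d',j}\alpha_H(s')\Bigr)
\]
is \emph{not} literally true because the double sum over pairs does not factor once $p$ and $j$ are shared. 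The correct route: for fixed $(p,j)$, the inner sum over $n$ and over $d\mid n$ of the product bounds is at most $p^{2j}\bigl(\sum_d \alpha_G^{(p,j)}(d)\bigr)\bigl(\sum_{d'}\alpha_H^{(p,j)}(d')\bigr)$, and summing over $(p,j)$ the product of two convergent families is again convergent provided each factor converges — which happens for $s > \max(a(G),a(H))$ on each factor separately. But we need a single $s$; taking $s > a(G)+a(H)$ ensures $s - a(H) > a(G)$ and one can split the exponent $p^{-snj} = p^{-(s-a(H)-\delta)dj}\cdot p^{-(a(H)+\delta)d'j + \text{(slack)}}$ so that the $G$-factor converges at $s-a(H)-\delta > a(G)$ and the $H$-factor converges at $a(H)+\delta$. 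Keeping track of the $|\mathbb{P}^{\bullet-1}|$ weights through this asymmetric split is the technical heart.

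The main obstacle is precisely this: the zeta function of a product of groups does not factor (as the paper emphasises right before the statement), so one cannot simply multiply $\zeta_G$ and $\zeta_H$. The work is in showing that the convolution structure of $r^\ast$, together with the multiplicativity of $p^{-snj}$ and the near-multiplicativity of $|\mathbb{P}^{n-1}(\F_{p^j})|$ under $n = d\cdot d'$, lets one dominate $\log\zeta_{G\times H}(s)$ by a product of two Dirichlet-type series, one converging for $s$ slightly above $a(G)$ and the other for the remaining budget above $0$ once $s > a(G)+a(H)$; the extra $p^{O(j)}$ factors are harmless after a bounded shift in $s$ (or by reorganising so they are absorbed into the $d'$-sum). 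Once convergence of $\log\zeta_{G\times H}(s)$ is established for all real $s > a(G)+a(H)$, the bound $a(G\times H)\le a(G)+a(H)$ follows from the definition of the abscissa and the fact, noted at the start of this section, that convergence and absolute convergence coincide here.
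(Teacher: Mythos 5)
The opening estimate is false, and in fact goes the wrong way. With $n=dd'$, the left side $|\mathbb{P}^{n-1}(\F_{p^j})|$ has dominant term $p^{j(dd'-1)}$, while the proposed right side $|\mathbb{P}^{d-1}(\F_{p^j})|\cdot|\mathbb{P}^{d'-1}(\F_{p^j})|\cdot p^j$ has dominant term $p^{j(d+d'-1)}$; since $dd'-(d+d')=(d-1)(d'-1)-1\geq 1$ as soon as $\min(d,d')\geq 2$ and $\max(d,d')\geq 3$, the left side is strictly larger. Concretely, $d=d'=3$, $p=2$, $j=1$ gives $511$ on the left against $7\cdot 7\cdot 2=98$ on the right. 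Your parenthetical justification $p^{jn}-1\geq p^{j(d+d'-1)}-1$ is a lower bound for the left-hand side, so it cannot establish the claimed upper bound, and the deficit $p^{j(d-1)(d'-1)}$ grows with $n$, so it cannot be removed by Lemma~\ref{sameabscissa} or by a bounded shift in $s$.

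What the paper uses instead is an exact identity rather than an inequality:
\[
  \frac{p^{jmk}-1}{p^j-1}=\frac{p^{jk}-1}{p^j-1}\cdot\frac{p^{jmk}-1}{p^{jk}-1},
  \quad\text{i.e.}\quad
  |\mathbb{P}^{mk-1}(\F_{p^j})|=|\mathbb{P}^{k-1}(\F_{p^j})|\cdot|\mathbb{P}^{m-1}(\F_{p^{jk}})|,
\]
where the second factor is taken over the \emph{larger} field $\F_{p^{jk}}$, not $\F_{p^j}$. Fixing an $m$-dimensional absolutely irreducible $G$-module $M$ over $\F_{p^j}$ and summing over $k$-dimensional $H$-modules $N$ (the tensors $M\otimes N$ exhaust $\Irr^\ast(G\times H,\F_{p^j},mk)$ by Fein), one splits $p^{-smkj}=p^{-(a(H)+\varepsilon)kj}\cdot p^{-(sm-a(H)-\varepsilon)kj}$, absorbs $\frac{r^\ast(H,\F_{p^j},k)}{j}p^{-(a(H)+\varepsilon)kj}|\mathbb{P}^{k-1}(\F_{p^j})|$ into a bound by $\log\zeta_H(a(H)+\varepsilon)$, and then observes that the leftover exponent $m-1-sm+a(H)+\varepsilon=-tm-(a(H)+\varepsilon-1)(m-1)$ is \emph{negative} for $s=a(H)+\varepsilon+t$ with $t>0$, using $a(H)\geq 1$. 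This sign is the whole point: a negative exponent lets one replace $p^{kj(\cdots)}/(1-p^{-kj})$ by $p^{j(\cdots)}/(1-p^{-j})$, after which the remaining sum over $M$ is a $\zeta_G$-type series at argument $t$, convergent for $t>a(G)$. Your ``asymmetric split'' paragraph gestures in this direction and correctly flags the projective-space weights as the obstruction, but leaves precisely that step unverified; together with the false lead inequality, the argument as written does not close.
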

\begin{proof}
	By \cite[Theorem 2.7]{Fein}, absolutely irreducible $\F_{p^j}\bra G \times H \ket$-modules have the form $M \otimes_{\F_{p^j}} N$, with $M$ an absolutely irreducible $\F_{p^j}\bra G \ket$-module and $N$ an absolutely irreducible $\F_{p^j}\bra H \ket$-module. Fix such an $M$, of dimension $m$, say. By the contribution of $M$ to the sum $\log(\zeta_{G \times H})(s)$, we mean the sum, over all $\F_{p^j}\bra G \times H \ket$-modules of the form $M \otimes_{\F_{p^j}} N$, of $\frac{1}{j}p^{-smkj} |\mathbb{P}^{mk-1}(\F_{p^j})|$, where $k$ is the dimension of $N$.
	
	When $s = a(H) + \varepsilon + t$, with $\varepsilon>0$, the contribution of $M$ is therefore $$\sum_{k=1}^\infty \frac{r^\ast(H,\F_{p^j},k)}{j} p^{-(a(H)+\varepsilon)kj} \frac{p^{kj}-1}{p^j -1} p^{-(sm-a(H)-\varepsilon)kj} \frac{p^{mkj}-1}{p^{kj}-1}.$$ Now
	\begin{align*}
		p^{-(sm-a(H)-\varepsilon)kj} \frac{p^{mkj}-1}{p^{kj}-1} 
		&\leq \frac{p^{kj(m-1-sm+a(H)+\varepsilon)}}{1-p^{-kj}} \leq 
		c_\epsilon\frac{p^{j(m-1-sm+a(H)+\varepsilon)}}{1-p^{-j}}
	\end{align*}
	for a constant $c_\epsilon$ depending only on $\epsilon$, because $$m-1-sm+a(H)+\varepsilon = -tm - (a(H)+\varepsilon-1)(m-1) < -tm < 0.$$ So the contribution of $M$ is bounded above by
	\begin{align*}
		c_\epsilon\log(\zeta_H)(a(H)+\varepsilon) \frac{p^{-tmj}}{1-p^{-j}}
		&\leq 2c_\epsilon \log(\zeta_H)(a(H)+\varepsilon) p^{-tmj} |\mathbb{P}^{m-1}(\F_{p^j})|.
	\end{align*}
	Hence, when $t = a(G)+\delta$, 
	$\delta>0$, the total sum over all $M$ is $$\leq 
	2c_\epsilon\log(\zeta_H)(a(H)+\varepsilon) \sum_{p \in \mathcal{P}} \sum_{j=1}^\infty 
	\sum_{m=1}^\infty r^\ast(G,\F_{p^j}, m) p^{-(a(G)+\delta)mj} 
	|\mathbb{P}^{m-1}(\F_{p^j})|,$$ which converges by Lemma \ref{sameabscissa}. 
	This holds for all $\varepsilon,\delta>0$, so the abscissa of convergence is at 
	most $a(G)+a(H)$.
\end{proof}

Recall from \cite[Theorem 4.7]{CCKV} that split extensions of profinite groups with UBERG have UBERG. We cannot get the same bound on the abscissa for such groups as we do for direct products, but we can get close.

\begin{thm}[Split extensions]\label{thm:split}
	Suppose $G$ is a profinite group, $K \unlhd G$ and the quotient map $G \to G/K$ splits. Then $a(G) \leq a(K) + a(G/K)+1$.
\end{thm}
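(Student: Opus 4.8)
The plan is to bound the absolutely irreducible $\F_{p^j}\bra G\ket$-modules in terms of data coming from $K$ and $G/K$, using Clifford theory with respect to the normal subgroup $K$, together with the splitting of the extension. Fix a splitting $\sigma\colon G/K \to G$ and write $Q = G/K$. Given an absolutely irreducible $\F_{p^j}\bra G\ket$-module $N$, restrict to $K$: by Clifford's theorem $\Res^G_K(N)$ is a direct sum of conjugates of a single absolutely irreducible (after extending scalars, as in the proof of Proposition~\ref{opensbgp}) $K$-module $M$. Pick one such $M$, defined over $\F_{p^{j e}}$ where $\F_{p^{je}}\cong\End_K(M)$, and of dimension $m$ over that field. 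The orbit of $M$ under $G$ has size $t$, with $t \mid |G/K|$ in the finite case but possibly infinite in general; the key point is that $N$ embeds in $\Ind_{K}^{G}(M)$, and more precisely in $\Ind_{I}^{G}(\widetilde M)$ where $I$ is the inertia subgroup of $M$ in $G$ and $\widetilde M$ an extension of (a multiple of) $M$ to $I$. The dimension of $N$ over its field of definition is then controlled by $m$, the orbit size $t$, and the dimension of the $I/K$-module that ``twists'' $M$ — and $I/K$ is (conjugate into) a subgroup of $Q$.

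The key steps, in order: (1) Set up Clifford theory for $N$ over $K$ and record that $N$ is determined by the triple consisting of the $K$-module $M$, its inertia group $I$ (a closed subgroup of $G$ containing $K$), and an absolutely irreducible ``projective-type'' module for $I/K$ over a twisted group algebra — here I would invoke the standard Clifford correspondence, being careful to track fields of definition and the factor $e$ exactly as in Proposition~\ref{opensbgp}. (2) Use the splitting $\sigma$ to identify $I/K$ with a subgroup $\bar I$ of $Q$ and to replace the twisted group algebra of $I/K$ by something comparable to $\F\bra \bar I\ket$ up to a cohomological factor of bounded size; alternatively, and more cleanly, bound the number of such twisted modules of each dimension by $r^\ast(Q,-,-)$ up to a factor absorbable by Lemma~\ref{sameabscissa} (this is where one uses that projective representations of a subgroup of $Q$ are dominated by ordinary representations of $Q$ after a bounded extension of scalars, since the relevant $2$-cocycle has order dividing $|Q|$ and can be split over an extension of bounded degree). (3) Assemble the dimension bookkeeping: if $N$ has dimension $n$ over $\F_{p^\ell}$, then roughly $n\ell \asymp (mj)\cdot(\dim\text{ of the }Q\text{-part})\cdot(\text{orbit size})$, with all the discrepancy factors being of the form $p^{o(n\ell)}$, so that by Lemma~\ref{sameabscissa} the $\log\zeta_G$ sum is termwise dominated by a product-type sum over pairs (absolutely irreducible $K$-module, absolutely irreducible $Q$-module). (4) Bound that product-type sum exactly as in Proposition~\ref{abscissaproducts}: the $K$-variable contributes an abscissa of $a(K)$, the $Q$-variable an abscissa of $a(G/K)$, and the ``$+1$'' appears because — unlike in the direct product case, where $M\otimes N$ has the field of definition of both factors — here the induced/twisted construction can inflate the $\mathbb{P}^{n-1}$ factor by an extra power of $q$ per unit of dimension, exactly the phenomenon responsible for the $+1-|G:H|^{-1}$ loss in Proposition~\ref{opensbgp}(i).

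I expect step (2) to be the main obstacle: controlling the projective (twisted-group-algebra) representations of $I/K$ uniformly in $p$ and $j$, and checking that passing from a $2$-cocycle representative to an honest representation of a bounded central extension of a subgroup of $Q$ only costs a multiplicative factor of the shape $p^{o(n\ell)}$ (so that Lemma~\ref{sameabscissa} applies) and an additive shift absorbed into the ``$+1$''. The subtlety is that $G/K$ being UBERG does not obviously make all its bounded central extensions UBERG with a uniform constant; one needs to argue that the central extensions arising here have bounded order, hence lie in a controlled family, and that their representation counts are still bounded by $|F|^{cn}$ with $c$ depending only on $G$. Once that uniformity is in hand, the remaining estimates are a routine combination of the bookkeeping in Proposition~\ref{opensbgp} and the contribution-splitting trick in Proposition~\ref{abscissaproducts}. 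A cleaner alternative worth trying first: embed $N$ directly into $\Ind_K^G(M)$ (forgetting inertia groups), which has dimension $|G:K|\cdot(\text{dim }M)$; this is wasteful in dimension but avoids twisted group algebras entirely, and since $|G:K|$ is finite the loss is a factor $p^{O(j)} = p^{o(n\ell)}$ precisely when $n\ell\to\infty$ — one must then separately handle the finitely many small-dimensional $N$, and re-examine whether this cruder route still yields the sharp ``$+1$'' rather than something worse.
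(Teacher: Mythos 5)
Your high‑level plan — Clifford theory with respect to the normal subgroup $K$, tracking a restriction summand $V$, and then bounding the fiber over $V$ by some count of $G/K$‑representations — is the same as the paper's. But your execution goes through projective representations / twisted group algebras of $I/K$, and you correctly identify step (2) as the main obstacle: controlling those twisted‑algebra modules uniformly in $p,j$, and showing the relevant central extensions still satisfy a uniform UBERG‑type bound. You do not resolve this obstacle, and that is a genuine gap, because the whole difficulty of the theorem is concentrated precisely there.

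The paper avoids projective representations entirely. It first passes from absolutely irreducible modules to irreducible modules over prime fields (introducing the count $R(G,\F_{p^j},n,V)$ of irreducible $\F_{p^j}\bra G\ket$‑modules above $V$, with the trivial bound $r^\ast(G,\F_{p^j},n,V)\leq R(G,\F_{p^j},n,V)$), and then invokes the counting inequality from the proof of Theorem 4.7 of \cite{CCKV}:
\[
R(G,\F_{p^j},n,V)\leq n\, R(G/K,\F_{p^j},n),
\]
together with
\[
R(G/K,\F_{p^j},n)\leq \sum_{l=1}^{n}\sum_{i\mid l} r^\ast\bigl(G/K,\F_{p^{ji}},l/i\bigr).
\]
The first of these inequalities is exactly where the splitting hypothesis is used, and the mechanism is much simpler than the twisted‑algebra route: one restricts an irreducible $G$‑module along the section $G/K\to G$ and picks an irreducible constituent, producing an \emph{ordinary} (not projective) representation of $G/K$. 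No $2$‑cocycle, no central extension, no uniformity‑of‑UBERG issue. Once these inequalities are in hand, the remaining estimate is as you anticipate — group terms by $nj$, apply Lemma~\ref{lem:inequality}, and bound the contribution of each $V$ by a factor $p^{-(a(K)+\varepsilon)mk}$ times a sum converging for $s>a(G/K)+1$; summing over $V$ gives the claim.

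Two further concrete problems with your write‑up. First, your ``cleaner alternative'' (embed $N$ into $\Ind_K^G M$, of dimension $|G:K|\cdot\dim M$, and treat $|G:K|$ as finite) does not apply here: the theorem makes no finite‑index assumption, and in all the applications in the paper $G/K$ is infinite, so $|G:K|$ is not a usable quantity. Second, your dimension bookkeeping $n\ell\asymp (mj)\cdot(\dim\ Q\text{-part})\cdot(\text{orbit size})$ overstates what is needed; the paper only uses the one‑sided inequality $mk\leq nj$ (from $V$ being a summand of $\Res^G_K N$) and never tracks the orbit size, and the ``$+1$'' emerges not from inflation of $\mathbb{P}^{n-1}$ per se but from the Lemma~\ref{lem:inequality} bound $\sum_{nj\geq uv}njp^{(1-s)nj}\ll (uv)^2p^{uv(1-s)}$, whose extra $p^{uv}$ relative to the projective‑space weight shifts the $G/K$‑abscissa up by exactly $1$.
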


We will use the following elementary  lemma. 
\begin{lem}\label{lem:inequality}
	For all $a\in \mathbb{N}$, $p\ge 2$ and $t>0$, there is a constant $c_0$, 
	depending only on $t$, 
	such that $$\sum_{l \geq a} l^2p^{-tl} \leq c_0a^2 \cdot p^{-at}.$$
\end{lem}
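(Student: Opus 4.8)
\textbf{Proof plan for Lemma~\ref{lem:inequality}.}
The plan is to reduce the sum over $l \geq a$ to a comparison with its first term, since the summand $l^2 p^{-tl}$ decays geometrically once $l$ is large relative to the polynomial factor. First I would substitute $l = a + k$ with $k \geq 0$, so that $p^{-tl} = p^{-at} \cdot p^{-tk}$ and $l^2 = (a+k)^2 \leq 2a^2 + 2k^2 \leq 2a^2(1 + k^2)$ using $a \geq 1$. This pulls out the factor $a^2 p^{-at}$ cleanly and leaves
\[
\sum_{l \geq a} l^2 p^{-tl} \leq 2 a^2 p^{-at} \sum_{k=0}^\infty (1 + k^2) p^{-tk}.
\]

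Next I would bound the remaining series $\sum_{k=0}^\infty (1+k^2) p^{-tk}$ by a constant depending only on $t$. Since $p \geq 2$, we have $p^{-tk} \leq 2^{-tk}$, so this series is at most $\sum_{k=0}^\infty (1 + k^2) 2^{-tk}$, which is a convergent series whose value depends only on $t$; call twice this value $c_0$. (Concretely one can use $\sum k^2 x^k = x(1+x)/(1-x)^3$ and $\sum x^k = 1/(1-x)$ with $x = 2^{-t}$, but for the statement it suffices that the sum is finite and independent of $p$ and $a$.) Combining the two steps gives the claimed inequality with this $c_0$.

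There is essentially no obstacle here; the only mild subtlety is ensuring the constant genuinely does not depend on $p$, which is handled by the monotonicity $p^{-tk} \leq 2^{-tk}$ for $p \geq 2$. One should also note the edge case $t$ small: the geometric ratio $2^{-t} < 1$ is all that is needed for convergence, so no lower bound on $t$ beyond $t > 0$ is required, matching the hypothesis.
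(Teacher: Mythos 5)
Your proof is correct and follows essentially the same route as the paper: shift the summation index by $a$, bound the polynomial factor $l^2/a^2$ by something independent of $a$ (you use $(a+k)^2 \le 2a^2(1+k^2)$; the paper uses $(l/a)^2 \le (l-a+1)^2$), and then use $p \ge 2$ to replace $p^{-tk}$ by $2^{-tk}$ so the resulting convergent series depends only on $t$.
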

\begin{proof}
Divide by $a^2p^{-at}$ and observe that
\[
	\sum_{l = a}^\infty \Bigl(\frac{l}{a}\Bigr)^2 p^{-t(l-a)} \leq \sum_{l=0}^\infty (l+1)^2 p^{-tl} \leq  \sum_{l= 0}^\infty (l+1)^2 2^{-tl}
\]
converges to a value which does not depend on $p$ and $a$.
\end{proof}

To simplify the notation, sums in this proof will mean sums from $1$ to $\infty$ unless stated otherwise.

\begin{proof}[Proof of Theorem~\ref{thm:split}]
	For each absolutely irreducible $\F_{p^j}\bra G \ket$-module $W$ of dimension $n$, fix an irreducible summand $V$ of $\Res^G_K N$. Then $M$ is absolutely irreducible over $\F_{p^k}$, some multiple $k$ of $j$, of dimension $m$ such that $mk \leq nj$. This gives a map $\phi$ from the set of absolutely irreducible $G$-modules to the set of absolutely irreducible $K$-modules, and we write $r^\ast(G,\F_{p^j},n,V)$ for the number of absolutely irreducible $\F_{p^j}\bra G \ket$-modules $W$ of dimension $n$ such that $\phi(W)=V$. Similarly, if $N$ is an irreducible $\F_p\bra G \ket$-module, we may think of $N$ as an absolutely irreducible $\End_G(N)\bra G \ket$-module $N'$, and define a map $N \mapsto \phi(N')$ from irreducible $G$-modules over prime fields to absolutely irreducible $K$-modules. Then we write $R(G,\F_p,n,M)$ for the number irreducible $\F_p\bra G \ket$-modules of dimension $n$ in the preimage of $M$.
	
	We call the contribution of $V$ to $\log(\zeta_G)(s)$ the sum $\sum_{\phi^{-1}(V)} \alpha(W)$, where $\alpha(W) = 
	\frac{1}{j} p^{-snj}\frac{p^{nj}-1}{p^j-1}$. Thus, the contribution of $V$ is 
	\[
	\sum_{n,j} \frac{r^\ast(G,\F_{p^j},n,V)}{j}p^{-snj}\frac{p^{jn}-1}{p^j-1}.
	\]
	Now $r^\ast(G,\F_{p^j},n,M) \leq R(G,\F_{p^j},n,M),$ and the proof of \cite[Theorem 4.7]{CCKV} shows that $R(G,\F_{p^j},n,M) \leq n R(G/K,\F_{p^j},n);$ finally, $$R(G/K,\F_{p^j},n) \leq \sum_{l=1}^{n}\sum_{i|l} r^\ast(G/K,\F_{p^{ji}},l/i).$$
		
	Let $s' = a(K)+\varepsilon+s$, for some small $\varepsilon>0$. Then the contribution of $V$ to $\log(\zeta_G)(s')$ is
	\begin{align*}
		&\leq \sum_{n,j} \frac{r^\ast(G,\F_{p^j},n,V)}{j}p^{-s'nj}\frac{p^{jn}-1}{p^j-1} \\
		&\leq p^{-(a(K)+\varepsilon)mk}\sum_{n,j}\sum_{u,v:v \geq j, uv \leq nj} \frac{n}{j} r^\ast(G/K,\F_{p^v},u)p^{-snj}\frac{p^{jn}-1}{p^j-1} \\
		&= p^{-(a(K)+\varepsilon)mk}\sum_{u,v} r^\ast(G/K,\F_{p^v},u) \sum_{n,j: j \leq v, nj \geq uv} \frac{n}{j} p^{-snj}\frac{p^{jn}-1}{p^j-1}.
	\end{align*}
	Now grouping the terms with the same value of $nj$ together, we get
	\begin{align*}
		\sum_{n,j: nj \geq uv} \frac{n}{j} p^{-snj}\frac{p^{jn}-1}{p^j-1} \leq \sum_{n,j: nj \geq uv} njp^{(1-s)nj} \leq
        \sum_{l \geq uv} l^2p^{(1-s)l} \leq (c_0u^2v^2) p^{vu(1-s)}
	\end{align*}
	for some constant $c_0$ depending only on $s$, by Lemma~\ref{lem:inequality} for $a=uv$ and $t=1-s$. 
	
	So the contribution of $V$ is
	\begin{align*}
		&\leq p^{-(a(K)+\varepsilon)mk}\sum_{u,v} c_0u^2v^2r^\ast(G/K,\F_{p^v},u)p^{vu(1-s)} \\
		&\leq p^{-(a(K)+\varepsilon)mk}\sum_{u,v} c_0u^2v^3 \frac{r^\ast(G/K,\F_{p^v},u)}{v} p^{vu(1-s)} |\mathbb{P}^{u-1}(\F_{p^v})|
	\end{align*}
	which converges, by Lemma \ref{sameabscissa}, for all $s > a(G/K)+1$, to $p^{-(a(K)+\varepsilon)mk}f(s) \ll \frac{1}{k} p^{-(a(K)+\varepsilon/2)mk} |\mathbb{P}^{m-1}(\F_{p^k})|f(s)$ for some $f$ which is independent of $V$. Summing over all absolutely irreducible $K$-modules $V$, we see that $\log(\zeta_G)(s')$ converges for all $s' > a(K)+a(G/K)+1$.
\end{proof}

\section{Examples of abscissae}\label{sec:examples}

In this section we will prove Theorem~\ref{thmABC:examples}.

\subsection{Free abelian groups}\label{sec:free-abelian-abscissa}

Finitely generated abelian groups have UBERG by \cite{KV}, and all their absolutely irreducible representations have dimension $1$. For the free abelian group $\Z^r$ of rank $r$ and its profinite completion $\hat{\Z}^r$, $r^\ast(\Z^r,\F_{p^j},1)= r^\ast(\hat{\Z}^r,\F_{p^j},1)$ is the number of homomorphisms from $\hat{\Z}^r$ to $\F_{p^j}^\times$, that is, $(p^j-1)^r$. So $$\log(\zeta_{\hat{\Z}^r}(s)) = \sum_{p \in \mathcal{P}} \sum_{j=1}^\infty \frac{(p^j-1)^r}{j}p^{-sj} \leq \sum_n n^{r-s}$$ which converges for $s > r+1$. When $s=r+1$, we get $$\log(\zeta_{\hat{\Z}^r}(s)) > \sum_{p \in \mathcal{P}} \frac{(p-1)^r}{p^{r+1}} = \sum_{p \in \mathcal{P}} \sum_{i=0}^r \frac{(-1)^i}{p^{i+1}} \binom{r}{i}.$$ For each $i > 0$, the sum over the primes $p$ converges absolutely, whereas for $i = 0$, it diverges \cite{mertens}. Therefore the whole sum, over $i$ and $p$, diverges. So $a(\hat{\Z}^r)=r+1$.

Note that, by expanding $(p^j-1)^r = \sum_{i=0}^r (-1)^{r-i} \binom{r}{i} p^{ij}$, we get $$\log(\zeta_{\hat{\Z}^r}(s)) = \sum_{i=0}^r \sum_{p \in \mathcal{P}} \sum_{j=1}^\infty (-1)^{r-i} \binom{r}{i} \frac{p^{(i-s)j}}{j} = \sum_{i=0}^r (-1)^{r-i} \binom{r}{i} \log(\zeta(s-i)),$$ so $\zeta_{\hat{\Z}^r}(s) = \prod_{i=0}^r \zeta(s-i)^{(-1)^{r-i} \binom{r}{i}}$, where $\zeta(s)$ is the Riemann zeta function. 
In particular, $\zeta_{\hat{\Z}^r}(s)$ admits a meromorphic extension to $\C$ and has a simple pole at $r+1$.
 It is interesting to remark that our zeta function in this case is very similar to the subgroup growth zeta function as defined in \cite{GSS}: in that case $\zeta_{\hat{\mathbb{Z}}^r}^\le(s)=\prod_{i=1}^{r-1} \zeta(s-i)$ by \cite[Proposition 1.1]{GSS}.

\subsection{Free abelian pro-\texorpdfstring{$p$}{p} 
groups}\label{sec:free_ab_pro_p}

Fix a prime number $p$. Here we study the abscissa of convergence of free abelian pro-$p$ groups $\Z_p^r$. 
Let $w_p(m) = |m|_p^{-1}$ denote the highest $p$-power dividing $m$. If $q$ is a prime power, then 
\[
	r^*(\Z_p^r,\F_q,1) = w_p(q-1)^r,
\]
i.e., there are many absolutely irreducible representations, if $q$ is congruent $1$ modulo a high power of $p$.
We will see that this observation links the abscissa closely to small prime powers in the arithmetic progressions $np^k + 1$. 

We begin with a short summary of results on small \emph{primes} in arithmetic progressions.
 By Linnik's theorem there are constants $c,L$
such that for every $d \geq 2$ the least prime $p_{\text{min}}(d)$ congruent $1$ modulo $d$ satisfies
\[
	p_{\text{min}}(d) \leq c d^L.
\]
Currently, the best known value for the exponent is $L=5$; see \cite{Xylouris}.
Assuming the extended Riemann hypothesis or the generalized Riemann hypothesis, we have $L = 2+\epsilon$ for every $\epsilon > 0$; see \cite{BachSorenson} or \cite{HB}. 
A folklore conjecture (sometimes attributed to Chowla) states that $L = 1+\epsilon$ for every $\epsilon>0$.

For our purposes the only relevant case is $d = p^j$ a power of the fixed prime number $p$. In this case better results are known.
Let $L(p)$ be defined as
\[
 L(p) = \limsup_{j \to \infty} \frac{\log(p_{\text{min}}(p^j))}{j \log(p)}.
\]
In other words $L(p)$ is the infimum over all real numbers $L> 0$ such that $p_{\text{min}}(p^j) \leq c_L p^{jL}$ for some $c_L > 0$ and all $j \geq 1$.
Barban, Linnik and Tshudakov proved $L(p) \leq \frac{8}{3}$. Gallagher \cite{Gallagher} (see also Iwaniec \cite{Iwaniec}) established $L(p) < 2.5$ and Huxley \cite{Huxley} improved this to $L(p) \leq 2.4$. Currently the best bound appears in a paper of Banks-Shparlinski \cite{BanksShparlinski}, who show that $L(p) < 2.1115$.

We will now see that the abscissa of convergence for $\Z_p^r$ is related to a very similar, but less studied constant. For our purposes, we can replace $\limsup$ by $\liminf$ and, in addition, we are interested in \emph{prime powers} in arithmetic progressions (which should not make a big difference asymptotically).
\begin{defn}
Let $pp_{\text{min}}(d)$ denote the least prime power congruent $1$ modulo $d$. We define
\[
	K(p) = \liminf_{j \to \infty} \frac{\log(pp_{\text{min}}(p^j))}{j \log(p)}.
\]
\end{defn}
\begin{prop}\label{prop:abscissa-Zp}
Let $p$ be a prime number. Then
\[
    \frac{r}{K(p)} \leq a(\Z_p^r) \leq \frac{r-1}{K(p)} + 1.
\]
In particular, $a(\Z_p) = 1$. 
\end{prop}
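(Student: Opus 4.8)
The plan is to exploit that every absolutely irreducible representation of $\Z_p^r$ over a finite field is one-dimensional (as already recorded in the excerpt, $r^\ast(\Z_p^r,\F_q,1)=w_p(q-1)^r$ and $r^\ast(\Z_p^r,\F_q,n)=0$ for $n\geq 2$), so all the projective-space factors equal $1$. Reindexing each pair $(\ell,j)$ — running prime and exponent — by the prime power $q=\ell^j$ and using $\ell^{-sj}=q^{-s}$, the logarithm of the zeta function becomes the Dirichlet-type series
\[
  \log \zeta_{\Z_p^r}(s)=\sum_{q}\frac{w_p(q-1)^r}{j(q)}\,q^{-s},
\]
where $q$ ranges over prime powers and $j(q)$ is the exponent of $q$. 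Everything then hinges on how often a small prime power $q$ satisfies $q\equiv 1\ (p^k)$ for large $k$, which is exactly what $K(p)$ measures.

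For the upper bound I would telescope $w_p(q-1)^r=1+\sum_{k=1}^{v_p(q-1)}(p^{kr}-p^{(k-1)r})$, with $v_p$ the $p$-adic valuation, and interchange summations to get
\[
  \log\zeta_{\Z_p^r}(s)=\log\zeta(s)+\sum_{k\geq 1}(p^{kr}-p^{(k-1)r})\sum_{q\equiv 1\,(p^k)}\frac{q^{-s}}{j(q)}.
\]
The inner sum over prime powers $q\equiv 1\ (p^k)$ is at most $\sum_{m\geq pp_{\min}(p^k),\ m\equiv 1\,(p^k)}m^{-s}$ (legitimate since $pp_{\min}(p^k)\equiv 1\ (p^k)$), and comparison with an integral bounds this by $\ll_s p^{-k}\,pp_{\min}(p^k)^{-(s-1)}$ for $s>1$, with a constant depending only on $s$. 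As $p^{kr}-p^{(k-1)r}\leq p^{kr}$, the tail is dominated by $\sum_k p^{k(r-1)}\,pp_{\min}(p^k)^{-(s-1)}$; invoking the liminf definition of $K(p)$ in the form $pp_{\min}(p^k)\geq p^{k(K(p)-\varepsilon)}$ for all large $k$, this is a convergent geometric series once $(r-1)<(K(p)-\varepsilon)(s-1)$. Letting $\varepsilon\to 0$ gives convergence for all $s>\frac{r-1}{K(p)}+1$.

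For the lower bound I would keep only the single term $q_k:=pp_{\min}(p^k)$ for the infinitely many $k$ witnessing the liminf, i.e.\ those with $pp_{\min}(p^k)<p^{k(K(p)+\varepsilon)}$. Since $w_p(q_k-1)\geq p^k$ and $j(q_k)\leq\log_2 q_k<k(K(p)+\varepsilon)\log_2 p$, this term is at least $\frac{p^{k(r-(K(p)+\varepsilon)s)}}{k(K(p)+\varepsilon)\log_2 p}$, which tends to $\infty$ along this subsequence whenever $s<\frac{r}{K(p)+\varepsilon}$. As every term of the series is positive, having infinitely many terms exceeding any fixed bound forces divergence; letting $\varepsilon\to 0$ yields $a(\Z_p^r)\geq\frac{r}{K(p)}$. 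Finally, $pp_{\min}(p^k)\geq p^k+1$ trivially, so $K(p)\geq 1$; hence for $r=1$ the upper bound reads $a(\Z_p)\leq 1$, which together with the trivial lower bound $a(G)\geq 1$ gives $a(\Z_p)=1$.

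The step I expect to be the main obstacle is the careful bookkeeping around the liminf: one must use the \emph{eventual} lower bound $pp_{\min}(p^k)\geq p^{k(K(p)-\varepsilon)}$ for the convergence direction and the \emph{infinitely often} upper bound for the divergence direction, and check that the finitely many small $k$ where these might fail do not affect either conclusion. The analytic estimate for $\sum_{m\equiv 1\,(p^k)}m^{-s}$ is routine, but it is essential that its constant be independent of $k$ so that it survives the outer summation over $k$.
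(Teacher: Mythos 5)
Your proof is correct on both halves, and it differs from the paper's in a way worth noting. For the upper bound, the paper rewrites $\log\zeta_{\Z_p^r}$ via the von Mangoldt function, introduces a truncated weight $w^\varepsilon_p(n-1)$ (the largest $p$-power dividing $n-1$ not exceeding $n^{1/(K-\varepsilon)}$) so that the liminf becomes a pointwise cutoff, estimates the summatory function $W^\varepsilon_{p,r}(x)=\sum_{n\le x}w^\varepsilon_p(n-1)^r$ by the same telescoping device you use, and finishes with Abel summation. You instead telescope $w_p(q-1)^r$ directly, interchange the $k$-sum with the $q$-sum, bound each tail $\sum_{q\equiv 1\ (p^k)}q^{-s}/j(q)$ by the first element $pp_{\min}(p^k)$ of the progression together with an integral comparison, and only then invoke $pp_{\min}(p^k)\ge p^{k(K(p)-\varepsilon)}$ for large $k$. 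This bypasses the summatory function and Abel summation; the uniformity you flag — that the constant in $\ll_s p^{-k}\,pp_{\min}(p^k)^{-(s-1)}$ depends only on $s$ — does hold, since $M^{-s}\le p^{-k}M^{-(s-1)}$ because $M=pp_{\min}(p^k)>p^k$, and the integral comparison is uniform. Both routes rest on the same liminf inequality and reach the same threshold $\frac{r-1}{K(p)-\varepsilon}+1$.

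For the lower bound, you and the paper both single out the terms at $q=pp_{\min}(p^j)$. Your version correctly uses the ``infinitely often small'' half of the liminf ($pp_{\min}(p^k)<p^{k(K(p)+\varepsilon)}$ for infinitely many $k$) to exhibit unboundedly large terms in a nonnegative series, which is the right way round; the paper's written argument invokes the eventual lower bound $q_j^{k_j}>p^{j(K(p)-\varepsilon)}$ and then applies it in a step that in fact requires $q_j^{k_j}\le p^{j(K(p)-\varepsilon)}$, i.e.\ the inequality is pointing the wrong way in the displayed chain. The intended argument and conclusion are the same as yours; your bookkeeping is the cleaner realisation of it. The deduction $a(\Z_p)=1$ from $K(p)\ge 1$ (which follows from $pp_{\min}(p^k)\ge p^k+1$) together with the universal lower bound $a(G)\ge 1$ matches the paper exactly.
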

We note that $a(\Z_p) \geq 1$ for all groups, so the second assertion follows 
immediately from the upper bound.

It is clear that $1 \leq K(p) \leq L(p)$. As mentioned before, it is 
conjectured that $L(p) = 1$ and so one might conjecture $K(p) = 1$. In this 
case the upper and lower bounds agree and $a(\Z_p^r) = r$. 

\begin{proof}[Proof of the lower bound in \ref{prop:abscissa-Zp}]
Let $q^{k_j}_j = pp_{\text{min}}(p^j)$ be the least prime power congruent $1$ modulo $p^j$ (where $q_j$ is a prime).

Let $\varepsilon > 0$. By assumption, $q_j^{k_j} > p^{j(K(p)-\varepsilon)}$ for all sufficiently large $j$, say $j \geq j_0$. Since $q_j^{k_j} \leq cp^{5j}$ by \cite{Xylouris}, we have 
$k_j \log(q_j) \leq \log(c)+5j\log(p)$.
Then, for all real $s> 1$, $\log\zeta_{\Z_p^r}(s)$ is at least
\[
	 \sum_{j\geq j_0} \frac{p^{jr}}{k_j q_j^{k_js}} \geq  
	\sum_{j \geq j_0} \frac{p^{jr}}{k_j p^{js(K(p)-\varepsilon)}}
	\geq \sum_{j \geq j_0} \frac{p^{jr}\log(q_j)}{(\log(c)+5j \log(p)) 
p^{js(K(p)-\varepsilon)}}
\]
and this series diverges for $s < \frac{r}{K(p)-\varepsilon}$.
We deduce that $\alpha(\Z_p^r) \geq \frac{r}{K(p)}$. 
\end{proof}
Let $\zeta_{\Z^r_p}$ be the representation zeta function of $\Z_p^r$. Then
\[
	\log \zeta_{\Z_p^r}(s) = \sum_{q \text{ prime}} \sum_{k=1}^\infty \frac{w_p(q^k-1)^r}{k} q^{-sk};
\]
recall that $w_p(m)$ denotes the highest $p$-power dividing $m$ with the convention that $w_p(0)= 0$.
It is well-known that the Riemann zeta function satisfies
\[
	\log \zeta(s) = \sum_{n} \frac{\Lambda(n)}{\log(n)} n^{-s}
\]
where $\Lambda$ denotes the von Mangoldt function. In the same way we can rewrite $\zeta_{\Z^r_p}(s)$ and obtain
\[
	\log \zeta_{\Z_p^r}(s) = \sum_{n}  \frac{\Lambda(n) w_p(n-1)^r}{\log(n)} n^{-s}.
\]

\begin{proof}[Proof of the upper bound in Proposition \ref{prop:abscissa-Zp}]
Let $K := K(p)$.
Let $\varepsilon > 0$ be given. We may assume that $K - \varepsilon \geq 1$, since 
for $K = 1$, we can even take $\varepsilon = 0$. 

By assumption, there are only finitely many pairs $(j,n)$ where $n$ is a prime power with $n \equiv 1 \bmod p^j$ and $n^{1/(K-\varepsilon)} \leq p^j$.
We define $w^\varepsilon_p(n-1)$ to be the highest power of $p$ which divides $n-1$ and is at most $n^{1/(K-\varepsilon)}$.
Using $\Lambda(n)/\log(n) \leq 1$ we obtain for all real $s > 1$:
\begin{align*}
\log \zeta_{\Z_p^r}(s) &= \sum_{n}  \frac{\Lambda(n) w_p(n-1)^r}{\log(n)} n^{-s}\\
	 &=  \sum_{n}  \frac{\Lambda(n) w^{\varepsilon}_p(n-1)^r}{\log(n)} n^{-s} + O(1)\\	
	 &\leq \sum_n w^\varepsilon_p(n-1)^r n^{-s} + O(1).
\end{align*}
	
Let $W^\varepsilon_{p,r}(x) = \sum_{n\leq x} w^\varepsilon_p(n-1)^r$.
Using a standard trick we get
\begin{align*}
		W^\varepsilon_{p,r}(x) &= \sum_{n \leq x} w^\varepsilon_p(n-1)^r  \leq \lfloor x-1\rfloor +\sum_{k=1}^{\lfloor\log_p(x)/(K-\epsilon)\rfloor}  \Bigl\lfloor\frac{x-1}{p^k}\Bigr\rfloor (p^{kr}-p^{(k-1)r}) \\
				&\leq (x-1) + \sum_{k=1}^{\lfloor\log_p(x)/(K-\epsilon)\rfloor} \frac{x-1}{p^k}(p^{rk}-p^{r(k-1)}) \\
				&= (x-1) + (1-p^{-r})(x-1)\sum_{k=1}^{\lfloor\log_p(x)/(K-\epsilon)\rfloor}  p^{(r-1)k} \\
				&\leq (x-1)\left(1+\frac{1-p^{-r}}{1-p^{-(r-1)}}x^{(r-1)/(K-\epsilon)}\right)\\
				&\ll x^{(r-1)/(K-\epsilon)+1}.
			\end{align*}
We use Abel's summation formula to obtain for all $x > p$
\begin{align*}
	\sum_{p < n \leq x} w^\varepsilon_p(n-1)^r n^{-s} &= W^{\varepsilon}_{p,r}(x)x^{-s} - W^{\varepsilon}_{p,r}(p)p^{-s} - \int_{p}^x W^{\varepsilon}_{p,r}(u) (-su^{-s-1}) d u\\
	&\ll x^{\frac{r-1}{K-\epsilon}+1-s} + s\int_{p}^x u^{(r-1)/(K-\epsilon)-s} du\\
	&\ll x^{\frac{r-1}{K-\epsilon}+1-s} + \frac{s}{\frac{r-1}{K-\epsilon}+1-s}x^{\frac{r-1}{K-\epsilon}+1-s}
\end{align*}
the latter expression is bounded for all $s>\frac{r-1}{K-\epsilon}+1$ as $x \to \infty$.
We conclude that $\log \zeta_{\Z_p^r}(s)$ converges absolutely for $\Re(s) > \frac{r-1}{K-\epsilon}+1$.
\end{proof}
\begin{rmk}
Assuming $K(p) > 1$, it seems that the upper bound in Proposition \ref{prop:abscissa-Zp} is of the right order of magnitude. In fact, from the prime number theorem in arithmetic progressions one would expect that there are 
roughly 
\[
	\frac{p^{j(K(p)+\varepsilon)}}{\varphi(p^j)} = \frac{p^{j(K(p)+\varepsilon-1)}}{1-p^{-1}}
\]
 primes congruent $1$ mod $p^j$ below $p^{j(K(p)+\varepsilon)}$ (at least for large $j$). 
 An estimate of this form gives rise to a lower bound which matches the upper bound in Proposition \ref{prop:abscissa-Zp}. 
 
Some quantitative results about the amount of small primes in arithmetic progressions are known.
 For instance, Xylouris \cite[Theorem 2.2]{Xylouris} proved that for large $p$ there are 
 at least $p^{j(3.21)}$ primes of the form $np^j+1$ below $p^{5j}$. This imposes the lower bound
 \[
 	a(\Z_p^r) \geq \frac{3.21 + r}{5}.
 \]
 For $r=2$ this improves on the lower bound in Proposition 
\ref{prop:abscissa-Zp} and gives $a(\Z_p^2) > 1.04 > 1$.
 \end{rmk}

\begin{rmk}
	The proof of the upper bound shows similarly that any finite product $\prod \Z_p$ over distinct primes $p$ has abscissa of convergence $1$ (though, as we have already seen, the product over all primes has abscissa of convergence $2$). But in fact, we can show that in some cases, $\prod_k \Z_{p_k}$ has abscissa of convergence $1$ for an infinite sequence of primes $(p_k)$.
	
	Indeed, suppose we choose $p_k > 2^{(2^k)}$, and let $G = \prod_k \Z_{p_k}$. We may define $c_{(p_k)}(n)$ to be the largest divisor of $n$ all of whose prime factors are in $(p_k)$, so that $\log(\zeta_G)(s) \leq \sum_n c_{(p_k)}(n-1)n^{-s}$. Write $f(x)$ for the number of positive integers $n$ less than $x$ such that $c_{(p_k)}(n) = n$. As in Proposition \ref{prop:abscissa-Zp}, Tschebyscheff's trick shows that $\sum_{n \leq x} c_{(p_k)}(n-1)$ is at most $(x-1)f(x)$.
	
	Let $x = 2^{(2^k)}$; then $f(x)$ is at most the number of partitions of $2^k$ into powers of $2$. It is shown in \cite[(1.3)]{deBruijn} that the number of such partitions is $2^{O(k^2)}$. Therefore, $\sum_{n \leq 2^{(2^k)}} c_{(p_k)}(n-1) \leq 2^{2^k+O(k^2)}$. Finally, we can use Abel's summation formula as before to show that $\sum_{n \leq 2^{(2^k)}} c_{(p_k)}(n-1)n^{-s} = O(2^{2^k+O(k^2)-2^ks})$ converges when $s>1$.
\end{rmk}

\subsection{Free pro-\texorpdfstring{$\mathfrak{C}$}{C} groups, I}\label{sec:free-pro-c}

Let $\mathfrak{C}$ be a class of finite groups which is closed under quotients, finite subdirect products, taking normal subgroups and extensions (i.e., $\mathfrak{C}$ an NE-formation in the sense of \cite[\S 2.1]{RZ}).
Let $F^\mathfrak{C}_r$ be the free pro-$\mathfrak{C}$ on $r$ generators; these exist by \cite[Section 3.3]{RZ} and satisfy the usual universal property.
An open normal subgroup of index $m$ is again a free pro-$\mathfrak{C}$ group of rank $m(r-1)+1$ by \cite[Theorem 3.6.2]{RZ}.
Here we prove a general lower bound result for the abscissa of $F_r^\mathfrak{C}$.
\begin{thm}\label{thm:lowerbound-general}
Assume that $\mathfrak{C}$ contains a non-trivial abelian group and an absolutely irreducible subgroup $S \subseteq \GL(n_0,q)$ for some $n_0 \geq 1$ and some prime power $q$.
Then
\[
	a(F_r^{\mathfrak{C}}) \geq \frac{\log |S|}{n_0 \log q}(r-1) +1.
\]
\end{thm}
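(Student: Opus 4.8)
The plan is to exhibit, inside $F_r^{\mathfrak C}$, a tower of open subgroups whose quotients produce many absolutely irreducible representations and to bound the abscissa from below using Lemma~\ref{sameabscissa} together with the observation that divergence of the defining series at $s$ forces $a(F_r^{\mathfrak C}) \ge s$. The key point is that $\mathfrak C$ contains the (small) group $S \subseteq \GL(n_0,q)$ and a non-trivial abelian group $A$; since $\mathfrak C$ is an NE-formation it contains every finite group built from these by extensions, subdirect products, quotients and normal subgroups. First I would fix, for each $m$, the open normal subgroup $U_m \le F_r^{\mathfrak C}$ of index $|S|^m$ obtained as the preimage of a suitably chosen quotient; by \cite[Theorem 3.6.2]{RZ} $U_m$ is free pro-$\mathfrak C$ of rank $d_m = |S|^m(r-1)+1$, and in particular $U_m$ surjects onto $A^{d_m}$, hence onto a product of $d_m$ copies of a cyclic group $C_t$ of prime order $t$ dividing $|A|$.

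Next I would build representations of $F_r^{\mathfrak C}$ of dimension $n_0|S|^m$ over $\F_q$ (or a small extension) by inducing: starting from the $1$-dimensional characters of $U_m$ coming from $A^{d_m}$, one twists the natural $n_0$-dimensional absolutely irreducible representation of $S$ and induces up to $F_r^{\mathfrak C}$. The number of such characters is roughly $|A|^{d_m} \approx |A|^{|S|^m(r-1)}$; after accounting for the induction being far from injective (there are only polynomially-in-$|S|^m$ many ways two characters give isomorphic induced modules, and Clifford theory controls absolute irreducibility), one still gets at least $c^{|S|^m(r-1)}$ pairwise non-isomorphic absolutely irreducible $\F_q$-representations of dimension $n = n_0|S|^m$, for a constant $c>1$ depending on $|A|$. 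Feeding this into $\log\zeta_{F_r^{\mathfrak C}}(s)$, the corresponding terms are roughly $q^{(\log|A|/\log q)(r-1)|S|^m/n_0 \cdot n} \cdot q^{-snj}\cdot q^{n}$ with $q^j$ fixed, i.e.\ a geometric-type sum in $m$ with ratio $q^{n_0|S|^m((\log|A|/\log q)(r-1)/n_0 + 1 - s)}$; letting $|A|$ be chosen so that $\log|A|/\log q$ is as small as one pleases — actually one does not need $A$ large, only non-trivial, because one instead iterates: replace $A$ by $A^k$ inside $\mathfrak C$, which is still in $\mathfrak C$ — one checks divergence for every $s < \frac{\log|S|}{n_0\log q}(r-1)+1$. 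Here the exponent $\log|S|$ rather than $\log|A|$ appears because the growth of the rank $d_m$ is governed by the index $|S|^m$, not by $|A|$, so the abelian group only needs to contribute a fixed positive exponential rate which is then amplified by the $|S|$-power growth of $d_m$; a careful bookkeeping of the exponents shows the critical value is exactly $\frac{\log|S|}{n_0\log q}(r-1)+1$.

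The main obstacle I anticipate is the counting step: showing that the induced modules $\Ind_{U_m}^{F_r^{\mathfrak C}}(\chi \otimes \rho_S)$ are absolutely irreducible for a large proportion of characters $\chi$, and controlling exactly how many distinct $\chi$ give isomorphic induced modules, so that the count of absolutely irreducible representations of dimension $n_0|S|^m$ is genuinely exponential in $|S|^m(r-1)$ with the right base. This requires Mackey/Clifford theory over the non-algebraically-closed field $\F_q$ together with a lower bound on the number of $U_m$-orbits on the relevant character set, and one must be careful that passing to $\End_G(N)$ (the field of definition) does not cost more than a bounded factor in the exponent — exactly the kind of $p^{o(nj)}$ fudge that Lemma~\ref{sameabscissa} is designed to absorb. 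Once the count is in hand, the divergence of the Dirichlet-type series below the claimed abscissa, and hence the inequality $a(F_r^{\mathfrak C}) \ge \frac{\log|S|}{n_0\log q}(r-1)+1$, follows from Lemma~\ref{generalbounds} and Lemma~\ref{sameabscissa}.
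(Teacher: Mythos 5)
Your construction counts one-dimensional characters of an abelian quotient of the index-$|S|^m$ subgroup $U_m$ and, after tensoring with a fixed copy of $\rho_S$ and inducing, produces representations of dimension $n_0|S|^m$. But this cannot yield the constant $\log|S|$ in the exponent. The number of $\F_q$-characters of $U_m$ is governed by the base group $A$ (more precisely, by the size of $\Hom(U_m^{\mathrm{ab}},\F_q^\times)$, which is at most $(q-1)^{d_m}$), so the count of induced modules grows like $c^{|S|^m(r-1)}$ with $c$ depending on $|A|$ or $q-1$, while the dimension grows like $n_0|S|^m$. The index factor $|S|^m$ appears in \emph{both} the exponent of the count and the dimension, so it cancels in the ratio; the surviving lower bound for the abscissa is $1+\frac{(r-1)\log c}{n_0\log q}$, not $1+\frac{(r-1)\log|S|}{n_0\log q}$. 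Your remark that ``the growth of the rank $d_m$ is governed by the index $|S|^m$'' does not rescue this: rank growth and dimension growth cancel exactly. The suggestion to replace $A$ by $A^k$ is also a dead end, since the number of characters of $U_m$ into $\F_q^\times$ is capped at $(q-1)^{d_m}$ regardless of the chosen abelian target; if it did work, the lower bound would diverge as $k\to\infty$, contradicting finiteness of $a(F_r^{\mathfrak{C}})$.

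The paper's proof is genuinely different and puts $|S|$ in the base of the count. It takes the kernel $N$ of a surjection $F_r^{\mathfrak{C}}\to\Z/p^k\Z$ (here the non-trivial abelian group in $\mathfrak{C}$ is used only to guarantee such $p$-power cyclic quotients exist), so $N$ is free of rank $p^k(r-1)+1$, and then counts surjective homomorphisms $N\twoheadrightarrow S\subseteq\GL(n_0,q)$, which number roughly $|S|^{p^k(r-1)}$. It then shows, via Lemma~\ref{lem:invariants-extend}, that all but $O(|S|^{p^{k-1}(r-1)})$ of these give representations of $N$ with trivial inertia in $F_r^{\mathfrak{C}}/N$, so they induce to absolutely irreducible representations of $F_r^{\mathfrak{C}}$ of dimension $p^k n_0$. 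Here the dimension grows only like $p^k n_0$ while the count grows like $|S|^{p^k(r-1)}$, giving exactly $\frac{\log|S|}{n_0\log q}(r-1)+1$. To repair your argument you would need to count, not characters of an abelian quotient, but many inequivalent representations of $U_m$ (or of a normal $N$) with image $S$ in dimension $n_0$, and control their inertia; once this is in place the divergence bookkeeping you sketch does go through.
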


\begin{lem}\label{lem:invariants-extend}
Let $p$ be a prime and let $L/F$ be an extension of finite fields such that $p$ divides $(|L|-1)/(|F|-1)$. Let $G$ be a profinite group and let $N \leq G$ be a normal subgroup of index $p$.
Every $G$-invariant absolutely irreducible representation of $N$ over $F$ 
extends to an absolutely irreducible representation of $G$ over $L$.
\end{lem}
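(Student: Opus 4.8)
The plan is to reduce to a question about the vanishing of an obstruction class in $H^2(G/N, L^\times)$ and then exploit that $G/N$ is cyclic of prime order together with the divisibility hypothesis on $(|L|-1)/(|F|-1)$. First I would recall the standard framework (as in Isaacs, Chapter~11, or \cite[\S 8,9]{Isaacs}): if $V$ is a $G$-invariant absolutely irreducible $FN$-module, then after extending scalars to $L$ the module $L\otimes_F V$ is still absolutely irreducible over $N$ (here I would use that $V$ is absolutely irreducible, so $\End_{FN}(V) = F$ and $\End_{LN}(L\otimes_F V) = L$), and it remains $G$-invariant. For each $g \in G$ one may choose an $L$-linear isomorphism $T_g\colon {}^g(L\otimes_F V) \to L\otimes_F V$; the assignment $g\mapsto T_g$ fails to be a homomorphism only up to scalars in $L^\times$, producing a $2$-cocycle and hence an obstruction class $\omega \in H^2(G/N, L^\times)$ (with trivial action, since $N$ acts by inner automorphisms which are already accounted for). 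The module $V$ extends to an absolutely irreducible $LG$-module precisely when $\omega$ is trivial.

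Next I would use that $G/N \cong \Z/p$. Its Schur multiplier $H^2(\Z/p, L^\times)$ with trivial action is isomorphic to $L^\times / (L^\times)^p$, which is cyclic of order $\gcd(p, |L^\times|) = \gcd(p, |L|-1)$. If $p \nmid |L|-1$ this group is trivial and we are immediately done; but we must handle the case $p \mid |L|-1$. Here is where the hypothesis that $p \mid (|L|-1)/(|F|-1)$ enters. The point is that the obstruction cocycle already takes values in the subgroup $F^\times \leq L^\times$: indeed one can choose the intertwining operators $T_g$ to be defined over $F$ (working with the original $FN$-module $V$ rather than its scalar extension, the Clifford-theoretic obstruction lives in $H^2(G/N, F^\times)$), and the class in $H^2(G/N, L^\times)$ is the image of a class in $H^2(G/N, F^\times)$ under the map induced by $F^\times \hookrightarrow L^\times$. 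Now $H^2(\Z/p, F^\times) \cong F^\times/(F^\times)^p$ has order $\gcd(p, |F|-1)$, and the natural map to $H^2(\Z/p, L^\times) \cong L^\times/(L^\times)^p$ is, under the identifications, the map $F^\times/(F^\times)^p \to L^\times/(L^\times)^p$ induced by inclusion. Because $p$ divides $(|L|-1)/(|F|-1)$, every element of $F^\times$ is a $p$-th power in $L^\times$: the cyclic group $L^\times$ has a unique subgroup of order $|F|-1$, namely $F^\times$, and since $p \mid [L^\times : F^\times]$ we have $F^\times \subseteq (L^\times)^p$ (as $(L^\times)^p$ is the unique subgroup of index $\gcd(p,|L|-1)=p$, which contains $F^\times$ by the index count). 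Hence the map $H^2(\Z/p,F^\times) \to H^2(\Z/p,L^\times)$ is zero, so the image of the obstruction class vanishes, and $V$ extends to $G$ over $L$.

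Finally I would check that the extended representation is again absolutely irreducible: this is automatic, since its restriction to $N$ is $L\otimes_F V$, which is already absolutely irreducible over $N$, hence a fortiori the $LG$-module is absolutely irreducible. I expect the main obstacle to be the bookkeeping in the middle step — making precise that the Clifford obstruction for extending $V$ from $N$ to $G$ can be represented by a cocycle valued in $F^\times$ (not merely $L^\times$), and that its image in $H^2(G/N,L^\times)$ is what controls extendibility over $L$. This is essentially the content of the projective-representation formulation of Clifford theory specialized to a prime-index normal subgroup, but one must be careful that the scalar ambiguity genuinely lives in the smaller field. Everything else is elementary cyclic-group cohomology and the structure of the multiplicative group of a finite field.
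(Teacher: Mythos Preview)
Your proposal is correct and follows essentially the same approach as the paper: both identify the Clifford-theoretic obstruction in $H^2(G/N,F^\times)$, use $H^2(\Z/p,A)\cong A/pA$, and show the map $H^2(G/N,F^\times)\to H^2(G/N,L^\times)$ vanishes. Your argument is in fact more explicit than the paper's at the key step, since you spell out that the divisibility hypothesis forces $F^\times\subseteq (L^\times)^p$, whereas the paper just asserts the map is trivial ``since $F^\times$ and $L^\times$ are cyclic''.
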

\begin{proof}
Let $\theta$ be an absolutely irreducible $G$-invariant representation of $N$ defined over $F$.
Let $\omega \in H^2(G/N, F^\times)$ be the obstruction cocycle (see \cite[Section 9]{Karpilovsky}).
Since $G/N$ is cyclic of order $p$, we have $H^2(G/N,A) = A/pA$ for all modules $A$ with trivial $G/N$-action (see \cite[Lemma 5.5]{Karpilovsky}).
Since $F^\times$ and $L^\times$ are cyclic the map  $H^2(G/N,F^\times) \to H^2(G/N, L^\times)$ is trivial, i.e., by \cite[Thm.~9.6]{Karpilovsky} $\theta$ extends to $G$ over $L$.
\end{proof}

\begin{proof}[Proof of Theorem \ref{thm:lowerbound-general}]
We assume $r\geq 2$; in fact, for $r = 1$ there is nothing to do. We may also assume that $S$ is non-trivial.
Since $\mathfrak{C}$ contains a non-trivial abelian group, there is a prime number $p$ such that $\mathfrak{C}$ contains all $p$-groups.
We only consider representations over the field $\F_q$. Let $m = |S|$.
The number of surjective homomorphisms from $F_r^{\mathfrak{C}}$ onto $S$ is $m^r - O((m/2)^r)$, where the error term depends on the maximal subgroups of $S$. 
A conjugacy class of homomorphisms contains at most $\frac{|\GL(n_0,q)|}{q-1}$ elements. 
We deduce that the number of equivalence classes of representations $r^S(F_r^{\mathfrak{C}})$ with image $S \subseteq \GL(n_0,q)$ satisfies
\[
	r^S(F_r^{\mathfrak{C}}) \geq \lambda m^{r-1}\bigl(1- O(2^{-r+1})\bigr)
\]
as $r$ tends to infinity with $\lambda = \frac{m(q-1)}{|\GL(n_0,q)|}$.

Let $k \geq 1$ and let
$\phi \colon F_r^{\mathfrak{C}} \to \Z/p^{k}\Z$ be a surjective homomorphism. Let $N = \ker(\phi)$ and let $N^+$ be the unique normal subgroup containing $N$ with index $p$. We note that $N$ is free of rank $p^k(r-1)+1$ and $N^+$ is free of rank $d = p^{k-1}(r-1)+1$.
We claim that most irreducible representations of $N$ with image $S \subseteq \GL(n_0,q)$ are not $N^+$-invariant. 
From Lemma \ref{lem:invariants-extend} we know that an $N^+$-invariant representation of $N$ over $\F_q$ extends to $N^+$ over a suitable field extension $L/\F_q$ (whose degree depends only on $p$ and $q$).
Let $x_1,\dots, x_d$ be a free generating set of $N^+$ with $x_1,\dots, x_{d-1} \in N$ (To find such a generating set, one can start with an arbitrary free generating set of $N^+$ such that $\phi(x_d)$ has order $p^k$ and replace $x_i$ by $x_ix_d^{k_i}$ for suitable $k_i \in \Z$). The number of homomorphisms of $N^+$ into $\GL_{n_0}(L)$ that map $x_1,\dots, x_{d-1}$ into $S \subseteq \GL(n_0,q)$ is $m^{d-1} \cdot |\GL_{n_0}(L)|$
and we conclude that the number of $N^+$-invariant representations satisfies
\begin{align*}
	r^S(N)^{N^+} = O(m^{p^{k-1}(r-1)}).
\end{align*}
and so
\begin{align*}
   r^S(N) -  r^S(N)^{N^+} \geq \lambda m^{p^k(r-1)}\left(1- O\bigl(2^{-p^{k-1}(r-1)}\bigr)\right).
\end{align*}
If $\theta$ is an absolutely irreducible representation of $N$ which is not $N^+$-invariant, then $\theta$ has trivial inertia subgroup in $F_r^{\mathfrak{C}}$ since $N^+$ is minimal normal in $G/N$. Therefore the induced representation $\Ind_N^{F_r^{\mathfrak{C}}}(\theta)$ is absolutely irreducible.
Moreover, only $|G/N| = p^k$ distinct conjugates of $\theta$ give rise to the same induced representation.
We obtain that
\begin{align*}
   r^*(F_r^{\mathfrak{C}},\F_q,p^k {n_0}) \geq \kappa p^{-k} m^{p^k(r-1)}
\end{align*}
for all sufficiently large $k$ and a suitable constant $\kappa$.
This lower bound implies that the series 
\[\log(\zeta_{F_r^{\mathfrak{C}}})(s) = \sum_{n,\ell,j} \frac{r^*(F_r^{\mathfrak{C}},\F_{\ell^j},n)}{j} \ell^{-jn s } \frac{\ell^{jn}-1}{\ell^{j}-1} 
\geq  \kappa \sum_{k} m^{p^k(r-1)}p^{-k}q^{(1-s)p^{k}{n_0}} 
\] 
diverges for real numbers $s <  \frac{\log(m)}{{n_0}\log(q)}(r-1) + 1$.
\end{proof}
\begin{cor}\label{cor:lowerbound-general-perm}
In Theorem~\ref{thm:lowerbound-general} assume in addition that $\mathfrak{C}$ contains a transitive permutation group $T$ of degree $d > 1$.
Then
\[
	a(F_r^{\mathfrak{C}}) \geq \frac{\log\bigl( |S|^{(d-1)}|T|\bigr)}{(d-1)n_0 \log q}(r-1) +1.
\]
\end{cor}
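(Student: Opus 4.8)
The plan is to bootstrap Corollary~\ref{cor:lowerbound-general-perm} from Theorem~\ref{thm:lowerbound-general} by manufacturing, out of the given data $S \subseteq \GL(n_0,q)$ absolutely irreducible and the transitive permutation group $T$ of degree $d$, a \emph{new} absolutely irreducible linear group whose ``density'' $\frac{\log|S'|}{n_0' \log q}$ equals the claimed exponent $\frac{\log(|S|^{d-1}|T|)}{(d-1)n_0\log q}$. The natural candidate is the imprimitive wreath-type construction: inside $\GL(dn_0,q)$ take the group generated by the block-diagonal copies of $S$ in each of the $d$ coordinate blocks together with the permutation matrices realising $T$ permuting the $d$ blocks. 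Concretely $S' = S \wr T$ acting on $\F_q^{n_0} \otimes \F_q^d \cong \F_q^{dn_0}$, which lies in $\mathfrak{C}$ because $\mathfrak{C}$ is an NE-formation: $S^d$ is a finite subdirect product (indeed direct product) of copies of $S \in \mathfrak{C}$, and $S \wr T$ is an extension of $S^d$ by $T \in \mathfrak{C}$.

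The key steps, in order: (1) verify $S \wr T \in \mathfrak{C}$ as above, so that Theorem~\ref{thm:lowerbound-general} applies with $n_0$ replaced by $dn_0$ and $S$ replaced by $S \wr T$; (2) check that the natural $dn_0$-dimensional representation of $S \wr T$ is absolutely irreducible --- this is the one genuine representation-theoretic point, and it follows from the fact that the base group $S^d$ already acts with $d$ isotypic-but-inequivalent ``blocks'' (each block carrying the absolutely irreducible $S$-module, distinguished by which coordinate projection of $S^d$ acts non-trivially), so that $\End_{S^d}(\F_q^{dn_0}) \cong \F_q^d$, and $T$ permutes this $\F_q^d$ transitively, forcing the commutant of $S \wr T$ down to $\F_q$; (3) compute $|S \wr T| = |S|^d |T|$ and plug into Theorem~\ref{thm:lowerbound-general} to get
\[
 a(F_r^{\mathfrak{C}}) \ge \frac{\log(|S|^d|T|)}{dn_0 \log q}(r-1)+1;
\]
(4) observe this is \emph{not quite} the stated bound --- the corollary has exponent $\frac{\log(|S|^{d-1}|T|)}{(d-1)n_0\log q}$, with $d-1$ in place of $d$ in both numerator and denominator. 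So one must instead use a slightly different group: take the \emph{deleted permutation module} twisted appropriately, i.e.\ realise $T$ on the $(d-1)$-dimensional summand and pair it with only $d-1$ free copies of $S$; equivalently, apply the ``free rank'' bookkeeping of Theorem~\ref{thm:lowerbound-general}'s proof directly, where among $d$ coordinates one is pinned down by the $T$-action and only $d-1$ contribute a free factor of $|S|$. The cleanest route is to mimic the proof of Theorem~\ref{thm:lowerbound-general} rather than quote it as a black box: count surjections $F_r^{\mathfrak{C}} \twoheadrightarrow S \wr T$ modulo conjugacy, and among those pass to the kernel of a map to $\Z/p^k\Z$ and induce up non-invariant representations exactly as there, but track that an $N^+$-invariant representation with image in $S \wr T$ is constrained on $d$ of its $p^{k-1}(r-1)+1$ free generators in a way that saves a factor, yielding the $d-1$ in the denominator.

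I expect the main obstacle to be precisely this reconciliation of the combinatorial exponent: making sure that the wreath construction (or its deleted-module variant) produces exactly $|S|^{d-1}|T|$ and $(d-1)n_0$ rather than $|S|^d|T|$ and $dn_0$, and that the absolute irreducibility survives when one restricts $T$ to the deleted permutation module (which over $\F_q$ can fail to be irreducible when $\mathrm{char}\,\F_q \mid d$ --- one may need to allow a further bounded field extension, harmless for the asymptotic density as in Lemma~\ref{lem:invariants-extend}, or argue directly with the full permutation module and absorb the discrepancy into the free-generator count). A secondary, more routine obstacle is confirming that $S \wr T$ (and its variant) genuinely lies in $\mathfrak{C}$: this needs only closure under finite subdirect products and extensions, both part of the NE-formation hypothesis, but one should state it carefully since $T$ need not contain the abelian group or the $p$-groups used elsewhere in the argument --- those come from the separate hypothesis inherited from Theorem~\ref{thm:lowerbound-general}.
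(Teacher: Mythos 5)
Your first three steps --- building $S\wr T$, checking it lies in $\mathfrak{C}$, and verifying that its natural $dn_0$-dimensional module over $\F_q$ is absolutely irreducible --- are all sound, and you correctly diagnose that Theorem~\ref{thm:lowerbound-general} applied to this group only yields
\[
a(F_r^{\mathfrak{C}}) \;\geq\; \frac{\log(|S|^d|T|)}{dn_0\log q}(r-1)+1,
\]
which has $d$ where the corollary wants $d-1$, and is strictly weaker. Where the proposal goes wrong is step (4): neither of your proposed repairs closes this gap. The deleted permutation module is a $(d-1)$-dimensional representation of $T$ alone; there is no natural $(d-1)n_0$-dimensional group made of $d-1$ copies of $S$ together with $T$ that is absolutely irreducible and lies in $\mathfrak{C}$, and you yourself note the deleted module can fail to be irreducible in characteristic dividing $d$. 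The other suggestion --- ``pinning down one of the $d$ coordinates'' in the free-generator count of the proof of Theorem~\ref{thm:lowerbound-general} --- would, if it did anything, constrain the surjections and \emph{reduce} the count of representations, weakening the lower bound rather than strengthening it; the $(d-1)$ in the statement does not arise from deleting a coordinate.

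The paper's actual fix is a limiting argument over \emph{iterated} wreath products, which your proposal does not consider. For each $k\geq 1$ set
\[
W_k \;=\; S\wr\underbrace{T\wr T\wr\cdots\wr T}_{k\text{ times}}.
\]
This lies in $\mathfrak{C}$ (it is an iterated extension of direct powers of $S$ and $T$), and it has a faithful absolutely irreducible $\F_q$-representation of degree $d^kn_0$: fix one factor $S_1$ in the base group $\prod_{i=1}^{d^k}S_i$, extend the faithful degree-$n_0$ representation $\theta$ of $S_1$ trivially to the normalizer $N_{W_k}(S_1)$ (a direct product of index $d^k$), and induce; the induced module is absolutely irreducible because its restriction to the base is a single $W_k$-orbit of pairwise inequivalent absolutely irreducible modules, and faithful because $\theta$ and the top permutation action are. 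Since $|W_k| = |S|^{d^k}|T|^{(d^k-1)/(d-1)}$, Theorem~\ref{thm:lowerbound-general} applied to $W_k \subseteq \GL(d^kn_0,q)$ gives
\[
a(F_r^{\mathfrak{C}})-1 \;\geq\; \frac{d^k\log|S| + \frac{d^k-1}{d-1}\log|T|}{d^k n_0\log q}(r-1)
\;=\; \frac{(d-1)\log|S| + \bigl(1-d^{-k}\bigr)\log|T|}{(d-1)\,n_0\log q}(r-1),
\]
and letting $k\to\infty$ recovers the stated bound. The $(d-1)$ in the denominator is the denominator of the geometric series $1+d+\cdots+d^{k-1}$, not a degree. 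Your single-$T$ wreath is exactly the case $k=1$ of this family, which is why you obtained the weaker constant.
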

\begin{proof}
Let $k \geq 1$. We consider the wreath product
\[
	 W_k = S\wr \underbrace{T \wr T \wr \cdots\wr T}_{k \text{ times }}
\]
constructed from the permutation representation of $T$ on $d$ elements.
Since $W_k$ is an extension of direct products of groups in $\mathfrak{C}$, it belongs to $\mathfrak{C}$.
We will show that $W_k$ has a faithful absolutely irreducible representation of degree $d^{k}n_0$ over $\F_q$.
Then Theorem \ref{thm:lowerbound-general} gives
\begin{align*}
	a(F_r^{\mathfrak{C}})-1 &\geq \frac{\log|W_k|}{d^kn_0\log(q)}(r-1) = \frac{\log(|S|^{d^k}|T|^{(d^k-1)/(d-1)})}{d^kn_0\log(q)}(r-1) \\ &= \frac{\log(|S|^{d-1})+(1-\frac{1}{d^k})\log(|T|))}{(d-1)n_0\log(q)}(r-1)
\end{align*}
and the result follows by letting $k$ tend to infinity.

Let $N = \prod_{i=1}^{d^k}S_i$ be the normal base group of $W_k$. To see that $W_k$ admits an absolutely irreducible faithful representation of degree $d^kn_0$ we fix a copy $S_1 \subseteq N$ of $S$ in the base group. Then $S_1$ has a faithful representation $\theta$ of degree $n_0$ over $\F_q$. We extend this representation trivially to a representation $\theta'$ of the normalizer $N_{W_k}(S_1)$. This is possible since the normalizer is a direct product $S_1 \times W'$; indeed it is generated by $N$ and a point stabilizer in $T\wr T \wr \dots \wr T$. In particular, the normalizer has index $d^k$ in $W_k$. The induced representation  $\Ind_{N_{W_k}(S_1)}^{W_k}(\theta')$ has degree $d^kn_0$. It is absolutely irreducible because the restriction to $N$ consists of a single $W_k$-orbit of irreducible representations. Moreover, it is faithful, because both $\theta$ and the permutation representation of $T\wr T \wr \dots \wr T$ on $d^k$ elements are faithful.
\end{proof}

We also include here a result which will be useful in proving upper bounds.

\begin{prop}
	\label{upperbound-images}
	Let $G \leq GL(n,p^j)$. Then the number of absolutely irreducible representations of dimension $n$ over $\F_{p^j}$ of an $r$-generated group $F$ with image contained in $G$ is at most $|G|^{r-1}|G \cap \F_{p^j}^\times| \leq |G|^{r-1}p^j$, where $\F_{p^j}^\times \leq GL(n,p^j)$ is the group of diagonal matrices.
\end{prop}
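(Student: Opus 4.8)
The plan is to count absolutely irreducible representations $\rho \colon F \to \GL(n,p^j)$ with image contained in $G$ by first counting the homomorphisms and then quotienting out by the equivalences that produce the same representation. Since $F$ is generated by $r$ elements, a homomorphism $F \to G$ is determined by the images of these $r$ generators, so there are at most $|G|^r$ homomorphisms $F \to G$, and hence at most $|G|^r$ homomorphisms $F \to \GL(n,p^j)$ with image contained in $G$.

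Next I would pass from homomorphisms to isomorphism classes of representations. Two homomorphisms give isomorphic representations precisely when they are conjugate in $\GL(n,p^j)$; but to get a clean bound we only need to use conjugation by elements of $G$ itself. Fix one homomorphism $\rho$ with image in $G$. The $G$-conjugates of $\rho$ form an orbit of size $|G| / |C_G(\rho(F))|$. Since $\rho$ is absolutely irreducible, Schur's lemma (over the algebraically closed field $\overline{\F_{p^j}}$, hence also over $\F_{p^j}$ since the representation is realised there) tells us that the centraliser of $\rho(F)$ in $\GL(n,p^j)$ is exactly the group $\F_{p^j}^\times$ of scalar matrices; therefore $C_G(\rho(F)) = G \cap \F_{p^j}^\times$, the group of scalar matrices lying in $G$. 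Thus each $G$-conjugacy class of such homomorphisms has size exactly $|G|/|G \cap \F_{p^j}^\times|$. Since $G$-conjugate homomorphisms yield isomorphic representations, the number of isomorphism classes of absolutely irreducible representations of dimension $n$ over $\F_{p^j}$ with image in $G$ is at most
\[
	\frac{|G|^r}{|G|/|G \cap \F_{p^j}^\times|} = |G|^{r-1}\,|G \cap \F_{p^j}^\times|.
\]
Finally, $G \cap \F_{p^j}^\times \leq \F_{p^j}^\times$ has order at most $p^j - 1 < p^j$, giving the stated bound $|G|^{r-1}|G \cap \F_{p^j}^\times| \leq |G|^{r-1}p^j$.

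The only genuinely delicate point is the centraliser computation: one must be careful that "absolutely irreducible over $\F_{p^j}$" is exactly the hypothesis needed to conclude $\End_{\F_{p^j}[F]}(\F_{p^j}^n) = \F_{p^j}$, so that the centraliser of the image in $\GL(n,p^j)$ is the full scalar group rather than a larger field extension inside $M_n(\F_{p^j})$ — this is where absolute irreducibility, as opposed to mere irreducibility, is used. One should also note in passing that distinct $G$-conjugacy classes of homomorphisms could in principle still give isomorphic representations (if they are $\GL(n,p^j)$-conjugate but not $G$-conjugate), but this only makes the true count smaller, so the upper bound is unaffected. Everything else is a routine orbit-counting argument.
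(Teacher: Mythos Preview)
Your proof is correct and follows essentially the same approach as the paper: count homomorphisms $F \to G$ (at most $|G|^r$), then divide by the size of a $G$-orbit, using absolute irreducibility to identify the stabiliser as $G \cap \F_{p^j}^\times$. The paper phrases the orbit lower bound via $N_{\GL(n,p^j)}(G)$ rather than $G$ itself, but since $G \leq N_{\GL(n,p^j)}(G)$ this amounts to the same thing, and your write-up is arguably cleaner.
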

\begin{proof}
	An absolutely irreducible representation of $F$ is just a homomorphism $F \twoheadrightarrow J \leq GL(n,p^j)$, up to conjugacy in $GL(n,p^j)$, with $J$ absolutely irreducible.  The number of such homomorphisms with $J \leq G$ is at most $|G|^r$. The size of a conjugacy class is at least $|N_{GL(n,p^j)}(G)|/|C_{GL(n,p^j)}(J)| \geq |G|/|C_{GL(n,p^j)}(J) \cap G| \geq |G|/p^j$, because $J$ is absolutely irreducible. Therefore the number of conjugacy classes is at most $|G|^r \frac{p^j}{|G|}$, as required.
\end{proof}

\subsection{Free pro-\texorpdfstring{$p$}{p} groups (odd \texorpdfstring{$p$}{p})}\label{sec:free-pro-p}
Let $p$ be an odd prime number.
We define 
\[K'(p) = \inf_{k \geq 1} \frac{\log(pp_{\min}(p^k))}{(k+\frac{1}{p-1})\log(p)}.\] 

\begin{thm}\label{thm:abscissa-free-pro-p}
Let $F^p_r$ be the free pro-$p$ group on $r$ generators with $r\geq2$. The abscissa of convergence is
\[
	a(F^p_r) = \frac{r-1}{K'(p)} +1.
\]
\end{thm}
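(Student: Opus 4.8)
\textbf{Proof plan for Theorem~\ref{thm:abscissa-free-pro-p}.}

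The strategy is to bound $a(F^p_r)$ from both sides, exploiting that an open normal subgroup of $F^p_r$ is again free pro-$p$ of larger rank, and that all but the $1$-dimensional (abelian) quotients are controlled by a representation-counting argument analogous to the one in the proof of Theorem~\ref{thm:lowerbound-general}. For the \emph{lower bound}, I would fix $k \geq 1$ and a surjection $\phi\colon F^p_r \to \Z/p^k\Z$ with kernel $N$ (free pro-$p$ of rank $p^k(r-1)+1$) and penultimate term $N^+$ of index $p$; using Lemma~\ref{lem:invariants-extend} (applicable since $p \mid (|L|-1)/(|F|-1)$ for the relevant fields), an $N^+$-invariant $1$-dimensional representation of $N$ over $\F_{q}$ extends to $N^+$, so the number of $N^+$-invariant linear characters of $N$ landing in the relevant small field is $O(q^{p^{k-1}(r-1)})$ while the total number of linear characters of $N$ over $\F_{pp_{\min}(p^k)}$ is $w_p(pp_{\min}(p^k)-1)^{p^k(r-1)+1} = p^{k(p^k(r-1)+1)}$ (by the very definition of $pp_{\min}$, the minimal prime power $q$ with $q\equiv 1 \bmod p^k$ has $w_p(q-1) = p^k$, and this is the cheapest place to find $p^k$-torsion characters). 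The non-invariant ones induce up to absolutely irreducible representations of $F^p_r$ of dimension $p^k$ over $\F_q$ with $q = pp_{\min}(p^k)$; plugging the count $r^*(F^p_r, \F_q, p^k) \gtrsim p^{-k} p^{k p^k(r-1)}$ into $\log\zeta_{F^p_r}$ and writing $q = p^{(k+\frac{1}{p-1})\lambda_k}$ where $\lambda_k = \log(pp_{\min}(p^k))/((k+\tfrac1{p-1})\log p)$, one finds the series $\sum_k p^{-k} p^{kp^k(r-1)} q^{(1-s)p^k}$ diverges for $s < \frac{r-1}{K'(p)}+1$; the $\frac{1}{p-1}$ shift in the exponent is exactly what is needed so that the base-group contribution $p^{k}$ in the rank (the ``$+1$'' in $p^k(r-1)+1$) and the normalization by $q^{p^k}$ combine correctly, and taking the infimum over $k$ gives the constant $K'(p)$.

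For the \emph{upper bound}, I would argue that the abscissa is dominated by this same family of representations. Every absolutely irreducible $\F_{q}\bra F^p_r\ket$-module $W$ of dimension $n$ restricts, along a chief series of the (finite) image, to a sum of characters of some open normal subgroup; since the image is a finite $p$-group, $W$ is induced from a $1$-dimensional representation of an open subgroup $H$ of index $n$, which we may take normal of index $n = p^k$ inside $F^p_r$ (a $p$-group acting absolutely irreducibly of dimension $p^k$ is monomial, induced from a linear character of a normal subgroup of index $p^k$). That subgroup is free pro-$p$ of rank $p^k(r-1)+1$, and a linear character over $\F_{q}$ of order divisible by a high power of $p$ forces $q \equiv 1$ modulo that power, hence $q \geq pp_{\min}(p^{j})$ for the relevant $j$; using Proposition~\ref{upperbound-images} to count (with image in the monomial group $\F_q^\times \wr (\Z/p^k\Z)$, of size roughly $(q-1)^{p^k} p^k$) and Lemma~\ref{sameabscissa} to discard sub-exponential factors, I would show $\log\zeta_{F^p_r}(s)$ converges for $s > \frac{r-1}{K'(p)}+1$. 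The key is that the number of such $W$ in dimension $p^k$ over $\F_q$ is, up to polynomial factors, $w_p(q-1)^{p^k(r-1)+1}/p^k$, summed over prime powers $q$, and the defining infimum $K'(p)$ makes every term of the resulting Dirichlet-type series of the form $p^{-k} w_p(q-1)^{p^k(r-1)+1} q^{(1-s)p^k}$ converge once $s$ exceeds the threshold — the worst $q$ for each $k$ being precisely $pp_{\min}(p^k)$.

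\textbf{Main obstacle.} The routine parts — monomiality of absolutely irreducible representations of finite $p$-groups, the rank formula for open subgroups of $F^p_r$, and the bookkeeping with Lemmas~\ref{sameabscissa} and~\ref{lem:inequality} — are standard. The delicate point is getting the exponent in $K'(p)$ \emph{exactly} right: one must account precisely for the ``$+1$'' in the rank $p^k(r-1)+1$ of the index-$p^k$ subgroup (which, after normalizing each induced representation's contribution by the size $\sim q^{p^k}$ of the ambient diagonal torus, contributes the $\frac{1}{p-1}$ term via $\sum_{k}p^{-k}\cdot p^{k}\cdot q^{-p^k s}$-type geometric considerations, since $\sum_{i\geq 0} p^{-i} = \frac{p}{p-1}$), and to check that the lower-bound family genuinely achieves this infimum rather than merely bounding it — i.e., that no other shape of representation (different dimension, or character of smaller $p$-order) can converge later. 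Verifying that these two bounds meet, and in particular that the restriction to $p$ odd is exactly what makes Lemma~\ref{lem:invariants-extend} give extension over a \emph{bounded-degree} field extension (so the field-degree factors are sub-exponential and absorbed by Lemma~\ref{sameabscissa}), is where the real care is needed; the case $p=2$ genuinely behaves differently, which is why it is stated separately in Theorem~\ref{thmABC:examples}(iii).
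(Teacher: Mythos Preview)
Your proposal has a genuine gap, and it is exactly the point you flag as ``delicate'': the origin of the $\tfrac{1}{p-1}$ shift in the definition of $K'(p)$. In both directions your argument produces the constant $\inf_k \frac{\log(pp_{\min}(p^k))}{k\log p}$ rather than $K'(p) = \inf_k \frac{\log(pp_{\min}(p^k))}{(k+\frac{1}{p-1})\log p}$, and these differ (for Mersenne primes this is precisely what makes $K'(p)<1$).

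For the \emph{lower bound}, inducing linear characters from the kernel of a surjection onto $\Z/p^k\Z$ yields $r^\ast(F^p_r,\F_q,p^k)\gtrsim p^{kp^k(r-1)}$ with $q=pp_{\min}(p^k)$; the resulting divergence condition is $s<\frac{k\log p}{\log q}(r-1)+1$, with no $\tfrac{1}{p-1}$. Your attempt to extract the shift from ``the $+1$ in the rank'' does not work: that $+1$ contributes a factor $p^k$ to the character count, which exactly cancels the orbit size $p^k$. The paper instead applies Corollary~\ref{cor:lowerbound-general-perm} with $S=C_{p^k}$ (degree $n_0=1$) and $T=C_p$ acting on $p$ points: the iterated wreath product $C_{p^k}\wr C_p\wr\cdots\wr C_p$ has order $p^{kp^l}\cdot p^{(p^l-1)/(p-1)}$ in degree $p^l$, and the geometric sum $1+p+\cdots+p^{l-1}=\frac{p^l-1}{p-1}$ is what produces the $\tfrac{1}{p-1}$.

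For the \emph{upper bound}, the claim that the image lands in $\F_q^\times\wr(\Z/p^k\Z)$ --- equivalently, that the inducing subgroup may be taken normal with \emph{cyclic} quotient --- is false. A $p$-group acting absolutely irreducibly in dimension $p^k$ is monomial, but the inducing subgroup need not be normal; already for $k=2$ the Sylow $p$-subgroup $C_{w_p(q-1)}\wr C_p\wr C_p$ of $\GL(p^2,q)$ has $p$-part of order $w_p(q-1)^{p^2}p^{p+1}$, which exceeds the $p$-part $w_p(q-1)^{p^2}p^2$ of $\F_q^\times\wr C_{p^2}$, so it cannot be conjugated into that monomial group. The paper's Lemma~\ref{lem:upperbound-free-pro-p} instead bounds the image by the full Sylow $p$-subgroup $C_{w_p(q-1)}\wr C_p\wr\cdots\wr C_p$ of $\GL(p^k,q)$, of order $w_p(q-1)^{p^k}p^{(p^k-1)/(p-1)}$; the extra factor $p^{(p^k-1)/(p-1)}$ is precisely what, after applying Proposition~\ref{upperbound-images} and comparing with $\zeta_{\Z_p}$, matches $K'(p)$.
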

Before we give the proof, some comments on the constant $K'(p)$ are in order. We note that
\[
	K'(p) \leq \inf_{k \geq 1} \frac{\log(pp_{\min}(p^k))}{k\log(p)} \leq K(p),
\]
where $K(p)$ is the constant from Section~\ref{sec:free_ab_pro_p}. The discussion there gives $K'(p) \leq 2.1115$ and conjecturally $K'(p) \leq 1$. 
One can determine the precise value of $K'(p)$ if $p$ is a Mersenne prime.
\begin{prop}
	\label{Mersenne}
	Let $p$ be an odd prime.
	\begin{enumerate}[(i)]
		\item If $p$ is not a Mersenne prime, $K'(p) \geq 1$.
		\item If $p$ is a Mersenne prime, $K'(p) = \frac{(p-1)\log(p+1)}{p\log(p)} < 1$.
	\end{enumerate}
\end{prop}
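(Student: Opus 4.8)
The plan is to reduce the computation of $K'(p)$ to elementary estimates on the least prime power $pp_{\min}(p^k)$, the key point being a ``sandwich'': the least integer $>1$ congruent to $1\bmod p^k$ is $p^k+1$, and the \emph{only} integer in $(p^k,2p^k]$ congruent to $1\bmod p^k$ is $p^k+1$ itself. Hence $pp_{\min}(p^k)\ge p^k+1$ always, and $pp_{\min}(p^k)\ge 2p^k+1$ whenever $p^k+1$ fails to be a prime power. So everything hinges on deciding when $p^k+1$ is a prime power.

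\textbf{The arithmetic input.} Since $p$ is odd, $p^k+1$ is even and $\ge 4$, so it is a prime power precisely when it is a power of $2$, i.e.\ when $p^k=2^a-1$. For $k=1$ this says exactly that $p$ is a Mersenne prime $p=2^\ell-1$ ($\ell$ prime), and then $2^\ell=p+1\equiv 1\bmod p$ is a prime power, so $pp_{\min}(p)=p+1$. For $k\ge 2$ the equation $p^k=2^a-1$ has no solution: if $k$ is even then $p^k\equiv 1\bmod 8$, forcing $2^a\equiv 2\bmod 8$, hence $a=1$, absurd; if $k$ is odd then $p^k+1=(p+1)\cdot\frac{p^k+1}{p+1}$ where the cofactor is odd and $>1$, so $p^k+1$ has an odd factor $>1$ and is not a power of $2$. (Alternatively this follows from Mihailescu's theorem.) Consequently, for every odd $p$ and every $k\ge 2$ we have $pp_{\min}(p^k)\ge 2p^k+1$, and the same holds for $k=1$ as soon as $p$ is not a Mersenne prime.

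\textbf{The two parts.} Part (i) is then immediate: if $p$ is not Mersenne, then for all $k\ge 1$ we get $pp_{\min}(p^k)\ge 2p^k+1> p^k\cdot p^{1/(p-1)}=p^{\,k+1/(p-1)}$, the middle inequality being just $p^{1/(p-1)}<2$, equivalently $p<2^{p-1}$, valid for all $p\ge 3$; dividing by $(k+\frac1{p-1})\log p$ shows each term of the infimum defining $K'(p)$ is $>1$, so $K'(p)\ge 1$. For part (ii), let $p=2^\ell-1$ be Mersenne. The $k=1$ term equals $\frac{\log(p+1)}{(1+\frac1{p-1})\log p}=\frac{(p-1)\log(p+1)}{p\log p}$, and it remains to check that every $k\ge 2$ term is at least this value, so that the infimum is attained at $k=1$. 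Clearing denominators, this amounts to $\log pp_{\min}(p^k)\ge\frac{((p-1)k+1)\log(p+1)}{p}$; using $pp_{\min}(p^k)>2p^k$ it suffices to prove $\log 2+k\log p\ge\frac{((p-1)k+1)\log(p+1)}{p}$, and using $\log(1+\frac1p)<\frac1p$ it then suffices to check $(p\log 2-1)+(k-1)(\log(p+1)-1)\ge 0$, which is clear for $p\ge 3$, $k\ge 2$. Hence $K'(p)=\frac{(p-1)\log(p+1)}{p\log p}$, and this is $<1$ since the inequality is equivalent to $(p+1)^{p-1}<p^p$, i.e.\ $(1+\frac1p)^{p-1}<p$, which holds because the left-hand side is $<e<3\le p$.

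\textbf{Main obstacle.} The only real difficulty lies in part (ii): one must push the crude bound $pp_{\min}(p^k)\ge 2p^k+1$ through the logarithm and the varying weights $(k+\frac1{p-1})$ and still land above the $k=1$ value \emph{uniformly in $k$} — the point being that the coarse estimate $\log pp_{\min}(p^k)>\log 2+k\log p$ already suffices, so no fine information on prime powers in arithmetic progressions is needed. The other mildly delicate ingredient is the dichotomy ``$p^k+1$ is a prime power iff it is a power of $2$'' and its resolution for $k\ge 2$, but both are elementary.
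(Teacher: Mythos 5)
Your proof is correct, and it takes a genuinely different and arguably cleaner route than the paper's.

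For part (i), the paper reinterprets $K'(p)$ as the best exponent in the bound $|S(n,q^j)| \leq q^{nj/K'(p)}$ for Sylow $p$-subgroups of $\mathrm{GL}(n,q^j)$ and then invokes Wolf's Theorem 1.6(ii) as a black box. You instead give a short self-contained argument: since $p$ is odd and not Mersenne, $p^k+1$ is even and never a power of $2$ (the $k$-even case via the mod $8$ obstruction and the $k$-odd case via the factorisation $p^k+1 = (p+1)\cdot(\text{odd cofactor}>1)$), hence $pp_{\min}(p^k) \geq 2p^k+1 > p^{k+1/(p-1)}$ using $2 > p^{1/(p-1)}$.

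For part (ii), the paper works with the weaker lower bound $pp_{\min}(p^k) \geq p^k+1$, i.e.\ $f_p(k) = \frac{\log(p^k+1)}{(k+1/(p-1))\log p}$, and finds by ``a calculation'' that this has its minimum at $k=1$ when $p>3$; for $p=3$ this fails at $k=2$ (indeed $f_3(2) < f_3(1)$), so the paper has to check by hand that $pp_{\min}(9) = 19$ rescues the inequality. Your approach uses the stronger bound $pp_{\min}(p^k) \geq 2p^k+1$ for all $k\ge 2$, which you established once and for all in the arithmetic input, and reduces the comparison against the $k=1$ term to $(p\log 2 - 1) + (k-1)(\log(p+1)-1) \ge 0$, manifestly positive for $p\ge 3$, $k\ge 2$. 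This eliminates the $p=3$ case split entirely. I verified the algebra in the reduction (multiply by $p$, replace $\log p$ by $\log(p+1) - \log(1+\tfrac1p) > \log(p+1) - \tfrac1p$, and regroup); it is sound. The final inequality $\frac{(p-1)\log(p+1)}{p\log p}<1 \iff (1+\tfrac1p)^{p-1}<p$ is correctly handled via $(1+\tfrac1p)^{p-1}<e<3\le p$. In short: same underlying idea (reduce to small prime powers in the progression $1 \bmod p^k$), but a more uniform execution that avoids both the external citation and the special case.
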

\begin{proof}
	\begin{enumerate}[(i)]
		\item Since the size $S(n,q^j)$ of a Sylow $p$-subgroup of $GL(n,q^j)$ is at most $w_p(q^j-1)^np^{\frac{n-1}{p-1}}$, with equality when $n$ is a power of $p$, we see that $K'(p)$ is maximal such that $|S(n,q^j)|^{K'(p)} \leq q^{nj}$. So when $p$ is not a Mersenne prime, $K'(p) \geq 1$ by \cite[Theorem 1.6(ii)]{Wolf}; the result follows.
		\item 
		Suppose $p = 2^m -1$ is a Mersenne prime. Let $f_p(k)=\frac{\log(p^k+1)}{(k+1/(p-1))\log(p)} \leq \frac{\log(pp_{\min}(p^k))}{(k+1/(p-1))\log(p)}$. When $p>3$, a calculation shows $f_p(k)$ has a minimum when $k=1$, and so $K'(p) = f_p(1)$. When $p=3$, $f_3(k)>f_3(1)$ for all $k \geq 3$. Since $19$ is the smallest prime power congruent $1$ modulo $9$, we conclude that for $k=2$ we have $\frac{\log(19)}{(2+1/(p-1))\log(p)}>f_3(1)$, so we conclude that $K'(3) = f_3(1)$.
	\end{enumerate}
\end{proof}

\begin{ex}\label{ex:values}
For the first four Mersenne primes  one obtains
\[
	K'(3)\approx 0.8412,\quad K'(7)  \approx 0.9160,\quad K'(31) \approx 0.9767,\quad K'(127) \approx 0.9937.
\]
Even if $p$ is not a Mersenne prime the formula for $K'(p)$ yields upper bounds close to $1$ already for small values of $k$. For instance,
\[
	K'(5) \leq \frac{\log(1251)}{(4+\frac{1}{4})\log(5)} \approx 1.0426.
\]
\end{ex}

\begin{ex}\label{ex:free_prop}
	Let $G = F^p_r$. Then $a(G) = \frac{r-1}{K'(p)} + 1$. If $H$ is an open subgroup of index $i$, by the Nielsen-Schreier theorem for free pro-$p$ groups \cite[Theorem 3.6.2]{RZ}, $H$ is free pro-$p$ on $i(r-1)+1$ generators; hence $a(H) = \frac{i(r-1)}{K'(p)}+1$. This gives $a(H)/a(G) = i \frac{1+\frac{K'(p)}{i(r-1)}}{1+\frac{K'(p)}{(r-1)}}$. Since the right hand side approaches $i$ as $r$ tends to infinity, this shows that the upper bound $a(H)/a(G) \leq i$ given in Proposition \ref{opensbgp}(ii) is sharp. 
\end{ex}

As in the previous section $w_p(n) = |n|_p^{-1}$ denotes the highest $p$-power dividing $n$. 
We begin with an upper bound result.
\begin{lem}\label{lem:upperbound-free-pro-p}
Let $q$ be a prime power and let $k \geq 0$. If $p$ divides $q-1$, then
\[r^*(F_r^p,\F_q,p^k) \leq w_p(q-1)^{p^k(r-1)+1}p^{\frac{(p^k-1)(r-1)}{p-1}}.\]
If $n$ is not a $p$-power or $n> 1$ and $p$ doesn't divide $q-1$, then $r^*(F_r^p,\F_q,n) = 0$.
\end{lem}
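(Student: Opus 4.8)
The plan is to reduce everything to finite $p$-group quotients of $F^p_r$ (every continuous representation of a pro-$p$ group over a finite field factors through such a quotient) and then combine three ingredients: Schur's lemma over $\F_q$, the classical fact that absolutely irreducible representations of a $p$-group in characteristic prime to $p$ have degree a power of $p$, and Proposition~\ref{upperbound-images}.

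First I would dispose of the vanishing statements. Suppose $p \nmid q-1$; note this includes the case that $q$ is a power of $p$. Let $P$ be a finite $p$-group quotient of $F^p_r$ and $V$ an absolutely irreducible $\F_q\bra P \ket$-module; write $\bar P$ for the image of $P$ in $\GL(V)$. If $\bar P \ne 1$, then $Z(\bar P) \ne 1$, and any $1 \ne z \in Z(\bar P)$ acts on $V$ as a scalar in $\F_q^\times$ — here I use that the endomorphism ring of $V$ is $\F_q$ by absolute irreducibility and that $z$ is already $\F_q$-rational. This scalar has $p$-power order, but $|\F_q^\times| = q-1$ is prime to $p$, so $z$ acts trivially, contradicting faithfulness of the $\bar P$-action. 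Hence $\dim V = 1$, so $r^*(F^p_r,\F_q,n) = 0$ for every $n > 1$ when $p \nmid q-1$; since a non-$p$-power is $\ge 2$, this also settles "$n$ not a $p$-power" in that case. For the remaining case assume $p \mid q-1$, so $q$ is a power of a prime $\ell \ne p$ and hence $\ell \nmid |P|$. Then $\F_q\bra P \ket$ is semisimple and, by standard non-modular representation theory (lifting of idempotents; see e.g.\ \cite{Isaacs}), the absolutely irreducible $\overline{\F_q}\bra P\ket$-modules correspond dimension by dimension to the complex irreducible representations of $P$; their degrees divide $|P|$ and are therefore powers of $p$. So again $r^*(F^p_r,\F_q,n) = 0$ whenever $n$ is not a power of $p$.

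It remains to bound $r^*(F^p_r,\F_q,p^k)$ when $p \mid q-1$. Fix a Sylow $p$-subgroup $S$ of $\GL(p^k,q)$. The image of any absolutely irreducible representation $F^p_r \to \GL(p^k,q)$ is a $p$-group, so by Sylow's theorem it is $\GL(p^k,q)$-conjugate into $S$; conjugating by an element of $\GL(p^k,q)$ does not change the equivalence class of the representation, so every equivalence class of absolutely irreducible representations of dimension $p^k$ has a representative with image contained in $S$. Applying Proposition~\ref{upperbound-images} to $F^p_r$ (whose representations factor through $r$-generated finite quotients) with $G = S$, we obtain
\[
 r^*(F^p_r,\F_q,p^k) \le |S|^{r-1}\,|S \cap \F_q^\times| = |S|^{r-1}\, w_p(q-1),
\]
the last equality because $S$ contains precisely the scalar matrices of $p$-power order. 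Finally I would compute $|S| = |\GL(p^k,q)|_p$: since $p$ is odd and $p \mid q-1$, the lifting-the-exponent lemma gives $v_p(q^i-1) = v_p(q-1) + v_p(i)$, so
\[
 v_p\bigl(|\GL(p^k,q)|\bigr) = \sum_{i=1}^{p^k} v_p(q^i-1) = p^k v_p(q-1) + v_p\bigl((p^k)!\bigr) = p^k v_p(q-1) + \frac{p^k-1}{p-1},
\]
i.e.\ $|S| = w_p(q-1)^{p^k}\,p^{(p^k-1)/(p-1)}$. Substituting into the displayed inequality yields exactly $w_p(q-1)^{p^k(r-1)+1}\,p^{(p^k-1)(r-1)/(p-1)}$.

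The two order computations and the bookkeeping are routine. The only point that needs a little care is the reduction in the last paragraph — ensuring that passing to a single fixed Sylow subgroup $S$ and invoking Proposition~\ref{upperbound-images} genuinely bounds the number of \emph{equivalence classes} of representations rather than of homomorphisms, which is exactly what the Sylow-conjugacy observation provides. I do not anticipate a genuine obstacle.
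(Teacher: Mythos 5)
Your proof is correct and takes essentially the same route as the paper: both reduce to the observation that the image is a finite $p$-group contained in a Sylow $p$-subgroup $S$ of $\GL(p^k,q)$ and then invoke Proposition~\ref{upperbound-images}. The only cosmetic difference is that you compute $|S|$ directly via lifting-the-exponent, while the paper cites Weir for the wreath-product structure of the Sylow $p$-subgroup; you also spell out the Sylow-conjugacy step and the identity $|S\cap\F_q^\times|=w_p(q-1)$, which the paper leaves implicit.
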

\begin{proof}
It is well-known that the degrees of absolutely irreducible representations of finite $p$-groups are $p$-powers. Since the center of the image of a non-trivial absolutely irreducible representation acts by a non-trivial character, such representations require the existence of $p$-th roots of units, i.e. $p \mid q-1$.

Write $n = p^k$ and assume $p\mid q-1$.
 Fix a Sylow $p$-subgroup $S_p$ of $\GL(n,q)$, which is isomorphic to
  \[C_{w_p(q-1)}\underbrace{ \wr C_p \wr \cdots \wr C_p}_{k \text{ times}},\] 
  and in general $|S_p| \leq w_p(q-1)^n p^{\frac{n-1}{p-1}}$: see \cite[Section 2]{Weir}. The claimed upper bound follows from Proposition \ref{upperbound-images}.
 \end{proof}

\begin{proof}[Proof of Theorem \ref{thm:abscissa-free-pro-p}]
 The upper bound $a(F^p_r) \leq \frac{r-1}{K'(p)}+1$ for the abscissa follows from Proposition \ref{prop:abscissa-Zp}
  in combination with $r^\ast(F^p_r,\F_{q},n) \leq q^{n(r-1)/K'(p)}w_p(q-1)$ (which can be deduced from Lemma \ref{lem:upperbound-free-pro-p}). 
  In fact, for real $s$ we have
  \begin{align*}
  	\log\zeta_{F_r^p}(s) &\leq \sum_{q,n} q^{n(r-1)/K'(p)}w_p(q-1) q^{-sn} \frac{q^{n}-1}{q-1}\\
	 &\ll \sum_{q} w_p(q-1)q^{-1} \sum_{n=1}^\infty q^{n\bigl((r-1)/K'(p) + (1-s)\bigr)}\\ 
	 &\ll  \sum_{q} w_p(q-1) q^{(r-1)/K'(p)+s} = \log\zeta_{\Z_p}\left(s-\frac{r-1}{K'(p)}\right)
  \end{align*}
  
Let $k\geq 1$. The cyclic group of order $p^k$ admits a $1$-dimensional faithful absolutely irreducible representation over $\F_q$ with $q = pp_{\min}(p^k)$ and the cyclic group of order $p$ admits a faithful permutation representation on $p$ elements. Corollary \ref{cor:lowerbound-general-perm} implies that
\[
	a(F_r^p) \geq \frac{\log(p^{k(p-1)+1})}{(p-1) \log(q)}(r-1) +1 = \frac{(k+\frac{1}{p-1})\log(p)}{\log(pp_{\min}(p^k))}(r-1) +1.
\]
Taking the supremum gives the lower bound.
 \end{proof}

\subsection{Free pro-\texorpdfstring{$2$}{2} groups}
\label{pro2}

We consider separately the case $p=2$. Let $F^2_r$ be the free pro-$2$ group on $r$ generators, with $r \geq 2$.
\begin{thm}\label{thm:abscissa-pro-2}
The abscissa of convergence for the free pro-$2$ group $F_r^2$ of rank $r \geq 2$ is $a(F_r^2) =\frac{5\log(2)}{2\log(3)}(r-1) +1$.
\end{thm}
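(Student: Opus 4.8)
The plan is to prove the two inequalities separately. The lower bound $a(F_r^2)\ge \frac{5\log 2}{2\log 3}(r-1)+1$ comes straight out of the pro-$\mathfrak{C}$ machinery of Section~\ref{sec:free-pro-c}. I would take $\mathfrak{C}$ to be the NE-formation of all finite $2$-groups and apply Corollary~\ref{cor:lowerbound-general-perm} with $S=SD_{16}$, the semidihedral group of order $16$, realised as a Sylow $2$-subgroup of $\GL(2,3)$ (so $n_0=2$, $q=3$), and with $T=C_2$ the transitive group of degree $d=2$. The one thing needing a word is that $SD_{16}\le\GL(2,3)$ is absolutely irreducible: it contains $Q_8$, whose central involution acts as $-1$, so no line is invariant even over $\overline{\F_3}$. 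Corollary~\ref{cor:lowerbound-general-perm} then gives $a(F_r^2)\ge\frac{\log(|S|^{d-1}|T|)}{(d-1)n_0\log q}(r-1)+1=\frac{\log(16\cdot 2)}{2\log 3}(r-1)+1=\frac{5\log 2}{2\log 3}(r-1)+1$, the supremum over the chain $SD_{16}\wr C_2\wr\cdots\wr C_2\le\GL(2^{k+1},3)$ being already built into the corollary.

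For the upper bound I would follow the pattern of Theorem~\ref{thm:abscissa-free-pro-p}. Every absolutely irreducible representation of $F_r^2$ of dimension $n>1$ in characteristic $\ne 2$ has $n=2^k$ a power of $2$ and image inside a Sylow $2$-subgroup $S_2(2^k,q)$ of $\GL(2^k,q)$, while $r^\ast(F_r^2,\F_q,n)=0$ for other $n>1$ and in characteristic $2$; so Proposition~\ref{upperbound-images} gives $r^\ast(F_r^2,\F_q,2^k)\le|S_2(2^k,q)|^{r-1}\,q$. I would then substitute the explicit order of $S_2(2^k,q)$ from Weir's description of Sylow subgroups \cite{Weir} (as already used in Lemma~\ref{lem:upperbound-free-pro-p}): for $q$ odd, $|S_2(2^k,q)|=w_2(q-1)^{2^k}\,2^{2^k-1}$ if $q\equiv 1\bmod 4$, and $|S_2(2^k,q)|=(4\,w_2(q+1))^{2^{k-1}}\,2^{2^{k-1}-1}$ if $q\equiv 3\bmod 4$. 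Plugging this into $\log\zeta_{F_r^2}(s)$, bounding the factor $|\mathbb{P}^{2^k-1}(\F_q)|$ by $2q^{2^k-1}$, and organising the prime powers $q$ according to the values of $w_2(q-1)$ and $w_2(q+1)$ as in Section~\ref{sec:free_ab_pro_p}: the dimension-$1$ part is exactly $\log\zeta_{\Z_2^r}(s)$, of abscissa $\le\frac{r-1}{K(2)}+1\le r$ by Proposition~\ref{prop:abscissa-Zp}, while every remaining piece is governed by the quantity $\frac{\log|S_2(2^k,q)|}{2^k\log q}$, whose supremum over all $q$ and $k$ equals $\frac{5\log 2}{2\log 3}$ and is approached only as $q=3$, $k\to\infty$. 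Feeding this in, all contributions converge for $s>\frac{5\log 2}{2\log 3}(r-1)+1$.

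The main obstacle is the bookkeeping in the upper bound. The naive estimate $|S_2(2^k,q)|\le q^{c_{nil}2^k}$, with $c_{nil}=\frac{5\log 2}{2\log 3}$ the nilpotent linear-group constant of Wolf \cite{Wolf}, is valid but too lossy for small $k$ — it already forces the dimension-$2$ contribution to diverge just above the claimed abscissa. One genuinely needs the two exact formulas above and the case split $q\equiv\pm 1\bmod 4$, and one must handle the $q=3$ terms without degrading the base $3$ to $2$: the entire threshold is produced by $q=3$, since $\frac{\log|S_2(2^k,3)|}{2^k\log 3}\to\frac{5\log 2}{2\log 3}$, so the $q=3$ part of $\log\zeta_{F_r^2}$ converges exactly for $s>\frac{5\log 2}{2\log 3}(r-1)+1$ — matching the lower-bound construction — while the dimension-$1$ term and all $q\ge 5$ are comfortably subdominant.
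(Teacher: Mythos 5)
Your proposal follows the paper's proof essentially verbatim: the lower bound is precisely the paper's application of Corollary~\ref{cor:lowerbound-general-perm} with $S=SD_{16}\le\GL(2,3)$ and $T=C_2$, and the upper bound combines Proposition~\ref{upperbound-images} with the explicit Sylow $2$-subgroup orders (the paper takes these from Carter--Fong~\cite{CF} rather than Weir, whose treatment covers odd $p$) and the $\Z_p^r$-style bookkeeping of Section~\ref{sec:free_ab_pro_p}. The one step you leave impressionistic is the very one that needs care: saying the pieces are ``governed by $\sup_{q,k}\frac{\log|S_2(2^k,q)|}{2^k\log q}=c_{nil}$'' is not itself an argument --- as you note, the blunt bound $|S_2(2^k,q)|\le q^{c_{nil}2^k}$ already costs half a unit --- and the paper instead isolates the constants $K'(2)=\inf_k\frac{\log pp_{\min}(2^k)}{(k+1)\log 2}$ and $K^-=\inf_k\frac{2\log pp^-_{\min}(2^k)}{(k+3)\log 2}$, reduces each of the two sums (split by $q\equiv\pm1\bmod 4$) to a $\zeta_{\Z_2}$-type Dirichlet bound exactly as in the odd-$p$ Theorem~\ref{thm:abscissa-free-pro-p}, and then invokes Wolf~\cite{Wolf} to identify $K'(2)\ge K^-=\frac{2\log 3}{5\log 2}$.
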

We take an approach similar to that used for odd primes, however, the proof requires a number of modifications. We first discuss the upper bound  for $a(F^2_r)$. As usual we have $r^\ast(F^2_r,\F_{2^j},n) = 1$ for $n=1$ and $0$ otherwise.

\begin{lem}\label{lem:upperbound-pro-2}
Let $q$ be an odd prime power.
If $q \equiv 1 \bmod 4$, then
\[
	r^*(F_r^2,\F_q,2^k) \leq 2^{(2^k-1)(r-1)} w_2(q-1)^{2^k(r-1)+1}.
\]
If $q \equiv 3 \bmod 4$, then
\[
	r^*(F_r^2,\F_q,2^k) \leq 2^{(2^k+2^{k-1}-1)(r-1) +1} w_2(q+1)^{2^{k-1}(r-1)}.
\]
\end{lem}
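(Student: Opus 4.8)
The plan is to bound the image of an absolutely irreducible representation inside a Sylow $2$-subgroup of the relevant general linear group and then invoke Proposition~\ref{upperbound-images}. Recall first that every absolutely irreducible representation of the pro-$2$ group $F^2_r$ factors through a finite $2$-group quotient, so its degree is a power of $2$, and a non-trivial one exists only over $\F_q$ with $q$ odd (the centre of the image acts through a non-trivial character of $2$-power order). Since any $2$-subgroup of $\GL(2^k,q)$ is conjugate into a fixed Sylow $2$-subgroup $S$, and conjugate representations are isomorphic, Proposition~\ref{upperbound-images} applied with $G=S$ yields
\[
	r^*(F^2_r,\F_q,2^k)\;\leq\;|S|^{\,r-1}\,|S\cap \F_q^\times|\;\leq\;w_2(q-1)\,|S|^{\,r-1},
\]
the last step because $S\cap \F_q^\times$ is a $2$-subgroup of the cyclic group $\F_q^\times$. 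So it remains only to determine $|S|$, i.e.\ the $2$-part of $|\GL(2^k,q)|=q^{2^{k-1}(2^k-1)}\prod_{i=1}^{2^k}(q^i-1)$.

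Since $q$ is odd I would compute $v_2\bigl(\prod_{i=1}^{2^k}(q^i-1)\bigr)$ by the lifting-the-exponent lemma: $v_2(q^i-1)=v_2(q-1)$ when $i$ is odd, and $v_2(q^i-1)=v_2(q-1)+v_2(q+1)+v_2(i)-1$ when $i$ is even; combined with $\sum_{j=1}^{2^{k-1}}v_2(j)=2^{k-1}-1$, a short calculation distinguishing $q\bmod 4$ gives
\[
	|S|=\begin{cases} w_2(q-1)^{2^k}\,2^{\,2^k-1}, & q\equiv 1 \bmod 4,\\[0.4em] w_2(q+1)^{2^{k-1}}\,2^{\,2^k+2^{k-1}-1}, & q\equiv 3 \bmod 4.\end{cases}
\]
Structurally, in the first case $S\cong C_{w_2(q-1)}\wr C_2\wr\dots\wr C_2$ ($k$ copies of $C_2$), exactly as in Lemma~\ref{lem:upperbound-free-pro-p}; in the second case $S\cong \mathrm{Syl}_2(\GL(2,q))\wr C_2\wr\dots\wr C_2$ ($k-1$ copies), where $\mathrm{Syl}_2(\GL(2,q))$ has order $4w_2(q+1)$ and is the extension of the $2$-part of the anisotropic torus $\F_{q^2}^\times$ by the Frobenius (note $w_2(q^2-1)=2w_2(q+1)$ since $w_2(q-1)=2$). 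One can alternatively cite \cite{Weir} for these descriptions, but the valuation computation is self-contained.

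Plugging the two values of $|S|$ into the displayed inequality, and using $w_2(q-1)=2$ in the case $q\equiv 3\bmod 4$, gives exactly the two claimed bounds. The part I expect to need the most care is the $2$-adic bookkeeping — in particular the contribution of the even indices $i$ and the evaluation of $\sum_{j\le 2^{k-1}}v_2(j)$ — together with the fact that over $\F_q$ with $q\equiv 3\bmod 4$ the Sylow $2$-subgroup is built from $\F_{q^2}^\times$ rather than from $\F_q^\times$; once those are settled, the rest is an immediate application of Proposition~\ref{upperbound-images}.
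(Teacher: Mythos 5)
Your proof is correct and follows essentially the same route as the paper: bound the image inside a fixed Sylow $2$-subgroup $S$ of $\GL(2^k,q)$ and apply Proposition~\ref{upperbound-images} with $|S\cap Z|=w_2(q-1)$, which equals $2$ in the case $q\equiv 3\bmod 4$. The only difference is that the paper quotes the order of $S$ from Carter--Fong \cite{CF} rather than recomputing it via lifting-the-exponent as you do (and \cite{Weir} is the reference used for the odd-$p$ Sylow subgroups, not $p=2$); your $2$-adic computation is nevertheless correct and matches the cited values.
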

\begin{proof}
Let $S$ be a Sylow $2$-subgroup of $\GL(2^k,q)$.
By Proposition \ref{upperbound-images}, we have
\[
	r^*(F_r^2,\F_q,2^k) \leq |S|^{r-1} |Z \cap S|
\]
where $Z \subseteq \GL_{2^k}(\F_q)$ denotes the group of scalar matrices.
The order of the Sylow $2$-subgroups of $\GL(2^k, q)$ can be found in \cite[Section 1]{CF}.
When $q \equiv 1 \bmod 4$ we have $|S| = 2^{2^k-1} w_2(q-1)^{2^k}$ and $|S \cap Z| = w_2(q-1)$.
When $q \equiv 3 \bmod 4$ we have $|S| = 2^{2^k+2^{k-1}-1} w_2(q+1)^{2^{k-1}}$ and $|S \cap Z| = 2$.
\end{proof}

This gives an upper bound for the zeta function:
\begin{align*}
	\log(\zeta_{F^2_r})(s) \leq \sum_{q,n} &\frac{\Lambda(q)}{\log(q)} 2^{(n-1)(r-1)}w_2(q-1)^{n(r-1)+1}\frac{q^n-1}{q-1}q^{-ns} \\
	+ 2\sum_{\substack{q \equiv 3 \bmod 4\\ n \text{ even}}} &\frac{\Lambda(q)}{\log(q)} 2^{(n+\frac{n}{2}-1)(r-1)}w_2(q+1)^{\frac{n}{2}(r-1)}\frac{q^n-1}{q-1}q^{-ns}.
\end{align*}

We show exactly as for $F^p_r$ with $p$ odd that the first sum converges when $s > (r-1)/K'(2)+1$ with
\[
	K'(2) = \inf_{k\geq 1} \frac{\log(pp_\text{min}(2^k))}{(k+ 1)\log(2)}.
\]
Recall that $pp_\text{min}(2^k)$ denotes the smallest prime power congruent $1$ modulo $2^k$. Similarly, let $pp_{\min}^-(2^k)$ denote the smallest prime power congruent $-1$ modulo $2^k$.
Define 
\[
K^- = \inf_{k \ge 1} \frac{2\log(pp_{\min}^-(2^k))}{(k+3)\log(2)}.
\]
 Then, for $q \equiv 3 \bmod4$ and $n$ even, $r^\ast(F^2_r,\F_{q},n) \leq q^{n(r-1)/K^-}$, so 
the second sum converges when $s > (r-1)/K^-+1$. As in Proposition \ref{Mersenne}, \cite[Theorem 1.6(i), Proposition 1.7]{Wolf} shows that $K'(2) \geq   K^- = \frac{2\log(3)}{5\log(2)} \approx 0.633985$.
Overall, we conclude that $a(F^2_r) \leq \frac{5\log(2)}{2\log(3)}(r-1)+1$.

We now turn to the lower bound. The proof for the upper bound suggests that the crucial prime power is $q = 3$ and it turns out that it is sufficient to consider this case. 

Here the $2$-dimensional representations will serve as our base case. 
A Sylow $2$-subgroup $S$ of $\GL(2,3)$ is a semidihedral group of order $16$. We observe that $S$ is absolutely irreducible, because it is non-abelian. Indeed, an abelian group cannot be absolutely irreducible in dimension $2$. Conversely, since $S$ is a non-abelian $2$-group, the representation on $\overline{\F_3}$ is semisimple and cannot decompose into $1$-dimensional pieces.
Using the action of the $2$-element group on two points, we deduce from Corollary \ref{cor:lowerbound-general-perm} that
\begin{align*}
a(F_r^2) \geq \frac{\log(16\cdot 2)}{2\log(3)} (r-1) +1 = \frac{5 \log(2)}{2 \log(3)}(r-1) +1.
\end{align*}

\subsection{Free prosoluble groups}\label{sec:free_prosolv}

In this section we discuss an upper bound for the abscissa of convergence of free prosoluble groups. Let $c_{sol} = \frac{2}{3}+\frac{5\log(2)}{2\log(3)} \approx 2.24399$.

\begin{thm}\label{thm:abscissa-sol}
Let $F^{sol}_r$ be the free prosoluble group on $r$ generators, $r \geq 2$.
Then $a(F_r^{sol}) = c_{sol} (r-1) + 1$.
\end{thm}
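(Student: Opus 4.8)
The plan is to prove the two matching inequalities $a(F^{sol}_r)\geq c_{sol}(r-1)+1$ and $a(F^{sol}_r)\leq c_{sol}(r-1)+1$ separately, in the spirit of the pro-$p$ cases, noting that $F^{sol}_r$ is the free pro-$\mathfrak{C}$ group for $\mathfrak{C}$ the NE-formation of all finite soluble groups, which contains a non-trivial abelian group, so the results of Section~\ref{sec:free-pro-c} apply.

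\emph{Lower bound.} I would apply Corollary~\ref{cor:lowerbound-general-perm} to an explicit extremal configuration: take $q=3$, $n_0=2$, $S=\GL(2,3)$ acting on $\F_3^2$ (which is absolutely irreducible, and $S$ is soluble since $\GL(2,3)/Z\cong S_4$), and $T=S_4$ in its natural transitive action on $d=4$ points. Then Corollary~\ref{cor:lowerbound-general-perm} gives
\[
a(F^{sol}_r)\ \geq\ \frac{\log\bigl(|S|^{d-1}|T|\bigr)}{(d-1)n_0\log q}(r-1)+1\ =\ \frac{3\log 48+\log 24}{6\log 3}(r-1)+1,
\]
and using $\log 48=4\log 2+\log 3$ and $\log 24=3\log 2+\log 3$ one checks that the coefficient equals $\frac{\log 48}{2\log 3}+\frac{\log 24}{6\log 3}=\tfrac23+\frac{5\log 2}{2\log 3}=c_{sol}$. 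Heuristically, the corresponding linear group is, in the limit, the tower $\GL(2,3)\wr S_4\wr S_4\wr\cdots$: the base block $\GL(2,3)$ realises the extremal nilpotent rate $c_{nil}$ (it contains a semidihedral Sylow $2$-subgroup, as in the pro-$2$ analysis) and the iterated $S_4$-wreathing realises the extremal primitive soluble permutation rate, contributing the additional $\tfrac23$.

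\emph{Upper bound.} The structural input is a bound on the order of an absolutely irreducible soluble subgroup $G\leq\GL(n,q)$. Being irreducible, $G$ is completely reducible, so its Fitting subgroup $F(G)$ is a completely reducible nilpotent linear group and Wolf's theorem \cite{Wolf} gives $|F(G)|\leq q^{c_{nil}n}$, while $G/F(G)$ permutes the homogeneous components of the semisimple module $\F_q^n|_{F(G)}$ and Pálfy's bound \cite{Palfy} gives $|G/F(G)|\leq q^{(2/3)n}$ up to a bounded factor; multiplying, $|G|\leq C q^{c_{sol}n}$ for an absolute constant $C$. Now every absolutely irreducible soluble subgroup of $\GL(n,q)$ lies in a maximal soluble subgroup, which is then irreducible (hence of order $\leq Cq^{c_{sol}n}$ by the above), and the number of conjugacy classes of maximal irreducible soluble subgroups of $\GL(n,q)$ is sub-exponential in $n$, uniformly in $q$ (it is governed by the imprimitivity/tensor combinatorics: factorisations of $n$ together with maximal soluble transitive groups on the block sets, of which there are only quasi-polynomially many). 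Applying Proposition~\ref{upperbound-images} to each such maximal subgroup and summing gives
\[
r^{*}(F^{sol}_r,\F_q,n)\ \leq\ q^{c_{sol}n(r-1)+o(n)},
\]
and feeding this into the series defining $\log\zeta_{F^{sol}_r}$ and comparing with a geometric series, exactly as in the proof of Theorem~\ref{thm:abscissa-free-pro-p}, shows convergence for all real $s>c_{sol}(r-1)+1$.

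\emph{Main obstacle.} The delicate part is the upper bound: one must (i) extract the clean bound $|G|\leq Cq^{c_{sol}n}$ from the structure theory of completely reducible soluble linear groups, tracking the exceptional small cases so that only a bounded multiplicative error is incurred, and (ii) control the number of conjugacy classes of maximal irreducible soluble subgroups of $\GL(n,q)$ well enough that this count does not inflate the abscissa. The value $c_{sol}=\tfrac23+c_{nil}$ is then forced: it is an upper bound by (i)--(ii), and it is attained by the single tower $\GL(2,3)\wr S_4\wr S_4\wr\cdots$ used in the lower bound, which is precisely the configuration simultaneously saturating the Wolf and Pálfy bounds.
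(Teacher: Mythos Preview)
Your lower bound is correct and identical to the paper's: apply Corollary~\ref{cor:lowerbound-general-perm} with $S=\GL(2,3)$ and $T=S_4$.

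For the upper bound, your ingredients are the right ones (and the paper simply cites \cite[Theorem~3.1]{Wolf} for $|G|\leq q^{c_{sol}n}$ rather than rederiving it, together with \cite[Theorem~14.1]{BNV} for the conjugacy-class count). But there is a genuine gap in how you combine them. Proposition~\ref{upperbound-images} gives $r^{*}(F^{sol}_r,\F_q,n)\leq n^{O(\log^3 n)}(q-1)\,q^{c_{sol}n(r-1)}$, and the factor $(q-1)$ cannot be absorbed into $q^{o(n)}$ uniformly in $q$ (think $n=1$). If you feed this into $\log\zeta$ and sum ``exactly as in Theorem~\ref{thm:abscissa-free-pro-p}'', you get
\[
\log\zeta_{F^{sol}_r}(s)\ \ll\ \sum_{p,j}\frac{1}{j}\sum_{n\geq 1} n^{O(\log^3 n)}\,p^{\,j n(c_{sol}(r-1)+1-s)}\ \ll\ \sum_{p,j}\frac{1}{j}\,p^{\,j(c_{sol}(r-1)+1-s)},
\]
which is $\log\zeta(s-c_{sol}(r-1)-1)$ and only converges for $s>c_{sol}(r-1)+2$. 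The pro-$p$ argument you invoke works because the surviving factor there is $w_p(q-1)$, which sums to $\log\zeta_{\Z_p}$ with abscissa $1$; here the surviving factor is essentially $q-1$, which sums to $\log\zeta$ with abscissa $2$.

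The paper closes this gap by a small-$q$/large-$q$ split. For $q\leq Q$ there are only finitely many fields, so one just needs the sum over $n$ to converge, and the bound $|G|\leq q^{c_{sol}n}$ gives convergence for $s>c_{sol}(r-1)+1$. For $q>Q$ one uses the sharper bound of Segev \cite{Segev} (also recorded in Lemma~\ref{solubleorder}): for $q\geq 11$ an irreducible soluble subgroup has order at most $q^{(1+\log_q 24/3)n}$, so for any $\varepsilon>0$ one may choose $Q$ with $\log_Q 24/3<\varepsilon$ and obtain $r^{*}\leq n^{O(\log^3 n)}(q-1)q^{(1+\varepsilon)n(r-1)}$; now Lemma~\ref{generalbounds} and Lemma~\ref{sameabscissa} give convergence for $s>(1+\varepsilon)(r-1)+2$. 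Since $c_{sol}>2$, for $r\geq 2$ and $\varepsilon$ small one has $(1+\varepsilon)(r-1)+2<c_{sol}(r-1)+1$, so the small-$q$ bound dominates and $a(F^{sol}_r)\leq c_{sol}(r-1)+1$. Your sketch is missing precisely this refinement for large $q$; without it the argument overshoots by $1$.
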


We collect some results needed in the proof.
By \cite[Theorem 3.1]{Wolf} and \cite[Corollary to Theorem A]{Segev} we have:
\begin{lem}
	\label{solubleorder}
	Let $q$ be a prime power.
	The order of a soluble irreducible subgroup of $\GL(n,q)$ is $\leq q^{c_{sol}n}$. For $q \geq 11$, the order of a soluble irreducible subgroup of $\GL(n,q)$ is $\leq q^{(1+\log_{q}(24)/3)n}$.
\end{lem}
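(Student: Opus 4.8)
This lemma simply repackages known bounds on the orders of solvable linear groups, so the plan is to reduce to the cited theorems and check that the constants match up. The first reduction is immediate: an irreducible subgroup $G\le\GL(n,q)$ acts completely reducibly (indeed irreducibly) and faithfully on the natural module $\F_q^n$, so both \cite[Theorem 3.1]{Wolf} and \cite[Corollary to Theorem A]{Segev}, which concern solvable completely reducible linear groups, apply directly to $G$.

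For the second assertion, valid for $q\ge 11$, I would invoke \cite[Theorem 3.1]{Wolf}, which bounds the order of such a $G$ by an absolute constant (at most $1$) times $q^n\cdot 24^{n/3}$. Rewriting $q^n\cdot 24^{n/3}=q^{n(1+\log_q(24)/3)}$ produces exactly the exponent $1+\log_q(24)/3$ claimed, and absorbing the multiplicative constant only helps. The only thing to verify is the elementary arithmetic: $q\mapsto 1+\log_q(24)/3$ is decreasing, so this exponent is uniformly bounded on $\{q\ge 11\}$ by its value at $q=11$, which is strictly below $c_{sol}$.

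For the first assertion I would split on the size of $q$. When $q\ge 11$ the bound just obtained already yields $|G|\le q^{(1+\log_q(24)/3)n}\le q^{c_{sol}n}$, since $1+\log_{11}(24)/3<c_{sol}$. For the finitely many remaining prime powers $q\in\{2,3,4,5,7,8,9\}$ the exponent $1+\log_q(24)/3$ exceeds $c_{sol}$, so Wolf's bound in that shape is too weak; here one uses the sharper estimate of \cite[Corollary to Theorem A]{Segev} (together with \cite[Theorem 3.1]{Wolf}), which is tailored to these small fields and gives $|G|\le q^{c_{sol}n}$. The value $c_{sol}=\tfrac{2}{3}+\tfrac{5\log 2}{2\log 3}$ is precisely the extremal exponent, approached by suitable iterated wreath products over $\F_2$ and $\F_3$; in particular the bound cannot be improved to a uniformly smaller exponent.

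I do not expect a genuine obstacle: the argument is bookkeeping. The one point requiring care is the passage between the multiplicative form of the Wolf and Segev estimates — bounds of the shape $|G|\le c\cdot q^{\alpha n}$ with an absolute constant $c$ — and the clean exponential form $q^{c_{sol}n}$: one must confirm that absorbing $c$ never forces the exponent past $c_{sol}$ for any prime power $q$, which is exactly why the small fields must be treated via Segev rather than Wolf's generic bound.
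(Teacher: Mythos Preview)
Your proposal is correct and matches the paper's approach: the paper does not give a proof but simply records the lemma as a direct consequence of \cite[Theorem 3.1]{Wolf} and \cite[Corollary to Theorem A]{Segev}, and your write-up is a reasonable fleshing-out of how those two citations yield the two displayed bounds. The only caveat is that the precise allocation of which reference supplies which inequality (and the exact multiplicative constants) should be checked against the sources, but the reduction-and-bookkeeping strategy you describe is exactly what is intended.
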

We also use \cite[Theorem 14.1]{BNV}:

\begin{lem}\label{lem:max-soluble}
	The number of conjugacy classes of maximal irreducible soluble subgroups of $\GL(n,q)$ is at most $n^{4 \log^3(n)+4\log^2(n)+\log(n)+3}$.
\end{lem}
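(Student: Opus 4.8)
This statement is \cite[Theorem 14.1]{BNV}; here I describe how one would obtain a bound of this shape from the structure theory of soluble linear groups. The plan is to prove, by induction on $n$, a polylogarithmic-exponent bound on
$f(n) := \max_q \#\{\text{conjugacy classes of maximal irreducible soluble subgroups of }\GL(n,q)\}$,
exploiting the standard trichotomy for an irreducible soluble $M \le \GL(n,q)$ acting on the natural module $V = \F_q^n$: either $M$ is imprimitive; or $M$ is primitive but $\End_{\F_q M}(V)$ is a proper extension $\F_{q^e}$ of $\F_q$ (the semilinear case); or $M$ is absolutely primitive. The crucial feature is that in every case the problem recurses into strictly smaller dimension, so the recursion is well-founded and its depth is controlled.

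In the imprimitive case, maximality forces $M$ (up to conjugacy) to be the full stabiliser in $\GL(m,q)\wr S_t$ of a decomposition $V = V_1\oplus\dots\oplus V_t$ with $n = mt$, $t \ge 2$; hence $M$ is determined by a divisor $t\mid n$, a maximal irreducible soluble subgroup of $\GL(m,q)$ on $V_1$, and a maximal transitive soluble subgroup of $S_t$. As $m \le n/2$, this case contributes at most $\sum_{t\mid n,\,t\ge 2} f(n/t)\,g(t)$, where $g(t)$ is the number of conjugacy classes of maximal transitive soluble subgroups of $S_t$; this is itself bounded by $t^{O(\log^2 t)}$ by the analogous (and strictly shallower) recursion through iterated wreath products with primitive soluble permutation leaves. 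In the semilinear case, $M$ normalises the field-extension structure, reducing to a maximal irreducible soluble subgroup of $\GL(n/e,q^e)$ --- whence the factor $f(n/e)$ with $n/e \le n/2$ --- together with a cyclic Galois twist, contributing at most $e \le n$ further choices. In the absolutely primitive case, Suprunenko's theory gives $M$ a normal subgroup $E$ which is a central product of a cyclic group with extraspecial $r_i$-groups of order $r_i^{1+2a_i}$, where $n = \prod_i r_i^{a_i}$, and $M/(E\,Z(M))$ is a maximal soluble subgroup of $\prod_i \mathrm{Sp}(2a_i,r_i)$; since $2a_i \le 2\log_2 n$ and $\mathrm{Sp}(2a_i,r_i)\le\GL(2a_i,r_i)$, this re-enters $f$ only at dimension $O(\log n)$, and, together with the at most $n^{o(\log n)}$ prime-power factorisations of $n$, contributes a term negligible compared with the imprimitive one.

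Unwinding: the dominant contribution is the imprimitive tower, which has depth at most $\log_2 n$ and picks up a factor $g(t) \le t^{O(\log^2 t)}$ at each level, so $f(n) \le n^{O(\log^3 n)}$; the semilinear and absolutely primitive branches contribute only lower-order factors because they recurse at dimension $\le n/2$ (resp. $O(\log n)$). The main obstacle is the constant bookkeeping needed to turn ``$O(\log^3 n)$'' into the precise exponent $4\log^3 n + 4\log^2 n + \log n + 3$: one must plug in the exact form of the bound $g(t)$ (itself requiring the fine classification of primitive soluble permutation groups) and carefully control the divisor sum $\sum_{t\mid n} f(n/t)g(t)$ --- uniformly treating every factor as $t^{\mathrm{polylog}(t)}$ rather than crudely bounding the number of divisors of $n$ --- so that neither the summation over $t$ nor the semilinear/primitive error terms inflate the leading coefficient. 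Handling the small base cases and checking that the induction closes with exactly these constants is then routine once the recursion is set up.
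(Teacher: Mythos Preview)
Your proposal is correct and matches the paper's treatment: the paper does not prove this lemma at all but simply quotes it as \cite[Theorem~14.1]{BNV}, exactly as you identify in your first sentence. The structural sketch you add (imprimitive/semilinear/absolutely primitive trichotomy recursing to smaller dimension) is a faithful outline of how the bound in \cite{BNV} is obtained, but it goes well beyond what the paper itself provides.
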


Fix a prime power $q$ and fix a maximal soluble subgroup $M$ in each conjugacy class. By Proposition \ref{upperbound-images}, the number of $\GL(n,q)$-conjugacy classes of homomorphisms to $M$ with absolutely irreducible image is at most $|M|^{r-1}(q-1)$. We conclude that 
\[
r^\ast(F^{sol}_r,\F_{q},n) \leq n^{4 \log^3(n)+4\log^2(n)+\log(n)+3} (q-1) q^{c_{sol}n(r-1)}
\]
 for all $q$, and 
\[
r^\ast(F^{sol}_r,\F_{q},n) \leq n^{4 \log^3(n)+4\log^2(n)+\log(n)+3} (q-1) q^{(1+\log_{q}(24)/3)n(r-1)}
\]
for $q \geq 11$.

For any $\varepsilon>0$, pick $Q$ such that $\log_Q(24)/3 < \varepsilon$. We split the series defining $\log(\zeta_{F^{sol}_r})(s)$ into terms with $q > Q$ and terms with $q \leq Q$. By Lemma \ref{generalbounds} and Lemma \ref{sameabscissa}, the first sum converges when $s > (1+\varepsilon)(r-1)+2$, while, for each $q \leq Q$, we have the series 
\[\sum_n \frac{n^{O(\log^3(n))}}{j} q^{(c_{sol}(r-1)+1-s)nj},\]
which converges when $s > c_{sol}(r-1)+1$. Since the second of these bounds is larger for $r \geq 2$, we conclude $a(F^{sol}) \leq c_{sol}(r-1)+1$.

The lower bound follows from Corollary \ref{cor:lowerbound-general-perm}. We note that $\GL(2,3)$ is an absolutely irreducible soluble group of order $48$ and that $S_4$ is a soluble group of order $24$. Corollary \ref{cor:lowerbound-general-perm} allows us to deduce
\[
	a(F_r^{sol})-1 \geq \frac{\log(48^3\cdot 24)}{2 \cdot 3 \cdot \log(3)}(r-1) = c_{sol}(r-1) 
\]

\subsection{Free pronilpotent groups}\label{sec:free_nilp}
The results for free pro-$p$ groups may be used to determine the abscissa for free pronilpotent groups. We note that the class of nilpotent groups is not closed under extensions, so that the results from Section \ref{sec:free-pro-c} cannot be applied directly. Let $F^{nil}_r = \prod_p F^p_r$ be the free pronilpotent group on $r$ generators, with $r \geq 2$.
\begin{thm}\label{thm:abscissa-pronilpotent}
Let $r \geq 2$. The abscissa of convergence of $\zeta_{F_r^{nil}}$ is 
\[
a(F_r^{nil}) = \begin{cases} 3 & \text{ if $r=2$}\\
			\frac{5 \log(2)}{2 \log(3)}(r-1) +1 & \text{if $r> 2$}.
			\end{cases}\]
\end{thm}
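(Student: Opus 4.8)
The plan is to exploit the product decomposition $F_r^{nil} = \prod_p F_r^p$ and reduce to the pro-$p$ computations of Sections~\ref{sec:free-pro-p} and~\ref{pro2}. Since absolutely irreducible representations of a nilpotent group factor through a single Sylow quotient (a $p$-group has no nontrivial absolutely irreducible representation in a coprime characteristic, and more generally the image of $F_r^{nil}$ under an absolutely irreducible representation over $\F_{q}$, $q$ a power of $\ell$, is an $\ell$-group), we expect an essentially clean decoupling: for each prime $\ell$ the contribution of representations in characteristic $\ell$ to $\log\zeta_{F_r^{nil}}(s)$ coincides with the corresponding contribution to $\log\zeta_{F_r^\ell}(s)$, up to the representations coming from the other Sylow factors being forced to act trivially. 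So the first step is to verify that $r^\ast(F_r^{nil},\F_q,n) = r^\ast(F_r^{\ell},\F_q,n)$ for $q$ a power of $\ell$ and $n>1$, with only the $n=1$ terms (characters) receiving extra contributions from the abelianizations of all the other Sylow factors; but those extra $n=1$ terms are exactly what one already has in $\zeta_{\hat\Z^r}$-type estimates and contribute abscissa at most $r+1$.

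Given this, $\log\zeta_{F_r^{nil}}(s)$ is, up to the harmless character terms, the sum over primes $\ell$ of $\log\zeta_{F_r^\ell}(s)$ minus overcounted trivial pieces. The abscissa of convergence of such a sum is the supremum, over $\ell$, of the abscissae of the individual summands — \emph{provided} the convergence for $s$ slightly above that supremum is uniform enough in $\ell$ to sum. The second step is therefore to establish this uniformity: from Lemma~\ref{lem:upperbound-free-pro-p} and Lemma~\ref{lem:upperbound-pro-2} we have explicit bounds on $r^\ast(F_r^\ell,\F_q,n)$ in terms of $w_\ell(q\mp1)$ and powers of $\ell$, and feeding these into the $\log\zeta$ series as in the proofs of Theorems~\ref{thm:abscissa-free-pro-p} and~\ref{thm:abscissa-pro-2} gives, for $s$ in a fixed right half-plane, a bound on the $\ell$-part of the shape $\ll \log\zeta_{\Z_\ell}(s - (r-1)/K'(\ell))$ (and similarly for the $K^-$ term when $\ell=2$). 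Since $K'(\ell)\ge 1$ for all odd non-Mersenne $\ell$ and the Mersenne values $K'(3)\approx0.8412$, $K'(7)\approx0.916,\dots$ increase to $1$, the worst odd prime is $\ell=3$, contributing $(r-1)/K'(3)+1$; and $\ell=2$ contributes $\frac{5\log2}{2\log3}(r-1)+1$. One then checks that $\frac{5\log2}{2\log3} = 1/K^-$ exceeds $1/K'(3)$ (indeed $\frac{5\log2}{2\log3}\approx 1.577 > 1/0.8412 \approx 1.189$), so for $r>2$ the $\ell=2$ term dominates, and one must also confirm it dominates the character abscissa $r+1$: this holds precisely when $\frac{5\log2}{2\log3}(r-1)+1 > r+1$, i.e. $(\frac{5\log2}{2\log3}-1)(r-1)>1$, which fails for $r=2$ (left side $\approx 0.577$) and holds for $r\ge 3$. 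This is exactly the source of the case split.

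So the third step is the analysis of $r=2$ separately: here $\frac{5\log2}{2\log3}(r-1)+1 = \frac{5\log2}{2\log3}+1 \approx 2.577 < 3$, and the character contribution takes over. The characters of $F_2^{nil}$ over $\F_q$ number $w_\ell(q-1)^2$ (for $q$ a power of $\ell$), so $\log\zeta_{F_2^{nil}}(s)$ contains $\sum_{\ell}\sum_{q}\frac{w_\ell(q-1)^2}{k}q^{-ks}$; summing over all $\ell$ this is comparable to $\sum_n \gcd$-type$(n-1,\cdot)^2 n^{-s}$, and the relevant bound is $\sum_{n\le x}$ (largest squarefull-ish divisor)$^2 \ll x^3$, giving abscissa exactly $3$ — matching the $\hat\Z^2$ behaviour $a(\hat\Z^2)=3$ from Section~\ref{sec:free-abelian-abscissa}, which also gives the lower bound via the quotient map $F_2^{nil}\twoheadrightarrow\hat\Z^2$ and Lemma~\ref{quotient}. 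For the lower bound when $r>2$ one uses $F_r^{nil}\twoheadrightarrow F_r^2$ and $a(F_r^2) = \frac{5\log2}{2\log3}(r-1)+1$ from Theorem~\ref{thm:abscissa-pro-2} together with Lemma~\ref{quotient}.

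\emph{Main obstacle.} The technical heart is the uniformity in step two: one needs the bound on the $\ell$-local contribution to be summable over $\ell$ on a right half-plane whose left edge is the claimed abscissa, and in particular one must rule out a "conspiracy" where infinitely many primes each contribute a little and the sum over primes pushes the abscissa up. Concretely one wants: for $s$ slightly larger than $\frac{5\log2}{2\log3}(r-1)+1$ (resp. $3$ when $r=2$), the quantity $\sum_{\ell}\log\zeta_{\Z_\ell}\!\big(s-(r-1)/K'(\ell)\big)$ (and its $K^-$ analogue) is finite — which follows because for all but finitely many $\ell$ one has $K'(\ell)\ge1$ and then $\log\zeta_{\Z_\ell}(s-(r-1))$ is a tail of a convergent Euler-type product that is itself summable over $\ell$, using that each such $\zeta_{\Z_\ell}$ is dominated term-by-term by factors $(1-\ell^{-s+\text{const}})^{-1}$. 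Handling the finitely many exceptional (Mersenne) primes is then trivial since each contributes a genuinely convergent series. Pinning down this argument cleanly, rather than any individual estimate, is where the care is needed.
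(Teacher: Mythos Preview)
Your reduction rests on a false claim. You assert that ``a $p$-group has no nontrivial absolutely irreducible representation in a coprime characteristic'' and hence that the image of $F_r^{nil}$ under an absolutely irreducible representation over $\F_q$, $q$ a power of $\ell$, is an $\ell$-group. This is backwards. In characteristic $\ell$ it is the $\ell$-Sylow factor $F_r^\ell$ that is forced to act trivially (a finite $\ell$-group has only the trivial irreducible in characteristic $\ell$), while the factors $F_r^p$ for $p\neq\ell$ can act nontrivially in abundance: for instance $D_4$ (a $2$-group) has a $2$-dimensional absolutely irreducible representation over $\F_3$. So your proposed identity $r^\ast(F_r^{nil},\F_q,n)=r^\ast(F_r^\ell,\F_q,n)$ for $q$ a power of $\ell$ and $n>1$ fails already for $\ell=5$, $q=5$, $n=2$: the right-hand side is zero, but $F_r^{nil}\twoheadrightarrow F_r^2\twoheadrightarrow D_4$ gives representations on the left. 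Likewise the character count is $(q-1)^r$, not $w_\ell(q-1)^r$. The entire ``sum over $\ell$ of $\log\zeta_{F_r^\ell}$'' framework collapses.

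The paper's argument proceeds quite differently. Using Fein's theorem, an absolutely irreducible $\F_q$-representation of $F_r^{nil}$ of dimension $n=\prod p_i^{k_i}$ is a tensor product of absolutely irreducible representations of the $F_r^{p_i}$ (for $p_i\mid n$) together with a character of the remaining factors; Proposition~\ref{upperbound-images} then gives $r^\ast(F_r^{nil},\F_q,n)\leq (q-1)^r\prod_i |S(p_i,q)|^{r-1}$ where $S(p_i,q)$ is the Sylow $p_i$-subgroup of $\GL(p_i^{k_i},q)$, and Wolf's bound $|S(p_i,q)|\leq q^{\frac{5\log 2}{2\log 3}p_i^{k_i}}$ yields $r^\ast(F_r^{nil},\F_q,n)\leq (q-1)^r q^{\frac{5\log 2}{2\log 3}(r-1)n}$. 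The upper bound then follows by splitting into small $q$ (where this estimate gives the claimed ``space-like'' abscissa) and large $q$ (handled via the soluble bounds of Lemmas~\ref{solubleorder} and~\ref{lem:max-soluble}, giving abscissa $(1+\varepsilon)(r-1)+2$); comparing the two produces the case split at $r=2$. The lower bounds are as you say: quotient to $F_r^2$ for $r>2$ and to $\hat\Z^2$ for $r=2$.
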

In other words, for $r \geq 3$ the free pro-$2$ factor of $F_r^{nil}$ is responsible for most of the representations of $F_r^{nil}$. In particular, the lower bound follows immediately from \ref{thm:abscissa-pro-2}. For $r=2$ the lower bound follows from $a(\widehat{\Z}^2) = 3$ obtained in Section \ref{sec:free-abelian-abscissa}.
 We will study the corresponding upper bound now.
 
 \begin{lem} Let $n \geq 1$ and
 let $q$ be a prime power.  Then
  \[
 r^\ast(F^{nil}_r,\F_{q},n)  \leq (q-1)^r q^{\frac{5\log(2)}{2\log(3)}(r-1)n}.\]
 \end{lem}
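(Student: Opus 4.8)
The plan is to reduce the count for the free pronilpotent group $F^{nil}_r = \prod_p F^p_r$ to the counts for its free pro-$p$ factors, which were bounded in Sections~\ref{sec:free-pro-p} and~\ref{pro2}. First I would observe that every continuous $n$-dimensional representation of $F^{nil}_r$ over $\F_q$ has finite image, hence factors through a finite nilpotent quotient $Q = \prod_p Q_p$ with each $Q_p$ a quotient of $F^p_r$. By \cite[Theorem~2.7]{Fein}, an absolutely irreducible $\F_q Q$-module is uniquely a tensor product $\bigotimes_p M_p$ of absolutely irreducible $\F_q Q_p$-modules, and conversely every such tensor product is absolutely irreducible. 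Since the degree of an absolutely irreducible representation of a finite $p$-group is a power of $p$ (and equals $1$ in characteristic $p$), writing $n = \prod_p n_p$ for the factorisation of $n$ into prime powers (so $n_p = w_p(n)$) forces $\dim M_p = n_p$, and therefore
\[
  r^\ast(F^{nil}_r, \F_q, n) = \prod_{p} r^\ast(F^p_r, \F_q, n_p),
\]
a product in which all but finitely many factors equal $r^\ast(F^p_r, \F_q, 1) = w_p(q-1)^r$, the number of continuous homomorphisms $F^p_r \to \F_q^\times$.

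Next I would record the uniform per-prime estimate, valid for every prime $p$, every prime power $q$, and every $p^k$ with $k\ge 0$:
\[
  r^\ast(F^p_r, \F_q, p^k) \leq w_p(q-1)^r \, q^{\,p^k(r-1)/K^-}, \qquad K^- = \tfrac{2\log 3}{5\log 2}.
\]
For $k=0$ this is the equality just recorded, together with $q^{(r-1)/K^-}\ge 1$. For $k\ge 1$ it follows, as in the proofs of Theorems~\ref{thm:abscissa-free-pro-p} and~\ref{thm:abscissa-pro-2} (via Lemma~\ref{lem:upperbound-free-pro-p} for odd $p$ and Lemma~\ref{lem:upperbound-pro-2} for $p=2$): for odd $p$ one has $r^\ast(F^p_r,\F_q,p^k)\le w_p(q-1)\,q^{\,p^k(r-1)/K'(p)}$, while for $p=2$ one has $r^\ast(F^2_r,\F_q,2^k)\le w_2(q-1)\,q^{\,2^k(r-1)/K'(2)}$ when $q\equiv 1\bmod 4$ (or $q$ is even) and $r^\ast(F^2_r,\F_q,2^k)\le q^{\,2^k(r-1)/K^-}$ when $q\equiv 3\bmod 4$; in every case the right-hand side is at most $w_p(q-1)^r\,q^{\,p^k(r-1)/K^-}$, because $w_p(q-1)\le w_p(q-1)^r$ and $K'(p)\ge K^-$, the last inequality being Proposition~\ref{Mersenne} (odd $p$) together with the discussion in Section~\ref{pro2} ($p=2$). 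When the left-hand side vanishes (for instance $p$ odd with $p\nmid q-1$, $k\ge 1$) the bound is vacuous.

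Finally I would multiply these estimates over all primes, using the exact value $w_p(q-1)^r$ for the infinitely many primes $p\nmid n$, the displayed estimate for the finitely many $p\mid n$, and the identity $\prod_p w_p(q-1) = q-1$. This yields
\[
  r^\ast(F^{nil}_r, \F_q, n) \leq \Bigl(\prod_{p} w_p(q-1)^r\Bigr)\, q^{\,(\sum_{p\mid n}n_p)(r-1)/K^-} = (q-1)^r\, q^{\,(\sum_{p\mid n}n_p)(r-1)/K^-}.
\]
Since each $n_p$ with $p\mid n$ is an integer $\ge 2$, the elementary inequality $\sum_{p\mid n}n_p \le \prod_{p\mid n}n_p = n$ applies, and as $1/K^- = \frac{5\log 2}{2\log 3}$ this is precisely the asserted bound $(q-1)^r q^{\frac{5\log 2}{2\log 3}(r-1)n}$.

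The one point that really needs care is the bookkeeping in this infinite product: the $q$-power estimate may be invoked only for the finitely many primes dividing $n$, and for every other prime one must use the exact count $w_p(q-1)^r$, which equals $1$ for all but finitely many $p$; this is what makes the $w_p(q-1)$-factors collapse to $(q-1)^r$ while the $q$-exponents add up to at most $n(r-1)/K^-$. A secondary, minor, point is that the residue characteristic $\ell$ of $\F_q$ requires no special argument: if $\ell\mid n$ then $r^\ast(F^{nil}_r,\F_q,n)=0$, and if $\ell\nmid n$ then $F^\ell_r$ contributes only its trivial character, consistently with $w_\ell(q-1)=1$.
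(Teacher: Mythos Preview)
Your argument is correct and follows the same skeleton as the paper's: the Fein tensor decomposition $r^\ast(F^{nil}_r,\F_q,n)=\prod_p r^\ast(F^{p}_r,\F_q,n_p)$, a uniform bound on each factor, the collapse $\prod_p w_p(q-1)=q-1$, and finally $\sum_{p\mid n} n_p\le n$.

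The difference is in how the uniform per-prime bound is obtained. The paper applies Proposition~\ref{upperbound-images} to each $F^{p_i}_r$ with $G$ a Sylow $p_i$-subgroup of $\GL(p_i^{k_i},q)$ and then invokes \cite[Theorem~1.6]{Wolf} once to get $S(p_i,q)\le q^{\frac{5\log 2}{2\log 3}\,p_i^{k_i}}$ directly; this immediately yields $(q-1)^r\prod_i S(p_i,q)^{r-1}\le (q-1)^r q^{c_{nil}(r-1)\sum_i p_i^{k_i}}$. You instead recycle Lemmas~\ref{lem:upperbound-free-pro-p} and~\ref{lem:upperbound-pro-2} and then appeal to $K'(p)\ge K^-$ (Proposition~\ref{Mersenne} for odd $p$, and the discussion in Section~\ref{pro2} for $p=2$) to unify the exponents. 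This is slightly more circuitous, since $K'(p)\ge K^-$ is itself essentially a restatement of Wolf's bound on Sylow orders, but it has the virtue of making the dependence on the earlier sections explicit. Either way the content is the same; the paper's route is one citation shorter.
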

 \begin{proof}
 Let $n = p_1^{k_1} \cdots p_l^{k_l}$ be the prime decomposition. 
 Every absolutely irreducible representation of dimension $n$ over $\F_q$ is, by \cite{Fein}, of the form
 \[
 	\theta_1 \otimes \dots \otimes \theta_l \otimes \chi
 \]
 where $\theta_i$ is an absolutely irreducible representation of dimension $p_i^{k_i}$ of $F_r^{p_i}$ over $\F_q$ and 
 $\chi\colon \prod_{p \nmid n} F_r^{p} \to \F_q^\times$ is a homomorphism.
 Now Proposition \ref{upperbound-images} gives
 \begin{align*}
	r^\ast(F^{nil}_r,\F_{q},n) &\leq (q-1)^r \prod_{i=1}^l S(p_i,q)^{r-1} 
\end{align*}
where $S(p_i,q)$ is the order of a Sylow $p_i$-subgroup of $\GL(p_i^{k_i},q)$.

We note that $q$ and $n$ are coprime.
So we know by \cite[Theorem 1.6]{Wolf} that  $S(p_i,q) \leq q^{\frac{5\log(2)}{2\log(3)}p_i^{k_i}}$.
Therefore we have 
\[r^\ast(F^{nil}_r,\F_{q^j},n) \leq (q-1)^r q^{\frac{5\log(2)}{2\log(3)}(r-1) \sum_{i=1}^l p_i^{k_i}} \leq (q-1)^r q^{\frac{5\log(2)}{2\log(3)}(r-1)n}.\qedhere\]
 \end{proof}

\begin{proof}[Proof of the upper bounds in Theorem \ref{thm:abscissa-pronilpotent}]
We adapt the argument used for the free prosoluble group.
For any $\varepsilon>0$, pick $Q$ such that $\log_Q(24)/3 < \varepsilon$. We split the sum $\log(\zeta_{F^{nil}_r})(s)$ into terms with $q> Q$ and terms with $q \leq Q$. As for prosoluble groups we infer $r^\ast(F^{nil}_r,\F_{q},n) \leq n^{O(\log^3(n))}(q-1) q^{(1+\log_{q}(24)/3)n(r-1)}$ for $q \geq 11$ from Lemma~\ref{solubleorder} and Lemma~\ref{lem:max-soluble}. Thus we see by Lemma~\ref{generalbounds} and Lemma~\ref{sameabscissa} that the first sum converges when $s > (1+\varepsilon)(r-1)+2$, while, for each $q \leq Q$, we have the upper bound 
\[\sum_n (q-1)^{r} q^{(\frac{5\log(2)}{2\log(3)}(r-1)+1-s)n} \leq  Q^r\sum_n  q^{(\frac{5\log(2)}{2\log(3)}(r-1)+1-s)n}, \]
which converges when $s > \frac{5\log(2)}{2\log(3)} (r-1)+1$.

We note that $\frac{5\log(2)}{2\log(3)} > \frac{3}{2}$. Therefore,  when $r>2$, the second bound is larger, and we conclude $a(F^{nil}_r) \leq \frac{5\log(2)}{2\log(3)}(r-1)+1$. When $r=2$, the first bound is larger, and we conclude $a(F^{nil}_2) \leq 3$.
\end{proof}

\subsection{Pronilpotent groups of finite rank} \label{sec:finiterank}
Let $G$ be a profinite group. Recall that the rank of $G$ is the supremum of the minimal number of generators of all open subgroups; see \cite[3.11]{DDMS}.
\begin{prop}
Let $G$ be a pronilpotent group of finite rank $r$.
Then $a(G) \leq r+1$.
\end{prop}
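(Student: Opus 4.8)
The plan is to reduce to a product of pro-$p$ groups of bounded rank, bound their representation counts using the rank hypothesis, and then re-use the arithmetic estimates of Sections~\ref{sec:free_ab_pro_p}--\ref{sec:free_nilp}.

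First I would write $G=\prod_p G_p$ for the product of the (pro-$p$) Sylow subgroups of $G$; each $G_p$ is a quotient of $G$, hence a pro-$p$ group of rank $\le r$, so it is topologically generated by $r$ elements and has polynomial subgroup growth, with at most $C(r)\,p^{i(r-1)}$ open subgroups of index $p^i$ (cf.\ \cite{DDMS}). By \cite[Theorem~2.7]{Fein} an absolutely simple $\F_q\bra G\ket$-module of dimension $n$ has the form $\bigl(\bigotimes_{p\mid n}M_p\bigr)\otimes\chi$ with $M_p$ an absolutely simple $\F_q\bra G_p\ket$-module of dimension $p^{v_p(n)}$ (a $p$-power, as $G_p$ is pro-$p$) and $\chi$ a character of $\prod_{p\nmid n}G_p$, so
\[
   r^\ast(G,\F_q,n)=\Bigl(\prod_{p\nmid n}|\Hom(G_p,\F_q^\times)|\Bigr)\prod_{p\mid n}r^\ast\bigl(G_p,\F_q,p^{v_p(n)}\bigr),
\]
and since $|\Hom(G_p,\F_q^\times)|\le w_p(q-1)^r$ and $\prod_p w_p(q-1)=q-1$, the character factor is at most $(q-1)^r\prod_{p\mid n}w_p(q-1)^{-r}$.

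The crux is the estimate $r^\ast(G_p,\F_q,p^k)\le C(r)\,w_p(q-1)^{rp^k}$ for $k\ge 1$ when $p\mid q-1$ (and $r^\ast(G_p,\F_q,p^k)=0$ when $p\nmid q-1$), with $w_2(q-1)$ replaced by $w_2(q+1)$ if $p=2$ and $q\equiv 3\bmod 4$, as in Section~\ref{pro2}. Such a representation factors faithfully through a finite quotient $Q=G_p/N$ of rank $\le r$ inside $\GL(p^k,\F_q)$, whose centre is cyclic with $|Z(Q)|\mid q-1$ (the central character has values in $\F_q^\times$), so $|Z(Q)|\le w_p(q-1)$, and $[Q:Z(Q)]\ge p^{2k}$. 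One bounds $|Q|$ by $w_p(q-1)^{p^k}p^{(p^k-1)/(p-1)}$ (it lies in a Sylow $p$-subgroup of $\GL(p^k,\F_q)$) when $p^k$ is bounded in terms of $r$ — then the $p$-power factor is a bounded constant — and by $w_p(q-1)^{O(r)}$ when $p^k$ is large, using that $Z(Q)$ is cyclic and the commutator pairing on $Q/Z(Q)$ is non-degenerate (immediate in nilpotency class $2$; in general via a maximal normal abelian subgroup) together with the rank bound; in all cases $|Q|\le C(r)\,w_p(q-1)^{p^k}$. Proposition~\ref{upperbound-images} applied to the $r$-generated group $G_p$ bounds the number of such representations with image in a fixed $Q$ by $|Q|^{r-1}|Q\cap\F_q^\times|$, and the subgroup-growth bound counts the possible $Q$; multiplying gives the estimate. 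Hence
\[
   r^\ast(G,\F_q,n)\le C(r)^{\omega(n)}(q-1)^r\prod_{p\mid n}w_p(q-1)^{\,r(p^{v_p(n)}-1)},
\]
where $\omega(n)$ is the number of primes dividing $n$.

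Finally, in $\log\zeta_G(s)$ I would split off the term $n=1$, which is at most $\log\zeta_{\hat{\Z}^r}(s)$ and converges for $s>r+1$ by Section~\ref{sec:free-abelian-abscissa}. For $n\ge 2$, using $n\ge 1+\sum_{p\mid n}(p^{v_p(n)}-1)$ to bound $q^{(1-s)n}$, the $n$-th term reduces (for fixed $n$) to a sum $\sum_q(q-1)^r\prod_{p\mid n}w_p(q-1)^{r(p^{v_p(n)}-1)}q^{-\sigma}$ over prime powers $q$; the partial-summation argument from the upper bound in Proposition~\ref{prop:abscissa-Zp} (and its $K^-$-analogue for $p=2$, cf.\ Section~\ref{pro2}) shows this converges for $\sigma$ down to $r+1+\sum_{p\mid n}(r(p^{v_p(n)}-1)-1)/K(p)$, which by $K(p)\ge 1$ and $\sum_{p\mid n}(p^{v_p(n)}-1)\le n-1$ is at most $r+1+r(n-1)-\omega(n)$; so the $n$-th term converges for $s>r+1$. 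Moreover at a fixed $s>r+1$ the $n$-th term decays super-exponentially in $n$ (the Dirichlet series is dominated by the least prime power $q_0\equiv 1\bmod\prod_{p\mid n}p$, producing a factor $q_0^{(r+1-s)n}$), so $\sum_{n\ge 2}$ converges for $s>r+1$, whence $a(G)\le r+1$. The one substantial point is the structural bound $|Q|\le w_p(q-1)^{O(r)}$ for finite $p$-groups $Q$ of rank $\le r$ with a faithful absolutely irreducible representation — transparent in class $2$ from the non-degenerate commutator form but needing a genuine argument in general — together with making the final book-keeping sharp enough to land exactly at $r+1$ and handling the prime $2$ as in Sections~\ref{pro2}--\ref{sec:free_nilp}.
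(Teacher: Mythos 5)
Your approach is genuinely different from the paper's, and it contains a real gap at its crux, which you yourself flag.

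The paper does not decompose $G$ into its Sylow pro-$p$ factors at all. Instead it invokes the Detinko--Flannery structure theorems for nilpotent linear groups directly on $G$: for $q\equiv1\bmod4$ or $n$ odd every absolutely irreducible $\F_q$-representation of $G$ is monomial, and in the remaining case it is induced from a representation of dimension $\le 2$ (for which there is an explicit description). A monomial representation of dimension $n$ is induced from a character of a subgroup of index $n$, so the count is bounded by (number of subgroups of index $n$) $\times$ (number of characters of a rank-$\le r$ group into $\F_q^\times$), giving $r^*(G,\F_q,n)\le b_n\,q^{2r}$ with $b_n$ polynomial, and the convergence for $\Re(s)>r+1$ falls out in two lines. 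No structural bound on the \emph{order} of a finite $p$-group of bounded rank is needed.

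Your route instead writes $G=\prod_p G_p$, uses Fein to reduce to each $G_p$, and then tries to bound $r^*(G_p,\F_q,p^k)$ by bounding the order of a finite image $Q\le\GL(p^k,\F_q)$ of rank $\le r$ with a faithful absolutely irreducible representation. The step ``$|Q|\le w_p(q-1)^{O(r)}$ when $p^k$ is large'' is precisely what is not established. You correctly observe that it is immediate for class $2$ (non-degenerate commutator pairing into the cyclic centre bounds $|Q/Z|\le |Z|^r$), but the phrase ``in general via a maximal normal abelian subgroup'' is not a proof: if $A$ is maximal normal abelian, its exponent is controlled by $w_p(q^{p^k}-1)$ rather than $w_p(q-1)$, and the quotient $Q/A\hookrightarrow\Aut(A)$ is not obviously bounded in terms of $w$ and $r$. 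Already the $2$-groups of maximal class show you must replace $w_2(q-1)$ by $w_2(q+1)$ when $q\equiv3\bmod4$ (you note this), which signals that the relevant quantity is not simply tied to $Z(Q)$ and the centre-cyclic argument alone does not suffice. I could not produce a counterexample to your bound, and it may well be true, but it is a non-trivial structural statement about $p$-groups of bounded rank that needs its own argument; without it, falling back on the Sylow bound $|Q|\le w^{p^k}p^{(p^k-1)/(p-1)}$ gives only the free-pro-$p$ estimate, whose abscissa is $(r-1)/K'(p)+1$, which for small $K'(p)$ (e.g.\ $p=2$) exceeds $r+1$ once $r\ge 3$. So the rank hypothesis \emph{must} enter the count of representations, not just the count of generators, and at present it does not.

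There is also a secondary imprecision: ``the subgroup-growth bound counts the possible $Q$'' mixes up two different counts (normal subgroups of $G_p$ versus conjugacy classes of $p$-subgroups of $\GL(p^k,\F_q)$); Proposition~\ref{upperbound-images} applied to a single overgroup already counts all homomorphisms, so the subgroup-growth input is used quite differently in the paper (to count index-$n$ subgroups of $G$ from which monomial representations are induced). Your final bookkeeping (the $\sigma$-threshold computation and the super-exponential decay in $n$) looks to be roughly the right shape once the key bound is granted, but the variable $\sigma$ is never defined and the Abel-summation step is asserted rather than checked.

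In short: the decomposition via Fein and the idea of controlling each $p$-part via the rank bound is a legitimate alternative line of attack, but it hinges on an unproven order bound for rank-$\le r$ finite $p$-groups with a faithful absolutely irreducible representation, and the paper avoids this difficulty entirely by going through the Detinko--Flannery monomiality theorems for $G$ itself.
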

The bound is sharp for the free abelian profinite group $\hat{\Z}^r$.
\begin{proof}
We consider the one-dimensional and higher dimensional representations separately. The one-dimensional representations factor through the abelianization, which itself is a factor of $\hat{\Z}^r$. Hence the sum of the contributions for all one-dimensional representations converges for $\Re(s) > r+1$; see \ref{sec:free-abelian-abscissa}. 

Let $n \geq 1$ and let $q$ be a prime power.
When $q \equiv 1 \bmod 4$, or when $n$ is odd, every absolutely irreducible representation of $G$ over $\F_{q}$ is monomial by \cite[Theorem 5.3]{DF2}. When $q \equiv 3 \bmod 4$ and $n$ is even, every absolutely irreducible representation of $G$ over $\F_{q^j}$ is induced from one of dimension $\leq 2$, by \cite[Theorem 5.1]{DF2}. We may count the absolutely irreducible primitive representations of dimension $2$ using the structure theory of \cite{DF}: by \cite[Proposition 4.2]{DF}, an absolutely irreducible primitive nilpotent subgroup $G$ of $\GL(2,q)$ is a direct product $G_2 \times C$ where $G_2$ is a primitive $2$-group and $C$ is a group of scalars of odd order.  If $q \equiv 3 \bmod 4$, then $4w_2(q+1)$ is the order of a Sylow $2$-subgroup of $\GL(2,q)$. 
Let $a_n(G)$ denote the number of subgroups of index $n$ in $G$. 
Taking into account the monomial representations and, if needed, those induced from $2$-dimensional representations, we deduce
\[
	r^*(G,\F_q,n) \leq a_n(G) (q-1)^r + \delta_{n,q} a_{n/2}(G) (4c_2(q+1))^r \Bigl(\frac{q-1}{2}\Bigr)^r 
\]
where $\delta_{n,q} \in \{0,1\}$ depending on whether $q \equiv 3 \bmod 4$ and $n$ is even. Since $G$ has finite rank, it has polynomial subgroup growth (see \cite[10.1]{LuSe}); hence, we get an upper bound of the form
\[
    r^*(G,\F_q,n)  \leq b_n q^{2r}
\]
where $b_n$ is a function which grows at most polynomially in $n$.

We use this to estimate the contribution to $\zeta_G$ from representations of dimension at least $2$ and obtain
\begin{align*}
\sum_{q,j} \sum_{n \geq 2} \frac{ r^*(G,\F_{q^j},n)}{j} q^{-jns} \frac{q^{jn}-1}{q^j-1}
&\leq \sum_{q,j} \frac{1}{j} \sum_{n \geq 2} b_n q^{2rj} q^{-jns} q^{j(n-1)}\\
&\leq \sum_{q,j} \frac{q^{(2r-1)j}}{j}\sum_{n \geq 2} b_n q^{j(1-s)n}
\end{align*}
The inner sum converges for $\Re(s) > 1$ and the result is bounded by $q^{2(1-s)j+\varepsilon}$ for any $\varepsilon > 0$ up to a constant. We deduce that the series converges for $\Re(s) > r+1+\varepsilon/2$. Since $\varepsilon$ was arbitrary, the result follows.
\end{proof}

\subsection{Free pro-\texorpdfstring{$\mathfrak{C}$}{C} groups, II}
\label{pro-C upper bound}

Let $\mathfrak{C}$ be a NE-formation of finite groups containing the cyclic groups of prime order. Let $F^{\mathfrak{C}}_r$ be the free pro-$\mathfrak{C}$ group on $r$ generators. If $\mathfrak{C}$ contains alternating groups of arbitrarily large degree, or classical groups with natural representation of arbitrarily large dimension, an application of Theorem \ref{thm:lowerbound-general} shows that $F^{\mathfrak{C}}_r$ does not have UBERG. Assume it does not. Let $c_0$ be maximal such that $Alt(c_0) \in \mathfrak{C}$, and assume that $c_0$ is large.

To make the calculation of the abscissa of convergence of $F^{\mathfrak{C}}_r$ easier, we start by showing that only certain types of absolutely irreducible representations need to be considered. Just as, for soluble groups, the abscissa is dominated by representations whose image is in a wreath product $GL(2,\F_3) \wr Sym(4) \wr \cdots \wr Sym(4)$, here the abscissa is dominated by representations whose image is in a wreath product $N_{\Gamma L_{\F_{p^j}}(\beta,\F_{p^k})}(E) \wr Sym(c_0) \wr \cdots \wr Sym(c_0)$, for $E$ a classical or alternating group with natural representation over $\F_{p^k}$ of dimension $\beta$. (Here, for an alternating group, by the natural representation we mean the fully deleted permutation module defined in \cite[Section 5.3, Alternating groups]{KL}.) Note that, when $E \in \mathfrak{C}$ is classical or alternating, $\Out(E)$ and $C_{\Gamma L_{\F_{p^j}}(\beta,\F_{p^k})}(E)$ are soluble, so $N_{\Gamma L_{\F_{p^j}}(\beta,\F_{p^k})}(E) \in \mathfrak{C}$ and hence $N_{\Gamma L_{\F_{p^j}}(\beta,\F_{p^k})}(E) \wr Sym(c_0) \wr \cdots \wr Sym(c_0)$ is too.

For $p$ a prime and $j,n \geq 1$ integers, consider the set of all classical or alternating groups $E \in \mathfrak{C}$ with natural representation over $\F_{p^k}$ of dimension $\beta$, such that $j|k$ and $n=\beta\frac{k}{j}c_0^l$, for some $l$.  We define $r^\ast_D(F^{\mathfrak{C}}_r,\F_{p^j},n)$ to be 
the sum over all such $E$ of $$|N_{\Gamma L_{\F_{p^j}}(\beta,\F_{p^k})}(E) \wr \underbrace{Sym(c_0) \wr \cdots \wr Sym(c_0)}_{\text{$l$ times}}|^{r-1}p^j.$$

Let $C_0$ be an absolute constant such that $(C_0!)^{1/(C_0-1)} \geq \max((2c_t)^4,2^{2c_4+4c_i+2})$, where $c_t,c_4,c_i$ are the (effectively computable) constants defined in \cite{JP}; we will refer to these constants without further comment in the rest of this section. We will assume for the rest of Section \ref{pro-C upper bound} that $c_0 > C_0$. (By Stirling's approximation, $(C_0!)^{1/(C_0-1)} \to \infty$ as $C_0 \to \infty$, so the required $C_0$ does exist.)

\begin{thm}
	\label{dominant-abscissa}
	The abscissa of convergence of $\zeta_{F^{\mathfrak{C}}_r}(s)$ is at most that of $\Sigma_D = \sum_{p,j,n}\frac{r^\ast_D(F^{\mathfrak{C}}_r,\F_{p^j},n)}{j}p^{-snj}|\mathbb{P}^{n-1}(\F_{p^j})|$.
\end{thm}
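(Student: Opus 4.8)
The plan is to show that the full logarithm $\log(\zeta_{F^{\mathfrak{C}}_r})(s)$, away from its abscissa, is bounded (up to harmless polynomial factors absorbed by Lemma~\ref{sameabscissa}) by a constant multiple of $\Sigma_D(s')$ for $s'$ a bounded shift of $s$; since $\Sigma_D$ has positive coefficients, this forces $a(F^{\mathfrak{C}}_r) \le a(\Sigma_D)$. First I would invoke the classification of maximal subgroups of classical groups (Aschbacher's theorem), and the analogous reduction for alternating and sporadic groups, to organise an arbitrary absolutely irreducible representation $\rho\colon F^{\mathfrak{C}}_r \to \GL(n,\F_{p^j})$ with image $H \in \mathfrak{C}$ according to the structure of $H$: either $H$ lies in a geometric (reducible, imprimitive, field-extension, tensor-induced, \dots) configuration, in which case it is contained in a subgroup of the wreath-product shape $N_{\Gamma L}(E) \wr Sym(c_0) \wr \cdots \wr Sym(c_0)$ with $E$ classical or alternating, or $H$ is almost simple acting (quasi-)primitively, in which case the bound on $c_0$ and the nonexistence of large alternating/classical composition factors force the natural dimension $\beta$ to be bounded and again put $H$ inside such a normaliser. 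The key point, using $c_0 > C_0$ and the constants $c_t, c_4, c_i$ from \cite{JP}, is that the \emph{other} configurations — those not of the dominant type $D$ — contribute representation counts that are exponentially smaller in the relevant exponent, so their total contribution to $\log\zeta$ converges on a half-plane no larger than $\{\Re(s) > a(\Sigma_D)\}$.

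Concretely, I would stratify $\log(\zeta_{F^{\mathfrak{C}}_r})(s) = \sum_{p,j,n} \tfrac{r^\ast(F^{\mathfrak{C}}_r,\F_{p^j},n)}{j} p^{-snj} |\mathbb{P}^{n-1}(\F_{p^j})|$ by the isomorphism type of a minimal normal subgroup (or the socle) of the image $H$. For each stratum, Proposition~\ref{upperbound-images} bounds the number of representations with image in a fixed overgroup $G$ by $|G|^{r-1} p^j$; so the task reduces to comparing $|G|^{r-1}$ for $G$ the relevant wreath-product overgroup against $|N_{\Gamma L_{\F_{p^j}}(\beta',\F_{p^k})}(E') \wr Sym(c_0)^{\wr l'}|^{r-1}$ for the dominant-type overgroups $E'$ of the same dimension $n$. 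Using the order formulas for classical groups and for wreath products, together with the inequalities $(C_0!)^{1/(C_0-1)} \ge (2c_t)^4$ and $\ge 2^{2c_4+4c_i+2}$, one checks that replacing a non-dominant configuration by the appropriate $Sym(c_0)$-wreath configuration of the same dimension only increases the order bound; the combinatorial heart is that wreathing with $Sym(c_0)$ is the most efficient way to build up large dimension out of small simple building blocks, precisely because $c_0$ is large. One must also handle the soluble `glue' — the normaliser quotients $\Out(E)$ and $C_{\Gamma L}(E)$ are soluble, so by Lemma~\ref{solubleorder} their orders are $q^{O(n)}$ with a constant strictly smaller than the $\log|E|/(\beta \log q)$-type exponent coming from $E$ itself, hence they are subsumed. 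After summing the non-dominant strata one gets a series of the same shape as $\Sigma_D$ but with strictly smaller exponents, which therefore converges wherever $\Sigma_D$ does (indeed on a strictly larger half-plane), and the dominant strata are by definition exactly $\Sigma_D$ (up to the $p^j$ versus $|\mathbb{P}^{n-1}|$ discrepancy and polynomially-many choices of $(\beta,k,l)$ realising a given $n$, both absorbed by Lemma~\ref{sameabscissa}).

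The main obstacle, I expect, is the bookkeeping in the non-almost-simple Aschbacher classes — particularly the tensor-product class $\mathcal{C}_4$ and the tensor-induced class $\mathcal{C}_7$ — where an image $H$ of dimension $n = n_1 n_2$ (or $n = m^t$) need not obviously embed in a single $Sym(c_0)$-wreath tower, and where one must iterate the reduction. Controlling this requires showing that any such tensor decomposition can be refined down to building blocks of dimension $\le c_0$ (using that $\mathfrak{C}$ has no large classical/alternating factors), and then that the resulting tensor-induced structure is dominated, dimension-for-dimension, by a straight $Sym(c_0)$-wreath tower — which is where the explicit constant $C_0$ and the estimates of \cite{JP} do the real work. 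A secondary subtlety is ensuring that the almost-simple primitive case genuinely cannot produce more representations than the dominant type: here one uses that an almost simple group with a primitive faithful representation of dimension $n$ and bounded natural dimension has order at most $q^{O(n)}$ with a controlled constant (again \cite{JP}), strictly below the dominant exponent once $c_0$ is large.
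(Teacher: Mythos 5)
Your proposal correctly identifies the overall strategy — stratify representations by image structure, bound each stratum via Proposition~\ref{upperbound-images}, and use that $c_0$ is large relative to the \cite{JP} constants $c_t, c_4, c_i$ to make the non-dominant strata converge wherever $\Sigma_D$ does. The paper, however, does not invoke Aschbacher's theorem or stratify by socle: it writes the image $G \leq P \wr T$ with $P$ primitive linear and $T$ transitive, and for large $P$ uses \cite[Proposition~5.7]{JP} to express $P \leq A \odot B$ with $E(B)$ classical or alternating over a subfield; this produces the explicit case division (classes~1, 2, 3.$E$.1--4) that drives the estimates. The \cite{JP} machinery is essentially an Aschbacher-style analysis repackaged for counting, so this difference is organisational, but it determines the book-keeping in an essential way.

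The more significant gap is that your sketch does not address the number of \emph{conjugacy classes} of possible overgroups. Proposition~\ref{upperbound-images} bounds representations with image in a \emph{fixed} $G$ by $|G|^{r-1}p^j$, and to conclude one must sum over all conjugacy classes of such $G$; by \cite[Theorem~3.1]{JP} (used here as Lemma~\ref{transcc}) the number of conjugacy classes of transitive $T \leq Sym(t)$ to consider is roughly $c_t^{rt}$ — exponential in the permutation degree $t$, not polynomial. The inequality $(C_0!)^{1/(C_0-1)} \geq (2c_t)^4$ exists precisely so that the $(c_0!)^{(r-1)(\cdot)/(c_0-1)}$ factor can absorb this $c_t^{r(\cdot)}$ class count. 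Your proposal attributes that inequality instead to an order comparison between configurations ("replacing a non-dominant configuration ... only increases the order bound"), which is not its role; without absorbing the exponential conjugacy-class count, the argument does not close. (Similarly, $(C_0!)^{1/(C_0-1)} \geq 2^{2c_4+4c_i+2}$ absorbs the small-primitive class~1, where the number of conjugacy classes of images is $\leq p^{c_i rnj}$ by \cite[Proposition~6.1]{JP} — again a class-count issue rather than an order comparison.)
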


We will prove this in a series of smaller results.

\subsubsection{Permutation group results}

We recall following the well known result of \cite{PrSa}.

\begin{prop}
	\label{smallprim}
	Let $P$ be a primitive permutation group of degree $n$ and suppose that $P$ does not contain $Alt(n)$. Then the order of $P$ is at most $4^n$.
\end{prop}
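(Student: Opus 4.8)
The statement is the classical theorem of Praeger and Saxl, so the plan is simply to invoke \cite{PrSa}; for the applications in Section~\ref{pro-C upper bound} only the clean exponential form of the bound matters, so no refinement or reproof is required.

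If a self-contained argument were wanted, I would proceed as follows. The order of a primitive group $P$ of degree $n$ is controlled by the size of a minimal base: if $B\subseteq\{1,\dots,n\}$ has trivial pointwise stabiliser in $P$, then $|P|\le n^{|B|}$, so it suffices to bound the minimal base size tightly enough that $n^{|B|}\le 4^n$ holds for \emph{every} $n$, not merely asymptotically. I would do this by a case analysis along the O'Nan--Scott types. The natural action of $\mathrm{Sym}(n)$ or $\mathrm{Alt}(n)$ is exactly the case excluded by hypothesis. For an affine group $P\le\mathrm{AGL}(d,p)$ with $n=p^d$ one has $|P|\le p^d\,|\GL(d,p)|<p^{d+d^2}$, and a short computation confirms $p^{d+d^2}\le 4^{p^d}$ for all $p\ge 2$ and $d\ge 1$. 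The diagonal, product-action and twisted-wreath types reduce, via the structure of the socle, to the almost simple case $T\trianglelefteq P\le\Aut(T)$; there $n$ is at least the minimal faithful permutation degree $m$ of $T$, and one needs $|\Aut(T)|\le 4^m$, which follows by combining the classification of the finite simple groups with the known lower bounds on their minimal permutation degrees. The original Praeger--Saxl proof instead avoids the classification, running an intricate induction on $n$ built on a minimal block system.

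Either way, the main obstacle is the almost simple case, and in particular pinning down the constant $4$: the extremal behaviour occurs for alternating groups acting on $k$-element subsets and for the low-dimensional classical groups, where $|\Aut(T)|$ is largest relative to the permutation degree, so those families need a careful hands-on treatment. Since this is precisely the delicate content of \cite{PrSa}, I would cite it rather than reprove it.
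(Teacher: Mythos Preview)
Your proposal is correct and matches the paper exactly: the paper states this proposition as a recollection of the well-known result of \cite{PrSa} and gives no proof, simply citing Praeger--Saxl. Your additional sketch of a possible self-contained argument is reasonable but not needed for the paper's purposes.
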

\begin{prop}
	\label{transsize}
	If $G \leq Sym(n)$ is a transitive permutation group in $\mathfrak{C}$ then $\log(|G|) \leq \frac{n-1}{c_0-1}\log(c_0!)$, and the orders of the iterated wreath products $$S_k = \underbrace{Sym(c_0) \wr \cdots \wr Sym(c_0)}_{\text{$k$ times}} \leq Sym(c_0^k)$$ attain this bound. Thus, $|G| \leq (c_0!)^{(n-1)/(c_0-1)}$, and the supremum of $\log(|G|)/n$ over all $n$ and $G \in \mathfrak{C}$ is $\frac{\log(c_0!)}{c_0-1}$.
\end{prop}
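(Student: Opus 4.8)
The plan is to induct on the degree $n$; the case $n=1$ is trivial, so assume $n\ge 2$ and split into the primitive and imprimitive cases. Throughout I will use that $m\mapsto \log(m!)/(m-1)$ is increasing (it is the running average of $\log 2,\log 3,\dots$), so $\log(m!)\le \tfrac{m-1}{c_0-1}\log(c_0!)$ whenever $2\le m\le c_0$. \emph{Primitive case.} Suppose $G\le Sym(n)$ is primitive with $G\in\mathfrak C$. If $Alt(n)\le G$, then $Alt(n)$ is a normal subgroup of $G$, hence lies in $\mathfrak C$; since $Alt(m)\notin\mathfrak C$ for $m>c_0$ this forces $n\le c_0$, and then $|G|\le n!\le (c_0!)^{(n-1)/(c_0-1)}$. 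If $Alt(n)\not\le G$, Proposition~\ref{smallprim} gives $|G|\le 4^n$, and the choice $c_0>C_0$ makes $(c_0!)^{1/(c_0-1)}$ large enough that $4^n\le (c_0!)^{(n-1)/(c_0-1)}$ for all $n\ge 2$.

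\emph{Imprimitive case.} Choose a block system $\mathcal B$ with blocks of minimal size $b>1$ and put $k=n/b$. Let $\bar G\le Sym(k)$ be the action of $G$ on $\mathcal B$ and $K=\ker(G\to\bar G)$; then $\bar G\in\mathfrak C$ as a quotient of $G$, and $K\unlhd G$ lies in $\mathfrak C$. Minimality of $b$ makes the block stabiliser $G_B$ act primitively on a block $B$, with image a primitive group $P$ of degree $b$, and $G$ embeds into the imprimitive wreath product $P\wr\bar G$, so $|G|\le |P|^k|\bar G|$. Since $k(b-1)+(k-1)=n-1$, the induction closes once we bound $|P|\le (c_0!)^{(b-1)/(c_0-1)}$ and invoke the inductive hypothesis $|\bar G|\le (c_0!)^{(k-1)/(c_0-1)}$. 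If $b\le 4$ or $Alt(b)\not\le P$, the bound on $|P|$ follows exactly as in the primitive case. Otherwise $b\ge 5$ and $Alt(b)\le P\le Sym(b)$; here I would pass to the image $K^{B}\unlhd P$ of $K$ under $G_B\to Sym(B)$. As $K^{B}$ is a quotient of $K\in\mathfrak C$ it lies in $\mathfrak C$, and since $Alt(b)$ is simple and self-centralising in $Sym(b)$, every nontrivial normal subgroup of $P$ contains $Alt(b)$. Hence either $Alt(b)\le K^{B}$, forcing $Alt(b)\in\mathfrak C$, so $b\le c_0$ and $|P|\le b!\le (c_0!)^{(b-1)/(c_0-1)}$; or $K^{B}=1$, so $K$ fixes $B$ pointwise and, being normal in the block-transitive group $G$, fixes all points, whence $K=1$, $G\cong\bar G$ has strictly smaller degree, and we finish by induction.

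\emph{Extremality and the supremum.} The $k$-fold iterated wreath product $S_k$ has degree $c_0^k$ and order $(c_0!)^{1+c_0+\dots+c_0^{k-1}}=(c_0!)^{(c_0^k-1)/(c_0-1)}$, so it attains the bound; it lies in $\mathfrak C$ because its composition factors are $Alt(c_0)$ and $C_2$, both in $\mathfrak C$, and $\mathfrak C$ is closed under extensions. The inequality just proved gives $\log|G|/n\le \tfrac{n-1}{n}\cdot\tfrac{\log(c_0!)}{c_0-1}<\tfrac{\log(c_0!)}{c_0-1}$, while $\log|S_k|/c_0^k\to\tfrac{\log(c_0!)}{c_0-1}$; hence the supremum of $\log|G|/n$ equals $\tfrac{\log(c_0!)}{c_0-1}$.

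I expect the imprimitive step to be the main obstacle: an NE-formation need not be closed under arbitrary subgroups, so one cannot simply assert $P\in\mathfrak C$, and the argument must instead route through the normal subgroup $K$ — using that $K^{B}$ is a quotient of $K$, the dichotomy for normal subgroups of near-alternating primitive groups, and the separate treatment of the degenerate case $K=1$. Everything else is elementary bookkeeping with block systems and orders of wreath products.
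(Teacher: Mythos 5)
Your proof is correct and takes essentially the same approach as the paper: both decompose $G$ along its block systems into wreath products with primitive bottom factors, invoke the Praeger--Saxl bound (Proposition~\ref{smallprim}) for a primitive factor not containing the alternating group, and use a normal-subgroup argument (exploiting that $\mathfrak{C}$ is closed under quotients and normal subgroups) to rule out a primitive factor with alternating socle of degree $>c_0$. The only real difference is packaging: the paper works with the full tower $G\leq P_1\wr\cdots\wr P_l$ in one pass, locating the first index with $s_i>c_0$, whereas you run a clean one-level induction on the degree $n$ --- which makes normality of $K^B$ in $P$ immediate (since $K\unlhd G_B$) and handles the degenerate case $K=1$ by simply dropping to $\bar G$ of smaller degree, rather than threading these facts through the multi-level argument.
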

\begin{proof}
	We have $G \leq P_1 \wr \cdots \wr P_l$ for some primitive $P_1, \ldots, P_l$ of degrees $s_1, \ldots, s_l$ respectively, with $P_i$ induced by the stabiliser in $G$ of a minimal block in the quotient action of $G$ on a set of $n/(s_1 \cdots s_{i-1})$ elements. If $s_1, \ldots, s_l \leq c_0$, the proof is a straightforward induction which we leave to the reader. Suppose instead that (without loss of generality) $s_1, \ldots, s_{i-1} \leq c_0$, and $s_i > c_0$. Write $G_i$ (respectively, $G_{i+1}$) for the image of $G$ in the quotient $P_i \wr \cdots \wr P_l$ (respectively, $P_{i+1} \wr \cdots \wr P_l$). By the inductive hypothesis, $\log(|G_{i+1}|) \leq \frac{s_{i+1} \cdots s_l-1}{c_0-1}\log(c_0!)$. If $P_i$ does not contain $Alt(s_i)$, $|P_i| \leq 4^{s_i}$ by Proposition \ref{smallprim}, and we calculate $\log(|G_i|) \leq s_i \cdots s_l \log(4) + \log(|G_{i+1}|) \leq s_i \cdots s_l \log(4) + \frac{s_{i+1} \cdots s_l-1}{c_0-1}\log(c_0!) \leq \frac{s_i \cdots s_l-1}{c_0-1}\log(c_0!)$. Indeed, because $s_i > c_0 > C_0 > 9$, this implies that $\frac{\log(c_0!)}{c_0-1} > \frac{c_0+1}{c_0}\log(4) \geq \frac{s_i}{s_i-1}\log(4)$, and hence $s_i \cdots s_l \log(4) \leq (s_i-1)s_{i+1} \cdots s_l \frac{\log(c_0!)}{c_0-1}$ and the result follows. If $P_i$ contains $Alt(s_i)$, consider $H = G_i \cap P_i^{s_{i+1} \cdots s_l}$: this is normal in $G_i$, so in $\mathfrak{C}$. Since $G$ permutes the factors transitively, if $H$ is non-trivial, the image of $H$ in each copy of $P_i$ is a non-trivial normal subgroup of $P_i$, so it contains $Alt(s_i)$, giving a contradiction. 
	Therefore $H$ is trivial, and $\log(|G_i|) \leq \frac{s_{i+1} \cdots s_l-1}{c_0-1}\log(c_0!) \leq \frac{s_i \cdots s_l-1}{c_0-1}\log(c_0!)$.
	
	Finally, $\log(|G|) \leq s_i \cdots s_l \log(|P_1 \wr \cdots P_{i-1}|) + \log(|G_i|)$, so by the inductive hypothesis, $\log(|G|) \leq \frac{\log(c_0!)}{c_0-1}((s_1 \cdots s_{i-1} - 1)s_i \cdots s_l + s_i \cdots s_l-1) = \frac{n-1}{c_0-1}\log(c_0!)$, as required.
	
	For the second claim, we have $|S_k| = |Sym(c_0)|^{c_0^{k-1} + c_0^{k-2} + \cdots + c_0^1}$, so $\log(|S_k|) = \frac{c_0^k-1}{c_0-1} \log(c_0!)$.
\end{proof}

In fact the proof shows more:

\begin{cor}
	\label{smallwithbigfactor}
	Suppose $G \leq P_1 \wr \cdots \wr P_l$ is in $\mathfrak{C}$, with $P_1, \ldots, P_l$ primitive permutation groups of degree $s_1, \ldots, s_l$. If $s_1, \ldots, s_{i-1} \leq c_0$ and $s_i > c_0$, $|G| \leq 4^{s_i \cdots s_l}(c_0!)^{\frac{n-s_i \cdots s_l + s_{i+1} \cdots s_l-1}{c_0-1}}$.
\end{cor}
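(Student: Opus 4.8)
The plan is to extract a sharper conclusion from the inductive argument already carried out in the proof of Proposition~\ref{transsize}, simply keeping the contribution of the first ``large'' factor $P_i$ displayed rather than absorbing it into the uniform bound $(c_0!)^{(n-1)/(c_0-1)}$. Throughout I work in the setting of that proof, in particular with $i$ the least index with $s_i>c_0$ and with $G_i$, $G_{i+1}$ the images of $G$ in $P_i\wr\cdots\wr P_l$ and $P_{i+1}\wr\cdots\wr P_l$ respectively.

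First I would use the block decomposition: projecting $P_1\wr\cdots\wr P_l$ onto the block system on which $P_i\wr\cdots\wr P_l$ acts as a quotient (the $s_i\cdots s_l$ blocks of size $s_1\cdots s_{i-1}$) has kernel $(P_1\wr\cdots\wr P_{i-1})^{s_i\cdots s_l}$, so
\[
	|G|\le |P_1\wr\cdots\wr P_{i-1}|^{\,s_i\cdots s_l}\,|G_i|.
\]
Since $s_1,\dots,s_{i-1}\le c_0$, the elementary estimate for iterated wreath products of primitive groups of degree at most $c_0$ (the ``straightforward induction'' invoked in the proof of Proposition~\ref{transsize}) gives $\log|P_1\wr\cdots\wr P_{i-1}|\le \frac{s_1\cdots s_{i-1}-1}{c_0-1}\log(c_0!)$. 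Next I would re-run the case analysis on $P_i$: if $P_i$ does not contain $\mathrm{Alt}(s_i)$ then $|P_i|\le 4^{s_i}$ by Proposition~\ref{smallprim}, so $\log|G_i|\le s_i\cdots s_l\log 4+\log|G_{i+1}|$; and if $P_i\supseteq\mathrm{Alt}(s_i)$ then the normal subgroup $H=G_i\cap P_i^{s_{i+1}\cdots s_l}$ of $G_i$ lies in $\mathfrak{C}$, and the transitivity of $G$ on the $s_{i+1}\cdots s_l$ copies of $P_i$ forces $H$ to be trivial (otherwise $\mathrm{Alt}(s_i)$ would be a section of $G$, contradicting $s_i>c_0$ and the quotient-closure of $\mathfrak{C}$), so $\log|G_i|\le\log|G_{i+1}|$. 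In both cases $\log|G_i|\le s_i\cdots s_l\log 4+\log|G_{i+1}|$.

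It then remains to bound $|G_{i+1}|$, which is a quotient of $G$, hence a transitive group in $\mathfrak{C}$ of degree $s_{i+1}\cdots s_l$: Proposition~\ref{transsize} itself gives $\log|G_{i+1}|\le\frac{s_{i+1}\cdots s_l-1}{c_0-1}\log(c_0!)$. Combining the three estimates,
\[
	\log|G|\le s_i\cdots s_l\log 4+\frac{\log(c_0!)}{c_0-1}\bigl((s_1\cdots s_{i-1}-1)(s_i\cdots s_l)+(s_{i+1}\cdots s_l-1)\bigr),
\]
and since $s_1\cdots s_l=n$ the parenthesised quantity simplifies to $n-s_i\cdots s_l+s_{i+1}\cdots s_l-1$, which is exactly the asserted exponent of $c_0!$; exponentiating yields the claim.

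The argument is essentially bookkeeping layered on top of Proposition~\ref{transsize}, so I do not expect a serious obstacle. The one genuinely load-bearing point, which I would be careful to state correctly, is the case $P_i\supseteq\mathrm{Alt}(s_i)$: there one must use that it is $G$ itself — and not merely the ambient iterated wreath product, which need not lie in $\mathfrak{C}$ — that permutes the copies of $P_i$ transitively, so that non-triviality of $H$ would produce $\mathrm{Alt}(s_i)$ inside $\mathfrak{C}$.
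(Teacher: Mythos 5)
Your proof is correct and takes essentially the same approach as the paper: you project onto $P_i\wr\cdots\wr P_l$ to split off $|P_1\wr\cdots\wr P_{i-1}|^{s_i\cdots s_l}$, bound the small iterated wreath product by the uniform estimate of Proposition~\ref{transsize}, re-run the two-case analysis on $P_i$ (non-alternating socle via Proposition~\ref{smallprim}; alternating socle via triviality of $H=G_i\cap P_i^{s_{i+1}\cdots s_l}$, using that $G$, not the ambient wreath product, permutes the factors transitively), and finally apply Proposition~\ref{transsize} to $G_{i+1}$. This is exactly the paper's argument (the paper's displayed intermediate bound has a typo, $s_i\log 4$ where $s_i\cdots s_l\log 4$ is meant, and your derivation gets this right).
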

\begin{proof}
	The proof of the proposition shows $\log(|G_i|) \leq s_i\log(4) + \frac{s_{i+1} \cdots s_l-1}{(c_0-1)}\log(c_0!)$. So $\log(|G|) \leq s_i \cdots s_l\log(|P_1 \wr \cdots \wr P_{i-1}|)+s_i \cdots s_l \log(4) + \frac{s_{i+1} \cdots s_l-1}{(c_0-1)} \log(c_0!) \leq \frac{n-s_i \cdots s_l + s_{i+1} \cdots s_l-1}{c_0-1} \log(c_0!) + s_i \cdots s_l \log(4)$.
\end{proof}

\begin{lem}
	\label{transcc}
	Let $s_1, \ldots, s_l$ be fixed. Suppose $G \in \mathfrak{C}$ is an $r$-generated transitive permutation group such that $G \leq P_1 \wr \cdots \wr P_l$ for some primitive $P_1, \ldots, P_l$ of degrees $s_1, \ldots, s_l$. Suppose $Alt(s_1), \ldots, Alt(s_{i-1}) \in \mathfrak{C}$ and $Alt(s_i) \notin \mathfrak{C}$. Then up to conjugacy in $Sym(n)$, for the constant $c_t$ defined in \cite[Theorem 3.1]{JP}, $G$ is contained in one of at most $c_t^{rs_i \cdots s_l}$ subgroups of $Sym(s_1) \wr \cdots \wr Sym(s_{i-1}) \wr P_i \wr \cdots \wr P_l$ which are in $\mathfrak{C}$.
\end{lem}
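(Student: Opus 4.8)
\emph{The plan.} The key observation I would exploit is that all the combinatorial freedom in choosing an overgroup of $G$ inside $W':=Sym(s_1)\wr\cdots\wr Sym(s_{i-1})\wr P_i\wr\cdots\wr P_l$ sits in the \emph{upper} wreath layers $P_i,\dots,P_l$: in the lower layers $1,\dots,i-1$ we are allowed to enlarge all the way to the full symmetric groups (since $Alt(s_j)\in\mathfrak C$ and $C_2\in\mathfrak C$ force $Sym(s_j)\in\mathfrak C$), so no choice is made there. So the strategy is to collapse $G$ onto its action on the block system below layer $i-1$, reduce the statement to a counting statement about that quotient action, feed it into \cite[Theorem~3.1]{JP}, and pull the resulting list of groups back through an extension.

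Concretely, I would write $n=s_1\cdots s_l$ and $m=s_i\cdots s_l$, and let $W'$ act imprimitively on $n$ points; among its nested block systems let $\mathcal B$ be the one with $m$ blocks of size $s_1\cdots s_{i-1}$, on which the top part $Q:=P_i\wr\cdots\wr P_l$ acts, and let $B\trianglelefteq W'$ be the kernel of the action on $\mathcal B$, so $B=(Sym(s_1)\wr\cdots\wr Sym(s_{i-1}))^m$ and $W'/B\cong Q$. Since $P_j\le Sym(s_j)$ we have $G\le P_1\wr\cdots\wr P_l\le W'$; let $\bar G\le Q$ be the image of $G$ under $W'\to Q$, i.e.\ the induced action of $G$ on $\mathcal B$. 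Then $\bar G$ is an $r$-generated transitive permutation group of degree $m$ lying in $\mathfrak C$. I would check that the further quotient of $\bar G$ onto the layer $P_i$ is a subgroup of $Sym(s_i)$ lying in $\mathfrak C$, hence, as $Alt(s_i)\notin\mathfrak C$, not containing $Alt(s_i)$; and since $Q$ is imprimitive when $l>i$ and equals the primitive group $P_i$ when $l=i$, in either case $\bar G$ cannot contain $Alt(m)$ (an imprimitive group contains no $Alt$ of its degree). Note also $m\ge s_i>c_0$ is large, so there are no small-degree exceptions to worry about.

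Next I would apply \cite[Theorem~3.1]{JP} to $\bar G$ (applied along the primitive wreath layers of $Q$ as needed): this provides, up to conjugacy in $Sym(m)$, a list $\bar H_1,\dots,\bar H_N$ of subgroups of $Q$ lying in $\mathfrak C$, with $N\le c_t^{rm}=c_t^{rs_i\cdots s_l}$, such that $\bar G$ is contained in one of them. I would let $H_k\le W'$ be the full preimage of $\bar H_k$ under $W'\to Q$, so $H_k$ is an extension of $B$ by $\bar H_k$. Each $H_k$ lies in $\mathfrak C$: for $j<i$ the group $Sym(s_j)$ is an extension of $Alt(s_j)\in\mathfrak C$ by $C_2\in\mathfrak C$, so the iterated wreath product $Sym(s_1)\wr\cdots\wr Sym(s_{i-1})$ and hence the direct power $B$ lie in $\mathfrak C$ (NE-formations are closed under direct products, wreath products with permutation groups in $\mathfrak C$, and extensions), and then $H_k$ is an extension of $B\in\mathfrak C$ by $\bar H_k\in\mathfrak C$. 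Finally $G$ lies in the preimage of $\bar G$, hence in some $Sym(n)$-conjugate of the $H_k$ corresponding to the $\bar H_k$ containing a conjugate of $\bar G$; and a conjugation of $\bar H_k$ inside $Sym(m)$ is realised by the corresponding top-group element of $Sym(s_1\cdots s_{i-1})\wr Sym(m)\le Sym(n)$, which normalises $B$, so the $H_k$ fall into at most $N\le c_t^{rs_i\cdots s_l}$ classes under conjugacy in $Sym(n)$, giving the claim.

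The main obstacle is the reduction step: confirming that $\bar G$ genuinely satisfies the hypotheses needed to apply \cite[Theorem~3.1]{JP} — above all that it contains no $Alt(m)$, which is exactly where $Alt(s_i)\notin\mathfrak C$ and the imprimitivity of $Q$ are used — and, depending on the precise form of that theorem, checking that applying it along the $l-i+1$ layers of $Q$ still yields the stated exponent $c_t^{rs_i\cdots s_l}$ rather than a larger power of $c_t$. Everything else — the closure of $B$ and of the $H_k$ under the NE-formation operations, and the passage from $Sym(m)$-conjugacy to $Sym(n)$-conjugacy — is routine bookkeeping.
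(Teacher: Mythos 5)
Your proposal is correct and takes essentially the same route as the paper: apply \cite[Theorem~3.1]{JP} to the image of $G$ in $P_i\wr\cdots\wr P_l$ to bound the number of possibilities up to conjugacy, then pull back. The paper's proof is just these two sentences, whereas you carefully fill in the supporting bookkeeping (that the image avoids $Alt(m)$, that preimages lie in $\mathfrak{C}$ by the NE-formation axioms, and that $Sym(m)$-conjugacy lifts through the wreath structure to $Sym(n)$-conjugacy) which the paper leaves implicit under ``The result follows immediately.''
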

\begin{proof}
	By \cite[Theorem 3.1]{JP}, there are at most $c_t^{rs_i \cdots s_l}$ possibilities for the image $G'$ of $G$ in $P_i \wr \cdots \wr P_l$ up to conjugacy. The result follows immediately.
\end{proof}

\subsubsection{Linear group results}

Suppose $G \leq GL(n,p^j)$ is in $\mathfrak{C}$ and irreducible. We can write $G \leq P \wr T$, where $P \leq GL(b,p^j)$ is primitive, and $T \leq Sym(t)$, $t=n/b$, is transitive and in $\mathfrak{C}$ -- but note that in general $P$ need not be in $\mathfrak{C}$. We can also write $T \leq P_1 \wr \cdots \wr P_l$ as in the last section, with $P_1, \ldots, P_l$ primitive permutation groups of degrees $s_1, \ldots, s_l$ such that $s_1 \cdots s_l = t$.

In the case $|P|  > p^{c_4bj}$, we will need to fix some additional notation, following \cite[Proposition 5.7]{JP}. Suppose $\F_{p^{k'}} = Z(\End_{F^\ast(P)}(\F_{p^j}^b))$.  In this case, there exist $A \leq \Gamma L_{\F_{p^j}}(\alpha,\F_{p^{k'}}), B \leq \Gamma L_{\F_{p^j}}(\beta,\F_{p^{k'}})$ such that $P \leq A \odot B$ (so that $\alpha \beta k' = bj$ and $t=(nj)/(\alpha \beta k')$). These have the property that $\F_{p^{k'}}^\times \leq A$, $\F_{p^{k'}}^\times \leq B$, and $\beta \geq \alpha$, so that $|A| \leq p^{(\alpha^2+1)k'} \leq p^{bj+k'}$. Also, $E(B)$ is either a classical group over a subfield $\F_{p^k}$ of $\F_{p^{k'}}$ with natural representation in dimension $\beta$ with scalars extended to $\F_{p^{k'}}$, or an alternating group with natural representation over $\F_{p^{k'}}$ in dimension $\beta$.

For each classical group $E \in \mathfrak{C}$ over $\F_{p^k}$ with natural representation in dimension $\beta$, define $c_E$ by $|N_{\Gamma L_{\F_p}(\beta,\F_{p^k})}(E)| = p^{c_E \beta k}$ (so that $|N_{\Gamma L_{\F_{p^j}}(\beta,\F_{p^k})}(E)| = \frac{1}{j}p^{c_E \beta k}$). 
For each alternating group $E \in \mathfrak{C}$ with natural representation over $\F_{p^k}$ in dimension $\beta$, define $c_{E,p^k}$ by $|N_{\Gamma L_{\F_p}(\beta,\F_{p^k})}(E)| = p^{c_{E,p^k} \beta k}$. We have $|N_{\Gamma L_{\F_p}(\beta,\F_{p^k})}(Alt(d))| = |\Out(Alt(d))|k(p^k-1)$ (with $\Out(Alt(d)) = Sym(d)$ for $d \neq 6$), and $\beta = d-1$ or $d-2$,  so an easy calculation shows that $c_{Alt(d),p^k} < c_{Alt(d),2}$ for any $p^k>2$ and $d \geq 5$. When the choice of field is clear, we may suppress the dependence on $p^k$ in the notation.

We want to count representations of $F^{\mathfrak{C}}_r$ with image $G$. We consider several classes of possibilities for $G$. The strategy for each class is to show that $G$ is contained in some larger subgroup of $GL(n,p^j)$ in $\mathfrak{C}$ which has an easy-to-describe form, such that we can bound both the order of these larger subgroups and the number of such subgroups up to conjugacy.
\\

1. $|P| \leq p^{c_4bj}$.
\\

\begin{prop}
	The number $r^\ast_1(F^{\mathfrak{C}}_r,\F_{p^j},n)$ of absolutely irreducible representations of $F^{\mathfrak{C}}_r$ with image in class 1 is at most 
	\begin{equation}\label{eq:f1}
	  p^{c_4(r-1)nj+j+c_irnj}(c_0!)^{\frac{(r-1)(n/2-1)}{c_0-1}}.
	\end{equation}
\end{prop}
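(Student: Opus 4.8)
The plan is to count, for a fixed prime $p$ and fixed $j,n$, the absolutely irreducible $n$-dimensional $\F_{p^j}$-representations of $F^{\mathfrak{C}}_r$ whose image $G$ lies in class~1, by enclosing each such $G$ in a group $\widetilde G \leq \GL(n,p^j)$ of controlled order and then enumerating the possibilities for $\widetilde G$ up to $\GL(n,p^j)$-conjugacy, applying Proposition~\ref{upperbound-images} to each. First I would record the structure: $G \leq P \wr T =: \widetilde G$ with $P \leq \GL(b,p^j)$ primitive, $T \leq Sym(t)$ transitive, $t = n/b$, and $|P| \leq p^{c_4bj}$. We may assume $b \geq 2$ — an imprimitivity system with $1$-dimensional blocks yields a monomial representation, and those are estimated separately by an easier argument — so $t \leq n/2$. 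Since $T$ is a quotient of $G \in \mathfrak{C}$, it lies in $\mathfrak{C}$, so Proposition~\ref{transsize} gives $|T| \leq (c_0!)^{(t-1)/(c_0-1)} \leq (c_0!)^{(n/2-1)/(c_0-1)}$; note $\widetilde G$ itself need not lie in $\mathfrak{C}$, which is irrelevant for Proposition~\ref{upperbound-images}.

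Next I would bound $|\widetilde G|$ and hence the contribution of a single $\widetilde G$:
\[
  |\widetilde G| = |P|^{t}|T| \leq p^{c_4bjt}(c_0!)^{(t-1)/(c_0-1)} = p^{c_4nj}(c_0!)^{(t-1)/(c_0-1)} \leq p^{c_4nj}(c_0!)^{(n/2-1)/(c_0-1)},
\]
so by Proposition~\ref{upperbound-images} the number of absolutely irreducible representations of $F^{\mathfrak{C}}_r$ of dimension $n$ over $\F_{p^j}$ with image contained in a given $\widetilde G$ is at most $|\widetilde G|^{r-1}p^j \leq p^{c_4(r-1)nj+j}(c_0!)^{(r-1)(n/2-1)/(c_0-1)}$.

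Finally I would bound the number $N$ of groups $\widetilde G = P \wr T$ that can occur, up to conjugacy in $\GL(n,p^j)$. Such a $\widetilde G$ is determined by a factorization $n = bt$ with $b \geq 2$ (at most $n$ choices), by the $\GL(b,p^j)$-conjugacy class of the primitive group $P$ of order at most $p^{c_4bj}$, and by the $Sym(t)$-conjugacy class of the transitive group $T$, which lies in $\mathfrak{C}$ and is $r$-generated (being a quotient of $F^{\mathfrak{C}}_r$). The enumeration results of \cite{JP} bound the number of conjugacy classes of primitive linear groups of bounded order in $\GL(b,p^j)$ and of $r$-generated transitive subgroups of $Sym(t)$; with the constant $c_i$ from \cite{JP} these combine to give $N \leq p^{c_i rnj}$ (the factor $n$ from the factorizations being absorbed). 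Multiplying the two estimates yields $r^\ast_1(F^{\mathfrak{C}}_r,\F_{p^j},n) \leq p^{c_4(r-1)nj+j+c_i rnj}(c_0!)^{(r-1)(n/2-1)/(c_0-1)}$, which is \eqref{eq:f1}. The hard part is this last step: one must pin down the correct counting statements in \cite{JP} for conjugacy classes of primitive subgroups of $\GL(b,p^j)$ of order at most $p^{c_4bj}$ and for $r$-generated transitive subgroups of $Sym(t)$ inside $\mathfrak{C}$, and check that $c_i$ is chosen so that both, together with the $\leq n$ factorizations, fit inside the single exponent $c_i rnj$.
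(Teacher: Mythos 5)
Your overall strategy matches the paper's: bound the order of the relevant image/overgroup, count the conjugacy classes of possibilities, and conclude via Proposition~\ref{upperbound-images}. But there are two substantive concerns.

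First and most importantly, the conjugacy-class count is not actually established. The paper's proof is a one-liner at this step precisely because it invokes \cite[Proposition~6.1]{JP}, which directly says that, up to conjugacy in $\GL(n,p^j)$, there are at most $p^{c_irnj}$ possibilities for the image $G$ of $F^{\mathfrak{C}}_r$ (and $c_i$ is \emph{defined} to be the constant appearing there). You instead propose to assemble such a bound yourself out of two separate ingredients — a count of primitive $P \leq \GL(b,p^j)$ of order at most $p^{c_4bj}$, and a count of $r$-generated transitive $T \leq Sym(t)$ — and you explicitly acknowledge that you have not pinned these down or checked that the product stays under $p^{c_irnj}$. This is not a cosmetic omission: it is the load-bearing step of the proof. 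Without it you have only a schema, not a proof. In particular, it is not automatic that the constant governing your two-piece count is the same $c_i$ as in \cite[Proposition~6.1]{JP}; the paper avoids this issue entirely by citing the result that was tailor-made for it.

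Second, your reduction to $b \geq 2$ is asserted, not proved. You claim that the $b=1$ (monomial) case is "estimated separately by an easier argument," but no such argument is supplied, and the $b=1$ case is genuinely included in class~1 ($|P| \leq p^j \leq p^{c_4 j}$ is automatic). If one plugs $b=1$, $t=n$ into the order bound, the exponent on $c_0!$ becomes $\frac{n-1}{c_0-1}$ rather than $\frac{n/2-1}{c_0-1}$, and under the running hypothesis $(c_0!)^{1/(c_0-1)} \geq 2^{2c_4+4c_i+2}$ this overflow cannot be absorbed into the $p^{c_4nj}$ factor, so the stated inequality~\eqref{eq:f1} need not hold for monomial $G$. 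The paper's own proof simply writes $t \leq n/2$ without comment; whether that rests on a convention from \cite{JP} about the decomposition $G \leq P \wr T$ or is a small imprecision, it is not something you can fix by gesturing at an unspecified "easier argument." If you want to exclude $b=1$, you must either show that monomial images with $t=n$ still satisfy~\eqref{eq:f1} (which, as indicated, is doubtful as written), or reroute those representations into a separate count and verify that the total still matches the proposition's statement.

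A minor structural difference worth noting: the paper applies Proposition~\ref{upperbound-images} with $G$ itself as the enclosing subgroup and counts conjugacy classes of images $G$, whereas you apply it with the overgroup $\widetilde G = P \wr T$ and count conjugacy classes of $\widetilde G$. Both are legitimate in principle, but the latter requires you to separately justify that the count of $\widetilde G$'s is controlled — which is exactly the missing step above.
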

\begin{proof}
	Suppose $G$ is in class 1. By Proposition \ref{transsize}, $|G| \leq p^{c_4nj}(c_0!)^{\frac{t-1}{c_0-1}}$, where $t \leq n/2$. By \cite[Proposition 6.1]{JP}, up to conjugacy in $GL(n,p^j)$, there are at most $p^{c_irnj}$ possibilities for the image $G$ of $F^{\mathfrak{C}}_r$. The result follows by Proposition \ref{upperbound-images}.
\end{proof}
\bigskip

2. $G$ is not in class 1, and $E(B)$ is not in $\mathfrak{C}$.
\\

\begin{prop}
	The number $r^\ast_2(F^{\mathfrak{C}}_r,\F_{p^j},n)$ of absolutely irreducible representations of $F^{\mathfrak{C}}_r$ with image in class 2 is at most $p^{2(r-1)nj+j+c_irnj}(c_0!)^{\frac{(r-1)(n/2-1)}{c_0-1}}$.
\end{prop}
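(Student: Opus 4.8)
The plan is to observe that the class-2 bound differs from the class-1 bound only in the exponent of $p$ ($2$ in place of $c_4$), so the proof runs exactly parallel: it suffices to prove the order estimate
\[
  |G| \le p^{2nj}\,(c_0!)^{\frac{n/2-1}{c_0-1}}
\]
for every $G \le \GL(n,p^j)$ occurring as the image of a representation in class 2, and then conclude as in class 1, bounding the number of $\GL(n,p^j)$-conjugacy classes of such images by $p^{c_irnj}$ via \cite[Proposition 6.1]{JP} and the number of representations with a prescribed image by $|G|^{r-1}p^j$ via Proposition \ref{upperbound-images}. As in class 1, $G$ not being in class 1 forces $|P| > p^{c_4bj} > p^j$, hence $b \ge 2$ and $t = n/b \le n/2$; this accounts for the exponent $\frac{n/2-1}{c_0-1}$.

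For the order estimate, work inside the imprimitive decomposition $G \le P \wr T$ and set $G_0 = G \cap P^t$. Then $G_0 = \ker(G \to T) \unlhd G$, so $G_0 \in \mathfrak{C}$, and $G_0$ is normal in the block stabiliser $\mathrm{Stab}_G(1)$, which maps onto $P$; hence the image $P_0$ of $G_0$ under projection to a wreath factor is a normal subgroup of $P$ lying in $\mathfrak{C}$. The key point is that $P_0$ cannot involve the simple section $S = E(B)/Z(E(B))$ of $P$. First, $S \notin \mathfrak{C}$: since $\mathfrak{C}$ is an NE-formation containing the cyclic groups of prime order it contains every finite soluble group, and closure under extensions then shows that $S \in \mathfrak{C}$ would give $E(B) \in \mathfrak{C}$, a contradiction. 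Second, by \cite[Proposition 5.7]{JP} the $P$-conjugates of $E(B)$ generate a normal subgroup $L$ of $P$ with $L/Z(L)$ a direct power of $S$; since $P_0 \cap L \unlhd P_0$ is again in $\mathfrak{C}$ and its image in $L/Z(L)$ is a normal subproduct, that image is trivial, i.e.\ $P_0 \cap L \le Z(L)$. Therefore $|P_0| \le |Z(L)|\cdot|P/L|$, and $|P/L|$ is bounded using $|A| \le p^{(\alpha^2+1)k'} \le p^{bj+k'}$ together with the scalars and outer automorphisms of the classical or alternating group $E(B)$, whose contribution is $p^{O(k'+\log(bj))}$. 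As $E(B) \notin \mathfrak{C}$, its natural dimension $\beta$ is large, and $\alpha\beta k' = bj$ gives $k' \le bj/\beta$; so for $c_0$, hence $\beta$, large enough we get $|P_0| \le p^{2bj}$.

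Finally $G_0 \le P_0 \times \cdots \times P_0$ ($t$ conjugate factors), so $|G_0| \le |P_0|^t \le p^{2bjt} = p^{2nj}$, and Proposition \ref{transsize} gives
\[
  |G| = |G_0|\cdot|G/G_0| \le |G_0|\cdot|T| \le p^{2nj}(c_0!)^{\frac{t-1}{c_0-1}} \le p^{2nj}(c_0!)^{\frac{n/2-1}{c_0-1}}.
\]
Combining this with the two counting inputs yields $r^\ast_2(F^{\mathfrak{C}}_r,\F_{p^j},n) \le p^{c_irnj}\bigl(p^{2nj}(c_0!)^{\frac{n/2-1}{c_0-1}}\bigr)^{r-1}p^j$, which is the asserted bound.

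The step I expect to be hardest is the structural core of the second paragraph: pinning down, from \cite[Proposition 5.7]{JP}, the exact normality relations between $P_0$, the layer of $P$, and $E(B)$ inside the central product $A\odot B$, and then checking that the residual factors — the small tensor factor $A$, the scalars, and the field and graph automorphisms of $E(B)$ — are genuinely negligible against $p^{bj}$ under the running assumption $c_0 > C_0$. Everything downstream of the displayed order estimate is verbatim the class-1 argument.
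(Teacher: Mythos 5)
Your approach is essentially the same as the paper's: identify the quasi-simple layer $E(B)$ sitting inside the primitive component $P$; use the fact that $\mathfrak{C}$ is closed under normal subgroups and quotients, plus $E(B)\notin\mathfrak{C}$, to force the intersection of $G$ with the base-group copies of $E(B)$ to be central; bound the residual (centralizer of $E(B)$, $\Out(E(B)/Z)$, the factor $A$, and the top transitive group $T$ via Proposition~\ref{transsize}); then count with Proposition~\ref{upperbound-images} and \cite[Prop.~6.1]{JP}. Where you differ is the bookkeeping. The paper works with $H=G\cap E(B)^t$ directly and bounds the per-block residual by $|N_{\Gamma L_{\F_{p^j}}(\beta,\F_{p^{k'}})}(E(B))/E(B)|\le\frac{1}{j}p^{3bj}$, concluding $|G|\le\frac{1}{j^t}p^{3nj}(c_0!)^{\frac{t-1}{c_0-1}}$; plugging that in gives $p^{3(r-1)nj+j+c_irnj}(c_0!)^{\frac{(r-1)(n/2-1)}{c_0-1}}$, which is \emph{weaker} than the exponent $2$ appearing in the proposition statement. (This discrepancy is harmless downstream, since the proof of Theorem~\ref{dominant-abscissa} only needs $r^\ast_2$'s bound to be dominated by $r^\ast_1$'s, and both $2<c_4$ and $3<c_4$.) Your version is actually tighter: by explicitly including the $|A|\le p^{(\alpha^2+1)k'}$ factor and using $\alpha^2 k'\le\alpha\beta k'=bj$ and $k'\le bj/\beta$, you reach $|P_0|\le p^{2bj}$, i.e.\ $|G|\le p^{2nj}(c_0!)^{\frac{n/2-1}{c_0-1}}$, which does yield the exponent~$2$ as stated. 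So you have, if anything, repaired the paper's constant rather than deviated from its method.

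One step of yours needs a different justification than the one you give. You write ``As $E(B)\notin\mathfrak{C}$, its natural dimension $\beta$ is large.'' That inference is correct when $E(B)$ is alternating (degree $>c_0$ forces $\beta\ge c_0-1$), but for $E(B)$ classical, ``$E(B)\notin\mathfrak{C}$'' does not by itself force $\beta$ large --- $\mathfrak{C}$ is an arbitrary NE-formation, and the bound on natural dimensions of classical groups in $\mathfrak{C}$ is in general a constant distinct from $c_0$. What actually guarantees $\beta$ (equivalently, makes $k'$ and $\alpha^2k'$ small relative to $bj$) in this situation is the exclusion from class~1, namely $|P|>p^{c_4bj}$, together with the structural output of \cite[Proposition 5.7]{JP}, which is precisely what produces the $A\odot B$ decomposition with $\beta\ge\alpha$. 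You should route the ``$\beta$ large'' claim through that hypothesis rather than through $E(B)\notin\mathfrak{C}$. With that repair the argument is sound.
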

\begin{proof}
	Suppose $G$ is in class 2. Consider $H = G \cap E(B)^t$, which is normal in $G$, so in $\mathfrak{C}$. If $H \nleq Z(E(B))^t$, since the factors are permuted transitively, the image of $H$ in each copy of $E(B)$ is a non-trivial normal subgroup of $E(B)$, so it contains $E(B)$, giving a contradiction. Therefore $H \leq Z(E(B))^t$. Also, $|N_{\Gamma L_{\F_{p^j}}(\beta,\F_{p^{k'}})}(E(B))/E(B)|$ is at most  
	\[
	|C_{\Gamma L_{\F_{p^j}}(\beta,\F_{p^{k'}})}(E(B))| |\Out(E(B)/Z(E(B)))| \leq \frac{k'}{j} p^{2bj} \leq \frac{1}{j} p^{3bj} 
	\]
by \cite[Lemma 2.6, Proposition 5.7]{JP}. So, by Proposition \ref{transsize}, $|G| \leq \frac{1}{j^t} p^{3nj} (c_0!)^{\frac{t-1}{c_0-1}}$, with $t \leq n/2$.
	
	As for class 1, there are at most $p^{c_irnj}$ possibilities for $G$. The result follows by Proposition \ref{upperbound-images}.
\end{proof}
\bigskip

3. $G$ is not in classes 1 or 2.
\\

In this case, we have $$|B| \leq |N_{\Gamma L_{\F_{p^j}}(\beta,\F_{p^{k'}})}(E(B))| \leq \frac{k'}{kj} p^{c_{E(B)} \beta k + k'} \leq \frac{1}{j} p^{c_{E(B)} \beta k + 2k'}.$$ Moreover, $B \in \mathfrak{C}$, because $C_{GL(\beta,\F_{p^{k'}})}(E(B))$ is cyclic and $\Out(E(B))$ is soluble.

We divide class 3 into classes 3.$E$ of groups $G$ in class 3 such that $E(B) = E$, over all classical and alternating groups $E$ in $\mathfrak{C}$. We subdivide these classes further.
\\

3.$E$.1. $\alpha > 1$.
\\

\begin{prop}
	The number $r^\ast_{E.1}(F^{\mathfrak{C}}_r,\F_{p^j},n)$ of absolutely irreducible representations of $F^{\mathfrak{C}}_r$ with image in class 3.$E$.1 is at most $$\frac{nj}{\beta k} c_t^{rnj/(2\beta k)} (\frac{1}{j})^{(r-1)(nj)/(\beta k)} p^{(c_E/2+2)(r-1)nj+j} (c_0!)^{\frac{(r-1)((nj)/(2\beta k)-1)}{c_0-1}}.$$
\end{prop}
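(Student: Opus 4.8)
The plan is to imitate the treatment of classes~1 and~2 in the preceding pages: exhibit, for every image $G$ in class $3.E.1$, a fixed \emph{frame} subgroup $\mathcal F$ with $G\le\mathcal F\le\GL(n,p^j)$; bound $|\mathcal F|$; bound the number of frames that can occur up to $\GL(n,p^j)$-conjugacy; and then apply Proposition~\ref{upperbound-images}. Recall that for $G$ in class~3 one has the imprimitivity decomposition $G\le P\wr T$ with $P\le\GL(b,p^j)$ primitive on a block of size $b$, $T\le Sym(t)$ transitive and in $\mathfrak{C}$, $t=n/b$, and moreover $P\le A\odot B$ with $A\le\Gamma L_{\F_{p^j}}(\alpha,\F_{p^{k'}})$, $B\le N_{\Gamma L_{\F_{p^j}}(\beta,\F_{p^{k'}})}(E)$, $\alpha\beta k'=bj$, $\beta\ge\alpha$, and $\alpha\ge2$ since we are in subclass~$.1$. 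I would take
\[
  \mathcal F \;=\; \Bigl(\Gamma L_{\F_{p^j}}(\alpha,\F_{p^{k'}})\odot N_{\Gamma L_{\F_{p^j}}(\beta,\F_{p^{k'}})}(E)\Bigr)\wr T ,
\]
so that $G\le\mathcal F$. The group $\mathcal F$ need not lie in $\mathfrak{C}$, but this does no harm: by Proposition~\ref{upperbound-images} the number of absolutely irreducible representations of $F^{\mathfrak{C}}_r$ with image inside $\mathcal F$ is at most $|\mathcal F|^{r-1}|\mathcal F\cap\F_{p^j}^\times|\le|\mathcal F|^{r-1}p^j$. The point of the frame is that it swallows the primitive linear part $P$ into a single fixed group, so that no enumeration of primitive linear subgroups is needed.

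For the order bound I would use $|\Gamma L_{\F_{p^j}}(\alpha,\F_{p^{k'}})|\le p^{bj+k'}$ (from $\alpha^2\le\alpha\beta$ and $k'\le p^{k'}$), the estimate $|N_{\Gamma L_{\F_{p^j}}(\beta,\F_{p^{k'}})}(E)|\le\tfrac1j p^{c_E\beta k+2k'}$ recorded just above the statement, and $|T|\le(c_0!)^{(t-1)/(c_0-1)}$ from Proposition~\ref{transsize}. The hypothesis $\alpha\ge2$ enters decisively here: since $k\mid k'$ we have $k'\ge k$, so $t=nj/(\alpha\beta k')\le nj/(2\beta k)$ — this is precisely what makes the exponent of $c_0!$ in the target $(nj/(2\beta k)-1)/(c_0-1)$ rather than $(nj/(\beta k)-1)/(c_0-1)$ — and the same inequality gives $\beta kt\le nj/2$ and $k't\le nj/4$. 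A short computation then yields, say, $|\mathcal F|\le(\tfrac1j)^{t}\,p^{(c_E/2+7/4)nj}\,(c_0!)^{(nj/(2\beta k)-1)/(c_0-1)}$. Since $7/4<2$ and $t\le nj/(\beta k)$, the per-frame quantity $|\mathcal F|^{r-1}p^j$ is at most $(\tfrac1j)^{(r-1)nj/(\beta k)}\,p^{(c_E/2+2)(r-1)nj}\,(c_0!)^{(r-1)(nj/(2\beta k)-1)/(c_0-1)}\,p^j$, the $\tfrac14$ of slack in the exponent of $p$ absorbing the gap between $(\tfrac1j)^{t(r-1)}$ and $(\tfrac1j)^{(r-1)nj/(\beta k)}$.

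It remains to count the frames. Up to $\GL(n,p^j)$-conjugacy, $\mathcal F$ is essentially determined by the block data $(\alpha,k')$ together with the transitive group $T$: once $(\alpha,k')$ is fixed, the embeddings of the subfield $\F_{p^{k'}}$ and of $E$ into $\GL(b,p^j)$, as well as the block decomposition of $\F_{p^j}^n$, are determined up to conjugacy and a bounded number of classes. The block data contributes a factor of at most $\tfrac{nj}{\beta k}$, since $\alpha k'$ is a multiple of $k$ not exceeding $bj/\beta\le nj/\beta$. For $T$: the image of the $r$-generated group $G$ in $Sym(t)$ is an $r$-generated transitive permutation group lying in $\mathfrak{C}$, so by \cite[Theorem~3.1]{JP} (cf.\ Lemma~\ref{transcc}) there are at most $c_t^{rt}\le c_t^{rnj/(2\beta k)}$ possibilities for it up to conjugacy. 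Multiplying the number $\le\tfrac{nj}{\beta k}c_t^{rnj/(2\beta k)}$ of frames by the per-frame bound $|\mathcal F|^{r-1}p^j$ of the previous paragraph gives exactly the asserted estimate.

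The main obstacle is the bookkeeping in the order estimate: the parameters $\alpha,\beta,k,k',t$ are tightly interlocked through $\alpha\beta k'=bj$ and $t=n/b$, and one must manage the $\tfrac1j$-factors together with the slack in the exponent of $p$ so that the final bound falls under the prescribed shape. Conceptually, however, the only real input is that — unlike the dominant case $\alpha=1$ (class $3.E.2$) — the tensor factor $A$, over which we have no control and which may fail to lie in $\mathfrak{C}$, can only be bounded crudely (by $p^{bj+k'}$), while a nontrivial $\alpha$ simultaneously halves the number of wreath layers available for the dominant $Sym(c_0)$-type structure at the top; both effects make the contribution of class $3.E.1$ strictly smaller than that of the dominant class, which is why this bound suffices in the proof of Theorem~\ref{dominant-abscissa}.
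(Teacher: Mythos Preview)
Your approach is essentially the paper's: the paper also takes the frame $\mathcal F=(\Gamma L_{\F_{p^j}}(\alpha,\F_{p^{k'}})\odot N_{\Gamma L_{\F_{p^j}}(\beta,\F_{p^{k'}})}(E))\wr T$, bounds $|P|\le\frac1j\,p^{(c_E/2+2)bj}$ via $|A|\le p^{bj+k'}$ and $|B|\le\frac1j\,p^{c_E\beta k+2k'}$ together with $\beta k'\le bj/2$, uses Proposition~\ref{transsize} for $|T|$, counts $\le\frac{nj}{\beta k}$ choices for $\alpha$ and $\le c_t^{rnj/(2\beta k)}$ choices for $T$ via \cite[Theorem~3.1]{JP}, and concludes with Proposition~\ref{upperbound-images}.

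One small caution: your claim that the $\tfrac14$ of slack in the $p$-exponent absorbs the gap between $(1/j)^{t(r-1)}$ and $(1/j)^{(r-1)nj/(\beta k)}$ amounts to the inequality $j^{\,nj/(\beta k)-t}\le p^{nj/4}$, and hence (since $nj/(\beta k)-t\le nj/(\beta k)$) to $j\le p^{\beta k/4}$; this is not automatic from the constraints you list (you only have $j\mid k'$, not a bound of $j$ against $p^{\beta k}$). The paper does not address this point either---it simply writes the exponent $(nj)/(\beta k')$ in the intermediate bound, which is larger than the actual $t=nj/(\alpha\beta k')$---so this is a shared looseness rather than a defect of your write-up, and in any case the only use of the bound is the comparison with \eqref{eq:f2} in the proof of Theorem~\ref{dominant-abscissa}, where the $(1/j)$-factors match and the comparison goes through regardless.
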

\begin{proof}
	Since $\beta k' \leq bj/2$, we have $|B| \leq \frac{1}{j} p^{c_{E(B)} bj/2 + bj}$.  Therefore $|P| \leq |A \odot B| = |A| |B| /p^{k'} \leq \frac{1}{j} p^{c_{E(B)} bj/2 + 2bj}$, and hence, by Proposition \ref{transsize}, $$|G| \leq (\frac{1}{j})^{(nj)/(\beta k')} p^{(c_{E(B)}/2+2)nj} (c_0!)^{\frac{(nj)/(2\beta k')-1}{c_0-1}}.$$
	
	In this class, we have $G \leq (\Gamma L_{\F_{p^j}}(\alpha,\F_{p^{k'}}) \odot N_{\Gamma L_{\F_{p^j}}(\beta,\F_{p^{k'}})}(E(B))) \wr T$. There are at most $\frac{nj}{\beta k}$ choices for $\alpha$ and then, by \cite[Theorem 3.1]{JP}, at most $c_t^{rnj/(2\beta k)}$ choices for $T$ up to conjugacy, giving $\frac{nj}{\beta k}c_t^{rnj/(2\beta k)}$ possibilities altogether. The result follows by Proposition \ref{upperbound-images}.
\end{proof}
\bigskip

3.$E$.2. $G$ is not in class 3.$E$.1, and $k'>k$.
\\

\begin{prop}
	The number $r^\ast_{E.2}(F^{\mathfrak{C}}_r,\F_{p^j},n)$ of absolutely irreducible representations of $F^{\mathfrak{C}}_r$ with image in class 3.$E$.2 is at most $$c_t^{rnj/(\beta k)} (\frac{1}{j})^{(r-1)(nj)/(\beta k)} p^{(c_E/2 + 3)(r-1) nj+j} (c_0!)^{\frac{(r-1)((nj)/(\beta k)-1)}{c_0-1}}.$$
\end{prop}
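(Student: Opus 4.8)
The plan is to argue exactly as in the cases already treated---most closely the case 3.$E$.1---by bounding the order of an arbitrary group $G$ in class 3.$E$.2, bounding the number of ``standard'' subgroups of $GL(n,p^j)$ (up to conjugacy) that such a $G$ can sit inside, and then combining the two via Proposition~\ref{upperbound-images} (which contributes the extra factor $p^j$).

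First I would record the structural consequences of the hypotheses. A group $G \leq GL(n,p^j)$ in class 3.$E$.2 is irreducible, lies in $\mathfrak{C}$, is not in class $1$, has $E(B) = E \in \mathfrak{C}$, is not in class 3.$E$.1---so $\alpha = 1$---and satisfies $k' > k$. From $\alpha = 1$ and $\alpha\beta k' = bj$ we get $\beta k' = bj$, so that $A \leq \Gamma L_{\F_{p^j}}(1,\F_{p^{k'}})$ and the number of wreath factors is $t = n/b = nj/(\beta k')$; and since $k \mid k'$ with $k' > k$, necessarily $k' \geq 2k$, whence $\beta k \leq bj/2$ and $t \leq nj/(2\beta k) \leq nj/(\beta k)$.

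For the order bound I would use $|A| = |\Gamma L_{\F_{p^j}}(1,\F_{p^{k'}})| = (k'/j)(p^{k'}-1)$, the inequality $|B| \leq |N_{\Gamma L_{\F_{p^j}}(\beta,\F_{p^{k'}})}(E)| \leq (k'/(kj))\,p^{c_E\beta k + k'}$ recorded just before the treatment of class 3.$E$.1, the relation $|P| \leq |A\odot B| \leq |A||B|/(p^{k'}-1)$, and the bounds $\beta k \leq bj/2$ and $k' \leq bj$, to obtain $|P| \leq (Q(nj)/j^{2})\,p^{(c_E/2+3)bj}$ for a fixed polynomial $Q$. (The honest exponent here is closer to $c_E/2+1$; stating the looser $c_E/2+3$ leaves a slack $p^{2(r-1)nj}$ in the final estimate which comfortably swallows the factors $Q(nj)^t$, the powers of $k'$ and $1/k$, and the count of choices of $k'$ below.) Feeding $G \leq P \wr T$ with $T$ transitive in $\mathfrak{C}$ of degree $t$ into Proposition~\ref{transsize} then gives
\[
  |G| \;\leq\; (Q(nj)/j^{2})^{t}\; p^{(c_E/2+3)nj}\;(c_0!)^{(t-1)/(c_0-1)}, \qquad t \leq nj/(\beta k).
\]

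Finally, each such $G$ lies in a subgroup of $GL(n,p^j)$ of the form $(\Gamma L_{\F_{p^j}}(1,\F_{p^{k'}}) \odot N_{\Gamma L_{\F_{p^j}}(\beta,\F_{p^{k'}})}(E)) \wr T$. There are at most $nj$ choices of $k'$ (since $\beta k' \leq nj$) and, by \cite[Theorem 3.1]{JP}, at most $c_t^{rt}$ choices of $T$ up to conjugacy, hence at most $c_t^{rnj/(\beta k)}$ such overgroups up to conjugacy (absorbing the polynomial $nj$ into the exponential, using $c_0 > C_0$). Applying Proposition~\ref{upperbound-images} with these inputs yields the stated bound on $r^\ast_{E.2}(F^{\mathfrak{C}}_r,\F_{p^j},n)$. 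I expect the only real difficulty to be purely bookkeeping: tracking the polynomial-in-$nj$ error terms coming from the Galois degrees in $|A|$ and $|B|$, from the factor $1/k$, and from the number of choices of $k'$, and checking that the slack built into the exponents $c_E/2+3$ and $c_t^{rnj/(\beta k)}$ really does absorb all of them. The genuinely substantive ingredients---the normalizer bound for $E(B)$ and \cite[Theorem 3.1]{JP}---are already available.
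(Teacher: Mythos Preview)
Your approach is essentially the same as the paper's and is correct, but you miss one simplification that makes the paper's argument cleaner. Since $\alpha=1$, the factor $A$ sits inside $\Gamma L_{\F_{p^j}}(1,\F_{p^{k'}})$, i.e.\ acts on a one-dimensional space; the paper simply observes that in this situation one may take $B$ to be $P$ itself, so $|P|=|B|\le \frac{1}{j}\,p^{c_E\beta k+2k'}\le \frac{1}{j}\,p^{(c_E/2+2)bj}$ directly (using $\beta k\le \beta k'/2$ and $2k'\le 2\beta k'=2bj$). This bypasses the $|A\odot B|$ bound entirely and produces a clean $(1/j)^t$ factor with no polynomial debris $Q(nj)^t$ to absorb. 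Your bookkeeping with the slack can be made to work, but it is unnecessary.

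There is also a small redundancy in your overgroup count: once $T$ (and hence its degree $t$) is fixed, $b=n/t$ is determined, and since $\alpha=1$ gives $\beta k'=bj$, the field $\F_{p^{k'}}$ is determined as well. So the ``at most $nj$ choices of $k'$'' is superfluous; the overgroup $N_{\Gamma L_{\F_{p^j}}(\beta,\F_{p^{k'}})}(E)\wr T$ is pinned down by $T$ alone, and the count is exactly $c_t^{rnj/(\beta k)}$ from \cite[Theorem~3.1]{JP}.
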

\begin{proof}
	Here we may choose $B$ to be $P$, so $$|P|=|B| \leq \frac{1}{j} p^{c_{E(B)} \beta k + 2k'} \leq \frac{1}{j} p^{(c_{E(B)}/2 + 2)\beta k'} = \frac{1}{j} p^{(c_{E(B)}/2+2)bj},$$ and hence $|G| \leq (\frac{1}{j})^{(nj)/(\beta k')} p^{(c_{E(B)}/2 + 2) nj} (c_0!)^{\frac{(nj)/(\beta k')-1}{c_0-1}}$ by Proposition \ref{transsize}.
	
	In this class, we have $G \leq N_{\Gamma L_{\F_{p^j}}(\beta,\F_{p^{k'}})}(E(B))) \wr T$. There are at most $c_t^{rnj/(\beta k)}$ choices for $T$ up to conjugacy, by \cite[Theorem 3.1]{JP}, and hence at most $c_t^{rnj/(\beta k)}$ possibilities for $N_{\Gamma L_{\F_{p^j}}(\beta,\F_{p^{k'}})}(E(B))) \wr T$. The result follows by Proposition \ref{upperbound-images}.
\end{proof}
\bigskip

3.$E$.3. $G$ is not in classes 3.$E$.1 or 3.$E$.2, and some $s_i$ is greater than $c_0$.
\\

\begin{prop}
	The number $r^\ast_{E.3}(F^{\mathfrak{C}}_r,\F_{p^j},n)$ of absolutely irreducible representations of $F^{\mathfrak{C}}_r$ with image in class 3.$E$.3 is at most 
	\begin{multline*}
	 2^{\log_2((nj)/(\beta k))^2} c_t^{rs_i \cdots s_l} (\frac{1}{j})^{(r-1)(nj)/(\beta k)} p^{c_E (r-1) nj + j} 4^{(r-1)s_i \cdots s_l} \cdot \\ \cdot (c_0!)^{\frac{(r-1)((nj)/(\beta k)-s_i \cdots s_l + s_{i+1} \cdots s_l-1)}{c_0-1}}.
	\end{multline*}
\end{prop}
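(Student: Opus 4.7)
In class 3.$E$.3 we have $\alpha = 1$ and $k' = k$ by assumption, so $b = \beta k / j$ and $t := n/b = (nj)/(\beta k)$; moreover, writing the transitive image $T \leq Sym(t)$ as $T \leq P_1 \wr \cdots \wr P_l$ with $P_m$ primitive of degree $s_m$, there is an index $i$ with $s_1,\dots,s_{i-1} \leq c_0 < s_i$, and in particular $Alt(s_i) \notin \mathfrak{C}$. The plan follows the same template as in the earlier classes: bound $|G|$, count the conjugacy classes in $\GL(n,p^j)$ of groups $G$ of this shape, and then apply Proposition~\ref{upperbound-images}.

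First I would bound $|G| \leq |P|^t |T|$. Since $\alpha = 1$ and $k' = k$, one may take $P \leq N_{\Gamma L_{\F_{p^j}}(\beta, \F_{p^k})}(E)$, whence $|P| \leq \frac{1}{j} p^{c_E \beta k} = \frac{1}{j} p^{c_E bj}$. For $T$, the hypothesis $s_i > c_0$ triggers Corollary~\ref{smallwithbigfactor} applied to the transitive group $T$ of degree $t$, giving
\[
  |T| \leq 4^{s_i \cdots s_l}\, (c_0!)^{(t - s_i \cdots s_l + s_{i+1} \cdots s_l - 1)/(c_0-1)}.
\]
Raising $|P|^t |T|$ to the $(r-1)$-st power and substituting $t = (nj)/(\beta k)$ accounts precisely for the factors $(1/j)^{(r-1)(nj)/(\beta k)}$, $p^{c_E(r-1)nj}$, $4^{(r-1)s_i \cdots s_l}$ and the displayed power of $c_0!$ in the target bound.

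For the number of conjugacy classes of $G$ in $\GL(n,p^j)$, Lemma~\ref{transcc} does the main work: once the decomposition data $(s_1,\dots,s_l)$ and the primitive groups $P_i,\dots,P_l$ (none containing their alternating group) are fixed, $T$ lies in one of at most $c_t^{r s_i \cdots s_l}$ subgroups of $Sym(s_1) \wr \cdots \wr Sym(s_{i-1}) \wr P_i \wr \cdots \wr P_l$, and so $G$ lies in one of that same number of overgroups of the form $N_{\Gamma L_{\F_{p^j}}(\beta, \F_{p^k})}(E) \wr T'$. The remaining combinatorial choices---the ordered factorization $t = s_1 \cdots s_l$, of which there are at most $t^{O(\log t)}$, together with the $Sym(s_m)$-conjugacy classes of primitive $P_m$ not containing $Alt(s_m)$, which by Proposition~\ref{smallprim} satisfy $|P_m| \leq 4^{s_m}$ and so admit only boundedly many classes per $s_m$---I would absorb into the single crude bound $2^{(\log_2 t)^2} = 2^{\log_2((nj)/(\beta k))^2}$.

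Applying Proposition~\ref{upperbound-images}, which yields at most $|G|^{r-1} p^j$ absolutely irreducible representations of $F^{\mathfrak{C}}_r$ with image contained in any given $G$, and multiplying by the count of overgroup classes just described, produces exactly the claimed bound. I expect the main obstacle to be ensuring the combinatorial factor $2^{\log_2((nj)/(\beta k))^2}$ is genuinely uniform: both the count of ordered factorizations of $t$ and the count of primitive permutation overgroups of degree $s_m$ without the alternating group must be absorbed into this single expression. This is where the standing hypothesis $c_0 > C_0$ enters, ensuring that the primitive-group classification contributions are dominated by powers of $2$ that are logarithmic in $t$.
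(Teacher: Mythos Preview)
Your proposal is essentially correct and follows the paper's approach closely: bound $|G|$ via $|P|^t|T|$ with Corollary~\ref{smallwithbigfactor} controlling $|T|$, count overgroups via Lemma~\ref{transcc}, and finish with Proposition~\ref{upperbound-images}.

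Two minor points of over-complication. First, Lemma~\ref{transcc} already absorbs the choice of the primitive groups $P_i,\dots,P_l$ into its $c_t^{rs_i\cdots s_l}$ bound: its proof invokes \cite[Theorem~3.1]{JP}, which counts conjugacy classes of $r$-generated transitive subgroups of $Sym(s_i\cdots s_l)$, so you do not need to enumerate primitive overgroups separately or fit them into the $2^{\log_2 t^2}$ factor. Second, the hypothesis $c_0 > C_0$ plays no role in this particular proposition; the factor $2^{\log_2((nj)/(\beta k))^2}$ arises purely from the elementary observation that $l \leq \log_2 t$ and each $s_m \leq t$, giving at most $t^{\log_2 t}$ ordered factorizations.
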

\begin{proof}
	Note that $l \leq \log_2((nj)/(\beta k))$, and there are at most $(nj)/(\beta k)$ choices for each $s_i$, so there are at most $((nj)/(\beta k))^{\log_2((nj)/(\beta k))} = 2^{\log_2((nj)/(\beta k))^2}$ possibilities for $s_1,s_2,\ldots,s_l$. Fix one possibility.
	
	Without loss of generality, we assume that $s_1, \ldots, s_{i-1} \leq c_0$. Because $k=k'$, we have 
	$|P| \leq \frac{1}{j} p^{c_{E(B)} bj}$. By Corollary \ref{smallwithbigfactor}, we have $$|G| \leq (\frac{1}{j})^{(nj)/(\beta k)} p^{c_{E(B)} n j} 4^{s_i \cdots s_l}(c_0!)^{\frac{(nj)/(\beta k)-s_i \cdots s_l + s_{i+1} \cdots s_l-1}{c_0-1}}.$$
	
	By Lemma \ref{transcc}, for fixed $s_1, \ldots, s_l$, $G$ is contained in one of at most $c_t^{rs_i \cdots s_l}$ possibilities in $\mathfrak{C}$ in this class, up to conjugacy. The result follows by Proposition \ref{upperbound-images}.
\end{proof}
\bigskip

3.$E$.4. $G$ is not in classes 3.$E$.1 or 3.$E$.2 or 3.$E$.3.
\\

\begin{prop}
	\label{E.4}
	The number $r^\ast_{E.4}(F^{\mathfrak{C}}_r,\F_{p^j},n)$  of absolutely irreducible representations of $F^{\mathfrak{C}}_r$ with image in class 3.$E$.4 is at most $$2^{\log_2((nj)/(\beta k))^2} (\frac{1}{j})^{(r-1)(nj)/(\beta k)} p^{c_E(r-1) nj+j} (c_0!)^{\frac{(r-1)((nj)/(\beta k)-1)}{c_0-1}}.$$
\end{prop}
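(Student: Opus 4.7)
The proof will follow the template of the preceding propositions in this section, applied in the structurally simplest subcase. I would begin by extracting the three consequences of exclusion from the previous classes: not being in 3.$E$.1 forces $\alpha = 1$; not being in 3.$E$.2 (together with not being in 3.$E$.1) forces $k' = k$; not being in 3.$E$.3 forces $s_i \leq c_0$ for every primitive factor. With $\alpha=1$ and $k'=k$, the subgroup $A \leq \Gamma L_{\F_{p^j}}(1,\F_{p^k})$ contributes only a polynomial-in-$k$ correction, and the central product $A \odot B$ has order bounded, up to this correction, by $|B| \leq |N_{\Gamma L_{\F_{p^j}}(\beta,\F_{p^k})}(E)| = \frac{1}{j} p^{c_E \beta k}$, so $|P| \leq \frac{1}{j} p^{c_E \beta k}$ (the correction being absorbed into the absolute constant implicit in the statement, using $\beta k = bj$).

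Next I would bound the transitive permutation part $T \leq P_1 \wr \cdots \wr P_l$ on $t = nj/(\beta k)$ points. Because every $s_i \leq c_0$, Proposition \ref{transsize} gives $|T| \leq (c_0!)^{(t-1)/(c_0-1)}$, and so $|G| \leq |P|^t |T| \leq (1/j)^{nj/(\beta k)} p^{c_E nj}(c_0!)^{(nj/(\beta k)-1)/(c_0-1)}$. Crucially, in this subcase the enclosing overgroup $H = N_{\Gamma L_{\F_{p^j}}(\beta,\F_{p^k})}(E) \wr Sym(s_1) \wr \cdots \wr Sym(s_l)$ is itself in $\mathfrak{C}$, since all its building blocks are (this is what distinguishes 3.$E$.4 from 3.$E$.3, where some $Alt(s_i)$ would be outside $\mathfrak{C}$). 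Consequently we can dispense with the Jones--Pyber subgroup-counting step used in the previous propositions and take $H$ itself as the ambient group.

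I would then count the configurations exactly as in 3.$E$.3: the number of tuples $(s_1, \ldots, s_l)$ with $\prod s_i = t$ is at most $t^{\log_2 t} = 2^{\log_2(t)^2}$, and each tuple determines a single conjugacy class of enclosing groups in $GL(n,p^j)$. Applying Proposition \ref{upperbound-images} to each such $H$ bounds the number of absolutely irreducible representations of $F_r^\mathfrak{C}$ whose image lies in $H$ by $|H|^{r-1} p^j$; multiplying by the number of tuples gives precisely the stated bound.

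The main obstacle is purely bookkeeping: carrying the small multiplicative factors arising from $|A|$ and from the central-product quotient through so that they are absorbed into $p^{c_E(r-1)nj + j}$ rather than inflating the exponent. It is worth emphasising that this case is the conceptually important one: the overgroups $H$ enumerated here are exactly the terms of $\Sigma_D$, which is what will allow Theorem \ref{dominant-abscissa} to conclude that the dominant contribution to the abscissa comes from $\Sigma_D$ alone.
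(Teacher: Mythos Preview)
Your proposal is correct and follows essentially the same approach as the paper: count the tuples $(s_1,\ldots,s_l)$ by the crude bound $2^{\log_2(t)^2}$, embed $G$ in the wreath product $N_{\Gamma L_{\F_{p^j}}(\beta,\F_{p^k})}(E) \wr Sym(s_1) \wr \cdots \wr Sym(s_l)$, bound its order via Proposition~\ref{transsize}, and finish with Proposition~\ref{upperbound-images}. Your remarks about $H \in \mathfrak{C}$ and the link to $\Sigma_D$ are good context but not needed for the bound itself (Proposition~\ref{upperbound-images} does not require the ambient group to lie in $\mathfrak{C}$), and note that the terms of $\Sigma_D$ use iterated $Sym(c_0)$ rather than arbitrary $Sym(s_i)$ with $s_i \leq c_0$, so the match is via the order bound of Proposition~\ref{transsize} rather than literal equality.
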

\begin{proof}
	As before, there are at most $2^{\log_2((nj)/(\beta k))^2}$ possibilities for $s_1, \ldots, s_l$. Fix one. Then $G \leq N_{\Gamma L_{\F_{p^j}}(\beta,\F_{p^{k'}})}(E(B))) \wr Sym(s_1) \wr \cdots \wr Sym(s_l)$, so $$|G| \leq (\frac{1}{j})^{(nj)/(\beta k)} p^{c_{E(B)} nj} (c_0!)^{\frac{(nj)/(\beta k)-1}{c_0-1}}$$ by Proposition \ref{transsize}. The result follows by Proposition \ref{upperbound-images}.
\end{proof}

\begin{proof}[Proof of Theorem \ref{dominant-abscissa}]
	Recall, for $G$ in class 3.$E$, that $c_{E(B)} > c_4 > 4$, and that $Alt(s_i) \in \mathfrak{C}$ for $s_i < 5$. Since $(c_0!)^{1/(c_0-1)} \geq (2c_t)^4$, a calculation shows that the bounds for $r^\ast_{E.1}(F^{\mathfrak{C}}_r,\F_{p^j},n), r^\ast_{E.2}(F^{\mathfrak{C}}_r,\F_{p^j},n)$ and $r^\ast_{E.3}(F^{\mathfrak{C}}_r,\F_{p^j},n)$ are at most 
	\begin{equation}\label{eq:f2}
	 2^{\log_2((nj)/(\beta k))^2} (\frac{1}{j})^{(r-1)(nj)/(\beta k)} p^{c_E(r-1) nj+j} (c_0!)^{\frac{(r-1)((nj)/(\beta k)-1)}{c_0-1}}.
	\end{equation}
  
    Also, it is clear that the upper bound for $r^\ast_2(F^{\mathfrak{C}}_r,\F_{p^j},n)$ is less than the upper bound for $r^\ast_1(F^{\mathfrak{C}}_r,\F_{p^j},n)$.

	We have shown that $r^\ast(F^{\mathfrak{C}}_r,\F_{p^j},n)$ is at most
	\begin{align} \label{line1}
	&  2p^{c_4(r-1)nj+j+c_irnj}(c_0!)^{\frac{(r-1)(n/2-1)}{c_0-1}} + \\ &+ 4\sum_{E \in \mathfrak{C}} 2^{\log_2((nj)/(\beta k))^2} (\frac{1}{j})^{(r-1)(nj)/(\beta k)} p^{c_E(r-1) nj+j} (c_0!)^{\frac{(r-1)((nj)/(\beta k)-1)}{c_0-1}}, \label{line2}
	\end{align}
	 so $\log(\zeta_G) \leq 2\Sigma_1 + 4\Sigma_2$ where
	\begin{align*}
		\Sigma_1 &= \sum_{p \in \mathcal{P}} \sum_{j,n=1}^\infty \frac{f_1(p,n,j) 
		}{j} p^{-snj} |\mathbb{P}^{n-1}(\F_{p^j})|, \\
		\Sigma_2 &= \sum_{E \in \mathfrak{C}} \sum_{p \in \mathcal{P}} \sum_{j,n=1}^\infty \frac{f_2(p,n,j)
		}{j} p^{-snj} |\mathbb{P}^{n-1}(\F_{p^j})|,
	\end{align*}
	$f_1(p,n,j)$ and $f_2(p,n,j)$ are the functions appearing in \eqref{line1} and \eqref{line2}. Therefore, $a(G)$ is at most the maximum of the abscissae of $\Sigma_1$ and $\Sigma_2$. We have $Sym(c_0) \in \mathfrak{C}$, with (absolutely irreducible) natural representation over $\F_2$ of dimension $c_0-\delta(c_0)$, where $\delta(c_0)=1$ or $2$ for odd or even $c_0$, respectively. By Theorem \ref{thm:lowerbound-general}, we have $a(F_r^{\mathfrak{C}}) \geq \frac{\log(c_0!)}{(c_0-\delta(c_0))\log(2)}(r-1) +1$. On the other hand, the abscissa of $\Sigma_1$ is at most $c_4(r-1)+c_ir+\frac{\log(c_0!)(r-1)}{2\log(2)(c_0-1)}+2$ by Lemma \ref{generalbounds}. So when $(c_0!)^{1/(c_0-1)} > 2^{2c_4+4c_i+2}$, a calculation shows that we have $a(G)$ larger than the abscissa of $\Sigma_1$, and hence at most that of $\Sigma_2$.
			
	For each $E \in \mathfrak{C}$, and each $n = \beta \frac{k}{j} c_0^l$, some $l$, we have $$|N_{\Gamma L_{\F_{p^j}}(\beta,\F_{p^k})}(E) \wr Sym(c_0) \wr \cdots \wr Sym(c_0)| = (\frac{1}{j})^{(nj)/(\beta k)} p^{c_Enj}(c_0!)^{\frac{(nj)/(\beta k)-1}{c_0-1}},$$ by Proposition \ref{transsize}. So the sum $$\sum_{(nj)/(\beta k)=1}^{c_0^l}  2^{\log_2((nj)/(\beta k))^2} (\frac{1}{j})^{(r-1)(nj)/(\beta k)} p^{c_E(r-1) nj+j} (c_0!)^{\frac{(r-1)((nj)/(\beta k)-1)}{c_0-1}}$$ is at most $c_0^l 2^{\log_2(c_0^l)^2} |N_{\Gamma L_{\F_{p^j}}(\beta,\F_{p^k})}(E) \wr Sym(c_0) \wr \cdots \wr Sym(c_0)|^{r-1} p^j$. It follows, using the usual techniques, that the abscissa of $\Sigma_2$ is at most that of $\Sigma_D$, by summing both over $n$ for a fixed $E$ and $j$, and then by summing over $E$ and $j$.
\end{proof}

In fact, we can do slightly better: for $p$ a prime and $n \geq 1$ an integer, consider the set $S_{p,n}$ of all classical or alternating groups $E \in \mathfrak{C}$ with natural representation over $\F_{p^k}$ of dimension $\beta$, such that $n=\beta k c_0^l$, for some $l$. 
We define $r^\ast_{D'}(F^{\mathfrak{C}}_r,\F_p,n)$ to be the sum over $S_{p,n}$ of $$|N_{\Gamma L(\beta,\F_{p^k})}(E) \wr \underbrace{Sym(c_0) \wr \cdots \wr Sym(c_0)}_{\text{$l$ times}}|^{r-1}p.$$

\begin{cor}
	\label{dominant-abscissa2}
	The abscissa of convergence of $\zeta_{F^{\mathfrak{C}}_r}(s)$ is at most that of $\Sigma_{D'} = \sum_{p,n}r^\ast_{D'}(F^{\mathfrak{C}}_r,\F_p,n)p^{-ns}|\mathbb{P}^{n-1}(\F_p)|$.
\end{cor}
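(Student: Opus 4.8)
The plan is to deduce this directly from Theorem~\ref{dominant-abscissa}: that result already gives that $a(F^{\mathfrak{C}}_r)$ is at most the abscissa of $\Sigma_D$, so it suffices to show that the abscissa of $\Sigma_D$ is at most that of $\Sigma_{D'}$. The first observation is that $\Sigma_{D'}$ is literally the ``$j=1$'' part of $\Sigma_D$: unwinding the definitions, $r^\ast_D(F^{\mathfrak{C}}_r,\F_{p},n)=r^\ast_{D'}(F^{\mathfrak{C}}_r,\F_p,n)$ for all $p$ and $n$, since for $j=1$ the divisibility $j\mid k$ is vacuous and $n=\beta\frac{k}{j}c_0^l=\beta k c_0^l$. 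So the content of the corollary is that the terms of $\Sigma_D$ with $j\ge 2$ do not raise the abscissa, and the plan is to bound each such term by a term of $\Sigma_{D'}$ up to a harmless polynomial factor.

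Concretely, I would reindex $\Sigma_D$ for a fixed prime $p$ by triples $(j,E,l)$, where $E\in\mathfrak{C}$ is classical or alternating with natural representation of dimension $\beta$ over $\F_{p^k}$, $j\mid k$, $l\ge 0$, and one sets $n=\beta\frac{k}{j}c_0^l$; the matching $(j,E,l)\mapsto(E,l)$ sends such a triple to the term of $\Sigma_{D'}$ with $n'=\beta k c_0^l=nj$ (note $l$ is recovered from $E$ and $n'$ since $c_0>1$). Using the identity $|N_{\Gamma L_{\F_{p^j}}(\beta,\F_{p^k})}(E)|=\tfrac1j|N_{\Gamma L(\beta,\F_{p^k})}(E)|$ recorded before Theorem~\ref{dominant-abscissa} (the Galois part of the semilinear group contributes $k/j$ instead of $k$), and the fact that the base group of the iterated wreath product has $c_0^l$ factors, one gets
\[
\Bigl|N_{\Gamma L_{\F_{p^j}}(\beta,\F_{p^k})}(E)\wr\underbrace{Sym(c_0)\wr\cdots\wr Sym(c_0)}_{l\text{ times}}\Bigr|=j^{-c_0^l}\Bigl|N_{\Gamma L(\beta,\F_{p^k})}(E)\wr\underbrace{Sym(c_0)\wr\cdots\wr Sym(c_0)}_{l\text{ times}}\Bigr|.
\]
Consequently the $(j,E,l)$-term of $\Sigma_D$ equals $\frac1j\cdot\frac1{j^{(r-1)c_0^l}}\cdot\frac{p^j}{p}\cdot\frac{p-1}{p^j-1}$ times the $(E,l)$-term of $\Sigma_{D'}$: the factors $p^{-snj}=p^{-sn'}$ agree, and $|\mathbb{P}^{n-1}(\F_{p^j})|=\frac{p^{n'}-1}{p^j-1}$ is being compared with $|\mathbb{P}^{n'-1}(\F_p)|=\frac{p^{n'}-1}{p-1}$.

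Each of these three factors is at most $1$; the only nonobvious one is $\frac{p^j}{p}\cdot\frac{p-1}{p^j-1}=\frac{p^{j-1}(p-1)}{p^j-1}\le 1$, which holds because $p^{j-1}(p-1)=p^j-p^{j-1}\le p^j-1$. Hence the $(j,E,l)$-term of $\Sigma_D$ is at most the $(E,l)$-term of $\Sigma_{D'}$. Finally, for each $(E,l)$ the fibre of the matching is $\{j:j\mid k\}$, of size at most $k\le n'$. Summing over the fibres gives, for real $s$,
\[
\Sigma_D(s)\ \le\ \sum_{p\in\mathcal{P}}\sum_{n'=1}^{\infty}n'\,r^\ast_{D'}(F^{\mathfrak{C}}_r,\F_p,n')\,p^{-sn'}\,|\mathbb{P}^{n'-1}(\F_p)|,
\]
and since $n'=p^{o(n')}$, Lemma~\ref{sameabscissa} (applied with ``$j$''$=1$ and $f(p,n',1)=n'$) shows the right-hand side has the same abscissa as $\Sigma_{D'}$. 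Together with Theorem~\ref{dominant-abscissa} this yields the claim. I expect the argument to be essentially bookkeeping; the only point that needs care is keeping the index matching straight and noticing that the mismatch between working over $\F_{p^j}$ versus $\F_p$ is compensated exactly by the inequality $\frac{p^{j-1}(p-1)}{p^j-1}\le 1$ and the polynomial factor coming from the divisor count.
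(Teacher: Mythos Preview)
Your proof is correct and follows essentially the same route as the paper: both reduce via Theorem~\ref{dominant-abscissa} to comparing $\Sigma_D$ with $\Sigma_{D'}$, fix a pair $(E,l)$ (equivalently $(p,n')$ with $n'=\beta k c_0^l$), bound the sum over $j\mid k$ of the $\Sigma_D$-terms by a harmless multiple of the single $\Sigma_{D'}$-term, and finish with Lemma~\ref{sameabscissa}. Your termwise inequality (showing each $(j,E,l)$-summand of $\Sigma_D$ is already $\le$ the $(E,l)$-summand of $\Sigma_{D'}$, then summing over the at most $k\le n'$ divisors) is slightly sharper and more transparent than the paper's estimate, which instead bounds the whole $j$-sum by $2k$ times the $\Sigma_{D'}$-contribution; but the structure and the key observations (the identity $|N_{\Gamma L_{\F_{p^j}}(\beta,\F_{p^k})}(E)|=\tfrac{1}{j}|N_{\Gamma L(\beta,\F_{p^k})}(E)|$ and the divisor-count bound) are the same.
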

\begin{proof}
	It is enough to show that $\Sigma_{D'}$ has the same abscissa as $\Sigma_D$. Clearly $\Sigma_{D'} \leq \Sigma_D$, so one direction is trivial. We prove the converse.
	
	For a fixed classical or alternating group $E$ with natural representation over $\F_{p^k}$ of dimension $\beta$, and fixed $l$ (so that $nj =\beta k c_0^l$), the contribution to $\Sigma_D$ is 
	\begin{multline*}
	\sum_{j:j|k} \frac{|N_{\Gamma L_{\F_{p^j}}(\beta,\F_{p^k})}(E) \wr Sym(c_0) \wr \cdots \wr Sym(c_0)|^{r-1}p^j}{j} p^{-snj} |\mathbb{P}^{n-1}(\F_{p^j})| \leq \\ \leq 2\sum_{j:j|k} |N_{\Gamma L_{\F_{p^j}}(\beta,\F_{p^k})}(E) \wr Sym(c_0) \wr \cdots \wr Sym(c_0)|^{r-1} p^{-s \beta k c_0^l} (p^{\beta k c_0^l -1}) \leq \\ \leq 2k |N_{\Gamma L(\beta,\F_{p^k})}(E) \wr Sym(c_0) \wr \cdots \wr Sym(c_0)|^{r-1} p^{-s \beta k c_0^l} (p^{\beta k c_0^l -1}),
	\end{multline*} 
	which is at most $2k$ times the contribution of this fixed $E$ and $l$ to $\Sigma_{D'}$. Summing over $E$ and $l$, Lemma \ref{sameabscissa} now gives the result.
\end{proof}

Let $c_{\mathfrak{C}, \text{space}}$ be the supremum of $c_E+\frac{\log_p(c_0!)}{\beta k (c_0-1)}$ over all classical and alternating groups $E \in \mathfrak{C}$. For a constant $K$, let $c_{\mathfrak{C}, \text{time},K}$ be the supremum of $c_E$ over all classical and alternating groups $E$ such that $p^k \geq K$, and let $c_{\mathfrak{C}, \text{time}} = \inf_K(c_{\mathfrak{C}, \text{time},K})$; clearly $c_{\mathfrak{C}, \text{time}} \leq c_{\mathfrak{C}, \text{time},K} \leq c_{\mathfrak{C}, \text{space}}$ for any $K$.

\begin{thm}
	\label{spacetime-bounds}
	$c_{\mathfrak{C}, \text{space}}(r-1)+1 \leq a(F^{\mathfrak{C}}_r) \leq \max(c_{\mathfrak{C}, \text{space}}(r-1)+1, c_{\mathfrak{C}, \text{time}}(r-1)+2)$.
\end{thm}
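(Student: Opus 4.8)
The plan is to derive the lower bound from Corollary~\ref{cor:lowerbound-general-perm} and the upper bound from Corollary~\ref{dominant-abscissa2}, so that everything reduces to understanding the single series $\Sigma_{D'}$.

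For the lower bound, I would fix a classical or alternating group $E \in \mathfrak{C}$ with natural representation over $\F_{p^k}$ in dimension $\beta$, and restrict scalars to $\F_p$ to obtain an $\F_p$-representation of dimension $\beta k$. The first point to check is that $S := N_{\Gamma L_{\F_p}(\beta,\F_{p^k})}(E) \leq \GL(\beta k,p)$ acts \emph{absolutely irreducibly} over $\F_p$: this holds because $S$ contains $E$ together with the scalars $\F_{p^k}^\times$ and the Galois group $\mathrm{Gal}(\F_{p^k}/\F_p)$, while over $\overline{\F_p}$ the module decomposes as the direct sum of the $k$ Galois twists of the natural module, which are absolutely irreducible and pairwise non-isomorphic once $E$ is taken over its minimal field of definition, and which $\mathrm{Gal}(\F_{p^k}/\F_p)$ permutes transitively. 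Moreover $S \in \mathfrak{C}$, since both $\Out(E)$ and $C_{\Gamma L_{\F_p}(\beta,\F_{p^k})}(E)$ are soluble (cf.\ the discussion before Theorem~\ref{dominant-abscissa}) and $\mathfrak{C}$ is an NE-formation containing the cyclic groups of prime order. Since $Sym(c_0) \in \mathfrak{C}$ is transitive of degree $c_0 > 1$, Corollary~\ref{cor:lowerbound-general-perm} applied with this $S$ (so $n_0 = \beta k$, $q = p$, $|S| = p^{c_E\beta k}$) and $T = Sym(c_0)$ would give
\[
  a(F^{\mathfrak{C}}_r) \;\ge\; \Bigl(c_E + \frac{\log_p(c_0!)}{(c_0-1)\beta k}\Bigr)(r-1)+1 ,
\]
and taking the supremum over all such $E$ yields $a(F^{\mathfrak{C}}_r) \geq c_{\mathfrak{C}, \text{space}}(r-1)+1$.

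For the upper bound, by Corollary~\ref{dominant-abscissa2} it suffices to bound the abscissa of $\Sigma_{D'}$. Using Proposition~\ref{transsize} one computes $|N_{\Gamma L_{\F_p}(\beta,\F_{p^k})}(E)\wr Sym(c_0)^{\wr l}| = p^{c_E\beta k c_0^l}(c_0!)^{(c_0^l-1)/(c_0-1)}$, so after reindexing $\Sigma_{D'}$ by triples $(p,E,l)$ with $n = \beta k c_0^l$ and using $|\mathbb{P}^{n-1}(\F_p)| \leq 2p^{n-1}$, the $(p,E,l)$-term is at most $2\,p^{\,n[\gamma_E(p)(r-1)+1-s]}$ with $\gamma_E(p) := c_E + \log_p(c_0!)/(\beta k(c_0-1)) \leq c_{\mathfrak{C}, \text{space}}$. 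I would then split the sum according to whether $p^k < K$ or $p^k \geq K$ for a large parameter $K$. Only finitely many $E$ satisfy $p^k < K$ (the UBERG hypothesis bounds the natural dimension $\beta$, and each dimension hosts only boundedly many classical types), and for each of them the geometric-type series over $l$ converges as soon as $s > \gamma_E(p)(r-1)+1$; hence the first part converges for $s > c_{\mathfrak{C}, \text{space}}(r-1)+1$. For the second part I would use $c_E \le c_{\mathfrak{C}, \text{time},K}$ and $\log_p(c_0!)/(\beta k(c_0-1)) \le \log(c_0!)/((c_0-1)\log K) =: \epsilon_K \to 0$, so that for $s > (c_{\mathfrak{C}, \text{time},K}+\epsilon_K)(r-1)+2$ and $\delta := s-(c_{\mathfrak{C}, \text{time},K}+\epsilon_K)(r-1)-1 > 1$ the term is $\le 2p^{-\beta k c_0^l\delta}$; summing over $l$, then over the $O(1)$ groups $E$ over $\F_{p^k}$, then over $k$, bounds the contribution of each prime $p$ by $O(p^{-\delta})$, and $\sum_p p^{-\delta}$ converges precisely because $\delta > 1$. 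Letting $K\to\infty$ then shows the second part converges for $s > c_{\mathfrak{C}, \text{time}}(r-1)+2$, and combining the two parts gives the stated bound.

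The step I expect to be the main obstacle is the second part of the upper bound: one is forced to sum over \emph{all} primes, and a single stray factor of $p$ survives (coming from $|\mathbb{P}^{n-1}(\F_p)|$ together with the $p^j = p$ factor of Proposition~\ref{upperbound-images}), so that convergence of $\sum_p p^{-\delta}$ demands $\delta > 1$. This is exactly what produces the ``$+2$'' rather than ``$+1$'' in the time-like bound and is the reason the two estimates in the theorem can genuinely differ. A secondary point needing care is the absolute-irreducibility claim for $N_{\Gamma L_{\F_p}(\beta,\F_{p^k})}(E)$ used in the lower bound, which relies on choosing $E$ over its minimal field of definition so that the Galois twists of its natural module are pairwise non-isomorphic.
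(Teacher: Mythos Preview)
Your proposal is correct and follows essentially the same approach as the paper: the lower bound via Corollary~\ref{cor:lowerbound-general-perm} with $S=N_{\Gamma L_{\F_p}(\beta,\F_{p^k})}(E)$ and $T=Sym(c_0)$, and the upper bound by bounding the dominant series and splitting according to whether the defining field of $E$ is below or above a threshold $K$. The only differences are cosmetic---you work with $\Sigma_{D'}$ rather than $\Sigma_D$ (equivalent by Corollary~\ref{dominant-abscissa2}) and carry out the time-like estimate explicitly where the paper simply invokes Lemmas~\ref{generalbounds} and~\ref{sameabscissa}; your added justification for the absolute irreducibility of $S$ is a point the paper leaves implicit.
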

\begin{proof}
	The lower bound follows from Corollary \ref{cor:lowerbound-general-perm} by taking, for all $\epsilon>0$, $S=N_{\Gamma L_{\F_p}(n,\F_{p^k})}(E)$ for some $E \in \mathfrak{C}$ with $c_E+\frac{\log_p(c_0!)}{\beta k (c_0-1)} > c_{\mathfrak{C}, \text{space}}-\epsilon$, and $T=Sym(c_0)$.
	
	The number of groups $E$ contributing to $r^\ast_D(F^{\mathfrak{C}}_r,\F_{p^j},n)$ is at most $7 n^2$: there are at most $n$ choices for each of $\beta$ and $k$, and then at most 7 choices of classical or alternating group. So $r^\ast_D(F^{\mathfrak{C}}_r,\F_{p^j},n) \leq 7n^2 p^{c_{\mathfrak{C}, \text{space}} (r-1)nj+j}$.
	
	To prove the upper bound, we may split our sum 
	into the sum $$\sum_{p,j,n: p^j \geq K}\frac{r^\ast_D(F^{\mathfrak{C}}_r,\F_{p^j},n)}{j}p^{-njs}\frac{p^{nj}-1}{p^j-1}$$ and finitely many sums of the form $$\sum_n\frac{r^\ast_D(F^{\mathfrak{C}}_r,\F_{p^j},n)}{j}p^{-njs}\frac{p^{nj}-1}{p^j-1}$$ with $p^j$ fixed. Each of the sums with $p^j$ fixed converges for $s > c_{\mathfrak{C}, \text{space}} (r-1) + 1$. On the other hand, $\frac{\log_p(c_0!)}{\beta k (c_0-1)} \to 0$ as $p^k \to \infty$. So for all $\epsilon > 0$, we can pick $K$ large enough that for $p^k \geq K$, we have $r^\ast_D(F^{\mathfrak{C}}_r,\F_{p^j},n) \leq 7n^2 p^{(c_{\mathfrak{C}, \text{time}}+\epsilon)(r-1)nj+j}$. Since $\epsilon$ is arbitrary, we get the result by Lemma \ref{generalbounds} and Lemma \ref{sameabscissa}.
\end{proof}

As examples, we will now calculate the abscissa in more detail for two specific NE-formations. Let $\mathfrak{C}_{Alt(c_0)}$ be the NE-formation generated by the cyclic groups of prime order and the alternating groups of degree $\leq c_0$; let $\mathfrak{C}_{\Sigma(c_0)}$ be the NE-formation generated by the class of groups defined in \cite{BCP}: that is, all the simple groups in $\mathfrak{C}_{Alt(c_0)}$, all exceptional groups of Lie type, all non-abelian simple groups of order at most $c_0$, and all classical simple groups whose natural representation has dimension at most $c_0$. None of the calculations below would be affected if we followed \cite[Window: Permutation groups, Section 2]{LuSe} in also including all the sporadic simple groups (this is not done in \cite{BCP} only because the classification of finite simple groups was incomplete when the paper was written), so our results also hold in this case.

\subsubsection{$\mathfrak{C}_{Alt(c_0)}$}

Let $\delta(c_0)=1$ or $2$ for odd or even $c_0$, respectively.

\begin{thm}
	Suppose that $c_0 > C_0$. For any $r \geq 2$, $$a(F^{\mathfrak{C}_{Alt(c_0)}}_r) = \frac{c_0\log_2(c_0!)}{(c_0-\delta(c_0))(c_0-1)}(r-1)+1.$$
\end{thm}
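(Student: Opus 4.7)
We plan to prove this by applying Theorem \ref{spacetime-bounds} and computing both constants $c_{\mathfrak{C}, \text{space}}$ and $c_{\mathfrak{C}, \text{time}}$ explicitly for $\mathfrak{C} = \mathfrak{C}_{Alt(c_0)}$. The classical or alternating groups in this NE-formation are exactly the alternating groups $Alt(d)$ for $5 \leq d \leq c_0$ (together with the accidental isomorphisms $Alt(5)\cong PSL(2,4)\cong PSL(2,5)$, $Alt(6)\cong PSL(2,9)$, $Alt(8)\cong PSL(4,2)$, which contribute no new data). Their natural (fully deleted permutation) representations over $\F_{p^k}$ have dimension $\beta = d-1$ when $p\nmid d$ and $\beta = d-2$ when $p\mid d$, and by the formula preceding Theorem \ref{dominant-abscissa} one computes $|N_{\Gamma L_{\F_p}(\beta,\F_{p^k})}(Alt(d))| = |\Aut(Alt(d))|\cdot k(p^k-1)$ for $d\neq 6$ (with a controllable correction at $d=6$). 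In particular, at $p^k=2$ this yields $|N|=d!$ and $c_{Alt(d),2} = \log_2(d!)/(d-\delta(d))$.

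The central step is to evaluate $c_{\mathfrak{C}, \text{space}} = \sup_{d,p,k}\bigl(c_{Alt(d),p^k} + \tfrac{\log_p(c_0!)}{\beta k(c_0-1)}\bigr)$. Using the remark just before subsection 3.$E$ that $c_{Alt(d),p^k} < c_{Alt(d),2}$ whenever $p^k>2$ and $d\geq 5$, together with the fact that $\tfrac{\log_p(c_0!)}{\beta k(c_0-1)}$ is also maximised at $p=2$, $k=1$, the supremum reduces to maximising
\[
\phi(d) \;:=\; \frac{\log_2(d!)}{d-\delta(d)} + \frac{\log_2(c_0!)}{(d-\delta(d))(c_0-1)} \;=\; \frac{(c_0-1)\log_2(d!)+\log_2(c_0!)}{(d-\delta(d))(c_0-1)}
\]
over $5\leq d\leq c_0$. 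Substituting $d=c_0$ gives exactly $\phi(c_0) = \tfrac{c_0\log_2(c_0!)}{(c_0-\delta(c_0))(c_0-1)}$, matching the target value. The main obstacle is to verify that $d=c_0$ genuinely realises this supremum: because $\delta(d)$ jumps with the parity of $d$, one must compare $\phi(c_0)$ against $\phi(d)$ for $d$ close to $c_0$ whose parity differs, where the saving in the denominator $d-\delta(d)$ could in principle offset the loss in $\log_2(d!)$. We plan to resolve this by a Stirling-based monotonicity argument, using the hypothesis $c_0 > C_0$ to ensure the asymptotic regime where $\log_2(d!)/(d-\delta(d))$ dominates.

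To control $c_{\mathfrak{C}, \text{time}}$, observe that for fixed $d$ one has $c_{Alt(d),p^k} = \tfrac{\log_p(d!) + \log_p(k(p^k-1))}{\beta k} \longrightarrow \tfrac{1}{\beta}$ as $p^k\to\infty$, and $\tfrac{1}{\beta}\leq \tfrac{1}{3}$ uniformly in $d\geq 5$. Hence $c_{\mathfrak{C}, \text{time}}\leq \tfrac{1}{3}+\varepsilon$ for any $\varepsilon>0$ by taking the cutoff $K$ large. Since $c_{\mathfrak{C}, \text{space}} = \Theta(\log_2 c_0)$ grows without bound, for $c_0 > C_0$ sufficiently large we have $c_{\mathfrak{C}, \text{time}}(r-1)+2 \leq c_{\mathfrak{C}, \text{space}}(r-1)+1$ for every $r\geq 2$. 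Theorem \ref{spacetime-bounds} then gives matching upper and lower bounds, yielding $a(F^{\mathfrak{C}_{Alt(c_0)}}_r) = c_{\mathfrak{C}, \text{space}}(r-1)+1 = \tfrac{c_0\log_2(c_0!)}{(c_0-\delta(c_0))(c_0-1)}(r-1)+1$ as required.
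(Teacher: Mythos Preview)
Your strategy matches the paper's exactly: reduce via Theorem~\ref{spacetime-bounds} to computing $c_{\mathfrak{C},\text{space}}$ and $c_{\mathfrak{C},\text{time}}$, observe that the field supremum occurs at $p^k=2$, and then maximise $\phi(d)$ over $5\le d\le c_0$. Your bound $c_{\mathfrak{C},\text{time}}\le 1/3$ is correct and suffices (the paper asserts the stronger bound $1/(c_0-2)$, which is suspect since $\beta$ depends on $d$ rather than $c_0$, but either bound does the job). One minor correction: the accidental isomorphs \emph{do} contribute distinct terms---for instance $\PSL(4,2)\cong Alt(8)$ enters with $\beta=4$ over $\F_2$ and $c_E\approx 3.57$---but these are bounded independently of $c_0$ and are dominated by $\phi(c_0)\sim\log_2 c_0$ once $c_0>C_0$, so they do not affect the supremum.

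The real problem is precisely where you flag it, and your proposed Stirling resolution fails when $c_0$ is odd. Then $c_0-\delta(c_0)=c_0-1$, whereas $d=c_0-1$ is even with $d-\delta(d)=c_0-3$, and one computes
\[
\phi(c_0-1)-\phi(c_0)\;=\;\frac{2c_0\,\log_2\bigl((c_0-1)!\bigr)-(c_0^2-4c_0+1)\log_2 c_0}{(c_0-3)(c_0-1)^2}.
\]
Stirling makes the numerator asymptotic to $c_0^2(\log_2 c_0-2\log_2 e)$, which is positive for every $c_0\ge 8$; concretely, at $c_0=9$ one finds $\phi(8)\approx 2.93>2.60\approx\phi(9)$, and the inequality persists for all larger odd $c_0$. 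Hence for odd $c_0$ the supremum defining $c_{\mathfrak{C},\text{space}}$ equals $\phi(c_0-1)$, strictly larger than $\phi(c_0)$, and by the lower bound in Theorem~\ref{spacetime-bounds} the abscissa is strictly larger than the stated value. The paper's own proof simply asserts the supremum occurs at $a=c_0$ without justification and therefore shares this gap; the stated formula appears to be valid only for even $c_0$, where $c_0-\delta(c_0)=(c_0-1)-\delta(c_0-1)=c_0-2$ and the comparison $\phi(c_0)>\phi(d)$ does go through.
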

\begin{proof}
	Recall that, for $E$ alternating, of degree $\geq 5$, that $c_{E,p^k}$, over all fields $\F_{p^k}$, is maximised when $p^k=2$. In this case, we have $|N_{\Gamma L_{\F_p}(\beta,\F_{p^k})}(E)| = c_0!$, and $\beta = c_0-\delta(c_0)$. So $c_0! = 2^{c_{Alt(c_0)}(c_0-\delta(c_0))}$ and $c_{Alt(c_0)} = \frac{\log_2(c_0!)}{c_0-\delta(c_0)}$.
	
	In fact, $\frac{\log_p(c_0!)}{\beta k(c_0-1)}$ is also maximised in this case, giving $\frac{\log_2(c_0!)}{\beta(c_0-1)}$. So $c_{\mathfrak{C}_{Alt(c_0)}, \text{space}} = \sup_{a \leq c_0} (\frac{\log_2(a!)}{a-\delta(a)} + \frac{\log_2(c_0!)}{(a-\delta(a))(c_0-1)}) = \frac{\log_2(c_0!)}{c_0-\delta(c_0)} + \frac{\log_2(c_0!)}{(c_0-\delta(c_0))(c_0-1)} = \frac{c_0\log_2(c_0!)}{(c_0-\delta(c_0))(c_0-1)}$.
	
	On the other hand, the same proof shows that for $p^k \geq K$ large, $c_{E,p^k} \leq \frac{\log_K(c_0!)+1}{c_0-2}$. So $c_{\mathfrak{C}_{Alt(c_0)}, \text{time}} \leq \frac{1}{c_0-2}$, and hence, for any $r \geq 2$, $$a(F^{\mathfrak{C}_{Alt(c_0)}}_r) \leq \frac{c_0\log_2(c_0!)}{(c_0-\delta(c_0))(c_0-1)}(r-1)+1.$$ This is also the lower bound for $a(F^{\mathfrak{C}_{Alt(c_0)}}_r)$ given by Theorem \ref{spacetime-bounds}.
\end{proof}

Stirling's approximation shows that $\frac{c_0\log_2(c_0!)}{(c_0-\delta(c_0))(c_0-1)} \sim \log_2(c_0)$ as $c_0 \to \infty$.

\begin{rmk}
	In fact, this approach can be used to show that we could modify the definition of $\Sigma_{D'}$ to consider, in the contributions of alternating groups to the sum, only their natural representations over $\F_2$, and the statement of Corollary \ref{dominant-abscissa2} would still hold.
\end{rmk}

\subsubsection{$\mathfrak{C}_{\Sigma(c_0)}$}

For a prime power $p^k$, define $\rho(p^k) = \frac{(c_0-1)\log(k\prod_{i=1}^{c_0}(1-p^{-ik}))+\log(c_0!)}{c_0(c_0-1)\log(p^k)}$.

\begin{thm}
	Suppose that $c_0 > C_0$. For any $r \geq 2$, $a(F^{\mathfrak{C}_{\Sigma(c_0)}}_r) = (c_0+\rho(2))(r-1)+1$. Moreover, $\frac{\log(c_0)}{(c_0-1)\log(2)} - \frac{3}{c_0 \log(2)} < \rho(2) < \frac{\log(c_0)}{(c_0-1)\log(2)}$.
\end{thm}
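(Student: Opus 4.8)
The plan is to read off the value from Corollary~\ref{dominant-abscissa2} by a direct convergence analysis of $\Sigma_{D'}$; Theorem~\ref{spacetime-bounds} alone is not enough here, since one has $c_{\mathfrak{C},\text{time}}=c_0$ (realised asymptotically by $E=SL(c_0,q)$ as $q\to\infty$), so that theorem only gives $a(F^{\mathfrak{C}_{\Sigma(c_0)}}_r)\le\max\bigl((c_0+\rho(2))(r-1)+1,\,c_0(r-1)+2\bigr)$, and since $\rho(2)<1$ the second term dominates. The first task is to compute $c_{\mathfrak{C}_{\Sigma(c_0)},\text{space}}$. Among the classical and alternating simple groups $E\in\mathfrak{C}_{\Sigma(c_0)}$ of natural dimension $\beta$ over $\F_{p^k}$, the linear groups $E=SL(\beta,p^k)$ have the largest normalizer, $|N_{\Gamma L_{\F_p}(\beta,\F_{p^k})}(E)|=k\,|GL(\beta,p^k)|$, whence $c_E=\beta+\frac{\log_p\bigl(k\prod_{i=1}^{\beta}(1-p^{-ik})\bigr)}{\beta k}$, while the other classical types and the alternating ones give strictly smaller values (the alternating contribution is $\approx\log_2 c_0$, as in the $\mathfrak{C}_{Alt(c_0)}$ case, which is negligible next to $c_0$). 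A direct optimisation — using $\log(c_0!)\le c_0\log c_0$, $\log(c_0!)\ge c_0\log c_0-c_0$, and $\tfrac14\le\prod_{i=1}^{\infty}(1-2^{-i})<1$ — shows that, for $c_0$ large, $c_E+\frac{\log_p(c_0!)}{\beta k(c_0-1)}$ is maximised at $\beta=c_0$, $p^k=2$, with maximal value exactly $c_0+\rho(2)$; thus $c_{\mathfrak{C}_{\Sigma(c_0)},\text{space}}=c_0+\rho(2)$.

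The lower bound $a(F^{\mathfrak{C}_{\Sigma(c_0)}}_r)\ge(c_0+\rho(2))(r-1)+1$ then follows from Corollary~\ref{cor:lowerbound-general-perm} applied with $S=GL(c_0,2)=SL(c_0,2)$ (absolutely irreducible over $\F_2$, and in $\mathfrak{C}_{\Sigma(c_0)}$) and $T=Sym(c_0)$, the resulting bound being $\frac{\log(|GL(c_0,2)|^{c_0-1}c_0!)}{(c_0-1)c_0\log 2}(r-1)+1=(c_0+\rho(2))(r-1)+1$. For the upper bound, by Corollary~\ref{dominant-abscissa2} it is enough to bound the abscissa of $\Sigma_{D'}$, which I would read off from two families of partial sums. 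First, summing $\Sigma_{D'}$ over the wreathing parameter $l$ for a fixed field $\F_{p^k}$ and a fixed $E$ shows each such term has abscissa $1+(r-1)\bigl(c_E+\frac{\log_p(c_0!)}{\beta k(c_0-1)}\bigr)$, so the supremum over all fields and simple types is $(c_0+\rho(2))(r-1)+1$ by the previous paragraph. Secondly, fixing the simple type, $\beta$, $k$ and $l$ and summing the summand $|N_{\Gamma L_{\F_p}(\beta,\F_{p^k})}(E)\wr Sym(c_0)^{\wr l}|^{r-1}p\cdot p^{-ns}\,|\mathbb{P}^{n-1}(\F_p)|$ (with $n=\beta kc_0^l$) over all primes $p$, and bounding $\prod_{i=1}^{\beta}(1-p^{-ik})\le1$, leaves a tail $\sum_p p^{n((r-1)\beta+1-s)}$ up to bounded factors, which converges precisely for $s>1+(r-1)\beta+\tfrac1n$; the supremum of $1+(r-1)\beta+\tfrac1n$ over $\beta\le c_0$, $k\ge1$, $l\ge0$ is $c_0(r-1)+1+\tfrac1{c_0}$ (attained at $\beta=c_0$, $k=1$, $l=0$, $E=SL(c_0,p)$). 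A routine check that the remaining, rapidly decaying slices do not raise these thresholds then gives $a(\Sigma_{D'})=\max\bigl\{(c_0+\rho(2))(r-1)+1,\ c_0(r-1)+1+\tfrac1{c_0}\bigr\}$.

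So it remains to see that $\tfrac1{c_0}<(r-1)\rho(2)$, which makes the first term win; since $r\ge2$ it suffices to prove $\rho(2)>\tfrac1{c_0}$, and this also yields the advertised lower bound on $\rho(2)$. From $\log(c_0!)=\sum_{m=1}^{c_0}\log m>\int_1^{c_0}\log x\,dx=c_0\log c_0-c_0+1$ we get $\frac{\log(c_0!)}{c_0(c_0-1)\log 2}>\frac{\log c_0-1}{(c_0-1)\log 2}$, and from $\prod_{i=1}^{c_0}(1-2^{-i})\ge\prod_{i=1}^{\infty}(1-2^{-i})\ge\tfrac12\bigl(1-\sum_{i\ge2}2^{-i}\bigr)=\tfrac14$ we get $\frac{\log\prod_{i=1}^{c_0}(1-2^{-i})}{c_0\log 2}>\frac{-\log 4}{c_0\log 2}>\frac{-3}{c_0\log 2}$; adding these, $\rho(2)>\frac{\log c_0}{(c_0-1)\log 2}-\frac{3}{c_0\log 2}$, and the elementary inequality $\frac{\log c_0}{c_0-1}>\frac{3+\log 2}{c_0}$ — valid for $c_0$ large, hence for $c_0>C_0$ — forces $\rho(2)>\tfrac1{c_0}$. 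The upper bound $\rho(2)<\frac{\log c_0}{(c_0-1)\log 2}$ is immediate from $\log\prod_{i=1}^{c_0}(1-2^{-i})<0$ and $c_0!<c_0^{c_0}$. Combining all of this gives $a(F^{\mathfrak{C}_{\Sigma(c_0)}}_r)=(c_0+\rho(2))(r-1)+1$. The main obstacle is the prime-by-prime tail estimate in the second paragraph — exactly where Theorem~\ref{spacetime-bounds} is too lossy: one must see that summing $\sum_p p^{-\delta}$ over all primes costs only $+\tfrac1{c_0}$ in the exponent (the smallest dimension in the dominant family being $n=c_0$), rather than the full $+1$ that a dimension-$1$ family would cost.
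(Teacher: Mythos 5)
Your proposal follows essentially the same route as the paper's own proof: compute $c_{\mathfrak{C}_{\Sigma(c_0)},\text{space}}=c_0+\rho(2)$, note that Theorem~\ref{spacetime-bounds} is too lossy because $c_{\mathfrak{C}_{\Sigma(c_0)},\text{time}}=c_0$, and instead bound the abscissa of the dominating sum directly, finding the competing thresholds $(c_0+\rho(2))(r-1)+1$ (from a fixed small field) and $c_0(r-1)+1+\tfrac1{c_0}$ (from summing over primes with $n\ge c_0$), then resolve the competition by proving $\rho(2)>1/c_0$. The lower bound via Corollary~\ref{cor:lowerbound-general-perm} with $S=GL(c_0,2)$, $T=Sym(c_0)$ is exactly the one buried inside Theorem~\ref{spacetime-bounds}. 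Two remarks. First, the part you call ``a routine check that the remaining, rapidly decaying slices do not raise these thresholds'' is not quite routine: taking abscissae of individual slices over an infinite index set only gives lower bounds for the abscissa of the total sum, so to get the \emph{upper} bound one needs an actual partition of the terms rather than a collection of slices. The paper does this by splitting $\Sigma_D$ into $\Sigma_1(p^j)$ (each $p^j<K$), $\Sigma_2$ (all $p^j\ge K$, $n\ge c_0$ at once) and $\Sigma_3(n)$ (each $n<c_0$), using $c_E\le c_0+\epsilon$ uniformly for $p^k\ge K$, and it is the combined $\Sigma_2$ estimate (summing the geometric tail over $n$ and then over $q\ge K$) that produces the $+\tfrac1{c_0}$ rather than $+1$; you correctly identified this as the crux, but a complete proof has to set up that partition explicitly. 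Second, your estimate $\prod_{i\ge1}(1-2^{-i})\ge\tfrac14$ is a genuine improvement on the paper, which only records $\log\prod_{i=1}^{c_0}(1-2^{-i})>-2$; with the paper's weaker estimate the displayed chain $\rho(2)>\frac{\log(c_0)-1}{(c_0-1)\log 2}-\frac{2}{c_0\log 2}>\frac{\log c_0}{(c_0-1)\log 2}-\frac{3}{c_0\log 2}$ actually fails at the second inequality (it reduces to $\tfrac{1}{c_0}>\tfrac{1}{c_0-1}$), whereas your $-\log 4\approx-1.386$ makes the analogous chain valid once $c_0>2.63$, which is well within the range $c_0>C_0$. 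So your version of the $\rho(2)$ lower bound is in fact the cleaner one. The upper bound $\rho(2)<\frac{\log c_0}{(c_0-1)\log 2}$ and the optimisation showing that $E=SL(c_0,p^k)$ at $p^k=2$ maximises $c_E+\frac{\log_p(c_0!)}{\beta k(c_0-1)}$ match the paper.
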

\begin{proof}
	By hypothesis, for any prime power $p^k$ and any classical or alternating $E \in \mathfrak{C}_{\Sigma(c_0)}$ defined over $\F_{p^k}$, $E$ has natural representation of dimension $\beta$, some $\beta \leq c_0$. Since $GL(c_0,\F_{p^k}) \in \mathfrak{C}_{\Sigma(c_0)}$, it is straightforward to show that, for any prime power $p^k$, $|N_{\Gamma L_{\F_p}(\beta,\F_{p^k})}(E)| \leq p^{c_{SL(c_0,\F_{p^k})}\beta k}$ (see \cite[Table 2.1.C]{KL}). We calculate $|\Gamma L_{\F_p}(c_0,\F_{p^k})| = kp^{c_0^2k} \prod_{i=1}^{c_0}(1-p^{-ik}) = p^{c_{SL(c_0,\F_{p^k})}c_0k}$. As $p^k \to \infty$, $c_{p^k} \to c_0$, so $c_{\mathfrak{C}_{\Sigma(c_0)},\text{time}} = c_0$. Similarly, we see that $\frac{\log_p(|N_{\Gamma L_{\F_p}(\beta,\F_{p^k})}(E)|)}{\beta k} + \frac{\log_p(c_0!)}{\beta k(c_0-1)}$  is maximised, for each $p^k$, by $\beta=c_0$ and $E = SL(c_0,\F_{p^k})$, so $c_{\mathfrak{C}_{\Sigma(c_0)},\text{space}}$ is given by $\sup_{p^k}(c_{SL(c_0,\F_{p^k})} + \frac{\log_p(c_0!)}{kc_0(c_0-1)}) = c_0 + \sup_{p^k}\rho(p^k)$.
	
	We split the sum $\sum_{p,j,n}\frac{r^\ast_D(F^{\mathfrak{C}}_r,\F_{p^j},n)}{j}p^{-njs}\frac{p^{nj}-1}{p^j-1}$ as follows. For any $\epsilon > 0$, set $K$ large enough that $\rho(p^k) < \epsilon$ for $p^j \geq K$. For each $p^j < K$, $\Sigma_1(p^j)$ is the sum over $n$ of terms with $p^j$ fixed. $\Sigma_2$ is the sum over $p,j,n$ of terms with $p^j \geq K$ and $n \geq c_0$. For each $n < c_0$, $\Sigma_3(n)$ is the sum over $p,j$ such that $p^j \geq K$, with $n$ fixed. This partitions the terms of $\sum_{p,j,n}\frac{r^\ast_D(F^{\mathfrak{C}}_r,\F_{p^j},n)}{j}p^{-njs}\frac{p^{nj}-1}{p^j-1}$ into finitely many parts.
	
	Each $\Sigma_1(p^j)$ converges for $s > c_{\mathfrak{C}_{\Sigma(c_0)},\text{space}}(r-1)+1 = (c_0+\sup_{p^k}\rho(p^k))(r-1)+1$.
	
	Next, $\Sigma_2 \leq \sum_{p,j,n: p^j \geq K, n \geq c_0}  7n^2 2^{\log_2(nj)^2} p^{((c_0+\epsilon)(r-1)+1-s)nj}$. This has the same abscissa of convergence as $\sum_{p,j,n: p^j \geq K, n \geq c_0} p^{-tnj}$, by Lemma \ref{sameabscissa}, for $s=(c_0+\epsilon)(r-1)+1+t$. Summing over $n$ first, we get $\sum_{p,j:p^j \geq K} \frac{p^{-tc_0j}}{1-p^{-tj}} \ll \sum_{p,j:p^j \geq K} p^{-tc_0j}$, where the implicit constant depences only on $t$; the latter sum converges when $t > 1/c_0$.
	
	Finally, for each $n < c_0$, $|\Gamma L_{\F_p}(n,p^j)| \leq p^{(n+\epsilon)nj}$. As before, we get that $\Sigma_3(n)$ has the same abscissa of convergence as $\sum_{p,j:p^j \geq K} p^{((n+\epsilon)(r-1)+1-s)nj}$, which converges when $s > (n+\epsilon)(r-1)+1+1/n$, and this bound is $< (c_0+\epsilon)(r-1)+1+1/c_0$.
	
	Since $\epsilon$ was arbitrary, to show $a(F^{\mathfrak{C}_{\Sigma(c_0)}}_r) \leq (c_0+\sup_{p^k}\rho(p^k))(r-1)+1$, it remains to show that $(c_0+\sup_{p^k}\rho(p^k))(r-1)+1 \geq c_0(r-1)+1+1/c_0$, or equivalently that $\sup_{p^k}\rho(p^k)(r-1) \geq 1/c_0$. We first find a bound for $\log(\prod_{i=1}^{c_0}(1-2^{-i})) \geq \sum_{i=1}^\infty \log(1-2^{-i})$. We can write $\log(1-2^{-i})$ as a Taylor series to get $\sum_{i=1}^\infty \log(1-2^{-i}) = -\sum_{i=1}^\infty \sum_{\iota=1}^\infty \frac{2^{-\iota i}}{\iota}$. Changing the order of summation, we get $-\sum_{\iota=1}^\infty \sum_{i=1}^\infty \frac{2^{-\iota i}}{\iota} = -\sum_{\iota=1}^\infty \frac{2^{-\iota}}{\iota(1-2^{-\iota})} = -\sum_{\iota=1}^\infty \frac{1}{\iota(2^\iota-1)} > -\sum_{\iota=1}^\infty \frac{1}{2^{\iota-1}} = -2$. So the whole sum is absolutely convergent, and we conclude $\log(\prod_{i=1}^{c_0}(1-2^{-i})) > -2$. Moreover, $\log(c_0!) > c_0 (\log(c_0)-1)$ by Stirling's approximation, so $\rho(2) > \frac{\log(c_0)-1}{(c_0-1)\log(2)} - \frac{2}{c_0 \log(2)} > \frac{\log(c_0)}{(c_0-1)\log(2)} - \frac{3}{c_0 \log(2)}$. 
	Therefore, because $c_0 > C_0 > 2e^3$, the result holds. 	
	
	We wish to show that $\sup_{p^k}\rho(p^k) = \rho(2) \leq \frac{\log(c_0)}{(c_0-1)\log(2)}$; we have already shown the lower bound for $\rho(2)$. For each $p^k$, we have the upper bound $\rho(p^k) < \frac{\log(k)+\log(c_0!)}{c_0(c_0-1)\log(p^k)} < \frac{\log(k)+\log(c_0)}{(c_0-1)\log(p^k)}$.
	Since $c_0 > C_0 > 8$, a calculation shows, for $p^k \geq 3$, $\rho(p^k) < \frac{\log(k) + \log(c_0)}{(c_0-1)\log(p^k)} < \frac{\log(c_0)-3}{c_0 \log(2)} < \rho(2) < \frac{\log(c_0)}{(c_0-1)\log(2)}$.
	
	Finally, for the lower bound on $a(F^{\mathfrak{C}_{Alt(c_0)}}_r)$, $(c_0+\rho(2))(r-1)+1$ is given by Theorem \ref{spacetime-bounds}.
\end{proof}

The reader may generate their own examples, but we mention the following generalisation of the previous two theorems, which follows from essentially the same proof without any additional work, as the reader may verify. Suppose, for each of the types $A_d, B_d, C_d, D_d, ^2A_d, ^2D_d$ of classical groups (where $d$ denotes the Lie rank), we pick some non-negative integers $d_A,d_B,d_B,d_D,d_{(^2A)},d_{(^2D)}$. Let $\mathfrak{C}$ be the NE-formation generated by some subset of the cyclic groups of prime order including $C_2$ and $C_3$, some subset of the sporadic groups, some subset of the exceptional groups of Lie type, some subset of the alternating groups of degree $\leq c_0$ including $Alt(c_0)$, some subset of the classical groups of type $A_d(q)$ (if $d_A > 0$) over all $d \leq d_A$ and all prime powers $q$ including $A_{d_A}(2)$, and similarly for the other classical groups (for the Steinberg groups $^2A_{d_{(^2A)}}(q^2)$ and $^2D_{d_{(^2D)}}(q^2)$, we assume $^2A_{d_{(^2A)}}(4)$ and $^2D_{d_{(^2D)}}(4)$ are in $\mathfrak{C}$). Including $C_3$ ensures that the normalisers of the Steinberg groups in $\mathfrak{C}$ in the corresponding general linear groups over $\F_4$ are also in $\mathfrak{C}$; including $C_2$ ensures $Sym(c_0) \in \mathfrak{C}$. $C_2$ and $C_3$ also ensure that Theorem \ref{thm:lowerbound-general} applies.

\begin{thm}
	Suppose $c_0 > C_0$. For any $r \geq 2$, $a(F^{\mathfrak{C}}_r) = c_{\mathfrak{C},\text{space}}(r-1)+1$, where 
	\begin{multline*}
	c_{\mathfrak{C},\text{space}} = \max\left\{c_{Alt(c_0)}+\frac{\log_2(c_0!)}{(c_0-\delta(c_0))(c_0-1)}, c_{A_{d_A}(2)}+\frac{\log_2(c_0!)}{(d_A+1)(c_0-1)}, \right. \\ c_{B_{d_B}(2)}+\frac{\log_2(c_0!)}{(2d_B+1)(c_0-1)}, c_{C_{d_C}(2)}+\frac{\log_2(c_0!)}{2d_C(c_0-1)}, c_{D_{d_D}(2)}+\frac{\log_2(c_0!)}{2d_D(c_0-1)}, \\ \left. c_{(^2A_{d_{(^2A)}})(4)}+\frac{\log_2(c_0!)}{2(d_{(^2A)}+1)(c_0-1)}, c_{(^2D_{d_{(^2D)}})(4)}+\frac{\log_2(c_0!)}{4d_{(^2D)}(c_0-1)}\right\}.
	\end{multline*}
\end{thm}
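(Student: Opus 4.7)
The plan is to follow verbatim the strategy used for $\mathfrak{C}_{\mathrm{Alt}(c_0)}$ and $\mathfrak{C}_{\Sigma(c_0)}$: the lower bound is produced by Corollary~\ref{cor:lowerbound-general-perm} (with $T=\mathrm{Sym}(c_0)$, which lies in $\mathfrak{C}$ because $C_2 \in \mathfrak{C}$), and the upper bound is obtained from Theorem~\ref{spacetime-bounds} together with the usual splitting argument that separates the ``space-like'' and ``time-like'' contributions. The inclusion $C_3 \in \mathfrak{C}$ is needed so that the normalizers $N_{\Gamma L}({}^2A_{d_{(^2A)}}(4))$ and $N_{\Gamma L}({}^2D_{d_{(^2D)}}(4))$ (which contain field automorphisms of order dividing $2k'$ with $k' =2$) actually belong to $\mathfrak{C}$, and $C_2 \in \mathfrak{C}$ is used identically for the untwisted factors.

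For the lower bound, I would fix a type appearing in the max and take $S = N_{\Gamma L_{\F_p}(\beta,\F_{p^k})}(E)$ (with $p^k=2$ for alternating and untwisted simple factors of $E$, and $p^k=4$ for the Steinberg ones) realizing the corresponding entry of $c_{\mathfrak{C},\text{space}}$, together with $T = \mathrm{Sym}(c_0)$ and its natural permutation action of degree $c_0$. Corollary~\ref{cor:lowerbound-general-perm} then yields
\[
   a(F_r^{\mathfrak{C}}) - 1 \geq \frac{\log\bigl(|S|^{c_0-1}|T|\bigr)}{(c_0-1)\,\beta k \log p}(r-1) = \Bigl(c_{E(p^k)} + \frac{\log_p(c_0!)}{\beta k(c_0-1)}\Bigr)(r-1),
\]
and taking the maximum over the allowed types gives $a(F_r^{\mathfrak{C}}) \geq c_{\mathfrak{C},\text{space}}(r-1)+1$.

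For the upper bound, the main point is to identify $c_{\mathfrak{C},\text{space}}$ (in the sense of Theorem~\ref{spacetime-bounds}) with the explicit maximum in the statement. Using \cite[Table 2.1.C]{KL} one computes, for each of the six classical types and for each fixed Lie rank, the explicit value of $c_{E(p^k)}$; the function
\[
	\phi_E(p^k) = c_{E(p^k)} + \frac{\log_p(c_0!)}{\beta k (c_0-1)}
\]
is then checked to be maximized, within each type, at the smallest admissible field ($\F_2$ for untwisted types and $\F_4$ for Steinberg types), because both $c_{E(p^k)}$ and the fractional term are decreasing in $p^k$, and within a fixed field $\phi_E$ is largest at the prescribed Lie rank; this is the analogue of the inequality $\rho(2) \geq \rho(p^k)$ proved in the $\mathfrak{C}_{\Sigma(c_0)}$ case. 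For the time-like component, as $p^k \to \infty$ the fractional term disappears and $c_{E(p^k)}$ tends to a constant strictly below $c_{\mathfrak{C},\text{space}}$, so $c_{\mathfrak{C},\text{time}} < c_{\mathfrak{C},\text{space}}$. To upgrade the $\max(\cdot,\cdot)$ conclusion of Theorem~\ref{spacetime-bounds} to the single value $c_{\mathfrak{C},\text{space}}(r-1)+1$, I would partition $\Sigma_{D'}$ exactly as in the $\mathfrak{C}_{\Sigma(c_0)}$ proof into finitely many tails with $p^j < K$ (each converging for $s > c_{\mathfrak{C},\text{space}}(r-1)+1$), a large-$n$ tail with $p^j \geq K$ and $n \geq c_0$, and finitely many small-$n$ tails with $n < c_0$; the hypothesis $c_0 > C_0$ guarantees that the excess $1/c_0$ created by summing the geometric series in the last two pieces is absorbed by the gap $c_{\mathfrak{C},\text{space}} - c_{\mathfrak{C},\text{time}} \geq \frac{\log_2(c_0!)}{c_0(c_0-1)}$ present at the alternating factor. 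The main obstacle is the bookkeeping in step~(3): verifying, uniformly across the six classical types and the alternating type, that $\phi_E$ is indeed jointly maximized at the prescribed $(E,p^k)$ and that the gap $c_{\mathfrak{C},\text{space}} - c_{\mathfrak{C},\text{time}}$ beats $1/c_0$; everything else is a routine reprise of the $\mathfrak{C}_{\Sigma(c_0)}$ argument.
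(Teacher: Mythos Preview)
Your proposal is correct and matches the paper's approach exactly: the paper does not give a separate proof of this theorem but states that it ``follows from essentially the same proof without any additional work, as the reader may verify,'' referring to the proofs for $\mathfrak{C}_{\mathrm{Alt}(c_0)}$ and $\mathfrak{C}_{\Sigma(c_0)}$. Your outline---lower bound via Corollary~\ref{cor:lowerbound-general-perm} with $T=\mathrm{Sym}(c_0)$, upper bound via the $\Sigma_{D'}$ splitting into fixed-$p^j$ tails, a large-$n$ tail, and small-$n$ tails, together with the identification of $c_{\mathfrak{C},\mathrm{space}}$ as the displayed maximum---is precisely that reprise, and your remark that the only genuine work is the type-by-type bookkeeping for $\phi_E$ is accurate.
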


Note that analogous results can still be proved without the assumption that $A_{d_A}(2)$, etc., are in $\mathfrak{C}$, but in this case the constant $C_0$ may have to be increased, as a function of the smallest $p^k$ such that $A_{d_A}(p^k)$, etc., are in $\mathfrak{C}$. By contrast, $C_0$ does not depend on $d_A$, etc.

Up to error terms which tend to $0$ as $q \to \infty$, the values given in \cite[Table 5.1.A, Table 5.4.C]{KL} show that $c_{A_{d_A}(q)} = d_A+1$; $c_{B_{d_B}(q)} = d_B+1/(2d_B+1)$; $c_{C_{d_C}(q)} = d_C+1/2+1/(2d_C)$; $c_{D_{d_D}(q)} = d_D-1/2+1/(2n)$; $c_{(^2A_{d_{(^2A)}})(q^2)} = d_{(^2A)}/2+1/2$;  $c_{(^2D_{d_{(^2D)}})(q^2)} = d_{(^2D)}/2-1/4+1/(2n)$.

\section{Groups with arbitrary abscissae}
\label{sec:arbitrary}

In this section we will prove Theorem~\ref{thmABC:arbitrary}. In passing we mention that similar results for other types of zeta functions have been obtained by Kassabov \cite{Kassabov} and Klopsch-Piccolo (unpublished).

For $n$ a positive integer, let $\pi(n) \leq \log_2(n)$ denote the number of prime factors of $n$ with repetitions (for example, $\pi(2^k)=k$). In particular, we set $\pi(1)=0$.

\begin{thm}
	\label{arbitraryabs}
	Let $G_\alpha = \prod_{p > 3} \mathrm{SL}(2,p)^{\lfloor p^\alpha \rfloor}$, for any real $\alpha \geq 0$. Then $\zeta_{G_\alpha}(s)$ has abscissa of convergence $\alpha/2+1$.
\end{thm}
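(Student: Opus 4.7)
The proof plans to sandwich $a(G_\alpha)$ between $\alpha/2+1$ and itself by isolating which absolutely irreducible representations dominate. For the \emph{lower bound}, consider inside each factor $H_p := \SL(2,p)^{\lfloor p^\alpha\rfloor}$ the $\lfloor p^\alpha\rfloor$ distinct absolutely irreducible $\F_p$-representations obtained by pulling back the natural $2$-dimensional representation of $\SL(2,p)$ along each of the projections. Inflating to $G_\alpha$ gives that many absolutely irreducible $\F_p$-representations of $G_\alpha$ of dimension $2$. Their combined contribution to $\log\zeta_{G_\alpha}(s)$ is at least
\[
  \sum_{p>3}\lfloor p^\alpha\rfloor\,p^{-2s}\,\frac{p^2-1}{p-1}\;\geq\;\tfrac{1}{2}\sum_{p>3}p^{\alpha+1-2s},
\]
which diverges for $s\leq\alpha/2+1$ by Mertens's theorem, giving $a(G_\alpha)\geq\alpha/2+1$.

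For the \emph{upper bound}, use continuity (Proposition~\ref{prop:continuity}) to reduce to the finite subproducts $\prod_{q\in T}H_q$. By Fein's theorem, every absolutely irreducible $\F_{p^j}$-representation of such a finite product is a tensor product $\bigotimes_{q\in S}V_q$ with $V_q$ absolutely irreducible; after dropping trivial factors we may assume each $n_q:=\dim V_q\geq 2$. The elementary inequality $\prod n_q\geq\sum n_q$ (valid whenever every $n_q\geq 2$), together with $s>1$, gives $p^{(1-s)j\prod n_q}\leq\prod_q p^{(1-s)jn_q}$. Defining
\[
  f_{q,p,j}(s):=\sum_{n\geq 2}r^\ast(H_q,\F_{p^j},n)\,p^{(1-s)jn},
\]
one obtains
\[
  \log\zeta_{G_\alpha}(s)\;\leq\;2\log\zeta(s)+2\sum_{p,j}\frac{p^{-j}}{j}\Bigl(\prod_{q}\bigl(1+f_{q,p,j}(s)\bigr)-1\Bigr),
\]
so by $\log(1+x)\leq x$ the upper bound reduces to controlling $\sum_{p,j,q}p^{-j}j^{-1}f_{q,p,j}(s)$.

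The estimation of $f_{q,p,j}$ splits into defining ($q=p$) and cross ($q\neq p$) characteristic. In defining characteristic, the absolutely irreducible representations of $\SL(2,p)$ over any extension of $\F_p$ are the $p$ symmetric powers $V_0,\dots,V_{p-1}$ of dimensions $1,2,\dots,p$, all defined over the prime field; hence $r^\ast(H_p,\F_{p^j},n)$ counts the tuples $(k_1,\dots,k_N)\in\{0,\dots,p-1\}^N$ with $\prod(k_i+1)=n$, which is bounded by $(Np)^{\log_2 n}$ because the support has size at most $\log_2 n$. Summing geometrically over $j$, the contribution from dimension $n=2^k$ becomes $\ll\sum_p p^{k\alpha+2^k(1-s)-1}$; the exponent rewrites as $\alpha(k-2^{k-1})-1-2^k(s-1-\alpha/2)$, maximised over $k\geq 1$ at $k\in\{1,2\}$ (where $k/2^k=1/2$), yielding the critical condition $s>\alpha/2+1$, and all other dimensions $n$ give strictly weaker conditions. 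In cross characteristic, Brauer theory of $\SL(2,q)$ provides $O(q)$ absolutely irreducible $\F_{p^j}$-representations, each of dimension $1$ or at least $(q-1)/2$, so $f_{q,p,j}(s)\ll q^{\alpha+1}\,p^{-(s-1)j(q-1)/2}$, which is superpolynomially small in $q$ and negligible after summation.

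The main technical obstacle will be the defining-characteristic bookkeeping: several dimensions (in particular all powers $n=2^k$) produce the same critical abscissa $\alpha/2+1$, so one must verify that summing over $k\geq 1$, and then absorbing the multi-factor terms $|S|\geq 2$ hidden inside the product $\prod_q(1+f_{q,p,j})$, does not accumulate to push the abscissa above $\alpha/2+1$. Once this is done, $\log\zeta_{G_\alpha}(s)$ converges absolutely for every $s>\alpha/2+1$, matching the lower bound.
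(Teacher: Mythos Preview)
Your lower bound is correct and matches the paper's argument. Your product decomposition for the upper bound is a legitimate reorganisation of the paper's approach, and the cross-characteristic estimate $f_{q,p,j}(s)\ll q^{\alpha+1}p^{-(s-1)j(q-1)/2}$ is fine. However, the defining-characteristic step contains a genuine gap that breaks the proof as written.

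You bound $r^\ast(H_p,\F_{p^j},n)$ by $(Np)^{\log_2 n}$, but then your displayed exponent $k\alpha+2^k(1-s)-1$ corresponds to the bound $N^{\log_2 n}$: you have silently dropped a factor of $p^{\log_2 n}$. With your stated bound and $N\le p^\alpha$, the exponent at $n=2^k$ is $(\alpha+1)k+2^k(1-s)-1$, forcing $s>1+(\alpha+1)/2$, which only yields $a(G_\alpha)\le\alpha/2+3/2$. Worse, even the bound $N^{\log_2 n}$ would not suffice: the function $\log_2(n)/n$ is maximised among integers at $n=3$ (value $\log_2 3/3\approx 0.528$), not at powers of $2$, so your claim that ``all other dimensions $n$ give strictly weaker conditions'' is false. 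With either bound, $n=3$ is the bottleneck and gives an abscissa strictly above $\alpha/2+1$.

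The fix is exactly the paper's observation: since $\SL(2,p)$ has a \emph{unique} absolutely irreducible representation in each dimension $1,\dots,p$, the count of tuples $(k_1,\dots,k_N)$ with $\prod(k_i+1)=n$ is at most $N^{\pi(n)}$, where $\pi(n)$ is the number of prime factors of $n$ with multiplicity (distribute each prime factor to one of the $N$ coordinates). Since $\pi(n)/n\le 1/2$ with equality only at $n\in\{2,4\}$, the defining-characteristic contribution converges precisely for $s>\alpha/2+1$. So the crucial exponent is $\pi(n)$, not $\log_2 n$.

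A smaller point: the reduction from $\prod_q(1+f_{q,p,j})-1$ to $\sum_q f_{q,p,j}$ is not an application of $\log(1+x)\le x$; that inequality only gives $\prod(1+f_q)\le\exp(\sum f_q)$. You need the extra observation that $F_{p,j}:=\sum_q f_{q,p,j}(s)$ is bounded uniformly in $p,j$ (which follows from your cross-characteristic estimate together with the defining-characteristic bound once corrected), after which $\prod(1+f_q)-1\le e^{F_{p,j}}-1\le e^{M}\sum_q f_q$.
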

\begin{proof}[Proof of Theorem~\ref{thmABC:arbitrary}]
Let $p > 3$ be a prime.
	We first list a few general facts about the representations of $\mathrm{SL}(2,p)$ that we will use. Since $p>3$, the group $\mathrm{PSL}(2,p)$ is simple. It is known that (see \cite[Section 8]{Humphreys}) $\mathrm{SL}(2,p)$ has exactly one absolutely irreducible representation in each dimension $d \in \{1, \ldots, p\}$, over the field $\F_{p^j}$ for all $j$. Let $q \neq p$ be a prime. The group $\mathrm{SL}(2,p)$ has non-trivial absolutely irreducible representations over $\F_{q^j}$ only in dimensions $\geq (p-1)/2$ by \cite[Theorem 5.3.9]{KL}. By the Wedderburn-Artin theorem, the number of non-trivial absolutely irreducible representations of $\mathrm{SL}(2,p)$ over $\F_{q^j}$ of dimension $n \geq (p-1)/2$ is at most 
	\begin{equation}\label{4-bound-reps}
	 |\mathrm{SL}(2,p)|/n^2 \leq 4(p^3-p)/(p-1)^2 = 4p + 8 + 8/(p-1) \leq 4p + 10.
	\end{equation}
	
	Now fix $\alpha\ge 0$ and denote $G=G_\alpha$.	We will split the proof into two parts: showing that $a(G) \leq \alpha/2+1$ and showing that $a(G) \geq \alpha/2+1$.
		
	We start by fixing a finite field $\F_{q^j}$, $q$ prime, and count absolutely irreducible representations in dimension $n$. All copies of $\mathrm{SL}(2,p)$ with $p \neq q$ and $p > 2n+1$ must act trivially.

	By \cite{Fein} (see also \cite[Lemma 5.5.5]{KL}), we may write an $n$-dim\-en\-sio\-nal absolutely irreducible representation of $G$ as a tensor product of one absolutely irreducible $m$-dim\-en\-sio\-nal representation of $\mathrm{SL}(2,q)^{\lfloor q^\alpha \rfloor}$ and one absolutely irreducible  $n/m$-dim\-en\-sio\-nal representation of $H=\prod \mathrm{SL}(2,p)^{\lfloor p^\alpha \rfloor}$, where the product of $H$ ranges over the primes $p$ such that $p \leq 2n/m+1$ and $p \neq 2,3,q$. For the case $q \leq 3$, we simply assume $m=1$. As an upper bound for $r^\ast(H,\F_{q^j},n/m)$, note that any such representation must be a tensor product of at most $\log_2(n/m)$ non-trivial absolutely irreducible representations of one of the special linear groups in $H$. Additionally, the number of $\mathrm{SL}$-factors appearing in $H$ is at most
	\begin{align*}
		&\sum_{p \leq 2n/m+1}
	\lfloor p^\alpha \rfloor \leq (2n/m+1)^{\alpha+1},
\end{align*}
and by \eqref{4-bound-reps} each factor has at most $4(2n/m+1)+10 \leq 8n/m+14$ non-trivial absolutely irreducible representations. This gives the upper bound 
\[r^\ast(H,\F_{q^j},n/m) \leq ((2n/m+1)^{\alpha+1}(8n/m+14))^{\log_2(n/m)} = 2^{O_{\alpha}(\log_2(n/m)^2)},\] where the implied constant depends only on $\alpha$.

Now we estimate $r^\ast(\mathrm{SL}(2,q)^{\lfloor q^\alpha \rfloor},\F_{q^j},m)$ similarly: any such representation must be a tensor product of  non-trivial absolutely irreducible representations of  the $\mathrm{SL}(2,q)$ factors, of which there are $\lfloor q^\alpha \rfloor$, each with at most one absolutely irreducible representation in each dimension. Thus $r^\ast(\mathrm{SL}(2,q)^{\lfloor q^\alpha \rfloor},\F_{q^j},m)$ is bounded by the number of possible distributions of the $\pi(m)$ prime factors of $m$ to the $\lfloor q^\alpha \rfloor$ copies of $\mathrm{SL}(2,q)$. This gives the upper bound $r^\ast(\mathrm{SL}(2,q)^{\lfloor q^\alpha \rfloor},\F_{q^j},m) \leq q^{\alpha\pi(m)}$.

In total, summing over $m$, we have $r^\ast(G,\F_{q^j},n) = q^{\alpha\pi(n)}2^{O_{\alpha}(\log_2(n)^2)}$.

Putting all this together, 
we get the upper bound 
\begin{equation*}\label{eq:nq neq 3}
	\log(\zeta_G)(s) \leq \sum_{q \in \mathcal{P}}
	\sum_{n,j\in \mathbb{N}} \frac{q^{\alpha\pi(n)}2^{O_{\alpha}(\log_2(n)^2)}}{j} q^{-snj} \frac{q^{nj}-1}{q^j-1}.
\end{equation*}
For a fixed $k=nj$, because $q \geq 2$, we have $\frac{q^{nj}-1}{q^j-1} \leq \frac{q^{nj}-1}{q-1} \leq q^{nj-1}\frac{q}{q-1} \leq 2q^{k-1}$. So replacing $nj$ with $k$, we find the upper bound
\begin{equation*}
	2 \sum_{q \in \mathcal{P}} \sum_{k} 2^{O_{\alpha}(\log_2(k)^2)}q^{\alpha \pi(k)+(1-s)k-1},
\end{equation*}
which converges when $\alpha \pi(k)+(1-s)k < -\delta k$ for some $\delta > 0$, or equivalently when $\alpha \pi(k)+(1-s)k < 0$ for all integers $k\geq 1$. Indeed, summing over $k$, $\sum 2^{O_{\alpha}(\log_2(k)^2)}q^{-1-\delta k} = O(q^{-1-\delta'}/(1-q^{-\delta'}))$ for any $0 < \delta' < \delta$, for an implied constant independent of $q$, so the sum converges by the integral test.

In particular, it converges when $s > \alpha \max\{\pi(k)/k \mid k \in \mathbb{N}\} +1$. Using $\pi(k) \leq \log_2(k)$, it is easy to check that $\alpha(\log_2(k)/k)+1$ attains its upper bound $\alpha/2+1$ for $k=2$ and $k=4$, and is otherwise smaller. 
We conclude that $a(G) \leq \alpha/2+1$.

To show that $a(G) \geq \alpha/2+1$, we count representations of dimension $2$ over fields of prime order. Here we have $\log(\zeta_G)(s) \geq \sum_{q \geq 5} \lfloor q^\alpha \rfloor q^{-2s} (q+1)$. For large $q$, $\lfloor q^\alpha \rfloor \geq (1-\epsilon)q^\alpha$ for any $\epsilon > 0$, so this sum is at least $ (1-\epsilon)\sum_{q \geq Q} q^{\alpha+1-2s}+q^{\alpha-2s}$, which diverges when $\alpha+1-2s > -1$, or equivalently when $s < \alpha/2+1$. So $a(G) \geq \alpha/2+1$.
\end{proof}

\section{Finite extensions with large abscissae}
\label{largeabscissae}

Here we will construct examples of groups $G$ that are \emph{split extensions} 
of a finite index normal subgroup $N$ such that $a(G)> a(N)$.
We will use a product of groups of Lie type, acted on by 
the cyclic groups $C_f$  of order $f$ via Frobenius automorphisms.

\begin{thm}
Let $f \geq 5$ be a prime and let
$G = \prod_{p\in \mathcal{P}, p > 3} \SL(2,p^f)^{p^f}$. Let the cyclic group $C_f$ act diagonally on $G$ by Frobenius automorphisms on the factors. Then 
\[a(G) =  \frac{3}{2} - \frac{f-1}{4f} < \frac{3}{2} \leq a(G \rtimes C_f).\]
\end{thm}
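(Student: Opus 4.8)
The plan is to establish the two halves of the displayed chain separately: compute $a(G)$ exactly by the method of Theorem~\ref{arbitraryabs}, and then bound $a(G\rtimes C_f)$ from below by producing many induced representations. First note that $G$ is open of index $f$ in $G\rtimes C_f$, so by Proposition~\ref{opensbgp} (or Theorem~\ref{thm:split}) $G\rtimes C_f$ is UBERG once $G$ is, and $G$ will turn out to have finite abscissa, so both quantities are defined. The representation‑theoretic input I will use is this: for $p>3$ and $q=p^f$, the absolutely irreducible modules of $\SL(2,q)$ in defining characteristic are the twisted tensor products $L(a_0)\otimes L(a_1)^{(p)}\otimes\cdots\otimes L(a_{f-1})^{(p^{f-1})}$, $0\le a_i\le p-1$, of dimension $\prod_i(a_i+1)$, with field of definition $\F_{p^d}$ where $d$ is the period of the word $(a_0,\dots,a_{f-1})$ under cyclic rotation; since $f$ is prime, $d\in\{1,f\}$, a constant word giving a module of dimension $(a+1)^f\ge 2^f$ realised over $\F_p$ and every other word a module realised over $\F_{p^f}$ but over no proper subfield. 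As in Theorem~\ref{arbitraryabs}, for $\ell\ne p$ the group $\SL(2,\ell^f)$ has nontrivial absolutely irreducible representations only in dimensions $\ge(\ell^f-1)/2$, and at most $O(\ell^f)$ of each dimension.

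For the lower bound $a(G)\ge\tfrac32-\tfrac{f-1}{4f}$ I would work over $\F_{p^f}$ and, for each prime $p>3$, count the $\binom{p^f}{2}$ absolutely irreducible $4$‑dimensional modules of $G$ obtained by placing the natural $2$‑dimensional module of $\SL(2,p^f)$ on two of the $p^f$ copies and the trivial module elsewhere. Their total contribution to $\log\zeta_G(s)$ is at least $\sum_{p>3}\tfrac1f\binom{p^f}{2}p^{-4fs}|\mathbb{P}^{3}(\F_{p^f})|\gg\sum_{p>3}p^{2f}\,p^{-4fs}\,p^{3f}=\sum_{p>3}p^{f(5-4s)}$, which diverges for $s\le\tfrac{5f+1}{4f}=\tfrac32-\tfrac{f-1}{4f}$.

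For the upper bound I would imitate the proof of Theorem~\ref{arbitraryabs}: factor an $n$‑dimensional absolutely irreducible $\F_{p^j}$‑module of $G$ as $M\otimes N$ with $M$ a defining‑characteristic module of $\SL(2,p^f)^{p^f}$ of dimension $m$ and $N$ a cross‑characteristic module of $\prod_{\ell\ne p}\SL(2,\ell^f)^{\ell^f}$ of dimension $n/m$; the cross‑characteristic factor again contributes only $2^{O_f(\log^2(n/m))}$. Counting $M$ via Proposition~\ref{upperbound-images} together with the structure above — a nontrivial copy of $\SL(2,p^f)$ uses up at least one prime factor of $m$, and at least $f$ of them when $f\nmid j$ — one obtains $r^\ast(\SL(2,p^f)^{p^f},\F_{p^j},m)\le p^{f\pi(m)}2^{O_f(\log m)}$ when $f\mid j$ and $\le p^{\pi(m)}2^{O_f(\log m)}$ when $f\nmid j$. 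Substituting into the series — crucially keeping $n$ and $j$ separate so as to use the constraint $f\mid j$, rather than grouping by $nj$ — and using $m\ge 2^{\pi(m)}$, so that $\tfrac{\pi(m)-1+1/f}{m}\le\tfrac14+\tfrac1{4f}$ (the maximum, at $\pi(m)=2$), one checks that each part converges for $s>1+\tfrac14+\tfrac1{4f}=\tfrac32-\tfrac{f-1}{4f}$, giving $a(G)=\tfrac32-\tfrac{f-1}{4f}<\tfrac32$. I expect this bookkeeping — in particular getting convergence uniformly in $p$ — to be the most tedious part, although conceptually routine given Theorem~\ref{arbitraryabs}.

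Finally, for $\tfrac32\le a(G\rtimes C_f)$ the idea is to induce. Fix distinct copies $i\ne i'$ of $\SL(2,p^f)$ in $G$ and let $\theta$ be the $4$‑dimensional absolutely irreducible $\F_{p^f}$‑module of $G$ that is the natural module on each of these two copies and trivial elsewhere. A generator of $C_f$ acts on $\theta$ by the Frobenius twist on both copies at once, which is non‑isomorphic to $\theta$; as $f$ is prime, the $C_f$‑orbit of $\theta$ has size $f$, so its inertia group in $G\rtimes C_f$ is $G$. By Clifford theory $W:=\Ind_G^{G\rtimes C_f}(\theta)$ is irreducible, and a Mackey computation gives $\End(W\otimes\overline{\F_p})=\overline{\F_p}$ since the $f$ conjugates of $\theta$ are pairwise non‑isomorphic, so $W$ is absolutely irreducible of dimension $4f$; and for $\sigma\in\mathrm{Gal}(\overline{\F_p}/\F_p)$ one has $W^\sigma\cong\Ind_G^{G\rtimes C_f}(\theta^\sigma)\cong W$, because applying a field automorphism to the natural module of $\SL(2,p^f)$ yields one of its Frobenius twists and hence $\theta^\sigma$ is $C_f$‑conjugate to $\theta$; therefore $W$ is realised over the prime field $\F_p$. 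Distinct pairs $\{i,i'\}$ give distinct $W$, so there are $\ge\tfrac12 p^{2f}$ of them for $p$ large, contributing at least $\sum_{p}\tfrac12 p^{2f}\,p^{-4fs}\,|\mathbb{P}^{4f-1}(\F_p)|\gg\sum_p p^{6f-1-4fs}$ to $\log\zeta_{G\rtimes C_f}(s)$, which diverges for $s\le\tfrac32$. The truly delicate point here is the descent of $W$ to $\F_p$: realising it only over $\F_{p^2}$ would give the weaker bound $\tfrac54$, and it is precisely the compatibility between the Frobenius twists and the Galois action (available because $C_f$ is generated by the Frobenius) that upgrades this to $\tfrac32$.
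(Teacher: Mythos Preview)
Your argument is correct and, for the computation of $a(G)$, essentially identical to the paper's: both factor representations into a defining-characteristic piece for $\SL(2,p^f)^{p^f}$ and a cross-characteristic piece bounded by $2^{O(\log^2 n)}$, split the series according to whether $f\mid j$, and in the $f\mid j$ case reduce to maximising $\tfrac{\pi(k)}{k}-\tfrac{f-1}{fk}$ (attained at $k=4$). Your lower bound via $4$-dimensional modules over $\F_{p^f}$ is exactly the paper's.

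For $a(G\rtimes C_f)\ge\tfrac32$ you take a genuinely different route. The paper observes directly that each $\SL(2,p^f)\rtimes C_f$ has an absolutely irreducible representation of dimension $2f$ over $\F_p$ (the $\F_p$-restriction of scalars of the natural module, extended to the semidirect product), giving $p^f$ such representations for $G\rtimes C_f$ and hence the contribution $\sum_p p^{3f-1-2fs}$. You instead induce the $4$-dimensional $\F_{p^f}$-modules supported on pairs of factors up to $G\rtimes C_f$, obtaining $\binom{p^f}{2}$ absolutely irreducible modules of dimension $4f$, and then descend them to $\F_p$ via Galois invariance (correct, since Schur indices over finite fields are $1$); this yields $\sum_p p^{6f-1-4fs}$, the same threshold. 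Both arguments are valid; the paper's is shorter and avoids the descent step entirely, while yours makes explicit the mechanism you single out at the end---that the compatibility between the $C_f$-action and the Galois action is what forces the field of definition down to $\F_p$.
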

Note that $a(G \rtimes C_f) - a(G) \geq (1-f^{-1})/4$, that is, one quarter of the maximum increase predicted by Proposition \ref{opensbgp} (see also Remark \ref{perfect}) as $f$ tends to infinity.
\begin{proof}
We copy the proof of Theorem \ref{thmABC:arbitrary} to show convergence. In characteristic $q \neq p$, the group $\SL(2,p^f)$ has non-trivial absolutely irreducible representations only in dimensions $\geq (p^f-1)/2$ when $p > 3$ by \cite[Theorem 5.3.9]{KL}, and the number of such representations over $\F_{q^j}$ of dimension $n \geq (p^f-1)/2$ is at most  $ |\SL(2,p^f)|/n^2 \leq 4(p^{3f}-p^f)/(p^f-1)^2 = 4p^f + 8 + 8/(p^f-1) \leq 4p^f + 9$.

Now we consider characteristic $p$. The splitting field for $\SL(2,p^f)$ is $\F_{p^f}$ by \cite[Proposition 5.4.4]{KL}, and we consider first the irreducible representations over a field $\F_{p^{fj'}}$. We can use \cite[Theorem 5.4.5]{KL} to see that $r^\ast(\SL(2,p^f),\F_{p^{fj'}},n)$ is the number of ways of writing $n$ as an ordered product of $f$ numbers between $1$ and $p$. From this description, we have $r^\ast(\SL(2,p^f),\F_{p^{fj'}},n) = f^{\pi(n)}$, where as before $\pi(n)$ is the number of prime factors of $n$.

On the other hand, consider $\F_{p^j}$ with $j$ coprime to $f$. By \cite[Proposition 5.4.6]{KL}, and the description of the absolutely irreducible representations of $\SL(2,p)$ in \cite[Section 8]{Humphreys} (compare with the proof of Theorem \ref{thmABC:arbitrary}), there is one absolutely irreducible representation of $\mathrm{SL}(2,p^f)$ in each of dimensions $1^f, 2^f, \ldots, p^f$, and no others.

Now we can apply these upper bounds to count absolutely irreducible representations for the whole of $G$. Fix a finite field $\F_{q^j}$, $q$ prime, and count absolutely irreducible representations in dimension $n$.

As for Theorem \ref{thmABC:arbitrary}, we write such a representation as a tensor product of two absolutely irreducible representations, one of $\SL(2,q^f)^{q^f}$ of dimension $m$ and one of $H = \prod_{p^f \leq 2n/m+1, p \neq 2,3,q} \mathrm{SL}(2,p^f)^{p^f}$ of dimension $n/m$; when $q=2$ or $3$, we simply assume $m=1$. We see that $H$ contains $\sum_{p^f \leq 2n/m+1, p \neq 2,3,q} p^f \leq (2n/m+1)(n/m+1)$ special linear direct factors, each with at most $8n/m+13$ absolutely irreducible representations in any dimension $\leq n/m$; we conclude that 
\[r^\ast(H,\F_{q^j},n/m) \leq ((2n/m+1)(n/m+1)(8n/m+13))^{\log_2(n/m)} = 2^{O(\log_2(n/m)^2)}.\]

When $j$ is coprime to $f$, as for Theorem \ref{thmABC:arbitrary}, we get $r^\ast(\SL(2,q^f)^{q^f},\F_{q^j},l^f) \leq (q^f)^{\pi(l)}$; meanwhile $r^\ast(\SL(2,q^f)^{q^f},q^j,m) = 0$ for other values of $m$.

When $j = fj'$ for an integer $j'$, each $\SL(2,q^f)$ has at most $f^{\pi(m)}$ absolutely irreducible representations over $\F_{q^{fj'}}$ in dimension $m$, giving the upper bound $r^\ast(\SL(2,q^f)^{q^f},\F_{q^{fj'}},m) \leq (q^ff)^{\pi(m)}$.

We now split $\log(\zeta_G) = \Sigma_1 + \Sigma_2$ into two sums, and consider the convergence of each separately, where
\begin{align*}
	\Sigma_1 &= \sum_{q \in \mathcal{P}} \sum_{n=1}^\infty \sum_{j:\text{gcd}(j ,f)=1} \frac{r^\ast(G,q^j,n)}{j} q^{-snj} |\mathbb{P}^{n-1}(\F_{q^j})|, \\
	\Sigma_2 &= \sum_{q \in \mathcal{P}} \sum_{n=1}^\infty \sum_{j'=1}^\infty \frac{r^\ast(G,q^{fj'},n)}{fj'} q^{-snfj'} |\mathbb{P}^{n-1}(\F_{q^{fj'}})|.
\end{align*}

First we deal with $\Sigma_1$. Write $n = m (n/m)$ as above. We may assume $m$ has the form $l^f$. We consider 
separately the case where $l=1$: the reader may verify it converges for all 
$s>1$. So we need only check convergence for $$\Sigma_1' = \sum_{q \in 
	\mathcal{P}} \sum_{j:\text{gcd}(j ,f)=1} \sum_{l \geq 2} \sum_{l'=1}^\infty 
\frac{r^\ast(G,\F_{q^j},l^fl')}{j} q^{-sl^fl'j} |\mathbb{P}^{l^fl'-1}(\F_{q^j})|,$$ 
where we write $n$ as $l^fl'$. The approximations above give
 $$\Sigma_1' \leq 
\sum_{q \in \mathcal{P}} \sum_{\text{gcd}(j,q)} \sum_{l \geq 2} 
\sum_{l'=1}^\infty 2^{O(\log_2(l')^2)} q^{f\pi(l)} q^{(1-s)l^fl'j-1}.$$
 Now we sum over all $l$, $l'$ and $j$ such that $l^fl'j=k$ to get
  $$\Sigma_1' \leq \sum_{q \in \mathcal{P}} \sum_{k \geq 2^f} 2^{O(\log_2(k)^2)} 
q^{\pi(k)+(1-s)k-1}$$
 (because $k \geq l^f \geq 2^f$) -- which, as for 
Theorem \ref{thmABC:arbitrary}, converges when $\pi(k)+(1-s)k < 0$ for all $k \geq 2^f$, or equivalently (using $\pi(k) \leq \log_2(k)$) when $s > \max_{k \geq 2^f} (\pi(k)/k)+1 = 1 + f/2^f$.

Now we will deal with $\Sigma_2$. From the bounds above, we get $r^\ast(G,\F_{q^{fj'}},n) \leq 2^{O(\log_2(n)^2)} (q^ff)^{\pi(n)}$, and we conclude $$\Sigma_2 \leq \sum_{q \in \mathcal{P}} \sum_{n,j'=1}^\infty 2^{O(\log_2(n)^2)} q^{f\pi(n)+(1-s)nfj'-f}.$$ Replacing 
$nj'$ with $k$ as in the proof of Theorem \ref{thmABC:arbitrary}, by the arguments above, this converges for $s$ such that $f(\pi(k)+(1-s)k-1) < -1$ for all $k$, which holds when $s > \pi(k)/k-(f-1)/(fk)+1$.

This function has a maximum when $k=4$, so $\Sigma_2$ converges when $s > 3/2 - (f-1)/(4f)$.
 For all primes $f \geq 5$,  we find 
 \[
 	a(G) \leq \max\Bigl(\frac{3}{2} - \frac{f-1}{4f}, 1 + \frac{f}{2^f}\Bigr) = \frac{3}{2} - \frac{f-1}{4f} < \frac{3}{2}.\]

We obtain a corresponding lower bound by looking at representations of dimension $4$. We have $r^\ast(G,\F_{q^f},4) \geq q^{2f}$ for $q \geq 5$, so 
\[\log(\zeta_G)(s) \geq \sum_{q \in \mathcal{P}} \frac{q^{2f}}{f} q^{-4fs} \frac{q^{4f}-1}{q^f-1} \geq \sum_{q \in \mathcal{P}} \frac{q^{(5-4s)f}}{f},\]
which diverges when $(5-4s)f \geq -1$, or equivalently when $s \leq 3/2 - (f-1)/(4f)$. Therefore $a(G) = 3/2 - (f-1)/(4f)$.

On the other hand, when we extend $G$ by $C_f$ acting diagonally by Frobenius automorphisms on each $\SL(2,p^f)$, each $\SL(2,p^f) \rtimes C_f$ has an absolutely irreducible representation of dimension $2f$ over $\F_p$. So we get $\log(\zeta_{G \rtimes C_f})(s) \geq \sum_q q^f q^{-2fs} \frac{q^{2f}-1}{q-1} \geq \sum_q q^{f-2fs+2f-1}$, which diverges when $3f-2fs \geq 0$, or equivalently $s \leq 3/2$.
\end{proof}

\section{Zeta functions of finite groups}\label{sec:finite}

In this section we prove Theorem~\ref{thmABC:finite_groups}.
\begin{proof}[Proof of Theorem~\ref{thmABC:finite_groups}]
Since $\Q[G]$ is a semisimple $\Q$-algebra, we may write
\[
	\Q[G] \cong \prod_{i=1}^r A_i
\]
where $A_i$ is a simple $\Q$-algebra with centre $K_i = Z(A_i)$. Let $\mathcal{O}_i$ denote the ring of algebraic integers in $K_i$.
Put $n_i^2 = \dim_{K_i} A_i$. Note that $\dim_{K_i} A_i \leq \dim_\Q \Q[G] = |G|$ and so $n_i \leq \sqrt{|G|}$. We have $r = |\Irr(G,\Q)|$ and the rational irreducible representations of $G$ correspond to the simple modules of the algebras $A_i$.
Let $\chi$ be the character of the simple rational representation of $\Q[G]$, which factors through $A_i$. Let $m(\chi)$ denote the Schur index. Then 
$\chi(1) = n_i m(\chi) [K_\chi:\Q]$; i.e. $n_i = \frac{\chi(1)}{[K_\chi:\Q] m(\chi)}$.

We say that a prime number $p$ in \emph{unramified} in $\Z[G]$, if the following hold
\begin{enumerate}
\item $p$ is unramified in $K_i$ for all $i\in \{1,\dots, r\}$,
\item for all $i$ the algebra $A_i$ splits at all primes $\mathfrak{p}$ of $\mathcal{O}_i$ dividing $p$
\item $\Z_p[G]$ is a maximal $\Z_p$-order in
$\Q_p[G]$.
\end{enumerate}
The integral group ring $\Z[G]$ is a $\Z$-order in $\Q[G]$. For almost all prime numbers $p$ the completion $\Z_p[G]$ is a maximal $\Z_p$-order in
$\Q_p[G]$; this follows from the existence of maximal orders and \cite[(11.6)]{Reiner}.
We deduce that almost all primes are unramified in $\Z[G]$; see \cite[(8.4)]{Neukirch} and \cite[(32.1)]{Reiner}.
If $p$ is unramified, then 
\[
	\Q_p[G] \cong \prod_{i=1}^r \prod_{\mathfrak{p} \mid p} M_{n_i}(K_{i,\mathfrak{p}}).
\]
By \cite[(17.3)]{Reiner} the maximal $R$-orders in $M_n(F)$ are conjugate to $M_n(R)$ if $R$ is a complete discrete valuation ring with quotient field $F$.
In particular, we deduce for unramified $p$ that
\[
	\Z_p[G] \cong \prod_{i=1}^r \prod_{\mathfrak{p} \mid p} M_{n_i}(\mathcal{O}_{i,\mathfrak{p}})
\]
and so
\[
	\F_p[G] \cong \prod_{i=1}^r \prod_{\mathfrak{p} \mid p} M_{n_i}(\mathcal{O}_{i}/\mathfrak{p}).
\]
Let $N(\mathfrak{p}) = \vert\mathcal{O}_i/\mathfrak{p}\vert$ be the norm of the prime ideal $\mathfrak{p} \subseteq \mathcal{O}_i$ and write
$N(\mathfrak{p}) = p^{f(\mathfrak{p})}$.
We recall that $r^*(M_{n_i}(\mathcal{O}_i/\mathfrak{p}), \F_{p^k},n)$ vanishes unless $n = n_i$ and  $k$ is a multiple of $f(\mathfrak{p})$, in which case $r^*(M_{n_i}(\mathcal{O}_i/\mathfrak{p}), \F_{p^k},n_i) = f(\mathfrak{p})$. This allows us to calculate the zeta function of $G$ up to ramified primes. The correction amounts to multiplication of a finite number of factors of the form \eqref{eq:rational}. We obtain
\begin{align*}
	\zeta_G(s) &\sim \prod_{i=1}^{r}\exp\left(\sum_{\mathfrak{p} \subseteq \mathcal{O}_i} \sum_{k=1}^\infty\frac{r^*_{n}(M_{n_i}(\mathcal{O}_i/\mathfrak{p}), \F_{p^k})}{k} p^{-sn_ik} |\mathbb{P}^{n_i-1}(\F_{p^k})|\right)\\
	&= \prod_{i=1}^{r} \exp\left(\sum_{\mathfrak{p} \subseteq \mathcal{O}_i}\sum_{k'=1}^\infty\frac{f(\mathfrak{p})}{k'f(\mathfrak{p})} p^{-sn_ik'f(\mathfrak{p})} |\mathbb{P}^{n_i-1}(\F_{p^{k'f(\mathfrak{p})}})|\right)\\
	&= \prod_{i=1}^{r} \prod_{\mathfrak{p} \subseteq \mathcal{O}_i} \exp\left(\sum_{k'=1}^\infty \frac{1}{k'} N(\mathfrak{p})^{-sn_ik'} \bigl(N(\mathfrak{p})^{k'(n_i-1)} + \dots + N(\mathfrak{p})^{k'} + 1\bigr)\right)\\
	&= \prod_{i=1}^{r} \prod_{\mathfrak{p} \subseteq \mathcal{O}_i} \prod_{j=0}^{n_i-1} \bigl(1- N(\mathfrak{p})^{-sn_i+j}\bigr)^{-1}
	 = \prod_{i=1}^{r} \zeta_{K_i}^{\# n_i}(s).
\end{align*}
It is a classical result of Hecke that the Dedekind zeta function $\zeta_K$ admits a meromorphic extension to $\C$ with a simple pole at $s = 1$.
The poles of $\zeta_{K}^{\# n}(s)$ are thus $\frac{1}{n}, \frac{2}{n}, \dots, 1$. The rational correction factors \eqref{eq:rational} satisfy $a_k \leq \sqrt{|G|}$ and thus can have poles at non-negative rational numbers which
are also bounded from above by $1 - \sqrt{|G|^{-1}}$.
\end{proof}

\section{Virtually abelian groups}\label{sec:rationality}

In this section we prove Theorem~\ref{thmABC:rational}.
\begin{proof}[Proof of Theorem~\ref{thmABC:rational}]
Let $A \trianglelefteq G$ be an abelian normal subgroup of finite index $d = |G:A|$ in $G$. 
Then every absolutely irreducible representation of $G$ has dimension at most $d$.
For every $n \in \{1,2,\dots,d\}$ there is a moduli variety $M_{n}$ defined over $\F_p$ such that
the closed points $M_n(\F_{p^k})$ are in bijective correspondence with the isomorphism classes of absolutely irreducible representations of $G$ over the field $\F_{p^k}$; see e.g.~\cite[Theorem 6.23]{Korthauer}.
Define $V_n = M_n \times \mathbb{P}^{n-1}$.
Then 
\[
	|V_n(\F_{p^k})| = r^*(G,\F_{p^k},n)\cdot |\mathbb{P}^{n-1}(\F_{p^k})|
\]
hold for all $k \in \mathbb{N}$.
This implies that
\[
	\zeta_{G,p}(s) = \sum_{n=1}^{d} Z(V_n,s)
\]
where $Z(V_n,s)$ denotes the local Hasse-Weil zeta function of $V_n$. The local Hasse-Weil zeta function is a rational function in $p^{-s}$ by \cite{Dwork}.
\end{proof}

The following is probably well-known and it was communicated to the authors by Alexander Moret\'o.

\begin{lem}[\cite{moreto}]
 Let $K$ be a field of characteristic $p > 0$. Then there exists a real-valued function $f$ such that if $G$ is a finite group such that all irreducible representations over $K$ have dimension at most $n$, then $G$ has a characteristic $p$-abelian subgroup $A$ such that $\vert G : A\vert \leq f(n)$.
\end{lem}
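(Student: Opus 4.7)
The statement is essentially a characteristic-$p$ Jordan-type theorem, and my plan is to reduce in two stages to the Brauer--Feit theorem on finite linear groups in positive characteristic. First I would replace $K$ by $\overline{K}$: any irreducible $K[G]$-module decomposes over $\overline{K}$ into absolutely irreducible constituents of the same or smaller dimension, so the hypothesis passes to all absolutely irreducible $\overline{K}[G]$-modules, at the price of replacing $n$ by itself.

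Second, let $P = O_p(G)$ be the largest normal $p$-subgroup. For any irreducible $\overline{K}[G]$-module $V$, the fixed-point space $V^P$ is non-zero --- since $P$ is a $p$-group acting on a non-zero vector space in characteristic $p$ --- and it is $G$-stable because $P$ is normal; by irreducibility $V^P = V$. Hence every absolutely irreducible $\overline{K}$-representation of $G$ factors through $\overline{G} = G/P$, and $\overline{G}$ satisfies $O_p(\overline{G}) = 1$ together with the same dimension bound. If I can produce an abelian subgroup $\overline{A} \le \overline{G}$ of index at most $f_0(n)$, then the preimage $A$ in $G$ has $[G:A] \le f_0(n)$ and fits into $1 \to P \to A \to \overline{A} \to 1$, exhibiting $A$ as a $p$-by-abelian (i.e.\ characteristic-$p$-abelian) subgroup. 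So it suffices to prove a Jordan-type bound for $\overline{G}$.

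For the final step, with $O_p(\overline{G}) = 1$, I would either invoke the Brauer--Feit theorem directly --- applied to each irreducible representation $\rho_i \colon \overline{G} \to \GL(d_i, \overline{K})$ of dimension $d_i \le n$ --- and then intersect the resulting normal subgroups, or argue structurally via the generalized Fitting subgroup $F^*(\overline{G}) = F(\overline{G}) E(\overline{G})$: each component admits a faithful projective representation of dimension at most $n$ extracted from any irreducible $\overline{K}[\overline{G}]$-module on which it acts nontrivially, so by the Landazuri--Seitz bounds and the classification of finite simple groups the orders and multiplicities of the components are bounded in terms of $n$, while Ito's theorem controls the non-abelian part of the $p'$-group $F(\overline{G})$. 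The main obstacle in either variant is the uniform synthesis: the bound is easy on each irreducible representation separately, but the lemma demands a single normal subgroup of $\overline{G}$ of index bounded by $f_0(n)$, and making this passage rigorous is precisely the non-trivial content of a Jordan-type theorem in positive characteristic. Once such an $f_0$ is in hand, the lemma follows with $f(n) = f_0(n)$.
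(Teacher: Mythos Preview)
The paper does not supply a proof of this lemma: it is attributed to Moret\'o (the reference \cite{moreto} was in preparation) with the remark that the result was communicated to the authors. So there is no in-paper argument to compare against.

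On its own merits, your reduction is correct: passing to $\overline K$ is harmless, and the fixed-point argument shows $O_p(G)$ acts trivially on every irreducible, so one may work with $\overline G = G/O_p(G)$. You rightly isolate the core difficulty as producing a single abelian subgroup of bounded index in $\overline G$, and you are candid that neither suggested route is carried to completion. Your first route genuinely does not work as stated: intersecting the Brauer--Feit normal subgroups over all irreducibles yields an abelian subgroup, but its index is not controlled because the number of irreducibles is not bounded in terms of $n$. Your second route via $F^\ast(\overline G)$ is the right strategy; the point you flag about components is exactly where care is needed, and in defining characteristic it is the Steinberg module that forces Lie-type components to be small. One further point: you parse ``characteristic $p$-abelian subgroup'' as a compound adjective meaning $p$-by-abelian, but the more natural reading is that $A$ should be a \emph{characteristic} subgroup of $G$ with $A/O_p(A)$ abelian. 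Your preimage construction does not automatically give this, though once the structural route is completed the subgroup $\overline A$ can be chosen characteristic in $\overline G$ (for instance via $Z(F(\overline G))$) and the issue disappears.
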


It follows that the groups to which the proof of Theorem~\ref{thmABC:rational} can be applied are exactly the finitely generated virtually abelian groups. However, the class of UBERG groups with rational local factors is larger; for instance, it contains the lamplighter group (see \ref{sec:lamplighter}). It would be interesting to have a description of the class of groups for which the rationality result holds.

\appendix

\section{Examples}\label{app:A}
In this section we will collect various examples of zeta functions that we have calculated. Some examples have appeared in this article; others can be obtained by following the steps of Example~\ref{ex:ZwrC2}. Finally, we also calculate the zeta functions for the lamplighter groups $C_2\wr \mathbb{Z}$ and $C_3\wr \mathbb{Z}$. In this section $\zeta(s)$ will denote the Riemann zeta function.

\subsection{Abelian groups}
Since one-dimensional representations are always absolutely irreducible, $\zeta_G$ can be easily calculated for abelian groups by counting homomorphisms into $\F_q^\times$ for every $q$.
\begin{ex}[$C_p$ cyclic group]
  $$ \zeta_{C_p}(s) = \zeta(s) \cdot \prod_{p} \prod_{\chi\in \F_p^\times} \left( 1- \frac{\chi(p)}{p^s}\right)^{-1}  = 
    \zeta(s) \cdot  \zeta_{\mathbb{Q}(\eta)}(s) \cdot \left(1-\frac{1}{p^s}\right) $$
    where $\eta$ denotes a primitive $p$-th root of unity.
\end{ex}

\begin{ex}[$\mathbb{Z}^r$ free abelian group]
$$\zeta_{\hat{\Z}^r}(s) = \prod_{i=0}^r \zeta(s-i)^{(-1)^{r-i} \binom{r}{i}}$$
\end{ex}

\subsection{Symmetric groups}
In Theorem~\ref{thmABC:finite_groups} we gave a formula for the zeta 
function of a finite group up to rational factors. An exact formula for $\zeta_G$ requires a concise understanding of the modular representation theory of $G$.
The modular representation theory of the symmetric group $S_n$ is 
well-studied
and the description of the absolutely irreducible representations of $S_n$ as quotients $S^{\mu}/(S^{\mu}\cap (S^{\mu})^\perp)$ of the Specht modules $S^{\mu}$ given in \cite[Theorem 4.9]{James1978} can be used to compute an exact formula for $\zeta_{S_n}$ for small values of $n$.  
\begin{ex}[$S_4$ symmetric group]
\[ \zeta_{S_4}(s) = \zeta(s)^2 \zeta^{\#2}(s) \zeta^{\#3}(s)^2 (1-2^{-s}) \prod_{k\in \{2,3\}}\prod_{ j=0}^{k-1} (1-3^{j-ks}). \]
\end{ex}

We can also give a more concise
formula for the zeta function up to rational factors for primes below $n$. 
Given two meromorphic functions $f, g$ on $\mathbb{C}$ we write
$f \sim_n g$ if there is a rational function $h$ in $\{p^{-s} \mid  p \le 
n,\ p \text{ prime}\}$ such that $fh = g$.

\begin{ex}[$S_n$ symmetric group]
 Let $n \ge 2$ be an integer. Then 
 \[ 
   \zeta_{S_n}(s) \sim_n \prod_{\chi\in \mathrm{Irr}(S_n,\mathbb{C})} 
\prod_{j=0}^{\chi(1) -1} \zeta(\chi(1) s -j).
 \]
It follows from Theorem \ref{thmABC:finite_groups} that the zeta function $\zeta_{S_n}(s)$ has a pole of
order $P(n)$ at $s = 1$, where $P(n)$ denotes the number of partitions of $n$.
\end{ex}

\subsection{Virtually abelian groups}
For virtually abelian groups $\zeta_G$ can be calculating by inducing irreducible representations from an abelian normal subgroup. We explain this for $\mathbb{Z}\wr C_2$.
All the examples of this section can be calculated in essentially the same way. 

\begin{ex}[$\mathbb{Z}\wr C_2$ wreath product]\label{ex:ZwrC2}
Consider the group $G=\mathbb{Z}\wr C_2 = \mathbb{Z}^2 \rtimes C_2$, where $C_2$ swaps the two copies of $\Z$.
Then 
$$ \zeta_{\mathbb{Z}\wr C_2}(s) \sim \frac{\zeta(s-1)^2}{\zeta(s)^2} \cdot \frac{\zeta(2s)\zeta(2s-3)}{\zeta(2s-1)\zeta(2s-2)}.$$ 
 It is easy to see that $G^\mathrm{ab} \cong C_2 \times \mathbb{Z}$. Therefore, if $2\nmid q$, $r^\ast(G,\mathbb{F}_{q},1) = 2 (q -1)$ and, if $2\mid q$,  $r^\ast(G,\mathbb{F}_{q},1) = q -1$.
 
 For degree two representations, consider two distinct one-dimensional irreducible representations $\chi_1$, $\chi_2$ of $\mathbb{Z}$ over $\F_q$, then  the induced representation $\rho(\chi_1,\chi_2) = \Ind_{\Z^2}^G(\chi_1 \otimes \chi_2)$  is an absolutely irreducible representation of $G$ and the number of such representations is $ \frac{1}{2}(q -1)(q -2)$ (the order of $\chi_1,\chi_2$ does not matter). 
However, it is possible that the field of definition of the induced representation $\rho$ is smaller than $\F_q$ and this is actually one of the main difficulties that arise for virtually abelian groups. In the case at hand, there could be a representation $\rho: G\to \mathrm{GL}_2(\F_{q})$ such that $\rho|_{\Z^2}$ is diagonalisable over $\F_{q^{2}}$, but not over $\F_q$. More precisely, given $\chi_1,\chi_2 \colon \Z \to \F_{q^2}$, then
$\rho(\chi_1,\chi_2)$ is defined over $\F_q$ exactly if $\chi_1$ and $\chi_2$ are conjugate under $\mathrm{Gal}(\F_{q^{2}}/ \F_q)$.
The number of representations of this type is $\frac{1}{2}(q^{2}-q)$ and
\[ r^*(G,\mathbb{F}_{q},2) = \frac{1}{2}(q -1)(q -2) +  \frac{1}{2}(q^{2}-q) = q^{2} - 2 q +1;\]
we note that the  factor $\frac{1}{2}$ disappeared. 

Calculating the zeta function $\zeta_{G}(s)$:
	\begin{multline*}
		\exp \left( \sum_{p\neq 2} \left(\sum_{j=1}^\infty \frac{ 2(p^j -1) }{j} p^{-sj} + \sum_{j=1}^\infty \frac{p^{2j} - 2 p^j +1}{j} p^{-2sj} (p^j +1)\right) \right) \cdot \\  \exp \left(  \sum_{j=1}^\infty \frac{ 2^j -1 }{j} p^{-sj} + \sum_{j=1}^\infty  \frac{2^{2j} -2 \cdot 2^j +1)}{j} p^{-2sj} (2^j +1) \right)   \\ = \left(1-\frac{1}{2^{s-1}}\right) \left(1-\frac{1}{2^s}\right)^{-1}  \frac{\zeta(s-1)^2}{\zeta(s)^2} \cdot \frac{\zeta(2s) \zeta(2s-3)}{\zeta(2s-1) \zeta(2s-2)}.
	\end{multline*}
\end{ex}

\begin{ex}[$D_\infty$ infinite dihedral group]
 $$ \zeta_{D_\infty}(s) \sim \zeta(s)^4 \cdot 
\frac{\zeta(2s-2)}{\zeta(2s-1) \zeta(2s)^{2}}$$ 
\end{ex}

\begin{ex}[$\mathbb{Z}\wr C_3$ wreath product]
  $$ \zeta_{\mathbb{Z}\wr C_3}(s) \sim \frac{\zeta(s-1)^3}{\zeta(s)^3} \cdot \frac{\zeta(3s-3)\zeta(3s-2)}{\zeta(3s)}$$
\end{ex}

\begin{ex}[$BS(1,-1)$ metabelian Baumslag-Solitar group]
 $$ \zeta_{BS(1,-1)}(s) \sim \frac{\zeta(s-1)^2}{\zeta(s)^2} \cdot \frac{\zeta(2s)^2\zeta(2s-3)}{\zeta(2s-1)\zeta(2s-2)^2}$$ 
\end{ex}

\subsection{Two lamplighter groups}\label{sec:lamplighter}

 Let $G$ be the lamplighter group $C_2 \wr \Z$. Let $N = 
\bigoplus_{i\in \Z} C_2$ denote the base of the wreath product. We determine the 
number $r^\ast (G, \F_q,n)$ absolutely irreducible $n$-dimensional 
representations of $G$ over the finite field $\F_q$.
 
\begin{prop}\label{prop:lamp_rep}
 Let $q$ be a prime power. 
 \begin{enumerate}[(i)]
  \item If $q$ is even, then all irreducible $\F_q$-representations of $G$ are $1$-dimensional and $r^\ast(G, \F_q,1) = q - 1$.
  \item If $q$ is odd, then $$r^\ast(G, \F_q,n) = (q - 1)\frac{F(n)}{n} $$ where $F (n) = \sum_{d\mid n} 2^d \mu(n/d)$. 
 \end{enumerate}
 \end{prop}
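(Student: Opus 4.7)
The proof splits by the characteristic of $\F_q$, and in both cases applies Clifford theory to the abelian base group $N = \bigoplus_{i\in \Z} C_2$ of the wreath product.

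For part (i), if $q$ is a power of $2$, then any finite-dimensional $\F_q$-representation of $G$ factors through a finite quotient in which the image of $N$ is a normal finite $2$-subgroup. In characteristic $2$, such a subgroup lies in the kernel of every irreducible representation, so all irreducible $\F_q$-representations of $G$ factor through $G/N \cong \Z$. These are exactly the $q-1$ characters $\Z \to \F_q^\times$, all one-dimensional and automatically absolutely irreducible.

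For part (ii), when $q$ is odd, $N$ is elementary abelian of exponent $2$ and $\F_q^\times \supseteq \{\pm 1\}$, so $\hat N := \Hom(N, \F_q^\times) \cong \prod_{i\in \Z} \F_2$ with the shift $\Z$-action, and every character of $N$ over $\overline{\F_q}$ is already defined over $\F_q$. By Clifford theory, any absolutely irreducible $\F_q$-representation $V$ of $G$ restricts on $N$ to a $\Z$-orbit of characters; finite-dimensionality forces the orbit to be finite, so the characters are periodic under the shift. If $\chi$ has exact period $n$, its stabiliser in $G$ is $H_0 = N \rtimes n\Z$, and $V \cong \Ind_{H_0}^G \tilde\chi$ for some one-dimensional extension $\tilde\chi$ of $\chi$ to $H_0$. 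The count is then Möbius inversion: $2^n = \sum_{d\mid n} F(d)$ yields $F(n) = \sum_{d\mid n} \mu(n/d) 2^d$ characters of exact period $n$, hence $F(n)/n$ orbits. Since $n\Z$ is free (so $H^2(n\Z,\F_q^\times)=0$) and acts trivially on $\chi$, the set of extensions of $\chi$ to $H_0$ is a torsor under $\Hom(n\Z, \F_q^\times) \cong \F_q^\times$, giving $q-1$ extensions per orbit. Each induced representation is absolutely irreducible over $\F_q$: its restriction to $N$ consists of the $n$ distinct $\F_q$-valued characters in the orbit, so $\End_N(V) \cong \F_q^n$ and Clifford forces $\End_G(V) \cong \F_q$; it is defined over $\F_q$ because $\tilde\chi$ is; and distinct (orbit, extension) pairs yield inequivalent $V$'s, since the $G$-conjugates of $\tilde\chi$ have restrictions $T^k\chi \ne \chi$ for $1 \le k \le n-1$. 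Multiplying gives $r^*(G, \F_q, n) = (q-1) F(n)/n$.

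The main technical point is verifying that the Clifford correspondence delivers all absolutely irreducible representations \emph{over $\F_q$} (not merely over some extension) and that the counting of extensions avoids overcounting. Both steps rest on two coincidences: $n\Z$ is cyclic, so that one-dimensional extensions of $\chi$ are cleanly parameterised by $\F_q^\times$; and the characters of $N$ are automatically $\F_q$-rational, so that no Galois descent or field-of-definition complications intervene.
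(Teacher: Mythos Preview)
Your proof is correct and follows essentially the same approach as the paper: Clifford theory applied to the base $N$, identification of the $\F_q$-characters of $N$ with periodic $\{\pm1\}$-sequences under the shift, M\"obius inversion to count those of exact period $n$, and induction from one-dimensional extensions to the stabiliser $N \rtimes n\Z$. You are somewhat more explicit than the paper in justifying that each induced module is absolutely irreducible over $\F_q$ and that distinct (orbit, extension) pairs give inequivalent modules, but the underlying argument is the same.
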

 Before we can prove the proposition, we study the function $F$.
 
 \begin{lem}\label{lem:F}
  Let $n \in \mathbb{N}$. Then $F (n) = \sum_{d\mid n} 2^d \mu(n/d)$ is the number of sequences in $\{\pm1\}^\Z$ which have stabilizer $n\Z$ under the Bernoulli shift action.
 \end{lem}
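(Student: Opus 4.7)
The plan is to identify $F(n)$ as a Möbius inversion of the easy count of sequences fixed by shift by $n$. Write $a(n)$ for the number of sequences $(x_i)_{i \in \Z} \in \{\pm 1\}^\Z$ whose stabilizer under the Bernoulli shift is exactly $n\Z$, i.e., sequences of minimal period $n$. The goal is to show $a(n) = F(n)$.

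First I would observe that a sequence $(x_i)$ is fixed by the shift-by-$n$ map if and only if $x_{i+n} = x_i$ for all $i \in \Z$, if and only if its stabilizer has the form $d\Z$ for some (positive) divisor $d$ of $n$. The set of such sequences is in bijection with $\{\pm 1\}^{\{0,1,\dots,n-1\}}$ via $(x_i) \mapsto (x_0,x_1,\dots,x_{n-1})$, so there are exactly $2^n$ of them. Partitioning this set according to the exact stabilizer yields the identity
\[
  2^n = \sum_{d \mid n} a(d).
\]

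Then I would apply Möbius inversion to obtain
\[
  a(n) = \sum_{d \mid n} \mu(n/d) 2^d = F(n),
\]
which is the claimed formula. The only conceptual point is the bijection between $n$-periodic sequences and tuples of length $n$; there is no real obstacle, since once the counting identity $\sum_{d \mid n} a(d) = 2^n$ is established, Möbius inversion finishes the proof immediately.
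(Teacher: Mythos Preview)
Your proposal is correct and follows essentially the same approach as the paper: count the $2^n$ sequences fixed by shift-by-$n$, partition them according to their exact stabilizer $d\Z$ for $d \mid n$, and apply M\"obius inversion. The only cosmetic difference is notation (you write $a(n)$ where the paper writes $F(d) = |X(d)|$).
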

\begin{proof}
  For every divisor $d$ of $n$ let $X(d)$ denote the subset of $\{\pm1\}^\Z$ consisting of sequences with stabilizer $d\Z$ under the shift action of $\Z$. Define $F(d) = \vert X(d)\vert$; we will show that this function satisfies the formula. The union $\bigcup_{d\mid n} X(d)$ consists of all sequences which are stabilized by $n\Z$. Since the first $n$ entries determine such a sequence completely, we deduce 
  \begin{equation}\label{eq:inversion_form}
    2^n = \sum_{d\mid n} \vert X(d)\vert = \sum_{d\mid n} F (d).  
  \end{equation}
 The M\"obius inversion formula implies that 
 \[
   F (n) = \sum_{d\mid n}2^d \mu(n/d). \qedhere
 \]
 \end{proof}

 \begin{proof}[Proof of Proposition~\ref{prop:lamp_rep}] Let $V$ be an absolutely irreducible representation of $G$ over $\F_q$. If $q$ is even, then $V_{\vert N}$ is trivial, since $N$ is a $2$-group and $V _{\vert N}$ is semisimple. Every irreducible representation over $\F_q$ factors through the infinite cyclic quotient. In particular, the representations are $1$-dimensional and $r_1^\ast (G, q) = q - 1$.
 
 Assume that $q$ is odd. Then $V_{\vert N}$ decomposes as a sum of irreducible $\F_q[N]$-modules. We note that every irreducible $N$-module is absolutely irreducible, one dimensional and of the form 
 \[\chi_v ((\sigma_i)_{i\in\Z}) = \prod_{i} v_i^{\sigma_i}\]
 for some sequence $v = (v_i )_{i\in\Z} \in \{\pm1\}^\Z$. As $V_{\vert N}$ is finite dimensional and consists of a single orbit of a representation $\chi_v$, the sequence $v$ needs to be periodic with some period $n$. The representation factors through $G_n := \bigoplus_{i\in\Z/n\Z} C_2 \rtimes \Z$. The central subgroup $n\Z$ in $G_n$ acts with a character $\psi : n\Z \to \F_q^\times$ and by Clifford Theory 
 $$ V \cong \mathrm{Ind}^G_{N \times n\Z} (\chi_v \otimes \psi).$$ In particular, $V$ has dimension $n$. Conversely, every irreducible representation is of this form. A sequence $v \in \{\pm1\}^\Z$ is periodic with period $n$ exactly if $\chi_v$ has stabilizer $n\Z$ in the infinite cyclic subgroup of $G$. By Lemma~\ref{lem:F}, the number of such sequences $G$ is $F(n)$. For every character $\psi : n\Z \to \F_q$, $\mathrm{Ind}_{N \times n\Z}^G (\chi_v \otimes \psi)$ is an absolutely irreducible representation of $G$ defined over $\F_q$. Since the orbit of $\chi_v$ contains exactly $n$ elements, we obtain $$r^\ast (G, q,n) = (q - 1) \frac{F(n)}{n}.$$ 
\end{proof}

\begin{thm} Let $G$ be the lamplighter group $C_2\wr \mathbb{Z}$. Then 
\[
   \zeta_G(s) = \left( \frac{1 - 2^{-s}}{ 1 - 2^{1-s}} \right) \cdot \prod_{p>2} \left(\frac{1 - 2p^{-s}}{1 - 2p^{1-s}}\right).
\]
The abscissa of convergence is $a(G) = 2$. 
\end{thm}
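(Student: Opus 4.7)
The plan is to substitute the formulas for $r^*(G,\F_q,n)$ from Proposition~\ref{prop:lamp_rep} directly into the definition of $\zeta_G$ and then to simplify the resulting sums using a Möbius-inversion identity. Because $G$ has no irreducible representations of dimension $> 1$ in characteristic $2$, the Euler factor at $p = 2$ is immediate: only $n = 1$ contributes, and
\[
\log \zeta_{G,2}(s) = \sum_{j \geq 1} \frac{2^j - 1}{j}\, 2^{-sj} = -\log(1 - 2^{1-s}) + \log(1 - 2^{-s}),
\]
giving $\zeta_{G,2}(s) = (1 - 2^{-s})/(1 - 2^{1-s})$.

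For an odd prime $p$, using $r^*(G,\F_{p^j},n) = (p^j-1)F(n)/n$ one finds that the inner telescoping collapses nicely:
\[
\log \zeta_{G,p}(s) = \sum_{n,j \geq 1} \frac{F(n)}{nj}\bigl(p^{nj(1-s)} - p^{-snj}\bigr).
\]
Summing over $j$ for fixed $n$ yields $-\sum_n \frac{F(n)}{n}\bigl[\log(1-u^n) - \log(1-v^n)\bigr]$ with $u = p^{1-s}$, $v = p^{-s}$. The key step is to apply the formula $F(n) = \sum_{d \mid n} 2^d \mu(n/d)$, change variables $n = dm$, and invoke the classical identity $\sum_m \frac{\mu(m)}{m}\log(1 - x^m) = -x$ (valid for $|x|<1$, proved by expanding $-\log(1-x^m)$ as a power series and using $\sum_{m \mid n}\mu(m) = [n=1]$). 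This gives, for each $t \in \{1-s,-s\}$,
\[
\sum_{n,j \geq 1} \frac{F(n)}{nj} p^{njt} = \sum_{d \geq 1} \frac{2^d}{d} \cdot (-(-p^{td})) \cdot (-1) \Big/ \ldots = -\log(1 - 2p^{t}),
\]
so that $\zeta_{G,p}(s) = (1 - 2p^{-s})/(1 - 2p^{1-s})$. Multiplying the local factors yields the claimed formula.

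For the abscissa, I would analyse the convergence of the Euler product via its logarithm. For real $s$ with $s > 2$, Taylor-expanding $\log(1 - 2p^{1-s})$ and $\log(1 - 2p^{-s})$ and using Mertens' theorem (or $\sum_p p^{-\sigma}$ converges iff $\sigma > 1$, combined with PNT for the leading term) shows that $\sum_{p} \log \frac{1-2p^{-s}}{1-2p^{1-s}}$ converges absolutely, the dominant term being $\sum_p 2p^{1-s}$. Conversely, for $s \leq 2$ the same dominant term diverges, so the product diverges. The main subtlety is to verify that the higher-order terms in the Taylor expansion (of order $p^{k(1-s)}$ with $k\ge 2$) are absorbed into absolutely convergent tails for all $s$ in the relevant range; this is routine since they converge already for $s > 3/2$. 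Hence $a(G) = 2$.

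The hard part of this argument is the Möbius-inversion computation: one must recognise that the apparently complicated double sum over $n$ and $d \mid n$ decouples after the substitution $n = dm$, and then spot the identity $\sum_m \mu(m) m^{-1}\log(1-x^m) = -x$. Once that algebraic miracle is in hand, both the closed form and the abscissa follow quickly.
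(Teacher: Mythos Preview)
Your proposal is correct and arrives at the same local factors as the paper, but the route for odd $p$ differs in an instructive way. You sum over $j$ first to produce $-\sum_n \frac{F(n)}{n}\log(1-u^n)$, then expand $F(n)=\sum_{d\mid n}2^d\mu(n/d)$ and invoke the identity $\sum_m \mu(m)m^{-1}\log(1-x^m)=-x$. The paper instead keeps the double sum, substitutes $m=nk$, and uses directly the relation $\sum_{n\mid m}F(n)=2^m$ (equation~\eqref{eq:inversion_form}), which is precisely the identity from which $F$ was obtained by M\"obius inversion in Lemma~\ref{lem:F}. That collapses the sum to $\sum_m \frac{(p^m-1)2^m}{m}p^{-sm}$ in one step. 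So what you call the ``algebraic miracle'' is, from the paper's point of view, simply undoing the M\"obius inversion you just applied; the paper's version is shorter because it never inverts in the first place. Your abscissa argument via the dominant term $\sum_p 2p^{1-s}$ is fine and matches what the closed form makes evident. (As a minor point of presentation, the displayed line with ``$\ldots$'' should be written out cleanly: after the substitution $n=dm$ and the identity, one gets $\sum_d \frac{(2p^t)^d}{d}=-\log(1-2p^t)$.)
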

\begin{proof}
 For $p = 2$ the local zeta function is $$\log \zeta_{G,2}(s) = \log(1 - 2^{-s} ) - \log(1 - 2^{1-s} )$$ using the same calculation as for the infinite cyclic group. Let $p$ be an odd prime. Then $\log\zeta_{G,p}(s) $ equals
 \begin{align*} &  \sum_{n=1}^\infty \sum_{k=1}^\infty \frac{r_n^\ast (G, p^k )}{k} p^{-skn} \vert \mathbb{P}^{n-1}(\F_{p^k}) \vert  
 = \sum_{n=1}^\infty \sum_{k=1}^\infty \frac{(p^k -1) F(n)}{nk} p^{-skn} \vert \mathbb{P}^{n-1}(\F_{p^k}) \vert \\ &= \sum_{n=1}^\infty \sum_{k=1}^\infty \frac{(p^{nk}-1) F(n)}{nk} p^{-skn}  
 = \sum_{m=1}^\infty \sum_{n\mid m} \frac{(p^{m}-1) F(n)}{m} p^{-sm}  \\ & = \sum_{m=1}^\infty  \frac{(p^{m}-1) }{m} p^{-sm} \left( \sum_{n\mid m} F(n)\right)  
 = \sum_{m=1}^\infty  \frac{(p^{m}-1) 2^m }{m} p^{-sm} \\ &= \log(1-2 p^{-s}) - \log(1-2 p^{1-s}). 
 \end{align*}
Note that we used \eqref{eq:inversion_form} in the chain of equalities above.
 \end{proof}

 The calculation for the lamplighter group $C_3\wr \mathbb{Z}$ is very similar, 
albeit more involved, and here we only report the result of our calculation.

\begin{thm} 
Let $G$ be the lamplighter group $C_3\wr \mathbb{Z}$. Then 
 \[
	\zeta_{G}(s) = \frac{1-3^{-s}}{1-3^{1-s}} \prod_{p\equiv 1 \bmod 3 } 
\frac{1-3p^{-s}}{1-3p^{1-s}} \prod_{p\equiv 2 \bmod 3 } 
\frac{(1-3p^{-2s})(1+p^{1-s})}{(1-3p^{2-2s})(1+p^{-s})}.
\]
In particular, the abscissa of convergence of $C_3\wr \Z$ is $2$.
\end{thm}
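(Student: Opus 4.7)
The plan is to follow the recipe of the preceding $C_2 \wr \Z$ proof, adapted to handle the nontrivial Galois action on cube roots of unity. Write $G = N \rtimes \Z$ with $N = \bigoplus_{i \in \Z} C_3$. In characteristic $3$ the $3$-group $N$ acts trivially on every irreducible, so all representations factor through $\Z$ and we obtain the local factor $(1-3^{-s})/(1-3^{1-s})$ by the same computation as for the infinite cyclic group in Example~\ref{Z}. For $p \neq 3$, fix a primitive cube root of unity $\omega \in \overline{\F}_p$ and parametrise characters $N \to \overline{\F}_p^\times$ by sequences $v \in \F_3^{\Z}$ via $\chi_v((\sigma_i)_i) = \omega^{\sum_i v_i \sigma_i}$, identifying $C_3 = \F_3$ additively. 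Clifford theory then identifies the absolutely irreducible $\overline{\F}_q$-representations of $G$ of dimension $n$ with the induced modules $\Ind_{N \rtimes n\Z}^{G}(\chi_v \otimes \psi)$, where $v$ has exact period $n$ (so its $\Z$-orbit has size $n$) and $\psi \colon n\Z \to \overline{\F}_q^\times$ is arbitrary, modulo the obvious $\Z$-action that shifts $v$.

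Passing from $\overline{\F}_q$ to $\F_q$ is done by Galois descent: one counts Frobenius-fixed classes. Since $\mathrm{Frob}_q$ sends $\chi_v$ to $\chi_{qv}$ and $\psi$ to $\psi^q$, the analysis splits according to $q \bmod 3$. For $q \equiv 1 \pmod 3$ the Frobenius acts trivially on sequences and only $\psi^q = \psi$ is a constraint, yielding $r^\ast(G,\F_q,n) = (q-1)H(n)/n$ for all $n \geq 1$, where $H(n) = \sum_{d\mid n}\mu(n/d)3^d$. For $q \equiv 2 \pmod 3$ one has $qv = -v$, so Frobenius-fixedness demands that $\sigma^s v = -v$ for some $s \in \Z$ and that $\psi$ be $\F_q^\times$-valued. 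Squaring the first condition forces $n \mid 2s$, so either $v = 0$ (giving the $q-1$ dimension-one representations which factor through $G \twoheadrightarrow \Z$, as $C_3$ has no nontrivial characters in $\F_q^\times$) or $n$ is even and $v_{i+n/2} = -v_i$. Write $B(n)$ for the number of sequences of exact period $n$ satisfying this antisymmetry; direct enumeration gives $3^{n/2}$ sequences satisfying $v_{i+n/2}=-v_i$ in total, and Möbius inversion on the sub-lattice of divisors $d \mid n$ with $n/d$ odd, together with the single trivial sequence, produces the key identity
\[
\sum_{\substack{d\mid M \\ M/d\,\text{odd}}} B(d) \;=\; 3^{M/2} - 1 \qquad (M \text{ even}).
\]
This is the $C_3$-analogue of the relation $\sum_{d\mid M}F(d)=2^M$ that drove the $C_2 \wr \Z$ calculation.

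We now compute the local factor by regrouping $\log \zeta_{G,p}(s) = \sum_{n,k} k^{-1}\,r^\ast(G,\F_{p^k},n)\,p^{-skn}|\mathbb{P}^{n-1}(\F_{p^k})|$ by $M = kn$. For $p \equiv 1 \pmod 3$ this is a verbatim repeat of the $C_2$ case: using $\sum_{d\mid M}H(d) = 3^M$ we obtain $\log \zeta_{G,p}(s) = \sum_M M^{-1}(p^M - 1)3^M p^{-sM} = \log\frac{1-3p^{-s}}{1-3p^{1-s}}$. For $p \equiv 2 \pmod 3$ we split according to the parity of $k$. The even-$k$ part has $p^k \equiv 1 \pmod 3$ and reduces to the previous calculation with $p^2$ in place of $p$ and an extra factor $1/2$, contributing $\tfrac12\log\frac{1-3p^{-2s}}{1-3p^{2-2s}}$. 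The odd-$k$ part combines the dimension-one piece $\sum_{k\text{ odd}}k^{-1}(p^k-1)p^{-sk}$ with the higher-dimensional piece, which regroups by $M=kn$ as $\sum_{M\text{ even}}M^{-1}(3^{M/2} - 1)(p^M-1)p^{-sM}$ thanks to the identity above. Using $\sum_{k\text{ odd}}x^k/k = \tfrac12\log\frac{1+x}{1-x}$ together with $1-x^2 = (1-x)(1+x)$, the three pieces collapse to the rational expression in the statement. Finally, the abscissa of convergence equals $2$ because at $s = 2$ each Euler factor expands as $\log\zeta_{G,p}(2) \sim c_p/p$ with $c_p \to 3$ on primes $p \equiv 1 \pmod 3$ and $c_p \to 1$ on primes $p \equiv 2 \pmod 3$, so the global logarithm diverges by Dirichlet/Mertens, while convergence on $\Re(s) > 2$ follows from the uniform $O(p^{1-s})$ bound on the Euler factors.

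The main obstacle is the parity bookkeeping in the $q \equiv 2 \pmod 3$ regime: the Frobenius twist $v \mapsto -v$ forces a structural antisymmetry on sequences of even period, so after regrouping by $M = kn$ only certain $M$ contribute, and one must split the Dirichlet sum by the parity of the field degree and replace the straightforward relation $\sum_{d\mid M}H(d)=3^M$ by its twisted refinement $\sum_{d\mid M,\, M/d\,\text{odd}} B(d) = 3^{M/2} - 1$.
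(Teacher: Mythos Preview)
Your proof is correct and carries out precisely the approach the paper indicates: the paper gives no proof of this theorem, stating only that ``the calculation for the lamplighter group $C_3\wr \mathbb{Z}$ is very similar, albeit more involved, and here we only report the result of our calculation.'' Your adaptation of the $C_2\wr\Z$ recipe---Clifford-theoretic parametrisation by periodic $\F_3$-sequences, Galois descent to count Frobenius-fixed classes, and the M\"obius-type identity $\sum_{d\mid M,\,M/d\text{ odd}} B(d)=3^{M/2}-1$ replacing $\sum_{d\mid M}F(d)=2^M$---is exactly the ``more involved'' step the authors allude to, and your parity split on $k$ together with the identity $\sum_{k\text{ odd}}x^k/k=\tfrac12\log\frac{1+x}{1-x}$ correctly produces the stated Euler factor for $p\equiv 2\pmod 3$.
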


\section{Comparing two zeta functions}\label{app:B}

What is the effect of using absolutely irreducible representations in the definition of the zeta function? That is, UBERG can be measured in terms of the growth of irreducible representations, instead of absolutely irreducible ones. What would happen if we defined our zeta function in those terms?

Let $\eta_G$ be the complex function defined by $$\log(\eta_G)(s) = \sum_{p \text{ prime}} \sum_{n=1}^\infty \frac{r_n(G,\F_{p^j})}{j} p^{-snj} |\mathbb{P}^{n-1}(\F_{p^j})|,$$ i.e. just replacing $r^\ast$ with $r$ in the definition of $\zeta_G$.

Clearly $\zeta_G(s) \leq \eta_G(s)$ for real $s$, where they both converge. But in general, they need not have the same abscissa of convergence. We omit the proof of the following theorem for conciseness, as it follows the lines of the proof of Theorem~\ref{thmABC:arbitrary}.

\begin{thm}
	Let $G = \prod_{p \text{ prime } \geq 3} SL(2,p^p)^{p^p}$. Then $\eta_G$ has abscissa of convergence $\geq 3/2$, while $\zeta_G$ has abscissa of convergence $\leq 11/8$.
\end{thm}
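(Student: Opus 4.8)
The plan is to prove the two one‑sided bounds by essentially opposite methods. For $\eta_G$ it suffices to exhibit enough (non‑absolutely‑)irreducible modular representations to force divergence; for $\zeta_G$ one needs a genuine upper bound for $r^*(G,-,-)$, obtained by copying the arguments of Theorem~\ref{thmABC:arbitrary} and of Section~\ref{largeabscissae}, the one new feature being that the degree of the defining‑characteristic field varies with the place. \emph{Lower bound for $\eta_G$.} Fix a prime $p\ge 3$. The natural $2$‑dimensional $\F_{p^p}$‑module $V$ of $\SL(2,p^p)$ is absolutely irreducible and not realizable over a proper subfield, so its restriction of scalars $\Res_{\F_{p^p}/\F_p}(V)$ is an irreducible $\F_p[\SL(2,p^p)]$‑module of dimension $2p$ with endomorphism algebra $\F_{p^p}$ — irreducible but not absolutely irreducible. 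Composing with the $p^p$ coordinate projections $G\twoheadrightarrow\SL(2,p^p)$ gives at least $p^p$ pairwise non‑isomorphic $2p$‑dimensional irreducible $\F_p$‑representations of $G$, so $r_{2p}(G,\F_p)\ge p^p$, whence
\[
 \log\eta_G(s)\ \ge\ \sum_{p\ge 3} p^{p}\,p^{-2ps}\,|\mathbb{P}^{2p-1}(\F_p)|\ \ge\ \sum_{p\ge 3} p^{\,3p-2ps-1},
\]
which diverges for every $s\le 3/2$: at $s=3/2$ the exponent is $-1$ and $\sum_p p^{-1}$ diverges \cite{mertens}, and for $s<3/2$ the terms grow. (That $\eta_G$ converges for large $s$, so that its abscissa is genuinely $\ge 3/2$, follows from $r_n(G,\F_{p^j})\le n\log_2 n\cdot\max_{e\mid n}r^*(G,\F_{p^{je}},n/e)$ — a large $e$‑dimensional endomorphism algebra forces $n\ge 2^{e}$, so $e\le\log_2 n$ — together with the estimates below.)

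\emph{Upper bound for $\zeta_G$.} Fix a prime $q$ and a power $q^j$. By \cite{Fein}, an $n$‑dimensional absolutely irreducible $\F_{q^j}[G]$‑module is a tensor product of an $m$‑dimensional one for the defining‑characteristic block $\SL(2,q^q)^{q^q}$ and an $(n/m)$‑dimensional one for $H=\prod_{p\ne q,\ p^p\le 2n/m+1}\SL(2,p^p)^{p^p}$. Every nontrivial absolutely irreducible cross‑characteristic representation of $\SL(2,p^p)$ has dimension $\ge(p^p-1)/2$ (\cite[Theorem~5.3.9]{KL}), with at most $4p^p+O(1)$ of them per dimension (Wedderburn--Artin), and $H$ has at most polynomially‑in‑$n$ many direct factors, so $r^*(H,\F_{q^j},n/m)\le 2^{O(\log^2 n)}$ — negligible for the abscissa by Lemma~\ref{sameabscissa}. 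For the block $\SL(2,q^q)^{q^q}$ we use that the splitting field of $\SL(2,q^q)$ is $\F_{q^q}$ (\cite[Proposition~5.4.4]{KL}) and Steinberg's tensor product theorem (\cite[Theorem~5.4.5, Proposition~5.4.6]{KL}): when $q\nmid j$ the only absolutely irreducible $\F_{q^j}$‑representations of $\SL(2,q^q)$ have dimension $l^q$ with $l$ a product of primes $\le q$, and at most $q^{\pi(l^q)}$ of $\SL(2,q^q)^{q^q}$ occur in dimension $l^q$; when $q\mid j$ all dimensions occur, with at most $q^{(q+1)\pi(m)}$ of $\SL(2,q^q)^{q^q}$ in dimension $m$ (distribute the $\pi(m)$ prime factors of $m$ among $q^{q+1}$ slots). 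Thus $r^*(G,\F_{q^j},n)\le 2^{O(\log^2 n)}q^{(q+1)\pi(n)}$, and when $q\nmid j$ the dimension $n$ is moreover divisible by the $q$‑th power $l^q\ge 2^q$.

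Now split $\log\zeta_G(s)=\Sigma_1+\Sigma_2$ according to whether $q\nmid j$ or $q\mid j$ (the case where the $\SL(2,q^q)^{q^q}$‑factor is trivial contributes only to $H$ and converges for $s>1$), bound $|\mathbb{P}^{n-1}(\F_{q^j})|\le 2q^{(n-1)j}$, and collapse $nj$ to a single variable $k$ (at most $d(k)^2$ choices behind it, absorbed into $2^{O(\log^2 k)}$). In $\Sigma_1$ one obtains
\[
 \Sigma_1\ \ll\ \sum_q\sum_{k\ge 2^q}2^{O(\log^2 k)}q^{\pi(k)+(1-s)k-1}\ \le\ \sum_q\sum_{k\ge 8}2^{O(\log^2 k)}q^{\pi(k)+(1-s)k-1},
\]
and for fixed $k$ the inner sum over all primes $q$ converges precisely when $\pi(k)+(1-s)k<0$; requiring this for all $k\ge 8$ forces $s>1+\max_{k\ge 8}\pi(k)/k=1+\pi(8)/8=11/8$. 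In $\Sigma_2$ the analogous manipulation produces exponent $q\bigl(\pi(k)+(1-s)k-1\bigr)+\pi(k)$, and $\sum_q q^{q(\pi(k)+(1-s)k-1)+\pi(k)}$ converges for fixed $k$ exactly when $\pi(k)+(1-s)k-1<0$, i.e.\ $s>1+\max_k(\pi(k)-1)/k=5/4$. Since $5/4<11/8$, the series for $\zeta_G$ converges for all $s>11/8$, so $a(\zeta_G)\le 11/8$.

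\emph{Main obstacle.} The one point needing genuine care — and the reason the bound is $11/8$ rather than the $3/2-(f-1)/(4f)$ of Section~\ref{largeabscissae} — is controlling $r^*(\SL(2,q^q)^{q^q},\F_{q^j},m)$ uniformly as the place $q$ varies, and keeping the regimes $q\mid j$ and $q\nmid j$ strictly apart: the latter produces only dimensions that are $q$‑th powers, which is exactly what pins the critical exponent in $\Sigma_1$ down to the smallest admissible case $q=3$, $k=2^3=8$. Once this bookkeeping is in place, the convergence analysis is a routine variant of the one in the proof of Theorem~\ref{thmABC:arbitrary}, which is why the authors omit the details.
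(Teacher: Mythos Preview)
Your argument is correct and follows precisely the route the paper indicates (the paper omits the proof, pointing to Theorem~\ref{thmABC:arbitrary}, and your proof is the natural amalgam of that argument with the one in Section~\ref{largeabscissae}, letting $f=q$ vary). The splitting into $\Sigma_1$ and $\Sigma_2$, the collapse to the variable $k=nj$, and the identification of $k=8$ (from $2^q$ with $q=3$) as the critical case giving $11/8$ are all exactly what one expects.

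One small error, though it does not affect the theorem: your parenthetical justification that $\eta_G$ converges for large $s$ via ``a large $e$-dimensional endomorphism algebra forces $n\ge 2^{e}$'' is false. The very representations you use for the lower bound are counterexamples: the natural $2$-dimensional $\F_{q^q}$-module of $\SL(2,q^q)$, restricted to $\F_q$, is irreducible of dimension $n=2q$ with endomorphism field $\F_{q^q}$, so $e=q$ but $2q<2^q$ for $q\ge 3$. The correct (and simpler) argument is that once you have established $a(\zeta_G)<\infty$, the group $G$ has UBERG, whence $r_n(G,\F_{p^j})\le\sum_{e\mid n}r^*(G,\F_{p^{je}},n/e)\le d(n)\,p^{cnj}$ for the UBERG constant $c$, and $\eta_G$ converges for $\Re(s)>c+1$. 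Since the theorem only asserts the abscissa of $\eta_G$ is $\ge 3/2$, this parenthetical is in any case inessential.
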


As we saw in Sections~\ref{sec:finite} and \ref{sec:rationality}, for several 
groups we have a nice form for the zeta function, with properties like 
meromorphic continuation and rationality of local factors. In contrast, 
the next example illustrates that the zeta function $\eta_G(s)$ defined via irreducible 
representations can be `wild' even for virtually abelian groups.

\begin{ex}
Let $G = \Z\wr C_2$.
	By repeating word for word the calculations in example \ref{ex:ZwrC2} with 
	\emph{all} irreducible representations, we get 
	\[
	  \eta_G(s) \sim  \frac{\zeta(s-1)^2}{\zeta(s)^2} \cdot 
	\frac{\zeta(2s)}{\zeta(2s-2)}  \sqrt{\frac{\zeta(2s-3)}{\zeta(2s-1)}}.
	\]
	As we can see, this function does not have rational local factors and its analytic properties
	are more difficult to understand compared to the ones of $\zeta_G(s)$.
\end{ex}


\begin{thebibliography}{99}

\bibitem{AKOV}
N.~Avni, B.~Klopsch, U.~Onn, C.~Voll.
\emph{Representation zeta functions of compact $p$-adic analytic groups and arithmetic groups}.
Duke Math.~J.~162 (2013), no.\ 1, 111-197. 

\bibitem{BCP}
L.\ Babai, P.\ J.\ Cameron, P.\ P.\ P\'{a}lfy. \emph{On the orders of primitive groups with restricted nonabelian composition factors}.
J.\ Algebra  79  (1982),  no.\ 1, 161-168.

\bibitem{BachSorenson}
E.~Bach, J.~Sorenson. \emph{Explicit Bounds for Primes in Residue Classes}. 
Mathematics of Computation
Vol.\ 65, (1996), 1717-1735.

\bibitem{BanksShparlinski}
W.~D.~Banks, I.~E.~Shparlinski.
\emph{Bounds on short character sums and L-functions with characters to a powerful modulus}.
J.\ Anal.\ Math.\ 139 (2019), 239-263. 


\bibitem{bartle}
R.G.~Bartle. \emph{The elements of integration and Lebesgue measure}. Wiley Classics Library, New York,  1995.

\bibitem{BNV}
S. Blackburn, P. Neumann, G. Venkataraman. \emph{Enumeration of Finite Groups}. Cambridge Tracts in Mathematics 173, Cambridge University Press, 2007.

\bibitem{deBruijn}
N. de Bruijn. \emph{On Mahler's Partition Problem}. Proc. Sec. Sci. Koninklijke Nederlandse Akademie van Wetenschappen te Amsterdam 51 (1948), 6, 659-669.

\bibitem{CF}
R.\ Carter, P.\ Fong. \emph{The Sylow 2-Subgroups of the Finite Classical Groups}. J.\ Algebra 1 (1964), 139-151.


\bibitem{CCKV}
G.\ Corob Cook, S.\ Kionke, M.\ Vannacci. \emph{Counting irreducible modules for profinite groups}. Revista Matem\'{a}tica Iberoamericana, to appear (2022).


\bibitem{DF}
A.\ Detinko, D.\ Flannery, \emph{Nilpotent primitive linear groups over finite fields}. Comm.\ Algebra 33 (2005), no.\ 2, 497-505.

\bibitem{DF2}
A.\ Detinko, D.\ Flannery. \emph{Classification of Nilpotent Primitive Linear Groups over Finite Fields}. Glasgow Math.\ J.\ 46 (2004), 585--594.

\bibitem{DDMS}
J.D.~Dixon, M.P.F.~du Sautoy, A.~Mann, D.~Segal.
\emph{Analytic pro-$p$ groups}.
2nd ed. Cambridge University Press, Cambridge, 1999.

\bibitem{duSautoySegal}
M.P.F.~du Sautoy, D.~Segal. 
\emph{Zeta functions of groups.} New horizons in pro-p groups, 249-286,
Progr. Math., 184, Birkhäuser Boston, Boston, MA, 2000. 

\bibitem{Dwork}
B.~Dwork. \emph{On the rationality of the zeta function of an algebraic variety}. Amer.\ J.\ Math.\  82  (1960), 631-648.


\bibitem{Fein}
B. Fein. Representations of direct products of finite groups. Pacific J. Math., 20 (1967), 1, 45-58.

\bibitem{Gallagher}
P.~X.~Gallagher. \emph{Primes in progressions to prime-power modulus}. Invent.\ Math.\ 16 (1972), 191-201.

\bibitem{GSS}
F.J.~Grunewald, D.~Segal, G.C.~Smith. \emph{Subgroups of finite index in nilpotent groups}. Inventiones Mathematicae 93 (1988), 185--223.

\bibitem{HB}
D.\ Heath-Brown. \emph{Zero-Free Regions for Dirichlet L-Functions, and the Least Prime in an Arithmetic Progression}. Proc.\ London Math.\ Soc.\ 63-64 (1992), 2, 265--338.

\bibitem{Humphreys}
J. E. Humphreys. \emph{Representations of $SL(2,p)$}. The American Mathematical Monthly, 82 (1975), 1, 21--39.

\bibitem{Huxley}
M.~N.~Huxley. \emph{Large values of Dirichlet polynomials. III}. Acta Arith.\ 26 (1975), 435-444.
		
\bibitem{Isaacs}
I.~M.~Isaacs,
\emph{Character theory of finite groups.} Dover Publications, Inc., New York, 1994.

\bibitem{Iwaniec}
H.\ Iwaniec. \emph{On zeros of Dirichlet's L-series}. Inventiones Math.\ 23 (1974), 97-104.

\bibitem{JP}
A.\ Jaikin-Zapirain, L.\ Pyber. \emph{Random Generation of Finite and Profinite Groups and Group Enumeration}. Ann.\ Math.\ (2011), 769-814.

\bibitem{James1978}
G.~D.~James,
\newblock The Representation Theory of the Symmetric Groups. 
\newblock Lecture Notes Math. 632, Springer-Verlag, Berlin 1978.

\bibitem{Karpilovsky}
G.\ Karpilovsky. \emph{Clifford Theory for Group Representations}. North Holland Mathematics Studies 156, Notas de Matem\'{a}tica 125, North Holland, 
Amsterdam, 1989.

\bibitem{Kassabov}
M.~Kassabov,  N.~Nikolov. \emph{Cartesian products as profinite completions}. Int.\ Math.\ Res.\ Not.,  2006.

\bibitem{KL}
P.~Kleidman, M.\ Liebeck. \emph{The subgroup structure of the finite classical groups}. London Mathematical Society Lecture Note Series 129, Cambridge University Press, 1990.

\bibitem{KiKl}
S.~Kionke, B.~Klopsch,
\emph{Zeta functions associated to admissible representations of compact $p$-adic Lie groups.}
Trans.~Amer.~Math.~Soc.~372 (2019), no. 11, 7677-7733. 

\bibitem{KV}
S.~Kionke, M.~Vannacci. \emph{Positively finitely related profinite groups}. Israel J.\ Math.\ 225(2) (2018), 743--770.


\bibitem{Korthauer}
F.~Korthauer. \emph{Arithmetic representation growth of virtually free groups}. PhD Thesis, Bochum University, 2021. 

\bibitem{LL}
M.~Larsen, A.~Lubotzky. \emph{Representation growth of linear groups.}  J.\ Eur.\ Math.\ Soc.\ (JEMS)  10  (2008),  no.\ 2, 351--390.

\bibitem{LS}
M.\ W.\ Liebeck,  A.\ Shalev. \emph{Bases of primitive linear groups}. J.\ Algebra  252  (2002),  no.\ 1, 95--113.
 

\bibitem{LuSe}
A.~Lubotzky, D.~Segal. \emph{Subgroup growth}. Progress in Mathematics, 212. Birkh\"{a}user, Basel, 2003.


\bibitem{Mann}
A. Mann. \emph{Subgroup Growth in pro-p Groups}. In: du Sautoy, M., Segal, D., Shalev, A., New Horizons in pro-p Groups. Progress in Mathematics, 184. Birkh\"{a}user, Basel, 2000.

\bibitem{mertens}
F.\ Mertens. \emph{Ein Beitrag zur analytischen Zahlentheorie}. J.\ Reine Angew.\ Math.\ 78:46--62, 1874.

\bibitem{moreto}
A.\ Moret\'o. \emph{On the largest Brauer and $p'$-character degrees}. In preparation, 2022.

\bibitem{Neukirch}
J.~Neukirch. \emph{Algebraic Number Theory},  Springer-Verlag, Berlin, 1999.

\bibitem{Palfy}
P. P\'{a}lfy. \emph{A Polynomial Bound for the Orders of Primitive Solvable Groups}. J.\ Algebra 77 (1982), 127-137.

\bibitem{PrSa}
C. E. Praeger, J. Saxl. \emph{On the orders of primitive permutation groups}. Bull. London Math. Soc. 12 (1980), 303-307.


\bibitem{PS}
L.\ Pyber, A.\ Shalev, \emph{Asymptotic Results for Primitive Permutation Groups}. J.\ Algebra 188 (1997), 103-124.

\bibitem{Reiner}
I.~Reiner. \emph{Maximal Orders}, Clarendon Press, Oxford, 2003.

\bibitem{RZ}
L.\ Ribes, P.\ Zalesskii. \emph{Profinite Groups}. A Series of Modern Surveys in Mathematics 40, Springer, Berlin, 2010.

\bibitem{Roquette}
P.~Roquette. \emph{Realisierung von Darstellungen endlicher nilpotenter Gruppen}. Arch.\ Math.\ 9 (1958), 241-250. 

\bibitem{Segev}
Y. Segev. \emph{On completely reducible solvable subgroups of $GL(n,\Delta)$}. Israel J. Math. 51 (1985), 163-176.

\bibitem{SV}
A.~Stasinski, C.~Voll.
\emph{Representation zeta functions of nilpotent groups and generating functions for Weyl groups of type B}. 
Amer.~J.~Math.~136 (2014), no. 2, 501-550. 


\bibitem{Voll1}
C.~Voll,
\emph{A newcomer's guide to zeta functions of groups and rings.} Lectures on profinite topics in group theory, 99-144,
London Math. Soc. Stud. Texts, 77, Cambridge Univ. Press, Cambridge, 2011. 

\bibitem{Weil}
A.\ Weil. \emph{Number of solutions of equations over finite fields}. Bull.\ Amer.\ Math.\ Soc.\ 55 (1949), 497-508.

\bibitem{Weir}
A.\ Weir. \emph{Sylow $p$-Subgroups of the Classical Groups over Finite Fields with Characteristic Prime to $p$}. Proc. Amer. Math. Soc. 6 (1955), 529--533.

\bibitem{Wolf}
T.\ Wolf. \emph{Solvable and Nilpotent Subgroups of $GL(n,q^m)$}. Can. J. Math. 34 (1982), 5, 1097-1111.

\bibitem{Xylouris}
T.~Xylouris. 
\emph{\"Uber die Nullstellen der Dirichletschen L-Funktionen und die kleinste Primzahl in einer arithmetischen Progression}. Rheinische Friedrich-Wilhelms-Universit\"at Bonn, 2011.

\end{thebibliography}
\end{document}